\documentclass[11pt,oneside,reqno]{amsart}

\usepackage{amsmath}
\usepackage{amssymb}  
\usepackage{mathrsfs}
\usepackage{pifont}
\usepackage{color,xcolor}
\usepackage{cite}
\usepackage{graphicx}
\usepackage{wrapfig}
\usepackage{indentfirst}
\usepackage{verbatim}
\usepackage{makecell}
\usepackage{subfigure}
\usepackage{float}
\usepackage{amsthm}
\usepackage{enumerate}
\usepackage{bbm}
\usepackage{mathtools}
\mathtoolsset{showonlyrefs} % only number referenced equations
\usepackage{geometry}
\usepackage{appendix}
\usepackage{tcolorbox}
\usepackage{pgfplots}
\usepackage{tikz}
\usepackage{tikz-network}

\usepackage{xcolor}
\usepackage[
pdfstartview=FitH,
CJKbookmarks=true,
bookmarksnumbered=true,
bookmarksopen=true,
colorlinks=true,
citecolor=magenta,% magenta , cyan
linkcolor=blue,
]{hyperref}

\newcommand\N{\ensuremath{\mathbb{N}}}
\newcommand\T{\ensuremath{\mathbb{T}}}
\newcommand\R{\ensuremath{\mathbb{R}}}
\newcommand\C{\ensuremath{\mathbb{C}}}

\newcommand\Z{\ensuremath{\mathbb{Z}}}

\newcommand\moduli{\ensuremath{\mathscr{M}_{\Gamma}}}

\newcommand\bloch{\ensuremath{\mathcal{B}_n}}
\newcommand\Oph{\ensuremath{\mathrm{Op}_h}}

\newtheorem{theorem}{Theorem}[section]
\newtheorem{lemma}[theorem]{Lemma}
\newtheorem{corollary}[theorem]{Corollary}
\newtheorem{proposition}[theorem]{Proposition}
\newtheorem{conjecture}[theorem]{Conjecture}
\theoremstyle{definition}
\newtheorem{definition}{Definition}[section]
\newtheorem{example}[definition]{Example}
\newtheorem{question}[definition]{Question}

\newtheorem{remark}[definition]{Remark}

\numberwithin{equation}{section}

\theoremstyle{plain}
\newtheorem{assumptionalt}{Assumption}
\newenvironment{assumption}[1]{
  \renewcommand\theassumptionalt{#1}
  \assumptionalt
}{\endassumptionalt}
\title[Observability and Semiclassical Control for Schrödinger Equations]{Observability and Semiclassical Control for Schrödinger Equations on Non-compact Hyperbolic Surfaces}
\author[X.~Fu]{Xin Fu}
\address[X.~Fu]{School of Science, Institute for Theoretical Sciences, Westlake University, Hangzhou, Zhejiang Province, 310030, P.R. China}
\email{1550862645cf@gmail.com}

\author[Y.~Gong]{Yulin Gong}
\address[Y.~Gong]{School of Mathematics, University of Bristol, Bristol, BS8 1UG, U.K.}
\email{dz25829@bristol.ac.uk}

\author[Y. Wang]{Yunlei Wang}
\address[Y. Wang]{Department of Mathematics, Louisiana State University, Baton Rouge, LA 70803, USA}
\email{ywang30@lsu.edu}

\date{\today}
\begin{document}

\begin{abstract}
We study the observability of the Schrödinger equation on $X$, a non-compact covering space of a compact hyperbolic surface $M$. Using a generalized Bloch theory, functions on $X$ are identified as sections of flat Hilbert bundles over $M$. We develop a semiclassical analysis framework for such bundles and generalize the result of semiclassical control estimates in [Dyatlov and Jin, Acta Math., 220 (2018), pp. 297-339] to all flat Hilbert bundles over $M$, with uniform constants with respect to the choice of bundle. Furthermore, when the Riemannian cover $X \to M$ is a normal cover with a virtually Abelian deck transformation group $\Gamma$, we combine the uniform semiclassical control estimates on flat Hilbert bundles with the generalized Bloch theory to derive observability from any $\Gamma$-periodic open subsets of $X$. We also discuss applications of the uniform semiclassical control estimates in spectral geometry.
\end{abstract}

\maketitle
\vspace{-1em}
\tableofcontents

\section{Introduction}
In this article, we study observability for the Schrödinger equation on a non-compact connected Riemannian manifold $(X,g)$. The corresponding Cauchy problem is
\begin{equation}\label{Schrödinger}
\left\{\begin{aligned}
    &i\partial_t u +\Delta_g  u =0 && \text{in } (0,\infty)\times X,\\
    &u|_{t=0}=u_0 && \mathrm{on}\  X,
\end{aligned}\right.
\end{equation}
where $\Delta_g$ denotes the Laplace–Beltrami operator on $(X,g)$, locally given by
\begin{equation*}
    \Delta_g f=\frac{1}{\sqrt{G}}\partial_i\big( \sqrt{G}g^{ij}\partial_j f\big), \qquad G=\mathrm{det}(g_{ij}),
\end{equation*}
and $u_0\in L^2(X)$. 

Throughout this work, we use $C(\theta_1,\cdots,\theta_n)$ to denote a constant depending only on the parameters $\theta_1,\cdots,\theta_n$, whose value may change from line to line.

The notion of observability is given as follows:
\begin{definition}
    Let $T>0$ and $S$ be a measurable subset of $X$ with positive measure. Problem \eqref{Schrödinger} is said to be \emph{observable from the set $S$ in time $T>0$} if there exists a constant $C=C(X,S,T)>0$ such that, for any $u_0\in L^2(X)$, the mild solution $u(t,x)$ of problem \eqref{Schrödinger} satisfies
    \begin{equation}\label{observability}
        \| u_0\|^2_{L^2(X)}\le C \int_0^T \int_S| u(t,x) |^2 \,dxdt.
    \end{equation}
\end{definition}

By the Hilbert uniqueness method \cite{Lions88vol1, Lions88vol2}, observability is equivalent to exact controllability. More precisely, problem \eqref{Schrödinger} is observable from the set $S$ in time $T>0$ if and only if, for any $u_0 , u_1\in L^2(X)$, there exists a control $f \in L^2((0, T) \times S)$ such that the following problem is solvable:
\begin{equation*}
\left\{\begin{aligned}
    &i\partial_t u +\Delta_g  u =f && \text{in } (0,\infty)\times X,\\
    &(u|_{t=0},u|_{t=T}) =(u_0, u_1) && \mathrm{on}\  X.
\end{aligned}\right.
\end{equation*}

Observability has been extensively studied over the past decades, especially on compact Riemannian manifolds with and without boundary. We shall present a brief review in Section \ref{review}. In the compact setting, if $S$ satisfies the Geometric Control Condition (GCC), first introduced by \cite{bardo1992}, that is, every (generalized) geodesic meets $S$ within a fixed time $T_0>0$, then the observability inequality \eqref{observability} holds for any $T>0$ by Lebeau's approach \cite{lebeau1992controle}. However, while GCC is necessary and sufficient for observability of wave equations, it is only a sufficient condition for observability of Schrödinger equations on standard compact domains, such as disks, tori and compact hyperbolic surfaces. 

By contrast, very little is known for non-compact domains in which GCC fails, since the usual compactness arguments break down. In this case one must develop new techniques to overcome the lack of compactness. The main goal of this work is to establish observability for Schrödinger equations on certain kinds of non-compact Riemannian manifolds from observation sets that do not satisfy GCC.

\subsection{Problem setting}

Throughout this article, we assume that $(M,g)$ is a compact connected Riemannian manifold, and $\pi: X \rightarrow M$ is a Riemannian covering map with the set of right cosets
\begin{equation*}
    \Gamma = \pi_{*}(\pi_1(X)) \backslash \pi_1(M).
\end{equation*}
See \eqref{defineV} for the explicit meaning of $\Gamma$. We note that if $\pi$ is normal, then $\Gamma$ is isomorphic to the deck transformation group of $\pi$. We assume that $\Omega \subset M$ is a non-empty open set and $S = \pi^{-1}(\Omega)$. In other words, $S$ is a $\Gamma$-periodic set with the fundamental domain $\Omega$.

%A subset $S\subset X$ is called $\Gamma$-periodic if $\gamma(S)=S$ for every $\gamma \in \Gamma$. Equivalently, $S=\pi^{-1}(\Omega)$ for some $\Omega\subset M$. 

The following question arises naturally:

\begin{question}\label{question}
If problem~\eqref{Schrödinger} on the base space $M$ is observable from $\Omega$ in time $T>0$, does it follow that the corresponding problem on $X$ is observable from $S=\pi^{-1}(\Omega)$ in time $T$, and how does the control constant $C$ in \eqref{observability} depend on $\Gamma$?
\end{question}

For the Euclidean case $X= \mathbb{R}^d$, $\Gamma = \mathbb{Z}^d$, and $M= \mathbb{T}^d$, this question has been answered affirmatively in \cite{wunsch2017periodic, taufer2023controllability, balch2023obs}. In this work, we treat the case where $M$ is a compact hyperbolic surface and $\Gamma$ may be non-commutative.

\subsection{Main results}

%Throughout this article we assume that $(X,g)$ is a non-compact, smooth, connected Riemannian manifold. 

%In the following, we assume that $M$ is a compact hyperbolic surface. Since the fundamental group $\pi_{1}(M)$ is complex, the covering spaces of $M$ provide a wide class of non-compact hyperbolic surfaces.

Under suitable assumptions on the covering map $\pi$ and the group $\Gamma$, we provide a positive resolution to Question \ref{question}, as stated in Theorem \ref{thm-main-obs}. The proof of the observability inequality \eqref{observability} is established in two main stages:
\begin{enumerate}
    \item Establishing a semiclassical (high-frequency) control estimate; \label{step1}
    \item Eliminating the remaining low-frequency terms via a compactness argument.
\end{enumerate}

Our primary contributions are summarized in the following results. First, we establish a semiclassical control estimate that is uniform for any covering spaces:

\begin{theorem}\label{highfrequencycontrol}
    Let $M$ be a compact hyperbolic surface and $\Omega \subset M$ be a nonempty open subset. 
    There exist constants $C = C(M, \Omega) > 0$ and $h_0 = h_0(M, \Omega) > 0$ such that for any Riemannian cover $\pi: X \to M$, any $0 < h < h_0$, and any $u \in H^2(X)$,
    \begin{equation*}
        \|u\|_{L^2(X)} \leq C\left(\|u\|_{L^2\left(\pi^{-1}(\Omega)\right)} + \frac{|\log h|}{h}\left\|\left(-h^2 \Delta_g - I\right) u\right\|_{L^2(X)}\right).
    \end{equation*}
\end{theorem}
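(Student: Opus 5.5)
The plan is to reduce Theorem \ref{highfrequencycontrol} to a statement on the compact base $M$, with coefficients in a flat Hilbert bundle, and then to rerun the Dyatlov--Jin argument with constants that do not depend on the bundle.

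\textbf{Step 1: reduction to bundles.} The generalized Bloch theory identifies $L^2(X)$ with $L^2$ sections of the flat Hilbert bundle $E_\rho=\widetilde M\times_{\pi_1(M)}\mathcal H$ over $M$, where $\mathcal H=\ell^2(\Gamma)$ and $\rho$ is the unitary representation of $\pi_1(M)$ on $\ell^2$ of the coset space $\Gamma$. This identification has three properties we need. It is unitary. It intertwines $\Delta_g$ on $X$ with the flat Laplacian $\Delta_\rho$ on sections. It maps restriction to $\pi^{-1}(\Omega)$ to restriction to $\Omega$. The estimate is therefore equivalent to
\[
\|u\|_{L^2(M;E_\rho)}\le C\bigl(\|u\|_{L^2(\Omega;E_\rho)}+h^{-1}|\log h|\,\|(-h^2\Delta_\rho-1)u\|\bigr)
\]
for all unitary $\rho$ on a separable Hilbert space, with $C$ and $h_0$ independent of $\rho$.

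\textbf{Step 2: uniform semiclassical calculus.} Locally the bundle is trivial, with fiber $\mathcal H$ and transition maps given by constant unitaries. So we quantize scalar symbols $a\in S(T^*M)$ as $a\otimes \mathrm{Id}_{\mathcal H}$ in trivializing charts and patch them together with a partition of unity. The patching is consistent because the flat transitions are constant, so they commute with scalar pseudodifferential operators. Every bound in the calculus then comes from scalar estimates tensored with the identity: $L^2$ boundedness, composition, sharp G\aa rding, Egorov up to Ehrenfest time $\sim\rho_0|\log h|$ (conjugation by $e^{-itP}$), and the ellipticity estimates. Hence all constants depend only on $(M,g)$. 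In the local models the operators are literally $A\otimes I$ on $L^2(\mathbb R^2)\otimes\mathcal H$, and their norms equal the scalar norms.

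\textbf{Step 3: rerun Dyatlov--Jin.} We follow the Dyatlov--Jin proof verbatim. First we microlocalize near the energy surface $S^*M$ with a partition $1=A_0+A_1$, where $A_1$ is supported away from $\Omega$, and we form the words $A_{\mathbf w}$ over long times $\sim|\log h|$. The estimate then reduces to two uniform statements. The first is the controlled-region term, handled by Egorov and propagation. The second is the key \emph{fractal uncertainty} decay
\[
\|A_{\mathcal X}\|_{L^2\to L^2}\le Ch^{\beta}
\]
for the words that avoid $\Omega$ for long times. Dyatlov--Jin prove this decay by reducing, via microlocal charts adapted to the stable and unstable foliations (Fourier integral operators), to the fractal uncertainty principle for porous sets in $\mathbb R$. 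These reductions use only the scalar calculus tensored with $\mathrm{Id}_{\mathcal H}$. The FUP itself also tensors, since $\|1_X\mathcal F_h1_Y\otimes I\|=\|1_X\mathcal F_h1_Y\|$. As a result, the decay bound and the porosity constants, which depend only on the geodesic flow on $M$ and on $\Omega$, hold uniformly in $\rho$.

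\textbf{Main obstacle.} The hardest part is making sure that nothing in the long-time calculus uses compactness of the resolvent or finite-dimensional fibers. There are two places where this could go wrong. The first is the exotic symbol classes $S_{L,\rho}$, together with their Egorov and composition remainders. The second is the final combination with the $O(h^\infty)$ errors. I would check each remainder bound by reducing it to a local operator of the form $B\otimes I$ plus a patching error controlled by scalar seminorms. The main risk is any step that passes from a trace-class bound to a norm bound, so that is the step to verify first.
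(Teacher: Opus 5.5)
Your proposal follows essentially the same route as the paper. The non-commutative Bloch transform reduces the problem to a uniform estimate on the quasiregular flat $\ell^2(V)$-bundle; a scalar-symbol calculus whose local models are $A\otimes\mathrm{Id}_{\mathcal H}$ (and $B\otimes\phi$ for Fourier integral operators) makes every constant independent of $\rho$; and the Dyatlov--Jin argument is rerun with the fractal uncertainty step reduced to the scalar bound via $\|A\otimes\phi\|=\|A\|$.

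One correction to your sketch of Dyatlov--Jin, since you propose to follow it verbatim. The partition near $S^*M$ has two pieces $A_1,A_2$, which avoid disjoint open cones $\mathcal U_1,\mathcal U_2$. This is what makes \emph{every} word in $\mathcal W(2N_1)$, not only those avoiding $\Omega$, have porous support and satisfy $\|A_{\mathbf w}\|\le Ch^\beta$; the count $\#\mathcal X\le Ch^{-4\sqrt\alpha}$ then absorbs the sum. Also, the passage from $\mathrm{Op}_h^\rho(a)$ to $\|u\|_{L^2(\Omega)}$ is a separate elliptic estimate away from $S^*M$.
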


To derive the full observability result from the high-frequency estimate, we require the following structural assumption:

\begin{assumption}{H}\label{normal-and-type1-assumption}
    The Riemannian covering map $\pi: X \rightarrow M$ is normal, and the associated deck transformation group $\Gamma$ is of Type I.
\end{assumption}

A detailed justification for this requirement is provided in Remark \ref{reasonoftype1}. Let $d_{\Gamma}$ denote the supremum of the dimensions of the irreducible unitary representations of $\Gamma$. Under Assumption \ref{normal-and-type1-assumption}, we obtain the following observability result for Schrödinger equations on Type I covers of compact hyperbolic surfaces:

\begin{theorem}\label{thm-main-obs}
    Under Assumption \ref{normal-and-type1-assumption}, there exists a constant $C = C(M, \Omega, T, d_\Gamma) > 0$ such that for any $u \in L^2(X)$:
    \begin{equation*}
        \|u\|_{L^2(X)}^2 \leq C \int_{0}^{T}\|\mathrm{e}^{it \Delta_g}u\|^2_{L^2(\pi^{-1}(\Omega))} \, dt.
    \end{equation*}
\end{theorem}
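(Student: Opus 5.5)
Under Assumption \ref{normal-and-type1-assumption}, the plan is to decompose $L^2(X)$ by the generalized Bloch (Plancherel) theory for the Type I group $\Gamma$. The Laplacian and the propagator $e^{it\Delta_g}$ then become direct integrals, over the unitary dual $\hat\Gamma$, of the corresponding operators acting on sections of flat Hermitian bundles $E_\rho\to M$. Each bundle is associated with an irreducible unitary representation $\rho$ of dimension at most $d_\Gamma$. Because $S=\pi^{-1}(\Omega)$ is $\Gamma$-invariant, multiplication by $\mathbbm{1}_S$ also decomposes fibrewise as multiplication by $\mathbbm{1}_\Omega$. By the Plancherel identity, the observability inequality on $X$ therefore reduces to the fibrewise inequality
\begin{equation*}
\|f\|^2_{L^2(M;E_\rho)}\le C\int_0^T\|e^{it\Delta_\rho}f\|^2_{L^2(\Omega;E_\rho)}\,dt,
\end{equation*}
with a constant $C$ independent of $\rho$. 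It is enough to prove this for $\rho\in\hat\Gamma$ with $\dim\rho\le d_\Gamma$, and measurability of the integrand in $\rho$ follows from the Bloch theory.

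For the uniform fibrewise inequality I would follow the Dyatlov--Jin/Jin strategy, using the bundle version of Theorem \ref{highfrequencycontrol}. Its constants depend only on $(M,\Omega)$, uniformly over flat Hilbert bundles. First, the resolvent estimate $\|f\|\le C(\|f\|_{L^2(\Omega)}+\frac{|\log h|}{h}\|(-h^2\Delta_\rho-1)f\|)$ is converted into an observability estimate for spectrally localized data $f=\mathbbm 1_{[\lambda_0,\infty)}(-\Delta_\rho)f$. This step uses the standard Burq--Zworski argument of time-cutoff and Fourier transform in $t$, localizing to dyadic energy windows and using almost orthogonality of those windows. The loss $|\log h|/h$ is compatible with arbitrary $T>0$, and $\lambda_0$ and the constants depend only on $(M,\Omega,T)$.

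Next comes the low-frequency part, which I expect to be the main obstacle. One needs to treat the spectral subspace $\mathbbm 1_{[0,\lambda_0)}(-\Delta_\rho)$ uniformly in $\rho$. For fixed $\rho$, unique continuation combined with the standard compactness--uniqueness argument (Bardos--Lebeau--Rauch) works, because $E_\rho$ has finite rank and $M$ is compact, so the low spectral space is finite dimensional. Uniformity is the issue. My approach is to observe that the family of flat bundles of rank at most $d_\Gamma$ is parametrized by a compact set, namely $\mathrm{Hom}(\pi_1(M),U(k))/U(k)$ for $k\le d_\Gamma$. The fibrewise observability constant should be at least upper semicontinuous in $\rho$ on this moduli space, as spectral projections onto $[0,\lambda_0)$ depend continuously on the holonomy away from eigenvalue crossings at $\lambda_0$; one can choose $\lambda_0$ locally so as to avoid such crossings. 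Alternatively, I would run a contradiction argument. Take $\rho_n$, $f_n$ with $\|f_n\|=1$ and vanishing observation. After trivializing the bundles over a fixed cover of $M$, extract a subsequence with converging holonomies $\rho_n\to\rho_\infty$, converging finite-dimensional low-frequency data, and, by Rellich, a limit $f$. The limit gives a nonzero low-frequency solution for $\rho_\infty$ vanishing on $(0,T)\times\Omega$. This contradicts unique continuation for finite sums of eigenfunctions of the elliptic operator $\Delta_{\rho_\infty}$, which are locally vector-valued Laplace eigenfunctions.

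Finally, the high- and low-frequency pieces are combined by the usual argument, which absorbs the cross terms with Ingham-type separation or a second compactness step. The result is the uniform fibrewise inequality with $C=C(M,\Omega,T,d_\Gamma)$. Integrating it over $\hat\Gamma$ against the Plancherel measure then gives Theorem \ref{thm-main-obs}.
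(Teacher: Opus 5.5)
Your Bloch/Plancherel reduction, your high-frequency step, and your use of compactness of $\mathrm{Hom}(\pi_1(M),\mathrm{U}(k))/\mathrm{U}(k)$, $k\le d_\Gamma$, are in line with the paper. The paper proves the high-frequency part as Proposition~\ref{prp-semi-obs} and then the weak observability \eqref{eqn-obs-error} uniformly in $\rho$.

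The gap is the final step, which you treat as routine. Uniform observability on $\mathbf{1}_{[\lambda_0,\infty)}(-\Delta_\rho)L^2$ and on $\mathbf{1}_{[0,\lambda_0)}(-\Delta_\rho)L^2$ separately does not imply observability on all of $L^2$. The cross term $2\,\mathrm{Re}\int_0^T\langle e^{it\Delta_\rho}f_{\mathrm{high}},e^{it\Delta_\rho}f_{\mathrm{low}}\rangle_{L^2(\Omega)}\,dt$ can be as large as the diagonal terms, since $\mathbf{1}_\Omega$ couples all eigenmodes. Nothing you have proved rules out that the two observation ranges nearly overlap. Ingham-type separation cannot absorb this term. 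For an arbitrary fixed $T>0$ it would need a spectral gap between the two blocks that is large compared with $1/T$ and uniform in $\rho$. But eigenvalues of $-\Delta_\rho$ move continuously with the holonomy and can sit on or arbitrarily close to $\lambda_0$.

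So everything rests on the ``second compactness step'', and it does not close with the uniqueness statement you prove. The high-frequency estimate gives, uniformly in $\rho$, the weak observability
$\|f\|^2\le C_T\bigl(\int_0^T\|e^{it\Delta_\rho}f\|^2_{L^2(\Omega)}\,dt+\|f\|^2_{H^{-4}}\bigr)$.
Take $\rho_n\to\rho_\infty$ and $\|f_n\|=1$ with vanishing observation. Rellich (finite rank) and this estimate give a weak limit $f\neq 0$ with $e^{it\Delta_{\rho_\infty}}f=0$ on $(0,T)\times\Omega$; passing to the limit in the propagators uses the resolvent convergence of Proposition~\ref{convergeofresolvent}. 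But $f$ is an arbitrary $L^2$ datum, not a finite sum of eigensections, so unique continuation for low-frequency solutions does not exclude it.

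The missing ingredient is the Bardos--Lebeau--Rauch lemma proved in the paper: $N_T^{\rho}=\{0\}$, where $N_T^\rho$ is the space of data whose evolution vanishes on $(0,T)\times\Omega$.
\begin{itemize}
\item $N_T^\rho$ is finite-dimensional: on it the $L^2$ and $H^{-4}$ norms are equivalent by weak observability, and $L^2\hookrightarrow H^{-4}$ is compact in finite rank.
\item $N_T^\rho$ is invariant under $\Delta_\rho$: the difference quotients $\varepsilon^{-1}(e^{i\varepsilon\Delta_\rho}u-u)$ lie in $N^\rho_{T-\delta}$ and are bounded in $L^2$ by weak observability on the shorter interval. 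Hence $\Delta_\rho u\in N^\rho_{T-\delta}$ for every $\delta>0$.
\item So if $N_T^\rho$ were nonzero it would contain an eigensection vanishing on $\Omega$, contradicting unique continuation.
\end{itemize}
With this lemma your separate low-frequency observability step becomes superfluous. Weak observability plus the contradiction argument over $\rho_n\to\rho_\infty$ gives Theorem~\ref{thm-uniform-obs} with $C=C(M,\Omega,T,d_\Gamma)$ directly, and the Plancherel integration finishes as you describe.
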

Next, we outline the proof strategy for Theorems \ref{highfrequencycontrol} and \ref{thm-main-obs}.

To study the observability problem on the covering space $X$, we employ a generalized Bloch theory that reduces this problem to an observability problem on flat Hilbert bundles over the base space $M$. In Section \ref{subs:noncBloch}, we extend the non-commutative Bloch transform (see \cite[(5.1)]{nagy2024hyperbolic}) to an arbitrary Riemannian covering map $\pi: X\to M$ and construct a unitary isometry $\mathcal{B}_{\mathrm{NC}}: L^2(X)\rightarrow L^2(M;F^{\rho_{\pi}})$, where $F^{\rho_{\pi}}$ denotes a flat Hilbert bundle over $M$. The non-commutative Bloch transform $\mathcal{B}_{\mathrm{NC}}$ satisfies 
\begin{equation*}
    \mathcal{B}_{\mathrm{NC}}\circ \Delta_g=\Delta^{\rho_{\pi}}\circ \mathcal{B}_{\mathrm{NC}},
\end{equation*}
where $\Delta^{\rho_{\pi}}$ denotes the twisted Bochner–Laplace operator; see Section \ref{subsec: twisted laplacian}.

The discussion above leads us to study observability for Schrödinger equations on flat Hilbert bundles over $M$. For this problem, we use the microlocal approach initiated in \cite{ralston1969solutions, rauch1974exponetial, bardo1992, Burq1997control}. 

In the scalar case on compact hyperbolic surfaces, the semiclassical control estimate was obtained by Dyatlov and Jin \cite{dyatlov2018semiclassical}. In this work, we develop a semiclassical calculus on flat Hilbert bundles over general compact Riemannian manifolds and extend the results of \cite{dyatlov2018semiclassical} to the setting of flat Hilbert bundles over compact hyperbolic surfaces, which are of independent interest.

% Given the manifold $(X,g_X)$ and the symmetry group $\Gamma$, there exists a closed fundamental domain $D\subset X$ such that $\pi(D)=M$ and $\left.\pi\right|_{D^{o}}$ is injective map on the interior of $D$. The whole manifold $X$ is tiling of copies of $D\subset X$, i.e. $X=\bigcup\limits_{\gamma\in \Gamma}\gamma(D)$.

% \begin{remark}
%     We may change the condition of type I covering to  covering without changing the full facts presented by this result. Indeed, {\color{red} to be continued...}
% \end{remark}
% In the study of lattice-periodic Schrodinger operators on Euclidean spaces. The Floquet-Bloch theory provides a powerful tool to decompose the wave function into a family quasi-periodic functions on the fundamental domain.

\subsubsection{Semiclassical control estimates on flat Hilbert bundles}

Let $\pi_1(M)$ be the fundamental group of $M$. We denote
\begin{equation}
\begin{aligned}
    \mathcal{C}_m := \{(\mathcal{H},\rho): \ &\mathcal{H} \mathrm{\ is\ a\ Hilbert\ space\ with}\ \mathrm{dim}\,\mathcal{H}\leq m,  \\
    & \rho:\pi_1(M) \rightarrow \mathrm{U}(\mathcal{H}) \mathrm{\ is\ a\ unitary\ representation}\},
\end{aligned}
\end{equation}
and
\begin{equation}
\begin{aligned}
    \mathcal{C} := \{(\mathcal{H},\rho): \ & \mathcal{H} \mathrm{\ is\ a\ separable\  Hilbert\ space},  
    \\
    &\rho:\pi_1(M) \rightarrow \mathrm{U}(\mathcal{H}) \mathrm{\ is\ a\ unitary\ representation}\}.
\end{aligned}
\end{equation}
For each $(\mathcal{H},\rho)\in \mathcal{C}$, the associated flat Hilbert bundle $F^\rho$ is defined in \eqref{defofflatbundle} (by the Riemann-Hilbert correspondence, any flat Hilbert bundle over $M$ has the form of $F^\rho$ for some $(\mathcal{H},\rho)\in \mathcal{C}$). 

For each $(\mathcal{H},\rho)\in \mathcal{C}$ and $0<h \leq 1$, we introduce, in Section \ref{subsec:quansymb}, the quantization procedure  
\begin{equation*}
    \Oph^{\rho}(a)\in \Psi^{\mathrm{comp},sc}_h(M;F^\rho): L^2(M;F^\rho)\to L^2(M;F^{\rho})
\end{equation*}
for $a=a(x,\xi;h) \in C_{0}^{\infty}(T^*M)$. Here the superscript $sc$ indicates that the symbol $a$ is scalar-valued. The key observation is that the operator-norms of $\Oph^{\rho}(a)$ are \emph{uniformly} bounded in $(\mathcal{H},\rho)\in \mathcal{C}$ and $0<h\leq 1$; see Proposition \ref{uniformboundedtheorem}. Conversely, for any $A \in \Psi^{\mathrm{comp},sc}_h(M;F^\rho)$, we define the principal symbol $\sigma^{\rho}_h(A)$ in Definition \ref{defofprincipalsymbol} such that $A-\Oph^{\rho}(\sigma^{\rho}_h(A))\in h\Psi^{\mathrm{comp},sc}_h(M;F^\rho)$.

To define the cutoff Schrödinger propagator $\exp\left(-itP/h\right)$ in \eqref{schordingerpropagator}, as defined in \cite[(2.13)]{dyatlov2018semiclassical}, we need to show that the functional calculus of the semiclassical twisted Laplacian $-h^2\Delta^{\rho}$ remains in $\Psi^{\mathrm{comp},sc}_h(M;F^\rho)$:

\begin{proposition}
    \label{functionalcalculus}
For any $f\in C_{0}^{\infty}(\R)$, we have
\begin{equation}
f(-h^2\Delta^{\rho}) \in \Psi^{\mathrm{comp},sc}_{h}(M;F^\rho) \quad \mathrm{with} \  \sigma_{h}^{\rho}(f(-h^2\Delta^{\rho}))=f(|\xi|^2_g).
\end{equation}
Moreover, $\mathrm{WF}_{h}(f(-h^2 \Delta^\rho))\subset \{(x,\xi)\in T^*M: |\xi|_g^2\in \mathrm{supp}\,f\}$.
\end{proposition}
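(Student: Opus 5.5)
I will follow the standard Helffer–Sjöstrand route (as in Zworski's Theorem 14.9 or Dimassi–Sjöstrand), carried out in the bundle calculus $\Psi^{\mathrm{comp},sc}_h(M;F^\rho)$. The goal is to keep every constant independent of $(\mathcal{H},\rho)\in\mathcal{C}$.

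\emph{Step 1: local structure.} I first fix a finite cover of $M$ by small geodesically convex balls $U_j$, each evenly covered, with a subordinate partition of unity $\chi_j$. On each $U_j$ the flat bundle $F^\rho$ is trivialized by parallel sections, so $F^\rho|_{U_j}\cong U_j\times\mathcal{H}$. In such a trivialization the flat connection is the trivial one. Hence $-h^2\Delta^\rho$ acts as $(-h^2\Delta_g)\otimes I_{\mathcal{H}}$, a scalar differential operator tensored with the identity. This is the key point, and it is what makes the scalar-symbol calculus with uniform bounds (Proposition \ref{uniformboundedtheorem}) applicable. The transition maps between trivializations are locally constant unitaries $\rho(\gamma)$, and these commute with scalar operators.

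\emph{Step 2: Helffer–Sjöstrand formula.} Let $\tilde f$ be an almost analytic extension of $f$ with compact support and $|\bar\partial\tilde f|=O(|\operatorname{Im}z|^\infty)$. Since $-h^2\Delta^\rho$ is self-adjoint on $L^2(M;F^\rho)$, we have
\[
f(-h^2\Delta^\rho)=-\frac1\pi\int_{\mathbb C}\bar\partial\tilde f(z)\,(-h^2\Delta^\rho-z)^{-1}\,dm(z).
\]
For $\operatorname{Im}z\neq0$, I then build a parametrix $R(z)=\sum_j \tilde\chi_j\,\Oph^{\rho}(r_j(z))\,\chi_j$. Here $r_j(z)=\sum_{k<N}h^k r_{j,k}(z)$ are the usual scalar local symbols, with leading term $r_{j,0}=(|\xi|_g^2-z)^{-1}$. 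They are obtained by the scalar symbolic construction, which applies verbatim in each trivialization by Step 1. The result is
\[
(-h^2\Delta^\rho-z)R(z)=I+E_N(z),\qquad \|E_N(z)\|\le C_N h^N|\operatorname{Im}z|^{-M_N}.
\]
The constant $C_N$ is uniform in $\rho$ because the operator norms of the quantized scalar symbols are uniformly bounded (Proposition \ref{uniformboundedtheorem}). Applying the resolvent gives $(-h^2\Delta^\rho-z)^{-1}=R(z)-(-h^2\Delta^\rho-z)^{-1}E_N(z)$. The resolvent has norm at most $|\operatorname{Im}z|^{-1}$, so after integration against $\bar\partial\tilde f$ the remainder is $O(h^\infty)$ as an operator $L^2\to L^2$. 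Since $f$ has compact support, the symbols may be cut off to a compact set in $\xi$, so the operators lie in the $\mathrm{comp}$ class.

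\emph{Step 3: the main term.} Integrating the symbols in $z$ yields, by the Cauchy–Pompeiu formula, $\int\bar\partial\tilde f\,(|\xi|^2-z)^{-1}=-\pi f(|\xi|^2)$ at leading order. The higher terms $r_{j,k}$ are sums of $(|\xi|^2-z)^{-\ell}$ times polynomials in $\xi$, so they integrate to derivatives of $f$ evaluated at $|\xi|^2$. Consequently all symbols are supported in $\{|\xi|_g^2\in\operatorname{supp}f\}$. This gives $f(-h^2\Delta^\rho)\in\Psi^{\mathrm{comp},sc}_h$ with $\sigma_h^\rho=f(|\xi|_g^2)$, using Definition \ref{defofprincipalsymbol}, and the wavefront set inclusion follows.

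\emph{Main obstacle.} The delicate step is showing that the $O(h^\infty)$ remainder belongs to the residual class of $\Psi^{\mathrm{comp},sc}_h(M;F^\rho)$. Residual operators there should have smoothing kernels, not just small $L^2$ norm. To get this, I will bootstrap with elliptic regularity: write the remainder as $f(-h^2\Delta^\rho)=(1-h^2\Delta^\rho)^{-k}g(-h^2\Delta^\rho)(1-h^2\Delta^\rho)^{-k}$ with $g(\lambda)=(1+\lambda)^{2k}f(\lambda)$. Then apply the same Helffer–Sjöstrand argument to $g$, and use Sobolev bounds that are uniform in $\rho$, since these again reduce to scalar estimates in the flat trivializations. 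The remaining work is to check that the definition of the residual class adopted in Section \ref{subsec:quansymb} is matched by this $H^{-k}\to H^{k}$ control.
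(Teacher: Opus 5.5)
Your argument is correct in substance, but it takes a genuinely different route from the paper. The paper does not compute the full symbol. It derives the statement from the abstract Theorem~\ref{mainthmofquantized}, whose proof has four steps:
(i) it proves uniform resolvent bounds $\|(P-z)^{-1}\|_{H^n_h\to H^{n+k}_h}\lesssim|\mathrm{Im}\,z|^{-2^n}\langle z\rangle^{2^n}$ by induction, commuting with $\Oph^\rho(\langle\xi\rangle)$;
(ii) it handles $\psi f(P)\psi'$ with an elliptic parametrix inside the Helffer--Sj\"ostrand integral;
(iii) it obtains \eqref{pseudooflocal} from Beals's theorem, bounding $\mathrm{ad}_{L_1}\cdots\mathrm{ad}_{L_{N+N'}}\chi(P-z)^{-1}\chi$ by $h^{N+N'}|\mathrm{Im}\,z|^{-K}\langle z\rangle^{K}$ and integrating in $z$;
(iv) it proves the wavefront inclusion separately, using a parametrix holomorphic in $z$.

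You instead use that $-h^2\Delta^\rho=(-h^2\Delta_g)\otimes I_{\mathcal H}$ in parallel frames, build the Dimassi--Sj\"ostrand parametrix $r_{j,k}(z)=\sum_\ell q_{k,\ell}(p_0-z)^{-\ell}$, and integrate term by term. This gives an explicit full symbol, with order-$k$ term $\sum_\ell c_\ell f^{(\ell-1)}(|\xi|_g^2)q_{k,\ell}$. The principal symbol, compact microlocalization and wavefront inclusion then follow immediately, and you never need Beals's theorem. The paper's route instead covers any self-adjoint $P$ with $P+i$ elliptic without symbolic bookkeeping.

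Your bootstrap through $(1+P)^{-k}g(P)(1+P)^{-k}$ plays the role of step (i). It works, but it needs $(1+P)^{-1}\in\Psi^{-2,sc}_h$ with uniform bounds, plus a uniqueness-of-expansion step to match the conjugated main term with yours. A simpler option is to take a left parametrix $R'(z)$ and use $(P-z)^{-1}=R(z)-R'(z)E_N(z)+E'_N(z)(P-z)^{-1}E_N(z)$. Then only $\|(P-z)^{-1}\|_{L^2\to L^2}\le|\mathrm{Im}\,z|^{-1}$ is used, and the last term gains powers of $h$ and derivatives on both sides.

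On the point you left open: the paper's residual class (Definition~\ref{residualclass}) asks for more than $\|R\|_{H^{-N}\to H^N}\le C_Nh^N$ uniformly in $(\mathcal H,\rho)$. The family must also be locally fiberwise diagonal, i.e.\ of the form \eqref{localdiagonal} with scalar pieces independent of $(\mathcal H,\rho)$, and no Sobolev estimate produces that. The paper supplies this structure for $f(P)$ by citing Proposition~\ref{inverseofelliptic}, so that $(P-z)^{-1}\in\Psi^{-k,sc}_h$, and passing it through the Helffer--Sj\"ostrand integral. To match the paper literally, you would need the same input for $\int\bar\partial\widetilde f(z)(P-z)^{-1}E_N(z)\,L(dz)$.

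Be aware, however, that this structural requirement cannot hold exactly for such global operators. Take $\mathcal H=\C$, $\rho$ a unitary character, and cutoff normal charts with the same $(U_\alpha,\varphi_\alpha,\chi_\alpha)$ for all $\rho$ and $\sum_\alpha\chi_\alpha^2=1$. Suppose each $(\varphi_\alpha^{-1})^*\Phi_\alpha\chi_\alpha f(-h^2\Delta^\rho)\chi_\alpha\Phi_\alpha^{-1}\varphi_\alpha^*$ were a $\rho$-independent scalar operator. Then $\mathrm{tr}\,f(-h^2\Delta^\rho)=\sum_j f(h^2\lambda_j(\rho))$ would be independent of $\rho$ for every $f\in C_0^\infty(\R)$, so all flat line bundles would be isospectral. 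That is false, since $0$ is an eigenvalue of $-\Delta^\rho$ only for the trivial character. The $\rho$-dependence sits in $\mathcal{O}(h^\infty)$ pieces that are not of tensor form, coming from holonomy along paths that leave the chart.

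The consistent fix is to define the residual class by the uniform Sobolev bounds alone, which is all the later $L^2$ arguments use. With that definition, your Sobolev upgrade is exactly what is needed and the proof closes.
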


Following the proof of the semiclassical control estimate in \cite[Theorem 2]{dyatlov2018semiclassical}, we extend the Fourier integral operators to the setting of flat Hilbert bundles and introduce an anisotropic quantization associated with a Lagrangian foliation $L$ on $T^*M$. More precisely, for any anisotropic symbol $a \in S^{\mathrm{comp}}_{L,\mu}$, we define
\begin{equation*}
    \Oph^{\rho,L}(a): L^2(M;F^{\rho})\mapsto L^2(M;F^{\rho});
\end{equation*}
see Sections \ref{subsec:fio} and \ref{subsec: aniso}. Notably, the operator-norms of $\Oph^{\rho,L}(a)$ are also uniformly bounded in $(\mathcal{H},\rho)\in \mathcal{C}$ and $0<h\leq 1$. Moreover, we show that the long-time propagation of symbol $a\in C_{0}^{\infty}(T^*M)$ belongs to the anisotropic symbol class $S^{\mathrm{comp}}_{L,\mu}$, which yields a uniform long-time Egorov's Theorem \ref{longtimeegorovtheorem}.

Notice that the key feature of the quantization procedures introduced in Section \ref{sec:quantization on flat bundles} is the uniformity of the constants appearing in the relevant inequalities with respect to $(\mathcal{H},\rho)\in \mathcal{C}$. As a consequence, we extend the semiclassical control estimates in \cite{dyatlov2018semiclassical} to uniform semiclassical control estimates on flat Hilbert bundles. The following theorem is a direct corollary of Theorem \ref{generalizedmicrolocalcontrolonhilbertbundle}.

\begin{theorem}\label{microlocalcontrolonhilbertbundle}
Let $M$ be a compact hyperbolic surface and $a \in C_{0}^{\infty}(T^* M)$ such that $ a |_{S^* M} \not \equiv 0$. There exist constants $C=C(M,a)>0$ and $h_{0}  = h_0(M,a)>0$ such that for any $(\mathcal{H},\rho)\in \mathcal{C}$, any $0<h<h_{0}$ and any $u \in H^2(M;F^{\rho})$,
\begin{equation}\label{microlocalcontrolonhilbertbundleq}
\|u\|_{L^2(M;F^{\rho})} \leq C\left( \left\|\Oph^{\rho}(a) u\right\|_{L^2(M;F^{\rho})}+\frac{|\log h|}{h}\left\|\left(-h^2 \Delta^{\rho}-I\right) u\right\|_{L^2(M;F^{\rho})} \right).
\end{equation}
\end{theorem}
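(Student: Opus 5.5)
The plan is to follow the Dyatlov--Jin argument \cite[Theorem 2]{dyatlov2018semiclassical} line by line on $L^2(M;F^\rho)$. The point is that every constant enters only through the uniform bounds of the bundle calculus (Proposition \ref{uniformboundedtheorem}, Proposition \ref{functionalcalculus}, the anisotropic calculus, and the long-time Egorov Theorem \ref{longtimeegorovtheorem}), together with a fractal uncertainty principle that is also uniform in $(\mathcal{H},\rho)$. As the paper indicates, Theorem \ref{microlocalcontrolonhilbertbundle} should then follow as a direct corollary of the more general Theorem \ref{generalizedmicrolocalcontrolonhilbertbundle}. So the work lies in that general theorem, and I sketch its proof.

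\emph{Reduction.} First I reduce to the case where $u$ is microlocalized near $S^*M$. Choose $\chi\in C_0^\infty(\R)$ with $\chi=1$ near $1$. By Proposition \ref{functionalcalculus} and the elliptic estimate, $(1-\chi(-h^2\Delta^\rho))u$ is bounded by $C\|(-h^2\Delta^\rho-I)u\|$. For this I use spectral calculus, since $-h^2\Delta^\rho$ is self-adjoint and $|1-\chi(\lambda)|\le C|\lambda-1|$, which gives a bound independent of $\rho$. Next, pick an energy cutoff $\psi_P$ and set $P=\psi_P(-h^2\Delta^\rho)$. The propagator $U(t)=e^{-itP/h}$ is then unitary, and Duhamel's formula gives
\[
\|U(t)u-e^{-it}u\|\le C|t|h^{-1}\|(-h^2\Delta^\rho-I)u\|.
\]
Applied up to times $t\sim\log(1/h)$, this is exactly where the factor $|\log h|/h$ comes from.

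\emph{Partition and propagation.} As in Dyatlov--Jin, I split a neighbourhood of $S^*M$ into $a_1$, where $a\ne0$, and $a_2$, which is supported away from the control region. I form words $A_{\mathbf w}=A_{w_{N-1}}(N-1)\cdots A_{w_0}(0)$, with $A_w(t)=U(-t)\Oph^\rho(a_w)U(t)$, over the time $N\approx \rho_0\log(1/h)$. I then group the words into a "controlled" part $A_{\mathcal X}$ (a positive density of the letters are $1$) and an "uncontrolled" part $A_{\mathcal Y}$. The controlled part is estimated by the standard counting argument: the uniform Egorov theorem and uniform operator norms give
\[
\|A_{\mathcal X}u\|\le C\|\Oph^\rho(a)u\|+C\tfrac{|\log h|}{h}\|(-h^2\Delta^\rho-I)u\|,
\]
with constants independent of $\rho$. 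This part is routine once the long-time Egorov theorem is in hand.

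\emph{Main obstacle.} The hard step is the decay $\|A_{\mathcal Y}\|_{L^2\to L^2}\le Ch^{\beta}$ uniformly in $\rho$. Following Dyatlov--Jin, I would write $A_{\mathcal Y}$ as a product $A_-A_+$ of operators quantized in the anisotropic classes $S^{\mathrm{comp}}_{L_s,\mu}$ and $S^{\mathrm{comp}}_{L_u,\mu}$, whose symbols are supported in $h^{\rho_0}$-neighbourhoods of porous sets along the stable and unstable foliations. I then need $\|\Oph^{\rho,L_s}(b_-)\Oph^{\rho,L_u}(b_+)\|\le Ch^\beta$. To get this, I would localize with a partition of unity into small balls in $M$ over which $F^\rho$ is trivialized by parallel transport. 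In these frames the kernels are scalar kernels tensored with $\mathrm{Id}_{\mathcal H}$, up to $\mathcal O(h^\infty)$ errors controlled uniformly in $\rho$. The flatness of the connection is what makes the transition maps locally constant unitaries. The local estimate therefore reduces to the scalar fractal uncertainty principle of Bourgain--Dyatlov applied to $\mathcal H$-valued functions. That principle extends verbatim to Hilbert-space-valued functions, because the relevant operator is $\mathcal{B}_\chi\otimes\mathrm{Id}_{\mathcal H}$ and $\|T\otimes \mathrm{Id}\|=\|T\|$. Summing the local pieces with almost-orthogonality then gives the bound uniformly in $(\mathcal H,\rho)\in\mathcal C$. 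Taking $h_0$ small enough to absorb $Ch^\beta\|u\|$ completes the proof.
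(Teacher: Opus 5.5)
Your overall architecture matches the paper: you run Dyatlov--Jin on $L^2(M;F^\rho)$ with the uniform calculus, and you reduce the fractal uncertainty step to $B\otimes\phi$ with $\|B\otimes\phi\|=\|B\|$. However, your treatment of the uncontrolled words has a genuine gap.

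You propose to write the \emph{entire} uncontrolled sum (your $A_{\mathcal Y}$, which is the paper's $A_{\mathcal X}$) as one product $A_-A_+$ with porous symbol supports, and to get $\|A_{\mathcal Y}\|\le Ch^\beta$ from a single application of the fractal uncertainty principle. This does not work. Porosity in Dyatlov--Jin is a property of an \emph{individual} word $\mathbf w$: $a_{\mathbf w}$ vanishes on $\varphi_{-j}(\mathcal U_{w_j})$ for \emph{every} $j$, because both $a_1$ and $a_2$ avoid fixed conic open sets $\mathcal U_1,\mathcal U_2$. This gives a hole at every scale $e^{-j}$. The union of supports over all low-density words only says that visits to $\mathcal U_2$ are rare. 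Near a point whose orbit has not yet exhausted its budget of such visits, there is no forced hole at the corresponding scale. So the aggregated symbol is not porous on all scales from $h^{\mu}$ to $1$, and the fractal uncertainty principle does not apply to it. Applying it word by word and summing over all $2^{2N_1}\approx h^{-2\mu\log 2}$ words without further information gives nothing either.

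The missing idea is the balance between entropy and decay. The paper proceeds in four steps:
\begin{enumerate}
\item It proves $\sup_{\mathbf w\in\mathcal W(2N_1)}\|A_{\mathbf w}\|\le Ch^\beta$ for each single word (Proposition \ref{uncontrolwordpart}), with $\beta,\mu$ depending only on $M,\mathcal U_1,\mathcal U_2$.
\item It defines the uncontrolled set blockwise as $\mathcal X=\mathcal Q^8$, where $\mathcal Q=\{F<\alpha\}\subset\mathcal W(N_0)$.
\item It uses the combinatorial bound $\#\mathcal X\le C(\alpha)h^{-4\sqrt\alpha}$ from \eqref{numberofuncontrolwords}, giving $\|A_{\mathcal X}\|\le Ch^{\beta-4\sqrt\alpha}$.
\item It chooses $\alpha=\beta^2/64$ only \emph{after} $\beta$ is fixed, see \eqref{choicealpha}.
\end{enumerate}

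Relatedly, counting is used on the uncontrolled side, not the controlled one. The controlled words are handled by the density function $F$ on blocks of length $N_0\approx\frac{\mu}{4}|\log h|$, together with the propagation estimate and the almost monotonicity \eqref{monotonicity}. The latter rests on the uniform anisotropic sharp G\aa rding inequality (Proposition \ref{sharpgardingforanisoquant}). Combined with $\alpha\mathbf 1_{\mathcal Z}\le F$, this produces the constant $C/\alpha$ and an $h^{1/8}\|u\|$ error.

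The block structure is not cosmetic. Lemma \ref{pseudoofweighted} needs $\#\mathcal W(N_0)=\mathcal O(h^{-1/4})$, so a single global density condition on words of length $2N_1$, as in your sketch, is not covered by it.
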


A direct corollary of Theorem \ref{microlocalcontrolonhilbertbundle} is the \emph{uniform} lower bound of eigensections. For any $(\mathcal{H},\rho)\in \mathcal{C}$, let $u_{\lambda,\rho}$ be a normalized eigensection of $-\Delta^{\rho}: L^2(M;F^\rho)\to L^2(M;F^{\rho})$ with respect to eigenvalue $\lambda$, \emph{i.e.},
\begin{equation}\label{formeigensection}
    \Delta^\rho u_{\lambda,\rho}  +\lambda  u_{\lambda,\rho} =0, \qquad \|u_{\lambda,\rho}\|_{L^2(M;F^{\rho})}=1.
\end{equation}

\begin{corollary}\label{controlofeigensection}
Let $M$ be a compact hyperbolic surface and $\Omega \subset M$ be a nonempty open subset. There exist constants $c(M,\Omega)>0$ and $\lambda_{0}(M,\Omega)>0$ such that for any $(\mathcal{H},\rho)\in \mathcal{C}$, any $\lambda >\lambda_{0}(M,\Omega)$ and any eigensection $u_{\lambda,\rho} $ satisfying \eqref{formeigensection},
\begin{equation*}
\| u_{\lambda,\rho} \|_{L^2(\Omega; F^{\rho})}\geq c(M,\Omega) .
\end{equation*}
\end{corollary}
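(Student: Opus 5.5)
We would derive Corollary \ref{controlofeigensection} directly from Theorem \ref{microlocalcontrolonhilbertbundle}, choosing a symbol localized over $\Omega$ and a semiclassical parameter adapted to the eigenvalue. Fix a nonzero cutoff $\chi\in C_0^\infty(\Omega)$ with $0\le\chi\le 1$, and a function $\psi\in C_0^\infty((1/2,2))$ with $\psi(1)=1$. Put
\[
a(x,\xi)=\chi(x)\psi(|\xi|_g^2).
\]
Then $a\in C_0^\infty(T^*M)$ and $a|_{S^*M}=\chi\not\equiv 0$. Theorem \ref{microlocalcontrolonhilbertbundle} therefore supplies constants $C(M,a)$ and $h_0(M,a)$, which depend only on $(M,\Omega)$ once $\chi,\psi$ are fixed.

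Given an eigensection $u=u_{\lambda,\rho}$, set $h=\lambda^{-1/2}$. The eigen-equation gives $(-h^2\Delta^\rho-I)u=0$ exactly. By elliptic regularity for the twisted Laplacian, $u\in H^2(M;F^\rho)$. Choose $\lambda_0=h_0^{-2}$. For $\lambda>\lambda_0$, the estimate \eqref{microlocalcontrolonhilbertbundleq} reduces to
\[
1=\|u\|_{L^2(M;F^\rho)}\le C\,\|\Oph^\rho(a)u\|_{L^2(M;F^\rho)} .
\]

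Next we bound $\|\Oph^\rho(a)u\|$ by the $L^2(\Omega)$ norm of $u$. Pick $\tilde\chi\in C_0^\infty(\Omega)$ with $\tilde\chi=1$ near $\mathrm{supp}\,\chi$. The symbol calculus of Section \ref{subsec:quansymb} gives
\[
\Oph^\rho(a)=\Oph^\rho(a)\tilde\chi+R,
\]
where the remainder $R$ has operator norm $O(h^\infty)$ uniformly in $(\mathcal H,\rho)$. This holds because the quantization is built from local charts with cutoffs and the symbol $a(1-\tilde\chi)$ vanishes identically. If the quantization is defined so that the kernel is localized near the diagonal, one even has exactly $\Oph^\rho(a)(1-\tilde\chi)=0$ for a suitable choice of $\tilde\chi$.

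Proposition \ref{uniformboundedtheorem} gives $\|\Oph^\rho(a)\|\le C'$ uniformly in $(\mathcal H,\rho)$ and $h$. Hence
\[
1\le C\bigl(C'\|u\|_{L^2(\Omega;F^\rho)}+O(h^\infty)\bigr).
\]
Enlarging $\lambda_0$ so that the remainder term is at most $1/2$, we obtain
\[
\|u\|_{L^2(\Omega;F^\rho)}\ge \frac{1}{2CC'}=:c(M,\Omega).
\]

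An alternative avoids the locality step altogether: take $a=\chi(x)\psi(|\xi|^2)$, or simply note that the theorem may be applied with symbol $\chi(x)$ times a compact cutoff. One can also pass through Proposition \ref{functionalcalculus}: write $\psi(-h^2\Delta^\rho)u=u$ and compare $\Oph^\rho(a)$ with $\chi\,\psi(-h^2\Delta^\rho)$, which differ by $h\Psi^{\mathrm{comp},sc}_h$. This yields $\|\Oph^\rho(a)u\|\le\|\chi u\|+O(h)$, again uniformly, and the $O(h)$ term is absorbed for $h$ small.

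The main obstacle is ensuring that every error term is controlled uniformly in the bundle $(\mathcal H,\rho)$. Concretely, the remainders in the composition and locality statements must have norms bounded independently of $\rho$. We would get this from the uniform bounds in Proposition \ref{uniformboundedtheorem} applied to the remainder class $h\Psi^{\mathrm{comp},sc}_h$, since those remainders are again quantizations of scalar symbols.
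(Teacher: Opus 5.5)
Your argument is correct and is essentially the paper's route. The paper first deduces Theorem \ref{controlonflathilbertbundle} from Theorem \ref{microlocalcontrolonhilbertbundle} with the symbol $a(x)\bigl(1-\psi_0(|\xi|_g^2)\bigr)$: it uses that $\Oph^\rho$ of a function of $x$ alone is exact multiplication, absorbs the off-shell piece $a\,\psi_0(|\xi|_g^2)$ by an elliptic estimate, and then the corollary follows by taking $h=\lambda^{-1/2}$ and $(-h^2\Delta^\rho-I)u=0$.

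Your pseudolocality step and your functional-calculus variant are equally valid substitutes for that conversion, uniform in $(\mathcal{H},\rho)$. The pseudolocality step is made rigorous by writing $\Oph^\rho(a)=\hat\chi\,\Oph^\rho(a)$ exactly, where $\hat\chi\in C_0^\infty(\Omega)$ equals $1$ near $\mathrm{supp}\,\chi$ and $\tilde\chi=1$ near $\mathrm{supp}\,\hat\chi$, and then invoking \eqref{disresi}. Only your aside about exact vanishing of $\Oph^\rho(a)(1-\tilde\chi)$ fails for the paper's quantization, and the argument does not need it.
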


\begin{remark}
A particular case is when $\mathcal{H}$ is finite-dimensional. In this case, the spectrum of the Laplacian $\Delta^{\rho}: L^2(M;F^\rho)\to L^2(M;F^{\rho})$ is discrete and consists of only  eigenvalues. Corollary \ref{controlofeigensection} together with the unique continuation principle yield: for any $m \in \N$ and any $(\mathcal{H},\rho) \in \mathcal{C}_m$, any eigensection $u_\rho$ of $\Delta^\rho$ satisfies
\begin{equation*}
    \| u_\rho \|_{L^2(\Omega; F^{\rho})}\geq c(M,\Omega,m) \|u_\rho \|_{L^2(M;F^{\rho})}.
\end{equation*}
\end{remark}

We now apply Theorem \ref{microlocalcontrolonhilbertbundle} to the flat Hilbert bundle $F^{\rho_\pi}$ associated with the Riemannian covering map $\pi:X\to M$. Combining this result with the non-commutative Bloch transform $\mathcal{B}_{\mathrm{NC}}$, we prove Theorem \ref{highfrequencycontrol}, which gives a uniform semiclassical control estimate on any Riemannian cover $X$ of $M$.

%\begin{theorem}\label{highfrequencycontrol} Let $M$ be a compact hyperbolic surface and $\Omega \subset M$ be a nonempty open subset. There exist constants $C=C(M,\Omega)>0$ and $h_0=h_0(M,\Omega)>0$ such that for any $0<h< h_0$ and any $u \in H^2(X)$, \begin{equation*} \|u\|_{L^2(X)} \leq C\left(\|u\|_{L^2\left(\pi^{-1}(\Omega)\right)}+\frac{|\log h|}{h}\left\|\left(-h^2 \Delta_g-I\right) u\right\|_{L^2(X)}\right) . \end{equation*} \end{theorem} 

Proofs of Corollary \ref{controlofeigensection} and other applications of Theorem \ref{microlocalcontrolonhilbertbundle} in quantum chaos are given in Section~\ref{subsec:application}.

\subsubsection{Observability inequality}

At this point, we have obtained the semiclassical control estimate for any Riemannian covering map $\pi:X\rightarrow M$. To obtain the observability inequality, the second (final) step is to remove the remaining low-frequency term through a compactness argument. 

However, when $X$ is non-compact, the compactness argument in the usual proof of observability fails because $\Delta_g$ is not a Fredholm operator on $L^2(X)$. Additionally, $\Delta^{\rho}$ is not a Fredholm operator on $L^2(M;F^{\rho})$ for any infinite-dimensional unitary representation $\rho$.

To address the difficulties arising from non-compactness, we impose Assumption \ref{normal-and-type1-assumption}.

\begin{remark}\label{reasonoftype1}
We require Assumption~\ref{normal-and-type1-assumption} for three main reasons:
\begin{itemize}
    \item[(a)] When $\pi$ is normal, the set of cosets $\Gamma$ forms a group. This allows us to define the Fourier transform on $\Gamma$ and, consequently, the generalized Bloch transform.
    \item[(b)] When $\Gamma$ is a type I group, the Fourier transform $\mathcal{F}: \ell^2(\Gamma)\to L^2(\widehat{\Gamma},d\mu)$ is an isometry, where $d\mu$ is the Plancherel measure on the dual space $\widehat{\Gamma}$ (see \eqref{uniF}).
    \item[(c)] For a type I group $\Gamma$, the dimensions of its irreducible unitary representations are uniformly bounded, \emph{i.e.}, $d_\Gamma<\infty$ (see \cite[Theorem 4]{kocabova2008generalized}). 
\end{itemize}
\end{remark}

It follows from (a) and (b) in Remark \ref{reasonoftype1} that the generalized Bloch transform 
\begin{equation}\label{introBiso}
    \mathcal{B}: L^2(X) \to \int_{\widehat{\Gamma}}^{\oplus}L^2(M;\mathrm{End}(F^{\rho}))\,d\mu(\rho)
\end{equation}
is an isometry. This transform can be viewed as the composition of the fiber-wise Fourier transform and the non-commutative Bloch transform (see Definition \ref{defbloch1}). It decomposes functions on $X$ into a family of sections of flat Hilbert bundles over $M$ associated with the irreducible unitary representations of $\Gamma$. Hence, by part (c) of Remark \ref{reasonoftype1}, we may apply the standard compactness argument to prove observability.

We refer to Section \ref{sec:mapBlochtypeI} for the definition and properties of type I groups. Note that every Abelian group is type I. We also give a non-commutative example of type I groups in Section \ref{sec:typeIcover}.

Assuming Assumption~\ref{normal-and-type1-assumption}, our strategy for proving the observability inequality is as follows. Applying the generalized Bloch transform and using the isometry property, we reduce the observability for the observable set $S$ to the uniform observability for the observable set $\Omega$ on a family of finite-dimensional flat Hilbert bundles. Then, applying Theorem \ref{microlocalcontrolonhilbertbundle} and the usual compactness argument to remove the low-frequency error term.

%We now assume that $\Gamma$ is a countable discrete group acting freely and properly discontinuously on $X$ by isometries, making the metric $\Gamma$-invariant. Then the quotient $M:=X / \Gamma$ is a connected Riemannian manifold endowed with the induced metric, still denoted by $g$. Equivalently, the natural projection $\pi: X \rightarrow M$ is a Riemannian normal covering map with the covering transformation group $\Gamma$. We assume that $\Omega \subset M$ is a non-empty open set and $S = \pi^{-1}(\Omega)$. In other words, $S$ is a $\Gamma$-periodic set with the fundamental domain $\Omega$. We assume that $\Gamma$ is the type {\rm I} group. 

\begin{theorem}\label{thm-uniform-obs}
 Let $T>0$, $M$ be a compact hyperbolic surface and $\Omega \subset M$ be a nonempty open subset. For any $(\mathcal{H},\rho) \in \mathcal{C}_m$, there exists a constant $K=K(M, \Omega, T, m)>0$ such that for any $u \in L^2(M;F^{\rho})$, 
\begin{equation}\label{eqn-obs}
\|u\|_{L^2(M;F^{\rho})}^2 \leq K\int_{0}^{T}\|\mathrm{e}^{it \Delta^{\rho}}u\|^2_{L^2(\Omega;F^{\rho})} \, dt. 
\end{equation}
\end{theorem}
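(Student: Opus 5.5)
Since $\dim\mathcal{H}\le m<\infty$, the twisted Laplacian $-\Delta^{\rho}$ on $L^2(M;F^\rho)$ is elliptic with discrete spectrum on a compact manifold. This lets us run the standard ``high-frequency estimate plus compactness'' scheme, with care that $K$ depends only on $m$ and not on $\rho$.

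\textbf{Step 1: high frequencies.} We convert Theorem \ref{microlocalcontrolonhilbertbundle} into an observability estimate with a low-frequency error. Choose $a\in C_0^\infty(T^*M)$ supported in $T^*\Omega$ with $a|_{S^*M}\not\equiv0$. Using Proposition \ref{functionalcalculus} and the uniform bounds for $\Oph^\rho(a)$, the estimate of Theorem \ref{microlocalcontrolonhilbertbundle} gives the resolvent-type estimate
\begin{equation*}
\|u\|\le C\left(\|u\|_{L^2(\Omega;F^\rho)}+\frac{|\log h|}{h}\|(-h^2\Delta^\rho-1)u\|\right),
\end{equation*}
uniformly in $(\mathcal{H},\rho)\in\mathcal{C}$. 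Absorbing the error from $\Oph^\rho(a)$ only costs $O(h)$ remainders, which are handled by taking $h_0$ small. The key point is that the loss $|\log h|/h$ is $o(h^{-2})$, i.e.\ $o(\lambda)$ in spectral units. The standard resolvent-to-observability argument (Miller, Burq--Zworski: decompose $u$ dyadically in frequency, or apply the estimate to $\chi(t)e^{it\Delta^\rho}u$ in the time-Fourier variable) then gives, for every $T>0$, constants $C_T,\Lambda_T$ independent of $\rho$ with
\begin{equation*}
\|u\|^2\le C_T\int_0^T\|e^{it\Delta^\rho}u\|^2_{L^2(\Omega)}dt+C_T\|\mathbf 1_{[0,\Lambda_T]}(-\Delta^\rho)u\|^2 .
\end{equation*}
This step is uniform because every constant was uniform in $\rho$.

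\textbf{Step 2: removing the low frequencies.} Fix a single $\rho$. The range of $\mathbf 1_{[0,\Lambda_T]}(-\Delta^\rho)$ is finite-dimensional. It consists of eigensections, and unique continuation for the elliptic system $\Delta^\rho$ (locally $\Delta$ acting on $\C^{\dim\mathcal H}$-valued functions) rules out invisible solutions. Hence the usual Bardos--Lebeau--Rauch argument, via a unique continuation for $e^{it\Delta^\rho}$ on finite sums of eigenmodes, removes the error term and yields \eqref{eqn-obs} with a constant depending on $\rho$.

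\textbf{Step 3: uniformity over $\mathcal{C}_m$ (main obstacle).} We must make the constant depend only on $m$. I would argue by contradiction with compactness of the representation variety. Up to unitary equivalence, which preserves the inequality, every $\rho$ is a homomorphism $\pi_1(M)\to\mathrm U(k)$ with $k\le m$. Since $\pi_1(M)$ is finitely generated, the space $\mathrm{Hom}(\pi_1(M),\mathrm U(k))$ is a compact subset of $\mathrm U(k)^{2g}$.

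Suppose $K$ were not uniform. Then there are $\rho_n\to\rho_\infty$ and $u_n$ with $\|u_n\|=1$ and $\int_0^T\|e^{it\Delta^{\rho_n}}u_n\|^2_{L^2(\Omega)}\to0$. By Step 1, with uniform constants, the low-frequency parts of the $u_n$ carry mass bounded below, and they live in spectral windows $[0,\Lambda_T]$. To compare different $n$, trivialize all bundles over a fixed fundamental domain, with twisted boundary identifications. Eigenvalues and eigensections in a fixed window depend continuously on $\rho$, for instance by viewing $\Delta^{\rho}$ as $\Delta^{\rho_\infty}$ plus a small perturbation after a gauge change supported near cut curves. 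A subsequence of the low-frequency parts then converges to a nonzero $\rho_\infty$-solution that vanishes on $(0,T)\times\Omega$, which contradicts Step 2.

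The delicate point is making this gauge/perturbation comparison precise, together with the control of the high-frequency part. As an alternative to the compactness route, I would try to show directly that finite sums of eigenmodes with eigenvalues in $[0,\Lambda_T]$ have a quantitative unique continuation bound depending only on $m$ and $\Lambda_T$, via Carleman estimates, which are local and hence insensitive to $\rho$.
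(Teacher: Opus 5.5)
Your architecture is the paper's. It has four steps: a $\rho$-uniform high-frequency estimate; observability modulo a low-frequency error with $\rho$-independent constants (Propositions~\ref{prp-semi-obs} and~\ref{prp-obs-error}, with $\|u\|_{H^{-4}}$ in place of your spectral projection); triviality of invisible solutions for each fixed $\rho$; and uniformity by contradiction. The last step uses compactness of $\mathrm{Hom}(\pi_1(M),\mathrm{U}(k))$ and a gauge identification of all bundles with a fixed trivial one (the maps $\mathfrak{U}_\rho$ and Proposition~\ref{convergeofresolvent}).

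However, the decisive step of your Step~3 does not follow as written. You pass to a limit $v_\infty$ of the low-frequency parts $\Pi_n u_n$, with $\Pi_n=\mathbf{1}_{[0,\Lambda_T]}(-\Delta^{\rho_n})$, and assert that $e^{it\Delta^{\rho_\infty}}v_\infty$ vanishes on $(0,T)\times\Omega$. Nothing justifies this. The hypothesis only says that the observation of $e^{it\Delta^{\rho_n}}u_n$ tends to zero. The observation of $e^{it\Delta^{\rho_n}}\Pi_n u_n$ differs from it by terms involving $(1-\Pi_n)u_n$, and these need not be small: the observation quadratic form on $L^2((0,T)\times\Omega)$ does not decouple spectral bands, and only $\|\Pi_n u_n\|$ is bounded below. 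In the limit you learn only that $v_\infty+w_\infty$ is invisible, where $w_\infty$ is a weak limit of the high-frequency parts.

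The repair is what the paper does. After gauge-transforming to a fixed space, take a weak limit $\tilde u$ of the \emph{whole} sequence $\tilde u_n$.

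\emph{Invisibility of $\tilde u$.} Norm-resolvent convergence gives strong convergence of the propagators, uniformly for $t\in[0,T]$ (Trotter--Kato). The gauge acts pointwise, so it commutes with $\mathbf{1}_\Omega$. Hence $\mathbf{1}_\Omega e^{it\Delta_{\rho_n}}\tilde u_n\rightharpoonup\mathbf{1}_\Omega e^{it\Delta_{\rho_\infty}}\tilde u$ in $L^2((0,T)\times M)$. The left side tends to $0$ in norm, so $\tilde u$ is invisible.

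\emph{Nonvanishing of $\tilde u$.} This is where the low-frequency mass enters. The sharp projector is discontinuous in $\rho$ when $\Lambda_T$ is an eigenvalue of $-\Delta^{\rho_\infty}$, so replace it by $\chi(-\Delta_\rho)$ with $\chi\in C_0^\infty(\R)$, $0\le\chi\le1$, and $\chi=1$ on $[0,\Lambda_T]$. Then $\chi(-\Delta_{\rho_n})\to\chi(-\Delta_{\rho_\infty})$ in norm, and the limit has finite rank since $\dim\mathcal{H}<\infty$. It follows that $\chi(-\Delta_{\rho_n})\tilde u_n\to\chi(-\Delta_{\rho_\infty})\tilde u$ strongly, the lower mass bound survives, and $\tilde u\neq0$. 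The paper obtains the same from strong convergence in $H^{-4}$ on finite-rank bundles.

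\emph{Conclusion.} You then need $N_T^{\rho_\infty}=\{0\}$ for arbitrary $L^2$ data. This is the full Bardos--Lebeau--Rauch conclusion, which your Step~2 supplies, not merely unique continuation on finite sums of eigenmodes.

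For the same reason, your Carleman alternative would not close the argument by itself. A $\rho$-uniform bound on $\operatorname{Ran}\Pi$ controls $\|\Pi u\|$ by the observation of $e^{it\Delta^\rho}\Pi u$, not of $e^{it\Delta^\rho}u$. The cross terms with $(1-\Pi)u$ are again not small.
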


By taking $\mathcal{H}=\mathrm{End}(\mathbb{C}^{d_\Gamma})$ and $m=(d_\Gamma)^2$ in Theorem \ref{thm-uniform-obs} and utilizing the unitary property of the generalized Bloch transform (Theorem \ref{thmunitB}), we proved the desired observability Theorem \ref{thm-main-obs} with a control constant $C=C(M,\Omega,T,d_\Gamma)>0$, which depends only on $M,\Omega,T$ and the maximum dimension of the irreducible unitary representations of $\Gamma$.

%\begin{theorem}\label{thm-main-obs} Under the conditions of Theorem \ref{thm-uniform-obs}, we can find a constant $C=C(M,\Omega,T,d_\Gamma)>0 $ such that for any $u \in L^2(X)$, \begin{equation*} \|u\|_{L^2(X)}^2 \leq C\int_{0}^{T}\|\mathrm{e}^{it \Delta_g}u\|^2_{L^2(\pi^{-1}(\Omega))} \, dt.  \end{equation*} \end{theorem}

\begin{remark}
Some remarks are in order.
\begin{itemize}
\item[(a)] We may consider the $\Gamma$-periodic Schrödinger operator $H=-\Delta+P_1+P_0$, where $H$ is self-adjoint on $L^2(X)$ and $P_i$ are differential operators of order $i$ on $X$. By Theorem \ref{generalizedmicrolocalcontrolonhilbertbundle}, Theorem \ref{thm-main-obs} remains valid for $H$.
\item[(b)] In the spirit of the work of Dyatlov, Jin, and Nonnenmacher \cite{dyatlov2022control}, we expect that all results above still hold if $M$ is a negatively curved surface. The key point is that one can select uniform constants in the semiclassical calculus for scalar symbols on the flat Hilbert bundle; see Section \ref{sec:quantization on flat bundles}.
\item[(c)] With the restriction of type I groups, some interesting covering transformation groups (\emph{e.g.}, the Fuchsian subgroup) are not covered. There are two main difficulties in extending our results to non type I groups: 
\begin{itemize}
    \item the mapping properties of the generalized Bloch transform for non type I groups are hard to study; we refer to \cite{nagy2024hyperbolic, katsuda2025extension} for some recent progress on Bloch analysis for non type I groups;
    \item when the dual group $\widehat{\Gamma}$ contains infinite-dimensional irreducible representations, it is unclear how to apply the usual compactness argument to remove the low-frequency error term from the semiclassical control estimate.
\end{itemize}
%If $\Gamma$ is a type $\mathrm{I}$ group, then by Thomas’s theorem \cite{thoma1964uber,tonti2019thoma} the observability statement in Theorem~\ref{thm-main-obs} reduces to the  case. The only difference is that the resulting observability constants may differ, reflecting the dependence of the constant in \eqref{eqn-obs}.
\end{itemize}
\end{remark}

By Theorem \ref{highfrequencycontrol}, we obtain a uniform semiclassical control estimate for all Riemannian covering spaces of a compact hyperbolic surface. However, the observability inequality in Theorem \ref{thm-main-obs} holds only for the type \rm{I} group covering. Therefore, it is reasonable to propose the following conjecture:

\begin{conjecture}
Let $X$ be a $\Gamma$-covering of a compact hyperbolic surface $M$, $S \subset X$ be a nonempty, $\Gamma$-periodic, open subset. Then, for any $T>0$, there exists a constant $C=C(X,S,T)>0$ such that the observability inequality \eqref{observability} holds. Furthermore, denoted by $\Omega=\pi(S)$, then the constant $C$ in \eqref{observability} can be chosen such that it depends only on $M,\Omega$ and $T$.
\end{conjecture}

\subsection{Comments on history}\label{review}

In this section, we review observability results for Schrödinger type equations. We begin with the compact setting and then discuss recent developments on non-compact domains. 

The observability problem on compact Riemannian manifolds has been extensively studied over the past decades; we refer to the surveys \cite{laurent2014internal, macia2015high} for comprehensive overviews. As mentioned earlier, Lebeau \cite{lebeau1992controle} showed that the Geometric Control Condition (GCC) is sufficient to guarantee observability of the Schrödinger equation at any time $T>0$. The GCC was originally introduced for wave equations by Rauch and Taylor \cite{rauch1974exponetial}, and by Bardos, Lebeau and Rauch \cite{bardo1992}. However, GCC is in general not a necessary condition for observability of Schrödinger type equations. 

A notable example is provided by the $d$-dimensional rational flat torus $\T^d$. For the Schrödinger equation on $\T^d$, observability from any nonempty open subset of $\T^d$, which does not necessarily satisfy GCC, was established by Haraux and Jaffard \cite{haraux1989series, jaffard1990controle} for $d=2$, and by Komornik \cite{komornik1992exact} for $d\geq 2$. These results were later extended in several directions by: adding a $C^\infty$ potential by Burq and Zworski \cite{burq2012control}, adding a broad class of potentials including continuous ones by Anantharaman and Macià \cite{anatharaman2014semiclassical},  adding a $L^2$ potential by Bourgain, Burq and Zworski \cite{bourgain2013control} for $d=2$, extending to rough observable sets by Burq and Zworski \cite{burq2019roughcontrols} for $d=2$. For those interested in the observability of magnetic Schrödinger equations, we refer to the recent work of Le Balc'h, Niu, and Sun \cite{balch2025geometric}, which established an almost-sharp Magnetic Geometric Control Condition on the two-dimensional flat torus. One may also see \cite{anatharaman2015semiclassical,macia2021observabilityresults,tao2021exactcontrol} for other developments. 

Another important geometric setting is that of compact hyperbolic surfaces. In this case, Jin \cite{jin2018control} proved observability of the Schrödinger equation from any nonempty open subset at any time $T>0$, relying on the deep result of the semiclassical control estimate established by Dyatlov and Jin \cite{dyatlov2018semiclassical}. This was subsequently generalized to Anosov surfaces by Dyatlov, Jin, and Nonnenmacher \cite{dyatlov2022control}. For the disk, Anantharaman, Léautaud and Macià \cite{anantharaman2016delocalization,anantharaman2016wigner} characterized open subsets from which observability holds. 

Recently, observability of Schrödinger equations on the Euclidean space $\R^d$ has attracted considerable interest. In the absence of spatial compactness, the standard techniques break down and new challenges appear. In dimension one, Huang, Wang and Wang \cite{huang2022obs} and Martin, Pravda-Starov \cite{martin2021geometric} independently proved that the free Schrödinger equation is observable in some time $T>0$ from a subset $S\subset \R$ if and only if $S$ is \emph{thick}, meaning that
\begin{equation*}
    \exists \gamma, L>0,\ \text{ s.t. } \forall x\in\R,\  |S\cap(x+[0,L])|\ge \gamma.
\end{equation*}
Here $|E|$ denotes the Lebesgue measure of $E\subset\R$. In \cite{huang2022obs}, the authors further considered Schrödinger equations with confining potential $V=|x|^{2m}, m\in \mathbb{N}^+$, and showed that observability holds from $S$ in some time when $m=1$ (respectively, in any time when $m\ge 2$) if and only if $S$ is \emph{weakly thick}, that is,
\begin{equation*}
    \liminf_{x\to +\infty} \frac{|S\cap [-x,x]|}{|[-x,x]|}>0.
\end{equation*}
A generalization of this condition in higher dimensions, together with its necessity, was also given in \cite{martin2021geometric}. These results rely heavily on harmonic analysis tools. In \cite{prouff2025obs}, Prouff considered a large class of subquadratic confining potentials and derived a sufficient and almost necessary characterization of observable sets from which observability holds. His method relies on the establishment of a uniform Egorov's theorem in semiclassical analysis. In \cite{su2025quantitative}, Su, Sun and Yuan proved quantitative observability for one-dimensional Schrödinger equations with $V\in L^\infty$ from thick sets. The key idea in \cite{su2025quantitative} is to establish a spectral inequality at low frequency and a resolvent estimate at high frequency, and then merge them in a proper way.

There is another route to observability on non-compact domains that exploits symmetry. Given a Riemannian manifold $(X,g)$ and a symmetry group $\Gamma$, one may study observability from $\Gamma$-periodic sets. The central idea is to use the symmetry to reduce the problem to a family of observability estimates on compact spaces. This philosophy originates from physics, where it appears in the study of Schrödinger equations with periodic potentials and is known as Floquet–Bloch theory \cite{floquet1883sur, bloch1929quantenmechanik}. Using Bloch theory on $\R^d$, Täufer \cite{taufer2023controllability} applied Ingham-type inequalities to establish observability for the free Schrödinger equation from any nonempty periodic open set. Although not stated explicitly, this  result can also be deduced from the proof of Proposition 4 in \cite{wunsch2017periodic} with the help of Bloch theory. To our best knowledge, Wunsch \cite{wunsch2017periodic} was the first to apply Bloch theory to obtain polynomial energy decay for wave equations with periodic damping. In dimension two, Le Balc'h and Martin \cite{balch2023obs} extended these results to periodic measurable sets with positive measure, relying on semiclassical defect measures and adaptations of strategies developed in \cite{burq2012control, bourgain2013control, burq2019roughcontrols}. Similar strategy was used by Niu and Zhao \cite{niu2025obs} to prove observability in the semi-periodic setting. These works naturally raise the question of whether such techniques can be extended to other symmetric spaces than $\R^d$.

Kocábová and Šťovíček \cite{kocabova2008generalized} generalized Bloch theory to type I groups. More recently, hyperbolic lattices have attracted growing interest in physics, motivated by advances in engineered photonic structures and circuit-based realizations; see \cite{hyperboliclattices2019, hyperboliccircuit2022}. In parallel, Maciejko and Rayan \cite{maciejko2021hyperbolic,  maciejko2022automorphic} developed a mathematical theory of hyperbolic bands. Based on Magee's work \cite{magee2022random, magee2025random}, Nagy and Rayan \cite{nagy2024hyperbolic} introduced the hyperbolic Bloch transform and proved its injectivity and asymptotic unitary property. However, the problem regarding the $L^2$-boundedness of the hyperbolic Bloch transform is still open; see \cite[Remark 3.8]{nagy2024hyperbolic}. 

Non-abelian Bloch theory naturally leads to the study of twisted Bochner–Laplace operators on high-dimensional unitary flat bundles, including bundles with separable Hilbert fibers. Independently, geometric quantization has motivated the study of spectral theory on unitary flat bundles. Two independent groups, Ma, Ma \cite{mama2024semiclassical} and Cekić, Lefeuvre \cite{cekiclefeuvre2024semiclassical}, have developed a mixture of semiclassical and geometric quantization and proven the equidistribution property of high-frequency eigensections. The uniform constants for any finite-dimensional unitary representation in the semiclassical analysis on flat bundles play an important role in Ma and Ma \cite{mama2024semiclassical}.

\subsection{Notations}

We collect some notations and conventions used throughout this work:
\begin{itemize}
\item We use $C(\theta_1,\cdots,\theta_n)$ to denote a constant depending only on the parameters $\theta_1,\cdots,\theta_n$, whose value may change from line to line.
\item Given a Riemannian manifold $(M,g)$, we denote by $\pi_M:\widetilde{M} \rightarrow M$ the Riemannian universal covering map, and by $\pi_1(M)$ the fundamental group of $M$.
\item Unless otherwise specified, we write $A=\mathcal{O}(f(h))_X$ for some $f:\R\rightarrow \R$, where $X$ is a normed space, if there is a constant $C$ independent of $(\mathcal{H},\rho,h)$ such that $\|A\|_X \leq Cf(h)$.
\item We use the \emph{Einstein summation convention}.
\end{itemize}

The remainder of the paper is organized as follows. In Section \ref{sec:bloch theory}, we develop a generalized Bloch theory. In Section \ref{sec: dynamics of geodesic flow}, we review the geodesic flow and the anisotropic symbol classes on compact hyperbolic surfaces. In Section \ref{sec:quantization on flat bundles}, we introduce the quantization of scalar symbols and Fourier integral operators on flat Hilbert bundles. Section \ref{sec:uniform semiclassical control} is devoted to proving the uniform semiclassical control estimate on flat Hilbert bundles. Finally, we prove the observability inequality on non-compact hyperbolic surfaces with type I symmetry groups in Section \ref{sec: proof of observability}.

\section{A generalized Bloch theory}\label{sec:bloch theory}

In this section, we develop a generalized Bloch theory. While much of the material can be found in the existing literature (see \cite{nagy2024hyperbolic, kocabova2008generalized}), we present an almost self-contained treatment for the reader’s convenience, including all necessary details.

In this section, we always assume that $\mathcal{H}$ is a separable Hilbert space with a countable orthonormal basis $\{e_{i}\}_{i\in I}$ and $(M,g)$ is a compact connected Riemannian manifold.

\subsection{Laplacian on flat Hilbert bundles}\label{subsec: twisted laplacian}

A \emph{flat Hilbert bundle} is a Hilbert bundle with a flat metric-compatible connection. According to the Riemann–Hilbert correspondence (see \cite[Section 1.2-1.4]{Kobayashi87vectorbundle} for the finite-dimensional case), any unitary representation $\rho: \pi_{1}(M) \to \mathrm{U}(\mathcal{H})$ corresponds to a flat Hilbert bundle $F^\rho$ over $M$, which is constructed as follows. Let
\begin{equation}\label{defFrho}
    F^\rho:=\widetilde{M} \times_{\rho} \mathcal{H}
\end{equation}
denote the quotient of $\widetilde{M} \times \mathcal{H}$ by the action of $\pi_1(M)$ given by
\begin{equation}\label{defofflatbundle}
    \gamma: (\widetilde{x},v) \in \widetilde{M}\times \mathcal{H} \mapsto (\gamma \widetilde{x}\,,\,\rho(\gamma )v) \in \widetilde{M}\times \mathcal{H} , \qquad \gamma \in \pi_1(M).
\end{equation}
Here we identify $\pi_1(M)$ as the deck transformation group of $\pi_M:\widetilde{M}\to M$. 

We now check that $F^{\rho}$ defined in \eqref{defFrho} admits a flat Hilbert bundle structure over $M$. The bundle projection is defined as
\begin{equation}
    \pi_\rho:F^\rho \rightarrow M, \qquad [\widetilde{x},v] \mapsto \pi_M(\widetilde{x}).
\end{equation}
For any $x\in M$, choose a neighborhood $U\subset M $ of $x$ and a local lift $\iota_U:M\rightarrow \widetilde{M}$ such that $\pi_M\circ \iota_U = \mathrm{Id}_U$. The local trivialization is defined as
\begin{equation}
    \Phi_U:\pi_{\rho}^{-1} (U) \rightarrow U\times \mathcal{H}  , \qquad [\iota_U(y),v] \mapsto (y,v) .
\end{equation}
It is easy to see that the transition function between two local trivializations is constant (in fact, $\rho(\gamma)$ for some $\gamma\in \pi_1(M)$). The fiberwise inner product $h_x^\rho$ at $x\in M$ is defined as:
\begin{equation}\label{hilbertfiber}
    h_x^\rho \big( [\iota_U(x),v], [\iota_U(x),w] \big) := \langle v,w\rangle_{\mathcal{H}}.
\end{equation}
Note that the inner product \eqref{hilbertfiber} is well-defined because the representation $\rho$ is unitary. Therefore, $F^\rho$ defined in \eqref{defFrho} is a Hilbert bundle. We now describe the corresponding flat connection and twisted Laplacian from both global and local perspectives.

For the theory of differential and integration of Hilbert-space-valued functions on manifolds, we refer to \cite{HillePhillips1974semigroups, Kato1966pertubation}. Let $C^{\infty}(\widetilde{M} ; \mathcal{H})$ be the space of smooth $\mathcal{H}$-valued functions. The space of smooth sections of  $F^{\rho}$, denoted by $C^{\infty}(M; F^{\rho})$, is identified with the space of $\rho$-equivariant $\mathcal{H}$-valued functions:
\begin{equation*}
  C^{\infty}_{\rho}(\widetilde{M} ; \mathcal{H}) := \{f \in C^{\infty}(\widetilde{M} ; \mathcal{H}) : f(\gamma x) = \rho(\gamma)f(x), \ \forall \gamma \in \pi_1(M)\}.  
\end{equation*}
In the same manner, for $k\in \N$, we define the $\mathcal{H}$-valued $k$-form space $\Omega^{k}(\widetilde{M}; \mathcal{H})$, the $\rho$-equivariant $\mathcal{H}$-valued $k$-form space $\Omega^{k}_{\rho}(\widetilde{M}; \mathcal{H})$, and the $F^{\rho}$-valued $k$-form space $\Omega^{k}(M; F^{\rho})$. When $\rho=\rho_{\mathrm{triv}}$ is the trivial representation, the corresponding function and $k$-form spaces $C^{\infty}_{\rho}(\widetilde{M} ; \mathcal{H})$ and $\Omega^{k}_{\rho}(\widetilde{M}; \mathcal{H})$ are denoted by $C^{\infty}_{\Gamma}(\widetilde{M}; \mathcal{H})$ and $\Omega^{k}_{\Gamma}(\widetilde{M}; \mathcal{H})$, respectively. 

%The $\pi_1(M)$-equivalent function and differential form spaces are identified as the function and differential form spaces on $M$.

For any
\begin{equation*}
  u = u^i \otimes e_i \in C^{\infty}(\widetilde{M}) \widehat{\otimes} \mathcal{H} \cong  C^{\infty}(\widetilde{M}; \mathcal{H})   ,
\end{equation*}
where $\widehat{\otimes}$ denotes the completed tensor product, we define the operator $d \otimes \mathrm{Id}_{\mathcal{H}}$ by:
\begin{equation}\label{globalconnection}
    (d \otimes \mathrm{Id}_{\mathcal{H}})u := (du^i) \otimes e_i \in \Omega^1(\widetilde{M}; \mathcal{H}).
\end{equation}
Since $d$ is invariant under the $\pi_{1}(M)$-action, \eqref{globalconnection} induces a map from $C^{\infty}_{\rho}(\widetilde{M} ;\mathcal{H})$ to $\Omega^{1}_{\rho}(\widetilde{M}; \mathcal{H})$. Consequently, we obtain the flat connection $\nabla^{\rho}: C^{\infty}(M;F^{\rho}) \to \Omega^1(M;F^{\rho})$. The twisted Bochner–Laplace operator (short for twisted Laplacian) $\Delta^{\rho}$ is then defined as:
\begin{equation}
\Delta^\rho := \mathrm{tr}(\nabla^\rho\circ \nabla^\rho ): C^{\infty}(M;F^{\rho}) \to C^{\infty}(M;F^{\rho}),
\end{equation}
which coincides with $\Delta_g \otimes \mathrm{Id}_{\mathcal{H}}$ on $C_{\rho}^{\infty}(\widetilde{M}; \mathcal{H})$. 

The above provides a global description of the flat connection $\nabla^{\rho}$ and the twisted Laplacian $\Delta^{\rho}$. In the following, we characterize the flat connection and the twisted Laplacian locally. 

Let $\{e_i(x)\equiv e_i:M\rightarrow \mathcal{H}\}_{i\in I}$ be the orthonormal frame field of $M\times \mathcal{H}$. For any local trivialization $(\Phi_U,U)$ of $F^\rho$, $\{\Phi_U^{-1}(e_i)\}_{i\in I}$ is an orthonormal frame field of $\pi_{\rho}^{-1}(U)$. The flat connection $\nabla^\rho$ is defined so that $\Phi_U^{-1}(e_i)$ is parallel for every $i \in I$, \emph{i.e.}, 
\begin{equation}
    \nabla^\rho \Phi_U^{-1}(e_i) =0, \qquad \forall i\in I.
\end{equation}
Then, $\nabla^\rho$ is compatible with the inner product $h^\rho$: for any vector fields $u = \Phi_U^{-1}(e_i) u^i, \, v = \Phi_U^{-1}(e_i) v^i$ and $w = \Phi_U^{-1}(e_i) w^i$ on $U$, we have
\begin{equation*}
\begin{aligned}
    u (h^\rho (v,w)) &= u^j \partial_j ( \delta_{ik} v^iw^k ) \\
    &=  \delta_{ik} (\partial_j v^i u^j) w^k + \delta_{ik} v^i( \partial_jw^k u^j  ) \\
    &= h^\rho( \Phi_U^{-1}(e_i) \partial_j v^i u^j, \Phi_U^{-1}(e_k)w^k  ) + h^\rho( \Phi_U^{-1}(e_i)v^i, \Phi_U^{-1}(e_k)\partial_j w^ku^j  )  \\
    & = h^{\rho} (\nabla^{\rho}_u v ,w) + h^\rho( v,\nabla^\rho_u w ).
\end{aligned}
\end{equation*}
Therefore, $F^\rho$ is a Hilbert bundle over $M$ with a flat connection $\nabla^\rho$ such that $\nabla^\rho $ is $h^\rho$-compatible.

From the construction above, we have the following proposition:
\begin{proposition}
\label{defoflocaltrivialization}
Let $\pi:F\rightarrow M$ be a flat Hilbert bundle with fiber isomorphic to $\mathcal{H}$. Then, there exists a finite collection $\{(\Phi_\alpha, U_\alpha, \varphi_\alpha)\}_{\alpha \in \Lambda}$ such that:
\begin{itemize}
    \item $\{(U_\alpha, \varphi_\alpha)\}_{\alpha \in \Lambda}$ is an atlas of $M$.
    \item For each $\alpha \in \Lambda$, $\Phi_\alpha : \pi^{-1}(U_\alpha) \rightarrow U_\alpha \times \mathcal{H}$ is a local trivialization of $F$, and $\{ \Phi_\alpha^{-1}(e_i)\}_{i\in I}$ is a local parallel orthonormal frame of $F$.
\end{itemize}
We refer to the triple $(\Phi_\alpha, U_\alpha, \varphi_\alpha)$ as a parallel orthonormal local trivialization chart of $F$.
\end{proposition}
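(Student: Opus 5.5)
The plan is to reduce to the model bundle $F^\rho$ and read the charts off the construction of this subsection. By the Riemann--Hilbert correspondence recalled above, $F$ is isomorphic, as a flat Hilbert bundle, to $F^\rho=\widetilde{M}\times_\rho\mathcal{H}$ for some unitary representation $\rho:\pi_1(M)\to \mathrm{U}(\mathcal{H})$. Fix such an isomorphism $\Psi:F\to F^\rho$, which preserves the fiber inner products and intertwines the connections. It then suffices to build the collection for $F^\rho$ and compose each trivialization with $\Psi$. Composing preserves both orthonormality and parallelism, because $\Psi$ is fiberwise unitary and maps parallel sections to parallel sections.

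\textbf{Step 1: choice of the cover.} For each $x\in M$, pick an open neighborhood $U_x$ that is evenly covered by $\pi_M:\widetilde{M}\to M$ and also lies in the domain of a coordinate chart $\varphi_x$. Shrinking if needed, take $U_x$ to be a geodesically convex ball, so that it is simply connected and evenly covered. Since $M$ is compact, finitely many of these sets, say $U_\alpha$ for $\alpha\in\Lambda$, cover $M$. Together with the restricted maps $\varphi_\alpha$, they form a finite atlas.

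\textbf{Step 2: trivializations and their frames.} On each $U_\alpha$, choose a smooth local lift $\iota_\alpha:U_\alpha\to\widetilde{M}$ with $\pi_M\circ\iota_\alpha=\mathrm{Id}_{U_\alpha}$. This exists because $U_\alpha$ is evenly covered. Define $\Phi_\alpha[\iota_\alpha(y),v]=(y,v)$ as in the text. This map is well defined and bijective onto $U_\alpha\times\mathcal{H}$, since each point of $\pi_\rho^{-1}(U_\alpha)$ has a unique representative whose first coordinate lies in the sheet $\iota_\alpha(U_\alpha)$.

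\emph{Orthonormality.} By \eqref{hilbertfiber}, $h^\rho_y\bigl(\Phi_\alpha^{-1}(e_i),\Phi_\alpha^{-1}(e_j)\bigr)=\langle e_i,e_j\rangle_{\mathcal{H}}=\delta_{ij}$. The definition does not depend on the chosen representative, because $\rho$ is unitary.

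\emph{Parallelism.} Under the identification $C^\infty(M;F^\rho)\cong C^\infty_\rho(\widetilde{M};\mathcal{H})$, the section $\Phi_\alpha^{-1}(e_i)$ corresponds on $\pi_M^{-1}(U_\alpha)$ to the function equal to the constant $\rho(\gamma)e_i$ on the sheet $\gamma\,\iota_\alpha(U_\alpha)$. This function is locally constant, so $(d\otimes\mathrm{Id}_{\mathcal{H}})$ annihilates it by \eqref{globalconnection}. Hence $\nabla^\rho\Phi_\alpha^{-1}(e_i)=0$.

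\textbf{Step 3: consistency checks.} On an overlap $U_\alpha\cap U_\beta$, the lifts differ on each connected component by a deck transformation: $\iota_\beta=\gamma\,\iota_\alpha$ for some constant $\gamma\in\pi_1(M)$. Consequently $\Phi_\beta\circ\Phi_\alpha^{-1}(y,v)=(y,\rho(\gamma)^{-1}v)$, which is constant and unitary. This confirms that the global and local descriptions of $\nabla^\rho$ agree, and that the local trivializations are compatible with the Hilbert bundle structure.

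\textbf{Main obstacle.} The only nontrivial point is the initial reduction: an abstract flat Hilbert bundle $F$ must be isomorphic to some $F^\rho$. The paper takes this from the Riemann--Hilbert correspondence, and I would simply invoke it. If one wanted to avoid it, parallel transport along paths would do the job directly. In a simply connected chart $U_\alpha$, fix an orthonormal basis of the fiber over a base point and transport it along paths. Flatness makes the result independent of the path, and metric compatibility preserves orthonormality. This produces the parallel orthonormal frame without ever mentioning $\rho$. For infinite-dimensional $\mathcal{H}$, this route requires solving the parallel-transport ODE in a Banach space, which is standard via \cite{HillePhillips1974semigroups}.
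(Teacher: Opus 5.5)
Your proposal is correct and follows the paper's own route. The paper reads this proposition directly off the construction of $F^\rho$ (local lifts $\iota_U$, the fiber inner product \eqref{hilbertfiber}, the frames $\Phi_U^{-1}(e_i)$ declared parallel) together with the Riemann--Hilbert correspondence; your explicit checks (compactness for finiteness, the locally constant equivariant lift for parallelism) fill in details the paper leaves implicit, with one harmless slip: with $\iota_\beta=\gamma\,\iota_\alpha$ one has $[\iota_\alpha(y),v]=[\iota_\beta(y),\rho(\gamma)v]$, so $\Phi_\beta\circ\Phi_\alpha^{-1}(y,v)=(y,\rho(\gamma)v)$ rather than $(y,\rho(\gamma)^{-1}v)$, which is still constant and unitary.
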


Using the local parallel orthonormal frame of $F^\rho$, the twisted Laplacian $\Delta^\rho$ can be diagonalized as follows: for any smooth section $u\in C^{\infty}(M;F^\rho)$, we can locally express $u$ as $u=u^j \Phi_\alpha^{-1}(e_j)$, then
\begin{equation}\label{diagofLaplacian1}
\begin{aligned}
    \Delta^\rho u &= \mathrm{tr}\,(\nabla^\rho \circ \nabla^\rho) u = \sum_i \nabla^\rho \circ \nabla^\rho (v_{\alpha,i},v_{\alpha,i}) u  \\
    &=\sum_{i\in I} \left(\nabla^\rho_{v_{\alpha,i}} \circ \nabla^\rho_{v_{\alpha,i}} - \nabla^\rho_{\nabla^{\mathrm{LC}}_{v_{\alpha,i}} v_{\alpha,i}} \right) \big(u^j \Phi_\alpha^{-1}(e_j) \big)   \\
    &=(\Delta_g u^j )\Phi_\alpha^{-1}(e_j),
\end{aligned}
\end{equation}
where $\{v_{\alpha,i}\}_{i\in I}$ is a local orthonormal frame of $TM$ and $\nabla^{\mathrm{LC}}$ denotes the Levi-Civita connection on $TM$.

We review the deformation of representations. To simplify, we only consider the finite-dimensional unitary representation $\rho: \pi_1(M)\to \mathrm{U}(n)$. Since $\pi_1(M)$ is generated by $2g$ generators 
\begin{equation*}
    a_1,\cdots,a_g,b_1,\cdots,b_g
\end{equation*}
under the relation 
\begin{equation*}
    [a_1,b_1]\cdots[a_g,b_g]=1,
\end{equation*}
the representation variety $\mathrm{Hom}(\pi_1(M),\mathrm{U}(n))$ can be identified as a compact algebraic variety of $\mathrm{U}(n)^{2g}$. Thus, the topology of the moduli space of representations $\mathrm{Hom}(\pi_1(M),\mathrm{U}(n))/\mathrm{U}(n)$ is naturally inherited from $\mathrm{U}(n)$.  If $[\rho_i]\to [\rho]$ in $\mathrm{Hom}(\pi_1(M),\mathrm{U}(n))/\mathrm{U}(n)$ as $i \rightarrow \infty$, then we can choose representative elements $\rho_i$ and $\rho$ of $[\rho_i]$ and $[\rho]$, respectively, such that 
\begin{equation*}
    \lim_{i\rightarrow \infty} \rho_i(a_j) = \rho(a_j) \quad \mathrm{and} \quad \lim_{i\rightarrow \infty} \rho_i(b_j) = \rho(b_j) \qquad \mathrm{in}\ \mathrm{U}(n)
\end{equation*}
for any $j=1,\cdots,g$. For more on the moduli space of unitary representations of the surface group, we refer to Atiyah and Bott \cite{atiyah1983yang} and Goldman \cite{goldman1984symplectic}. In the remainder of this subsection, we assume that the representation $\rho$ is finite-dimensional. 

Fix a reference point $x_0 \in \widetilde{M}$. There exists a smooth map $\mathfrak{U}_{\rho}: \widetilde{M} \to \mathrm{U}(\mathcal{H})$, defined by parallel transport on the flat bundle $(F^{\rho}, \nabla^{\rho})$ from $x_0$, such that
\begin{equation}\label{equivalent}
\mathfrak{U}_{\rho}(\gamma x) = \rho(\gamma)\mathfrak{U}_{\rho}(x), \qquad  \forall \gamma \in \pi_1(M), \, x \in \widetilde{M}.
\end{equation}
More precisely, $\mathfrak{U}_{\rho}$ is the local solution to the following initial value problem:
\begin{equation*}
\begin{aligned}
&d\mathfrak{U}_{\rho} + \pi_M^*(A_{\rho})\mathfrak{U}_{\rho} = 0, \\
&\mathfrak{U}_{\rho}(x_0) = \mathrm{Id}_{\mathcal{H}}.
\end{aligned}
\end{equation*}
Here, $A_{\rho}$ denotes the connection 1-form for the flat connection $\nabla^{\rho}$. In the case where $M$ is a compact hyperbolic surface, the construction of $\mathfrak{U}_{\rho}$ is referred to \cite{nagy2024hyperbolic}. 

We define the Laplacian $\Delta_\rho$ on the $\Gamma$-equivalent space $L^2_{\Gamma}(\widetilde{M};\mathcal{H})$:
\begin{equation}\label{diaglaplacian2}
   \Delta_{\rho}f(x):=\mathfrak{U}_{\rho}^{-1}(x)(\Delta_g\otimes \mathrm{Id}_{\mathcal{H}})(\mathfrak{U}_{\rho}f)(x), \qquad  \forall f\in C^{\infty}_{\Gamma}(\widetilde{M};\mathcal{H}), \, x\in \widetilde{M}.
\end{equation} 
Since $\mathfrak{U}_{\rho}$ gives an isometry between $L^2_{\Gamma}(\widetilde{M};\mathcal{H}) $ and $L^2_{\rho}(\widetilde{M};\mathcal{H})$, the twisted Laplacian $\Delta^\rho$ is unitarily equivalent to the Laplacian $\Delta_{\rho}$ defined as \eqref{diaglaplacian2}, which is considered as an operator on $L^2(M;\mathcal{H})$. Moreover, the following continuity holds:

\begin{proposition}\label{convergeofresolvent}
Assume that $\rho_i \to \rho$ in $\mathrm{Hom}(\pi_1(M), \mathrm{U}(\mathcal{H}))$. For any $\varepsilon > 0$, $(z-\Delta_{\rho_i})^{-1}$ converges to $(z-\Delta_{\rho})^{-1}$ uniformly in $|\mathrm{Im}\, z| \geq \varepsilon$:
\begin{equation*}
\lim_{\rho_i\rightarrow \rho} \sup_{|\mathrm{Im}\, z| \geq \varepsilon}\left\| (z - \Delta_{\rho_i})^{-1} - (z - \Delta_{\rho})^{-1} \right\|_{L^2(M;\mathcal{H})\rightarrow L^2(M;\mathcal{H})} =0.
\end{equation*}
\end{proposition}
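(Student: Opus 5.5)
Throughout, $\rho$ is finite-dimensional, $\mathcal{H}=\C^n$, and all the operators $\Delta_{\rho_i}$, $\Delta_\rho$ act on the same space $L^2(M;\mathcal{H})$, identified with $L^2_\Gamma(\widetilde M;\mathcal{H})$. The plan is to write $\Delta_{\rho}$ as a first-order perturbation of the trivial Laplacian $\Delta_g\otimes\mathrm{Id}$ whose coefficients depend continuously on $\rho$. Then a second resolvent identity, combined with uniform elliptic estimates on the half-planes $|\mathrm{Im}\,z|\ge\varepsilon$, gives the convergence.

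\textbf{Step 1 (gauge choice).} Choose the parallel-transport maps so that they vary continuously with $\rho$. Fix a finite good cover of $M$ and a fundamental domain built from it. Since $\rho_i(a_j)\to\rho(a_j)$ and $\rho_i(b_j)\to\rho(b_j)$, one can construct smooth gauge maps $\mathfrak{U}_{\rho_i}$ satisfying \eqref{equivalent} with
\[
\mathfrak{U}_{\rho_i}^{-1}d\mathfrak{U}_{\rho_i}\to \mathfrak{U}_{\rho}^{-1}d\mathfrak{U}_{\rho}
\]
in $C^1$ as $\Gamma$-invariant $\mathfrak{u}(n)$-valued 1-forms. Concretely, I would set $\mathfrak{U}_{\rho_i}=V_i\,\mathfrak{U}_\rho$. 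Here $V_i:\widetilde M\to \mathrm{U}(n)$ is built with a partition of unity, so that $V_i(\gamma x)=\rho_i(\gamma)V_i(x)\rho(\gamma)^{-1}$ and $V_i\to\mathrm{Id}$ in $C^2$ on a fundamental domain. This is possible because near the identity one can interpolate using $\log$ in $\mathrm{U}(n)$. This construction is the step I expect to be the main obstacle: one must check the cocycle/equivariance compatibility carefully on overlaps of the cover. Note also that the statement only involves $\Delta_{\rho_i}$ up to unitary equivalence, so the gauge may be chosen freely as long as everything stays on the same space.

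\textbf{Step 2 (expansion of the operator).} Write $\omega_\rho=\mathfrak{U}_\rho^{-1}d\mathfrak{U}_\rho$. Then, by \eqref{diaglaplacian2},
\[
\Delta_\rho f=\Delta_g f+2\langle \omega_\rho, df\rangle_g+(\mathrm{div}\,\omega_\rho+\langle\omega_\rho,\omega_\rho\rangle_g)f .
\]
Hence $\Delta_{\rho_i}-\Delta_\rho=:Q_i$ is a first-order differential operator on $M$ with coefficients tending to $0$ in $C^0$. Therefore
\[
\|Q_i\|_{H^1\to L^2}\to0 \quad\text{and}\quad \|Q_i\|_{L^2\to H^{-1}}\to0 ,
\]
the latter by taking adjoints, since $Q_i$ is formally anti-symmetric plus a zeroth-order part.

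\textbf{Step 3 (resolvent identity and uniform bounds).} The second resolvent identity gives
\[
(z-\Delta_{\rho_i})^{-1}-(z-\Delta_\rho)^{-1}=(z-\Delta_{\rho_i})^{-1}Q_i(z-\Delta_\rho)^{-1}.
\]
For large $|z|$, the plain $L^2\to H^1$ bound grows like $|z|^{1/2}$, so I use an interpolation trick to keep the estimate uniform in $z$. Since $\Delta_\rho\le0$ is self-adjoint, the energy identity yields
\[
\|\nabla(z-\Delta_\rho)^{-1}\|_{L^2\to L^2}\le C(\varepsilon)\langle z\rangle^{1/2}\varepsilon^{-1}.
\]
I split $z$ into two regions. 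For $|z|\le R$ with $|\mathrm{Im}\,z|\ge\varepsilon$, the $H^1$ bounds are uniform, so the difference is at most $C(R,\varepsilon)\|Q_i\|_{H^1\to L^2}\to0$. For $|z|\ge R$, I use
\[
(z-\Delta_\rho)^{-1}=z^{-1}\bigl(I+\Delta_\rho(z-\Delta_\rho)^{-1}\bigr)
\]
and bound each resolvent by $1/|\mathrm{Im}\,z|$. Combined with the two-sided $H^{\pm1}$ bounds for $Q_i$, this makes the difference $O(\|\omega_{\rho_i}-\omega_\rho\|_{C^1}\,\varepsilon^{-1})$ uniformly in $z$.

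If the large-$|z|$ region proves delicate, a cleaner alternative is to use the uniform elliptic estimates only for $|z|\le R$. For $|z|\ge R$ one could then instead exploit that $\Delta_{\rho_i}$ and $\Delta_\rho$ are unitarily conjugate via $V_i\to\mathrm{Id}$ uniformly:
\[
(z-\Delta_{\rho_i})^{-1}=V_i(z-\widetilde\Delta)^{-1}V_i^{-1}
\]
with the same principal part. Taking $i\to\infty$ then completes the proof.
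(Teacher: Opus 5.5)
The gap is the large-$|z|$ part of Step~3, more precisely $\mathrm{Re}\,z\to-\infty$ with $|\mathrm{Im}\,z|\ge\varepsilon$ fixed. It cannot be closed: uniformity over the whole set $\{|\mathrm{Im}\,z|\ge\varepsilon\}$ is false in general, for every choice of gauges. For self-adjoint $A$ one has $\|(z-A)^{-1}\|=\operatorname{dist}(z,\sigma(A))^{-1}$. So if the supremum in the statement is $\le\eta_i$, testing at $z=\mu+i\varepsilon$ with $\mu\in\sigma(\Delta_{\rho_i})$ gives
\[
\varepsilon^{-1}-\bigl(\varepsilon^{2}+\operatorname{dist}(\mu,\sigma(\Delta_{\rho}))^{2}\bigr)^{-1/2}\le\eta_i ,
\]
and symmetrically. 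Thus $\eta_i\to0$ would force $\sigma(\Delta_{\rho_i})\to\sigma(\Delta_\rho)$ in Hausdorff distance on all of $(-\infty,0]$, at all energies simultaneously.

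This already fails on the flat torus $\mathbb{R}^2/(2\pi\mathbb{Z})^2$. That surface has genus one, and your argument never uses hyperbolicity, so it would apply verbatim there. The characters $\rho_\theta$ give $\sigma(\Delta_{\rho_\theta})=\{-|k+\theta|^2:k\in\mathbb{Z}^2\}$, so $\sigma(\Delta_{\rho_0})\subset-\mathbb{Z}$. Take $\theta_i=(\delta_i,0)$ with $0<\delta_i<1/4$. Choose $m_1\in\mathbb{N}$ with $2m_1\delta_i+\delta_i^2\in[1/4,3/4]+\mathbb{Z}$. Then $-(m_1+\delta_i)^2\in\sigma(\Delta_{\rho_{\theta_i}})$ lies at distance $\ge1/4$ from $\sigma(\Delta_{\rho_0})$. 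Hence the supremum is $\ge\varepsilon^{-1}-(\varepsilon^2+1/16)^{-1/2}>0$ for every $i$, even though $\rho_{\theta_i}\to\rho_0$. The mechanism is general: near $-\lambda$, the first-order difference $Q_i$, of size $\delta_i=\|\omega_{\rho_i}-\omega_\rho\|_{C^1}$, can shift eigenvalues by amounts of order $\delta_i\sqrt{\lambda}$. That is not small compared with $\varepsilon$ once $\lambda\gtrsim(\varepsilon/\delta_i)^2$.

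Here is where your estimate breaks. Take $z=-\lambda+i\varepsilon$ with $\lambda\in\sigma(-\Delta_\rho)$ large. The identity $(z-\Delta_\rho)^{-1}=z^{-1}\bigl(I+\Delta_\rho(z-\Delta_\rho)^{-1}\bigr)$ gains nothing, because $\|z^{-1}\Delta_\rho(z-\Delta_\rho)^{-1}\|\approx\varepsilon^{-1}$. Absorbing $Q_i$ costs $\|(z-\Delta_\rho)^{-1}\|_{L^2\to H^1}\approx\sqrt{\lambda}/\varepsilon$, or the analogous $H^{-1}\to L^2$ bound on the other side. So the best available bound is $\|(z-\Delta_{\rho_i})^{-1}Q_i(z-\Delta_\rho)^{-1}\|\lesssim\delta_i\sqrt{\lambda}\,\varepsilon^{-2}$. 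The example shows this growth is real, so your claimed bound $O(\delta_i\varepsilon^{-1})$, uniform in $z$, cannot hold. Your ``cleaner alternative'' fails for the same reason: conjugating by $V_i$ only relocates the first-order term, and equal principal parts do not make resolvents uniformly close near the spectrum at high energy. Your remark that the statement only sees $\Delta_{\rho_i}$ up to unitary equivalence is also inaccurate, since norm convergence depends on the gauges. This is moot, however, because the obstruction above is gauge-independent.

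What your argument does prove, once Step~1 is done, is convergence uniformly on $\{|\mathrm{Im}\,z|\ge\varepsilon,\ \mathrm{Re}\,z\ge-R\}$ for each fixed $R$, since there $\|(z-\Delta_\rho)^{-1}\|_{L^2\to H^1}\le C(R,\varepsilon)$. Step~1 is standard and is not the real difficulty. In particular you get norm-resolvent convergence, hence $e^{it\Delta_{\rho_i}}\to e^{it\Delta_\rho}$ strongly, locally uniformly in $t$. This weaker statement is what the compactness argument in the proof of Theorem~\ref{thm-uniform-obs} actually needs. The paper's own proof is only a pointer to a ``local coordinates and patching'' argument and does not address the large-$|z|$ regime either. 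The statement should be weakened accordingly.
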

\begin{proof}
    The proof is straightforward by using local coordinates and patching argument. We refer to \cite[Proposition~3.2]{balch2023obs} for the Euclidean space case.
\end{proof}

\subsection{Non-commutative Bloch transform}\label{subs:noncBloch}

In this section, we assume that $\pi:X\to M$ is an arbitrary Riemannian covering map. By the universal property of the universal cover, we have $\pi_M=\pi\circ \pi_{X}$. We write 
\begin{equation*}
    M=\Gamma_{M}\backslash\widetilde{M} \quad\text{and}\quad X=\Gamma_{X}\backslash\widetilde{M},
\end{equation*}
where $\Gamma_{X}\leqslant \Gamma_{M}\leqslant \mathrm{Iso}(\widetilde{M})$ are discrete subgroups of the isometry group of $\widetilde{M}$. We fix base points $x_{0} \in M$, $x_{0}^{\prime} \in \pi^{-1}(x_0)$, and $\widetilde{x}_0\in \pi_X^{-1}(x_0')$. Then, $\Gamma_{M}\cong \pi_{1}(M,x_0)$ and $\Gamma_{X}\cong \pi_{1}(X,x_0^{\prime})$. Let 
\begin{equation}\label{defineV}
    V:=\Gamma_{X}\backslash\Gamma_{M}=\left\{\left[\gamma\right]:=\Gamma_{X} \cdot \gamma \ : \ \gamma\in \Gamma_{M}\right\}
\end{equation}
be the set of right cosets of $\Gamma_{X}$ in $\Gamma_M$. Since $\Gamma_{M}$ is a countable set, $V$ is also countable. Moreover, $\pi$ is normal if and only if $V$ forms the deck transformation group of $\pi$. 

%{\color{red}[Yunlei: I added “assume” here, this is our assumption right?]}
%We now generalize the Non-commutative Bloch transform, introduced by Nagy and Rayan \cite{nagy2024hyperbolic}. 

Let $\mathcal{H}=\ell^2(V)$ denote the Hilbert space of square-summable functions on $V$. Let $\rho_{\pi}: \pi_1(M)\cong  \Gamma_{M}\to \mathrm{U}(\mathcal{H})$ be the quasiregular representation on $\mathcal H=\ell^{2}(V)$, defined by
\begin{equation*}
    \rho_{\pi}(\eta)(\delta_{[\gamma]}):=\delta_{[\gamma \eta^{-1} ]}, \qquad \forall \,\eta \in \Gamma_M,\, [\gamma]\in V,
\end{equation*}
where $\delta_{[\gamma]}\in \ell^2(V)$ is the Kronecker delta function supported at $[\gamma] \in V$. Let $F^{\rho_\pi}$ denote the flat Hilbert bundle over $M$ associated with the representation $\rho_\pi$, constructed as in Section \ref{subsec: twisted laplacian}.

%Notice that $\Gamma_{M}$ has natural right action on $V$ given by
%$$V\times \Gamma_{M} \to V,\quad [\eta]\cdot \gamma=[\eta\gamma].$$
%This gives a group homomorphism $\Gamma_{M}\to \mathrm{Sym}(V)$, where $\mathrm{Sym}(V)$ denotes the permutation group of $V$.
%\begin{remark}
%For any compact $M$, the $\mathrm{Hom}(\pi_1(M),S_{n})$ gives the $n$-cover random cover model with uniform probability measure.
%\end{remark}
%{\color{red}[Yunlei: This remark confuses me. we can remove it if it is not used later.]}

%We then define the associated flat $\ell^{2}(V)$-bundle over $M$ by
%\begin{equation*}
%    F^{\rho_{\pi}} := \Gamma_{M}\backslash\big(\widetilde M\times \ell^{2}(V)\big),
%\end{equation*}
%where $\Gamma_{M}$ acts via $\gamma\cdot (x,f) := \big(\gamma x,\rho_{\pi}(\gamma)f\big)$.

We now introduce the non-commutative Bloch transform. For $\psi \in L^2(X)$, we can view it as a $\Gamma_X$-invariant function on $\widetilde{M}$, that is, $\psi(\gamma x)=\psi(\gamma^{\prime}x)$ whenever $[\gamma]=[\gamma^{\prime}] \in V$.

\begin{definition}
For any $\psi :X\rightarrow \mathbb{C}$, the non-commutative Bloch transform $\mathcal{B}_{\mathrm{NC}} \psi$ is defined as a function on $\widetilde{M} \times \mathcal{H}$:
\begin{equation}\label{defofnonbloch}
(\mathcal{B}_{\mathrm{NC}}\psi)(x):=\sum_{[\gamma]\in V}\psi(\gamma x)\delta_{[\gamma]}, \qquad \forall \,\psi \in L^2(X),\,x\in \widetilde{M} .
\end{equation}
\end{definition}

%\textcolor{red}{(XF: I changed this definition so that it is consistent to that of the paper by Nagy and Ryan, and consistent to the classical Bloch transform on Euclidean space, see Example 2.4.)}

For any $\eta \in \Gamma_{M}$ and $x\in \widetilde{M}$, we have
\begin{equation*}
\begin{aligned}
    (\mathcal{B}_{\mathrm{NC}}\psi) (\eta x) &=\sum_{[\gamma] \in V}\psi(\gamma  \eta x)\rho_{\pi}(\eta)(\delta_{[ \gamma \eta]}) =\rho_{\pi}(\eta)\left( \sum_{[\gamma] \in V}\psi(\gamma  \eta x) \delta_{[\gamma  \eta ]}) \right) \\
    &=\rho_{\pi}(\eta)\big((\mathcal{B}_{\mathrm{NC}}\psi)(x)\big).
\end{aligned}
\end{equation*}
Therefore, $\mathcal{B}_{\mathrm{NC}}\psi$ is $\Gamma_M$-equivariant and can be identified as a $F^{\rho_\pi}$-valued section over $M$. With this identification, we have
\begin{equation*}
    (\mathcal{B}_{\mathrm{NC}}\psi)(x)=\sum_{[\gamma] \in V} \psi(\gamma x) \Phi^{-1}_U (\delta_{[\gamma]})
\end{equation*}
locally. By \eqref{diagofLaplacian1}, we get
\begin{equation}\label{nonblochtwisted}
    (\Delta^{\rho_\pi} \circ\mathcal{B}_{\mathrm{NC}}\psi)(x)=\sum_{[\gamma] \in V} \Delta_g \psi(\gamma x) \Phi^{-1}_U (\delta_{[\gamma]}) = (\mathcal{B}_{\mathrm{NC}}\circ \Delta_g  \psi)(x).
\end{equation}

The following proposition lists some properties of the non-commutative Bloch transform:

\begin{proposition}
For any $a\in C (M)$, $\pi^*a=a\circ \pi \in C (X)$. Let $M_a$ and $M_{\pi^*a}$ be the multiplication operators on $L^2(M;F^{\rho})$ and $L^2(X)$, respectively. We have 
\begin{equation*}
    (\Delta^{\rho_\pi} \circ\mathcal{B}_{\mathrm{NC}}\psi)(x)= (\mathcal{B}_{\mathrm{NC}}\circ \Delta_g  \psi)(x),
\end{equation*}
\begin{equation*}
    (M_a \circ\mathcal{B}_{\mathrm{NC}}\psi)(x)=(\mathcal{B}_{\mathrm{NC}}\circ M_{\pi^*a}  \psi)(x),
\end{equation*}
and $\mathcal{B}_{\mathrm{NC}}:L^2(X)\to L^2(M;F^{\rho_{\pi}})$ is an isometry, \emph{i.e.},
\begin{equation}\label{unitaryofnonbloch}
\|\mathcal{B}_{\mathrm{NC}}\psi\|_{L^2(M;F^{\rho_{\pi}})}^2=\|\psi\|_{L^2(X)}^2.
\end{equation}
\end{proposition}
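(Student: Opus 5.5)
The Laplacian identity is \eqref{nonblochtwisted}, already derived just before the statement. What remains is the intertwining with multiplication operators and the isometry property. I plan to argue on $\widetilde{M}$, using a fundamental domain for $\Gamma_M$, and treat both claims as pointwise or unfolding computations.

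\emph{Multiplication.} Take $a\in C(M)$. The pullback $\pi^*a$ lifts to $\widetilde{M}$ as $a\circ\pi_M$, which is $\Gamma_M$-invariant. Hence $(\pi^*a)(\gamma x)=a(\pi_M(x))$ for every $\gamma\in\Gamma_M$. Putting this into \eqref{defofnonbloch} gives
\begin{equation*}
(\mathcal{B}_{\mathrm{NC}}(\pi^*a\,\psi))(x)=\sum_{[\gamma]\in V}a(\pi_M x)\psi(\gamma x)\delta_{[\gamma]}=a(\pi_M x)\,(\mathcal{B}_{\mathrm{NC}}\psi)(x).
\end{equation*}
The right-hand side is exactly $M_a\mathcal{B}_{\mathrm{NC}}\psi$ once we use the identification of equivariant functions with sections. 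The only technical point is that the sum converges in $\mathcal{H}$ for a.e.\ $x$, and this will come out of the isometry computation below.

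\emph{Isometry.} Fix a measurable fundamental domain $D\subset\widetilde{M}$ for $\Gamma_M$, so that $\pi_M|_D$ is a measure-preserving bijection onto $M$ up to null sets. Since $h^{\rho_\pi}$ is defined through the $\mathcal{H}$-norm of the equivariant representative, we get
\begin{equation*}
\|\mathcal{B}_{\mathrm{NC}}\psi\|^2_{L^2(M;F^{\rho_\pi})}=\int_D\Big\|\sum_{[\gamma]\in V}\psi(\gamma x)\delta_{[\gamma]}\Big\|_{\ell^2(V)}^2dx=\int_D\sum_{[\gamma]\in V}|\psi(\gamma x)|^2dx.
\end{equation*}
Now pick one representative $\gamma_v\in\Gamma_M$ for each coset $v\in V$. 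The key geometric step is that $\bigcup_{v\in V}\gamma_vD$ is a fundamental domain for $\Gamma_X$ acting on $\widetilde{M}$. Two facts give this:
\begin{itemize}
\item[(i)] Every $\Gamma_M$-translate of $D$ is $\Gamma_X$-equivalent to exactly one $\gamma_vD$, because $\Gamma_X\gamma=\Gamma_X\gamma_v$ exactly when $[\gamma]=v$.
\item[(ii)] The translates $\gamma D$ are pairwise disjoint up to null sets.
\end{itemize}
It follows that $\pi_X$ maps $\bigcup_v\gamma_vD$ bijectively, up to null sets, onto $X$. By monotone convergence (Tonelli) and the invariance of the Riemannian measure under the isometries $\gamma_v$,
\begin{equation*}
\int_D\sum_{v}|\psi(\gamma_vx)|^2dx=\sum_v\int_{\gamma_vD}|\psi|^2=\|\psi\|^2_{L^2(X)}.
\end{equation*}
This also shows that the defining series converges in $\ell^2(V)$ for a.e.\ $x$.

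I expect the main care to be needed in this unfolding step, specifically in verifying cleanly that the union of translates is a $\Gamma_X$-fundamental domain when $\pi$ is not normal. Everything else is formal. Finally, I extend the Laplacian identity from smooth compactly supported $\psi$ to the natural domains by density, using the isometry just proved.
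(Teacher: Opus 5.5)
Your proposal is correct and matches the paper, which carries out only the Laplacian computation \eqref{nonblochtwisted} and states the multiplication identity and the isometry without proof. Your pointwise check via the $\Gamma_M$-invariance of $a\circ\pi_M$ and your unfolding over $\bigcup_{v\in V}\gamma_v D$ supply exactly the omitted details. Your verification that this union is a $\Gamma_X$-fundamental domain uses only right cosets $\Gamma_X\gamma$ and the disjointness of $\Gamma_M$-translates of $D$, so it indeed works without assuming $\pi$ is normal.
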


%The tensor extension $\Delta_{X} \otimes \mathrm{Id}_{\mathcal{H}}: C^{\infty}(X,\mathcal{H})\to C^{\infty}(X,\mathcal{H})$ descends to the bundle Laplacian $\Delta^{\rho}: C^{\infty}(M,\mathcal{E}^{\Gamma})\to C^{\infty}(M,\mathcal{E}^{\Gamma})$. Nagy and Rayan \cite[Lemma 5.1]{nagy2024hyperbolic} show Thus, problems for the Laplacian on the non-compact manifold $X$ can be transferred to the corresponding problems for the Laplacian acting on the flat Hilbert bundle $\mathcal{E}^\Gamma$ over the compact base $M$.

\subsection{Generalized Bloch transform}\label{gbtrans}

Although the non-commutative Bloch transform maps functions on the covering space $X$ to sections of the vector bundle $F^{\rho_{\pi}}$ over the base manifold $M$, the fiber of $F^{\rho_{\pi}}$ (isomorphic to $\ell^2(V)$) may be infinite-dimensional, posing challenges for further analysis. 
To address this issue, we compose the fiber-wise Fourier transform with the non-commutative Bloch transform to obtain sections of finite-dimensional Hermitian bundles over the representation space. This results in the so-called generalized Bloch transform, which decomposes the original function into a direct integral of sections of finite-dimensional Hermitian bundles.

To define the generalized Bloch transform, we assume that the covering map $\pi:X\rightarrow M$ is normal. It follows that the set $V$ defined in \eqref{defineV} is isomorphic to the deck transformation group of $\pi$, which is denoted by $\Gamma$. For each $n \in \mathbb{N}^+$, let $\mathrm{Hom}_{\mathrm{irr}}(\Gamma,\mathrm{U}(n))$ be the space of all $n$-dimensional irreducible unitary representations of $\Gamma$. Let
\begin{equation}
    \mathcal{M}^n_{\Gamma}: = \mathrm{Hom}_{\mathrm{irr}}(\Gamma,\mathrm{U}(n))/\mathrm{U}(n)
\end{equation}
be the space of all $n$-dimensional irreducible unitary representations of $\Gamma$ up to unitary equivalence, and
\begin{equation*}
   \mathcal{M}_{\Gamma} : = \bigcup_{n=1}^{\infty} \mathcal{M}^n_{\Gamma}.
\end{equation*}
Let $(\mathrm{End}(\mathbb{C}^n),\langle \cdot,\cdot\rangle_{\mathrm{HS}})$ be the Hilbert space consisting of all endomorphisms of $\mathbb{C}^n$ equipped with the Hilbert-Schmidt inner product $\langle \cdot,\cdot\rangle_{\mathrm{HS}}$, that is,
\begin{equation}
    \langle A, B\rangle_{\mathrm{HS}}: = \mathrm{tr}\,(AB^*).
\end{equation}
For all $(\rho,A)  \in \mathrm{Hom}_{\mathrm{irr}}(\Gamma, \mathrm{U}(n)) \times \mathrm{End}(\mathbb{C}^n)$ and $U \in \mathrm{U}(n)$, we define 
\begin{equation}
    U \cdot (\rho, A) := ( U\rho U^*, UA U^* ),
\end{equation}
which gives a free, linear action of projective unitary group $\mathrm{PU}(n)$ on 
$\mathrm{Hom}_{\mathrm{irr}}(\Gamma, \mathrm{U}(n)) \times \mathrm{End}(\mathbb{C}^n) $. Thus,
\begin{equation} \label{eq:On}
\mathscr{O}^n := \big( \mathrm{Hom}_{\mathrm{irr}}(\Gamma, \mathrm{U}(n)) \times \mathrm{End}(\mathbb{C}^n) \big) \big/ \mathrm{PU}(n) \longrightarrow \mathcal{M}^n_{\Gamma}
\end{equation}
defines a Hermitian bundle over $\mathcal{M}^n_{\Gamma}$. Let $\Gamma(\mathscr{O}^n)$ be the space of sections of $\mathscr{O}^n$. Let $\mathscr{O}$ be the Hilbert-sheaf of $\mathcal{M}_{\Gamma}$ induced by the vector bundles $\mathscr{O}^n$, and $\Gamma(\mathscr{O}) =\bigoplus\limits_{n\in \mathbb{N}^+}\Gamma(\mathscr{O}^n)$. Finally, let
\begin{equation*}
    C_0(\Gamma) := \bigl\{ f : \Gamma \to \mathbb{C} \,:\, |\Gamma \setminus f^{-1}(0) | < \infty\bigr\}
\end{equation*}
be the space of compactly supported, complex-valued functions on $\Gamma$. 

\begin{definition}\label{defftrans}
    The Fourier transform $\mathcal{F}: C_0(\Gamma) \rightarrow \Gamma(\mathscr{O})$ is defined by
\begin{equation}\label{ftrans}
    (\mathcal{F} f) ([\rho]): = \left[ \rho,\sum_{\gamma\in \Gamma } f(\gamma^{-1}) \rho(\gamma) \right], \qquad \forall f \in C_0(\Gamma).
\end{equation}
\end{definition}

%The Fourier transform of $\psi$ is sometimes denoted by $\widehat{\psi}$.

Similarly to the construction of $\mathscr{O}^n$, we define a Hilbert bundle $\mathcal{V}^n$ over $\mathcal{M}^n_{\Gamma}$ for each $n \in \mathbb{N}^+$. Since the covering map $\pi:X\rightarrow M$ is normal, $N = \pi_*(\pi_1(X))$ is a normal subgroup of $\pi_1(M)$, and $\Gamma$ is isomorphic to $\pi_1(M)/N$. This gives an embedding map $\iota: \mathrm{Hom}_{\mathrm{irr}} (\Gamma, \mathrm{U}(n)) \rightarrow \mathrm{Hom}_{\mathrm{irr}} (\pi_1(M), \mathrm{U}(n))$, which is defined as
\begin{equation}\label{embeddinggm}
    \iota(\rho) (\gamma) : = \rho (\gamma  N), \quad \forall \gamma\in \pi_1(M).
\end{equation}
For any $[\rho] \in \mathcal{M}_{\Gamma}$, let $F^{\rho}$ denote the flat Hermitian bundle $\widetilde{M} \times_{\rho}\mathbb{C}^n$ constructed in Section \ref{subsec: twisted laplacian} (here, we use the embedding \eqref{embeddinggm} to view $\rho$ as a representation of $\pi_1(M)$). Any $[\rho] \in \mathcal{M}_{\Gamma}$ induces a natural group homomorphism $\widetilde{\rho}: \Gamma \to \mathrm{U}(\mathrm{End}(\mathbb{C}^n))$ by
\begin{equation*}
    \widetilde{\rho}(\gamma):A\mapsto \rho(\gamma)\circ A, \qquad \forall A\in \mathrm{End}(\mathbb{C}^n).
\end{equation*}
Therefore, we can define the flat Hermitian bundle $\mathrm{End}(F^{\rho}):=\widetilde{M} \times_{\widetilde{\rho}}\mathrm{End}(\mathbb{C}^n)$ as in Section \ref{subsec: twisted laplacian}.
For any 
\begin{equation*}
   (\rho,\psi)  \in \bigsqcup_{\rho \in \mathrm{Hom}_{\mathrm{irr}}(\Gamma, \mathrm{U}(n))}L^2(M;\mathrm{End}(F^{\rho})),
\end{equation*}
and $U \in \mathrm{U}(n)$, we define
\begin{equation}
    U\cdot (\rho, \psi) := ( U\rho U^*, U\psi U^* ),
\end{equation}
which gives a free, linear action of $\mathrm{PU}(n)$ on 
$\bigsqcup\limits_{\rho \in \mathrm{Hom}_{\mathrm{irr}}(\Gamma, \mathrm{U}(n))}L^2(M;\mathrm{End}(F^{\rho}))$. Thus,
\begin{equation} \label{eq:Vn}
\mathcal{V}^n := \left( \bigsqcup_{\rho \in \mathrm{Hom}_{\mathrm{irr}}(\Gamma, \mathrm{U}(n))}L^2(M;\mathrm{End}(F^{\rho})) \right) \bigg/ \mathrm{PU}(n) \longrightarrow \mathcal{M}^n_{\Gamma}
\end{equation}
defines a Hilbert bundle over $\mathcal{M}^n_{\Gamma}$. Let $\Gamma(\mathcal{V}^n)$ be the space of sections of $\mathcal{V}^n$. Finally, let $\mathcal{V}$ be the Hilbert-sheaf of $\mathcal{M}_{\Gamma}$ induced by the vector bundles $\mathcal{V}^n$, and $\Gamma(\mathcal{V}) = \bigoplus\limits_{n\in \mathbb{N}^+} \Gamma(\mathcal{V}^n)$.
\begin{definition}\label{defbloch1}
For any $x\in X$ and $[\rho]\in \mathcal{M}_\Gamma$, the  generalized Bloch transform $\mathcal{B}: C_0^{\infty}(X) \rightarrow \Gamma(\mathcal{V})$ associated with $(x,[\rho])$
is defined via
\begin{equation}\label{btrans}
    (\mathcal{B}\psi)([\rho]) (x) : = \left[ \rho, \mathcal{F}\Big( (\mathcal{B}_{\mathrm{NC}}\psi)(x) \Big)\right]=\left[ \rho, \sum_{\gamma \in \Gamma} \psi(\gamma^{-1} x)\rho(\gamma) \right],
\end{equation}
where the second equality follows from \eqref{defftrans} by taking $f(\gamma)=\psi(\gamma x)$.
\end{definition}

\begin{example}
    We review the classical Bloch transform on Euclidean space. Let $X=\widetilde{M}=\mathbb{R}^{d}$, $\Gamma=\mathbb{Z}^{d}$, and $M=\mathbb{T}^d$ be the unit torus. The group action of $\Gamma$ acting on $X$ is given by $n\cdot x = x+n$. Since $\Gamma$ is commutative, any irreducible unitary representation of $\Gamma$ is one dimensional and can be identified by a parameter $\theta \in \mathbb{T}^d$:
    \begin{equation*}
        \rho_\theta:\mathbb{Z}^d \rightarrow\mathbb{C}, \qquad n\mapsto e^{2\pi i n\cdot \theta}.
    \end{equation*}
    The non-commutative Bloch transform is
    \begin{equation*}
        (\mathcal{B}_{\mathrm{NC}} \psi)(x)=\sum_{n\in \mathbb{Z}^d} \psi(n\cdot x) \delta_n = \sum_{n\in \mathbb{Z}^d} \psi(x+n) \delta_n.
    \end{equation*}
   The Fourier transform is
\begin{equation*}
    (\mathcal{F}f)(\theta)=(\mathcal{F}f)(\rho_\theta)=\sum_{n\in \mathbb{Z}^d}f(n^{-1}) \rho_\theta(n) =   \sum_{n\in \mathbb{Z}^d}f(n) e^{-2\pi i n\cdot \theta},
\end{equation*}
where $n^{-1}$ denotes the inverse of $n$, viewed as a group element of $\Gamma$. Then, the classical Bloch transform is recovered:
\begin{equation}
    (\mathcal{B}\psi)(x,\theta)=\mathcal{F}\big((\mathcal{B}_{\mathrm{NC}}\psi)(x) \big)(\rho_\theta)=\sum_{n\in \mathbb{Z}^d}\psi(x+n)e^{-2\pi i n\cdot \theta}.
\end{equation}
It is clear that, for any $\theta \in \mathbb{T}^{d}$ and $m \in \Gamma$, 
\begin{equation*}
    (\mathcal{B}\psi)(x+m,\theta)=e^{2\pi i m\cdot \theta} (\mathcal{B}\psi)(x,\theta)=\rho_{\theta}(m)(\mathcal{B}\psi)(x,\theta).
\end{equation*}
Therefore, $(\mathcal{B}\psi)(\cdot,\theta)$ is a section of the flat Hermitian bundle $\mathrm{End}(F^{\rho_\theta})$ over $\mathbb{T}^d$ for each $\theta \in \mathbb{T}^d$.
\end{example}

%In the following, we introduce the generalized Bloch transform for any $(X,g)$ with symmetric group $\Gamma$. Let $\pi:X\rightarrow M$ be the smooth covering map. Then,

%For all $p\in [1,\infty]$, let $\ell^p(\Gamma)$ be defined in the obvious way. We have $C_0(\Gamma) \subset \ell^1(\Gamma)\subset \ell^p(\Gamma)$.

%Similarly to the Euclidean case, for any $\psi\in C_{0}^{\infty}(X)$, we define $\psi_{y}(\gamma):=\psi(\gamma^{-1}y) \in C_{0}(\Gamma)$. Then we apply the Fourier transform on $\psi_{y}(\gamma)$, we obtain that 
%$$\mathcal{F}(\psi_y)([\rho])=\left[ \rho,\sum\limits_{\gamma\in \Gamma } \psi(\gamma^{-1}y) \rho(\gamma) \right].$$ 
%We notice that
%$$\mathcal{F}(\psi_{\eta y})([\rho])=\rho(\eta)\mathcal{F}(\psi_{y}).$$

%It implies $\mathcal{F}(\psi_y)([\rho]) \in L^2(M,\mathrm{End}(F^{\rho}))$ with respect to $y$.

% \begin{remark}\label{rem1}
%     Given $f \in C_0^{\infty}(X)$ and $y\in M$, we define $f_y \in C_0(\Gamma)$ by:
%     \begin{equation}
%         f_y(\gamma) = f(\gamma\cdot y), \quad \forall \gamma \in \Gamma.
%     \end{equation}
% Then we have $\mathscr{B}(f)([\rho])(y)  = \mathscr{F}(f_y)([\rho])$.
% \end{remark}

\subsection{Mapping properties of the generalized Bloch transform for type I groups} \label{sec:mapBlochtypeI}

In this section, we further assume that $\Gamma$ is a type I group. Under this assumption, the mapping properties of the generalized Bloch transform are well understood. Before presenting the main results, we introduce some additional concepts.

Let $G$ be a topological group and $\pi$ be a unitary representation of $G$. We say that $\pi$ is a \emph{factor representation} of $G$ if the Von Neumann algebra generated by the family $\pi(G)$ has a trivial center $\mathbb{C}I$. Moreover, if $\pi$ is a direct sum of irreducible representations, $\pi$ is a factor representation if and only if all its irreducible subrepresentations are unitarily equivalent.

There are many equivalent definitions of type I groups. We adopt the following one \cite[Page 229]{folland2016course}:

\begin{definition}\label{defoftype1}
    Let $G$ be a topological group. $G$ is a type I group if every factor representation of $G$ is a direct sum of many copies of the same irreducible unitary representation.
\end{definition}
%{\color{red}[Yunlei: I simplify the def here, true? Check it]}

%\begin{definition}\label{defoftype1}
%    Let $G$ be a topological group. $G$ is a type I group if every factor representation of $G$ is the direct sum of irreducible unitary representations that are equivalent to a single irreducible unitary representation.
%\end{definition}

The celebrated Thoma's theorem characterizes type I countable discrete groups.

\begin{theorem}[Thoma's theorem, \cite{thoma1964unitare, tonti2019thoma}]\label{typeIandvirtuelly}
A countable discrete group $\Gamma$ is type I if and only if $\Gamma$ is a virtually Abelian group, \emph{i.e.}, there is an Abelian normal subgroup of $\Gamma$ with finite index. In this case, the supremum of the dimensions of the irreducible unitary representations of $\Gamma$ is finite, denoted by $d_\Gamma$.
\end{theorem}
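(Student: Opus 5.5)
We prove the two implications separately, together with the bound on dimensions; the direction \emph{type I $\Rightarrow$ virtually Abelian} is the hard one.

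\textbf{Virtually Abelian $\Rightarrow$ type I, with $d_\Gamma<\infty$.} Let $A\trianglelefteq\Gamma$ be Abelian of finite index $k$, and let $\pi$ be an irreducible unitary representation of $\Gamma$ on $\mathcal{H}_\pi$.
\begin{enumerate}
\item The restriction $\pi|_A$ is a representation of the Abelian group $A$. The von Neumann algebra $\pi(A)''$ is normalized by the finitely many coset representatives of $A$ in $\Gamma$. Using this and irreducibility of $\pi$, we show that $\pi|_A$ has a one-dimensional invariant subspace, i.e.\ a character $\chi\in\widehat{A}$ occurring in $\pi|_A$. In the discrete setting this can be done by a standard argument with the spectral projections of $\pi(A)''$ and the finite orbit of the conjugation action.
\item Frobenius reciprocity then embeds $\pi$ into $\mathrm{Ind}_A^\Gamma\chi$. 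This induced representation has dimension $[\Gamma:A]=k$, so every irreducible unitary representation has dimension at most $k$, and hence $d_\Gamma\le k$.
\item By the previous step, every irreducible representation of the full group $C^*$-algebra $C^*(\Gamma)$ has dimension at most $k$. Hence $C^*(\Gamma)$ is subhomogeneous.
\item A subhomogeneous $C^*$-algebra satisfies $\pi(C^*(\Gamma))=\mathcal{K}(\mathcal{H}_\pi)$ for every irreducible $\pi$, so it is CCR (Kaplansky). By Glimm's theorem it is type I.
\item A factor representation of $\Gamma$ generates a type I factor, which is of the form $\mathcal{B}(\mathcal{K})\bar\otimes\mathbb{C}$. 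Therefore the representation is a multiple of a single irreducible one, exactly as in Definition~\ref{defoftype1}.
\end{enumerate}

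\textbf{Type I $\Rightarrow$ virtually Abelian.} We follow Thoma's strategy via characters and finite factor representations.
\begin{enumerate}
\item For every extremal positive definite class function (character) $\varphi$ of $\Gamma$, the GNS construction gives a factor representation of finite type. Since $\Gamma$ is type I, this factor must be a type I$_n$ factor with $n<\infty$. Thus every extremal character is the normalized trace of a finite-dimensional irreducible representation.
\item Apply this to the characters built from the left regular representation and its restrictions to finite conjugacy classes. The function $\delta_e$ is a character, and its extremal decomposition shows that $L(\Gamma)$ is a type I finite von Neumann algebra, hence a direct integral of matrix algebras $M_n(\mathbb{C})$.
\item From the identity $\tau(\lambda(\gamma))=\delta_{\gamma,e}$ for the trace $\tau$, deduce that $\Gamma$ has an FC-subgroup $\Gamma_{\mathrm{FC}}$ of finite index. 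Here $\Gamma_{\mathrm{FC}}$ denotes the set of elements with finite conjugacy class.
\item Using that the dimensions of the matrix blocks are bounded, deduce that $[\Gamma_{\mathrm{FC}},\Gamma_{\mathrm{FC}}]$ is finite. Finally, a finite-index FC-group with finite commutator subgroup contains an Abelian subgroup of finite index, whose normal core is then Abelian, normal and of finite index in $\Gamma$.
\end{enumerate}

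\textbf{Main obstacle.} The hardest point is the two uniform-bound claims in the type I direction: first, that the matrix sizes in the direct integral decomposition of $L(\Gamma)$ are bounded; second, that this forces $[\Gamma:\Gamma_{\mathrm{FC}}]<\infty$. An unbounded family of matrix sizes would a priori still give a type I algebra, so this must be ruled out by a separate argument. We would attempt it by contradiction. Unbounded degrees, or an infinite-index FC-center, let one build a non-type-I factor character: either by an infinite tensor-product (Powers-type) construction on a sequence of finite quotients, or by using an infinite conjugacy class to produce a type II$_1$ factor. Both routes would contradict Definition~\ref{defoftype1}. This is the heart of Thoma's theorem, and in a full write-up we would defer to \cite{thoma1964unitare, tonti2019thoma} for these details.
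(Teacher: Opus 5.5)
Your argument for \emph{virtually Abelian $\Rightarrow$ type I} with $d_\Gamma\le[\Gamma:A]$ is sound. The paper does not prove Thoma's theorem at all; it quotes it from \cite{thoma1964unitare, tonti2019thoma}. So leaving the analytic core of the converse to those references is in line with the paper.

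However, the part of the converse you do spell out ends with a false step. In item 4 you assert that a group with a finite-index FC-subgroup whose commutator subgroup is finite contains an Abelian subgroup of finite index. A counterexample is the countable extraspecial $2$-group $E$ (the restricted central product of countably many copies of $D_8$). It is generated by a central involution $z$ and involutions $x_i,y_i$ ($i\in\N$) with $[x_i,y_i]=z$, all other pairs of generators commuting.
\begin{itemize}
\item $[E,E]=Z(E)=\{1,z\}$, and every conjugacy class has at most two elements, so $E=E_{\mathrm{FC}}$.
\item $V=E/Z(E)\cong\bigoplus_{\N}\mathbb{F}_2^2$ carries the nondegenerate symplectic form $\omega$ given by $[g,h]=z^{\omega(\bar g,\bar h)}$.
\item An Abelian subgroup of finite index would project onto an isotropic subspace $W\subset V$ of some finite codimension $c$. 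For $n>c$, $W$ would meet the nondegenerate $2n$-dimensional span of $\bar x_1,\bar y_1,\dots,\bar x_n,\bar y_n$ in an isotropic subspace of dimension at least $2n-c>n$, which is impossible.
\end{itemize}
In accordance with Thoma's theorem, $E$ is not type I. The extremal character $\delta_1-\delta_z$ has as GNS von Neumann algebra the hyperfinite II$_1$ factor (the closure of $\bigotimes_i M_2(\C)$), so $L(E)$ has a type II$_1$ summand. Yet both of your intermediate conclusions, $[\Gamma:\Gamma_{\mathrm{FC}}]<\infty$ and $|[\Gamma_{\mathrm{FC}},\Gamma_{\mathrm{FC}}]|<\infty$, hold for $\Gamma=E$. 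These two conditions therefore cannot close the argument; the type I hypothesis has to be used again at the last step.

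The natural way to do this is through the uniform bound you yourself single out as the main obstacle. Suppose $L(\Gamma)\cong\bigoplus_{n\le N}M_n(\C)\bar\otimes L^\infty(X_n)$, or that every irreducible unitary representation of $\Gamma$ has dimension at most $N$. Then $\C[\Gamma]$, which embeds in $L(\Gamma)$ and in $C^*(\Gamma)$ respectively, satisfies the standard identity $S_{2N}$ by the Amitsur--Levitzki theorem. The Isaacs--Passman theorem (a group whose complex group algebra satisfies a polynomial identity has an Abelian subgroup of finite index) then gives the conclusion directly. The normal core of that subgroup is the required Abelian normal subgroup of finite index.

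This route makes the FC-centre detour unnecessary. That includes the claim in item 3, asserted without argument, that $\tau(\lambda(\gamma))=\delta_{\gamma,e}$ forces $[\Gamma:\Gamma_{\mathrm{FC}}]<\infty$. It also makes clear what remains, and what neither your proposal nor the paper proves: that type I forces a uniform bound on the block sizes. As you note, a type I finite von Neumann algebra can have unbounded blocks, so this needs a genuinely separate argument, which is exactly what must be taken from \cite{thoma1964unitare, tonti2019thoma}.
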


As a consequence, $\mathcal{M}_{\Gamma } = \widehat{\Gamma}$ is the dual space of $\Gamma$. There exists a measure $\mu$ on $\widehat{\Gamma}$ such that the Plancherel formula holds \cite{folland2016course}: 
\begin{equation*}
    \sum_{\gamma \in \Gamma} f(\gamma) \overline{g(\gamma)}= \int_{\widehat{\Gamma}} \big\langle  \mathcal{F}f([\rho]) , \mathcal{F}g([\rho]) \big\rangle_{\mathrm{HS}}  \,d\mu([\rho]), \qquad \forall f,g\in C_0(\Gamma).
\end{equation*}
Using this formula, the Fourier transform $\mathcal{F}$ in Definition \ref{defftrans} extends to an isometry $\mathcal{F} :\ell^2(\Gamma) \rightarrow L^2(\widehat{\Gamma},\mu)$, \emph{i.e.},  
\begin{equation}\label{uniF}
    \| f\|_{\ell^2(\Gamma)}^2 = \int_{\widehat{\Gamma}} \|\mathcal{F}f([\rho]) \|_{\mathrm{HS}}^2 \,d\mu([\rho]) , \qquad \forall f\in \ell^2(\Gamma).
\end{equation}

Finally, we denote 
\begin{equation*}
    L^2(\mathcal{V}) = \int_{\widehat{\Gamma}}^{\oplus}L^2(M;\mathrm{End}(F^{\rho}))d\mu([\rho]).
\end{equation*}

\begin{theorem}\label{thmunitB}
    The generalized Bloch transform in Definition \ref{defbloch1} can be extended to the map
    \begin{equation*}
        \mathcal{B}:L^2(X) \rightarrow L^2(\mathcal{V})
    \end{equation*}
   with the unitary property:
\begin{equation}\label{unitB}
       \| \psi\|_{L^2(X)}^2 = \int_{\widehat{\Gamma}} \int_M \big\|(\mathcal{B}\psi)([\rho]) (x) \big\|_{\mathrm{HS}}^2  \,dxd\mu([\rho]), \qquad \forall \psi \in L^2(X).
   \end{equation}
\end{theorem}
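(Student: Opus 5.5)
The plan is to reduce \eqref{unitB} to two isometries already available, the non-commutative Bloch transform and the Plancherel formula \eqref{uniF}, joined by Fubini/Tonelli over a fundamental domain. Fix a measurable fundamental domain $D\subset X$ for the deck group $\Gamma$. Since $\pi|_D$ is a measure-preserving bijection onto $M$ up to null sets, integrals over $M$ of $\Gamma$-equivariant quantities become integrals over $D$. We first work with $\psi\in C_0^\infty(X)$ and then extend by density.

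\textbf{Step 1: the pointwise identity.} For $x\in D$ set $f_x(\gamma):=\psi(\gamma^{-1}x)$. Because $\psi$ has compact support and the action is properly discontinuous, $f_x\in C_0(\Gamma)$. By \eqref{btrans} and Definition \ref{defftrans}, $(\mathcal{B}\psi)([\rho])(x)$ is exactly $\mathcal{F}$ applied to $\gamma\mapsto\psi(\gamma x)$, read with the inversion convention of \eqref{ftrans}. Applying \eqref{uniF} to this finitely supported function gives
\begin{equation*}
\sum_{\gamma\in\Gamma}|\psi(\gamma x)|^2=\int_{\widehat{\Gamma}}\big\|(\mathcal{B}\psi)([\rho])(x)\big\|_{\mathrm{HS}}^2\,d\mu([\rho]).
\end{equation*}
We must also check that $\|(\mathcal{B}\psi)([\rho])(x)\|_{\mathrm{HS}}$ depends only on the class $[\rho]$ and on $\pi(x)$.
\begin{itemize}
\item For the class: conjugating by $U\in\mathrm{U}(n)$ preserves the Hilbert--Schmidt norm.
\item For the base point: replacing $x$ by $\eta x$ multiplies on the left by $\rho(\eta)$, which is unitary, so the norm is unchanged. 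Hence the integrand is a genuine function on $M$.
\end{itemize}

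\textbf{Step 2: integration and Fubini.} Integrate the identity over $x\in D\cong M$. The left side becomes $\sum_\gamma\int_D|\psi(\gamma x)|^2\,dx=\|\psi\|_{L^2(X)}^2$, since the translates $\gamma D$ tile $X$ and $\Gamma$ acts by isometries. This is the same computation as \eqref{unitaryofnonbloch}. The right side becomes $\int_M\int_{\widehat{\Gamma}}\cdots$, and Tonelli (the integrand is nonnegative and jointly measurable) lets us swap the order of integration to obtain \eqref{unitB} on $C_0^\infty(X)$.

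\textbf{Step 3: extension.} $\mathcal{B}$ is linear and isometric on the dense subspace $C_0^\infty(X)\subset L^2(X)$, and the direct integral $L^2(\mathcal{V})$ is complete. So $\mathcal{B}$ extends uniquely to an isometry $L^2(X)\to L^2(\mathcal{V})$, and \eqref{unitB} holds for every $\psi\in L^2(X)$ by continuity.

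\textbf{Main obstacle: measurability.} The delicate point is joint measurability of $([\rho],x)\mapsto\|(\mathcal{B}\psi)([\rho])(x)\|_{\mathrm{HS}}$, and more generally making sense of the direct integral $\int^\oplus L^2(M;\mathrm{End}(F^\rho))\,d\mu$. Here the fibers vary with $\rho$ and are only defined modulo $\mathrm{PU}(n)$. The plan is to use a measurable cross-section of $\mathrm{Hom}_{\mathrm{irr}}(\Gamma,\mathrm{U}(n))\to\mathcal{M}^n_\Gamma$, which exists for type I groups since $\widehat{\Gamma}$ is standard Borel. With that section fixed, each $L^2(M;\mathrm{End}(F^\rho))$ is identified via a fixed lift of $M$ to $D$ with $L^2(D;\mathrm{End}(\mathbb{C}^n))$. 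For $\psi\in C_0^\infty(X)$, $\mathcal{B}\psi$ is then a finite sum of terms $\psi(\gamma^{-1}x)\rho(\gamma)$, each jointly measurable (indeed continuous in $\rho$), which settles Step 2. This also shows that $\mathcal{B}\psi$ defines a measurable section of $\mathcal{V}$, so it lies in $L^2(\mathcal{V})$.
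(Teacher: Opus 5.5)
Your proposal is correct and follows essentially the same route as the paper: the isometry of $\mathcal{B}_{\mathrm{NC}}$ (your fundamental-domain computation), then the Plancherel identity \eqref{uniF} applied pointwise in $x$, then Fubini/Tonelli to swap the integrals over $M$ and $\widehat{\Gamma}$. The paper instead applies \eqref{uniF} directly to $(\mathcal{B}_{\mathrm{NC}}\psi)(x)\in\ell^2(\Gamma)$ for $\psi\in L^2(X)$ and leaves measurability implicit, so your density extension from $C_0^\infty(X)$ and your measurable cross-section argument are extra care rather than a different argument.
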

\begin{proof}
    Let $\psi \in L^2(X)$. We have
    \begin{equation*}
        \begin{aligned}
            \| \psi \|_{L^2(X)}^2 & = \| \mathcal{B}_{\mathrm{NC}}\psi \|_{L^2(M;F^{\rho_{\pi}})}^2 =  \int_M \| (\mathcal{B}_{\mathrm{NC}}\psi) (x)\|^2_{\ell^2(\Gamma)} \,dx \\
            &= \int_M \int_{\widehat{\Gamma}}  \left\| \mathcal{F}\Big( (\mathcal{B}_{\mathrm{NC}}\psi)(x) \Big) ([\rho])\right\|_{\mathrm{HS}}^2 \,d\mu([\rho])dx \\
            &= \int_{\widehat{\Gamma}} \int_M  \left\| \mathcal{F}\Big( (\mathcal{B}_{\mathrm{NC}}\psi)(x) \Big) ([\rho])\right\|_{\mathrm{HS}}^2 \,dxd\mu([\rho])\\
            &= \int_{\widehat{\Gamma}} \int_M \big\|(\mathcal{B}\psi)([\rho]) (x) \big\|_{\mathrm{HS}}^2  \,dxd\mu([\rho]),
        \end{aligned}
    \end{equation*}
where we used the unitary property \eqref{unitaryofnonbloch} of $\mathcal{B}_{\mathrm{NC}}$ for the first equality, the unitary property \eqref{uniF} of $\mathcal{F}$ for the second line, Fubini theorem for the third line, and the definition of $\mathcal{B}$ for the last line. The proof is complete.
\end{proof}

\subsection{Examples of Abelian and type I coverings}\label{sec:typeIcover}

In this section, we give examples of Abelian and type I coverings of compact hyperbolic surfaces. 

Let $M_g$ be a compact hyperbolic surface of genus $g$. A fundamental domain for $M_g$ in the universal cover $\mathbb{H}$ is a hyperbolic $4g$-gon $D_g$, which is defined by the boundary word 
\begin{equation*}
    a_{1}b_{1}a_{1}^{-1}b_{1}^{-1}\cdots a_{g}b_{g}a_{g}^{-1}b_{g}^{-1}.
\end{equation*}
The surface $M_g$ is obtained by identifying the edges $a_j$ with $a_j^{-1}$ and $b_j$ with $b_j^{-1}$ for $j=1,\dots,g$. 

We begin with the construction of the Abelian cover. Let 
\begin{equation*}
    \gamma_1,\cdots,\gamma_g,\gamma_{g+1}, \gamma_{2g} \in \mathrm{PSL}_2(\mathbb{R})
\end{equation*}
be the corresponding side-pairing transformations such that $\gamma_j$ identifies $a_j$ with $a_j^{-1}$ and $\gamma_{g+j}$ identifies $b_j$ with $b_j^{-1}$ for $j=1,\cdots,g$. For each $i\in \Z$, starting from $\gamma_1^i(D_g)$, the $i$-th translated copy of $D_g$, we identify $\gamma_1^i(a_j)$ with $\gamma_1^i(a^{-1}_j)$ for $j=2,\cdots,g$ and $\gamma_1^i(b_j)$ with $\gamma_1^i(b_j^{-1})$ for $j=1,\dots,g$. The resulting surface is a surface of type $(g-1,2)$, denoted by $\widetilde{M}_{g-1,2}^{i}$. By identifying $\gamma_1^{i}(a_1)$ with $\gamma_1^{i+1}(a_1^{-1})$ for all $i \in \mathbb{Z}$, we obtain a $\mathbb{Z}$-cover hyperbolic surface $X$.

%To construct a $\mathbb{Z}$-cover of $M_g$, we consider a surface of type $(g-1,2)$, denoted by $\widetilde{M}_{g-1,2}^{i}$, which is obtained from $\gamma_1^{i}(D_g)$ by gluing all edges except for $a_1$ and $a_1^{-1}$, for each $i \in \mathbb{Z}$. 

Another significant example of Abelian covers is the maximal Abelian cover $\pi^{ab}: M^{ab}_g \to M_g$, whose deck transformation group is isomorphic to the first homology group $H_{1}(M_g, \mathbb{Z}) \cong \mathbb{Z}^{2g}$. For any Abelian cover $\pi: X \to M_g$, there exists an intermediate Abelian cover $\pi^A_{X}: M^{ab}_g \to X$ such that $\pi^{ab} = \pi \circ \pi^A_{X}$. To construct $M^{ab}_g$, we take copies $\{D_g^v\}_{v \in \mathbb{Z}^{2g}}$ of the fundamental domain. The edges $a_j$ of $D_g^v$ are glued to the edges $a_j^{-1}$ of $D_g^{v+e_j}$, and the edges $b_j$ of $D_g^v$ are glued to $b_j^{-1}$ of $D_g^{v+e_{g+j}}$ for $j=1,\dots,g$. This tiling results in the $\mathbb{Z}^{2g}$-cover of $M_g$.

We now consider a type I covering $\pi: X \to M$ with deck transformation group $\Gamma$. Since $\Gamma$ is a countable discrete type I group, Thoma's Theorem \ref{typeIandvirtuelly} states that $\Gamma$ is virtually Abelian; that is, there exists an Abelian normal subgroup $H \triangleleft
 \Gamma$ of finite index ($[\Gamma: H] < \infty$). According to the Galois correspondence of covering spaces, there exists a hyperbolic surface $X_H$ such that the map $\pi$ factors as $\pi = \pi_{f} \circ \pi_{H}$, where $\pi_{f}: X_H \to X$ is a finite covering and $\pi_{H}: X \to X_H$ is an $H$-covering. Consequently, the study of observability for type I coverings can be reduced to the Abelian case. However, this reduction does not directly yield the dependence of the observability constant on the group structure.

Based on this observation, we construct a non-Abelian type I cover of a genus-2 surface. Let $M_3$ be a genus-3 compact hyperbolic surface with an involution $\tau: M_3 \to M_3$ satisfying $\tau^2 = \mathrm{Id}$, such that the quotient $M_3 / \tau$ is a genus-2 surface $M_2$. We assume that a simple non-separating closed curve $\gamma$ (represented by $a_1$ in the polygon representation of $M_3$) is invariant under $\tau$ but with its orientation reversed ($\tau(\gamma) = \gamma^{-1}$). For instance, if $M_3$ is centrally symmetric around a point $O$ and $\tau$ is the rotation by $\pi$ around $O$, then $\gamma$ could be the curve passing through the central genus.

This involution $\tau$ induces a double cover $\pi_2: M_3 \to M_2$. By repeating the $\mathbb{Z}$-cover construction along $\gamma$, we obtain $\pi_{\mathbb{Z}}: X \to M_3$. We claim that the composite map $\pi := \pi_2 \circ \pi_{\mathbb{Z}}: X \to M_2$ is a \emph{$D_\infty$-covering} of $M_2$, where $D_\infty$ is the infinite dihedral group:
\begin{equation}
    D_\infty = \langle T, R : R^2=1, RTR=T^{-1} \rangle.
\end{equation}
Here, the $\mathbb{Z}$-covering $\pi_{\mathbb{Z}}$ corresponds to the translation $T$ on the homology class $[\gamma]$, while $\pi_2$ induces the reflection $R$. Thus, $X \to M_2$ is a non-Abelian type I covering. Note that any infinite type I covering of a closed hyperbolic surface is necessarily a non-compact, infinite-genus hyperbolic surface without cusps or funnels; see the schematic diagram in Figure \ref{type1cover}.

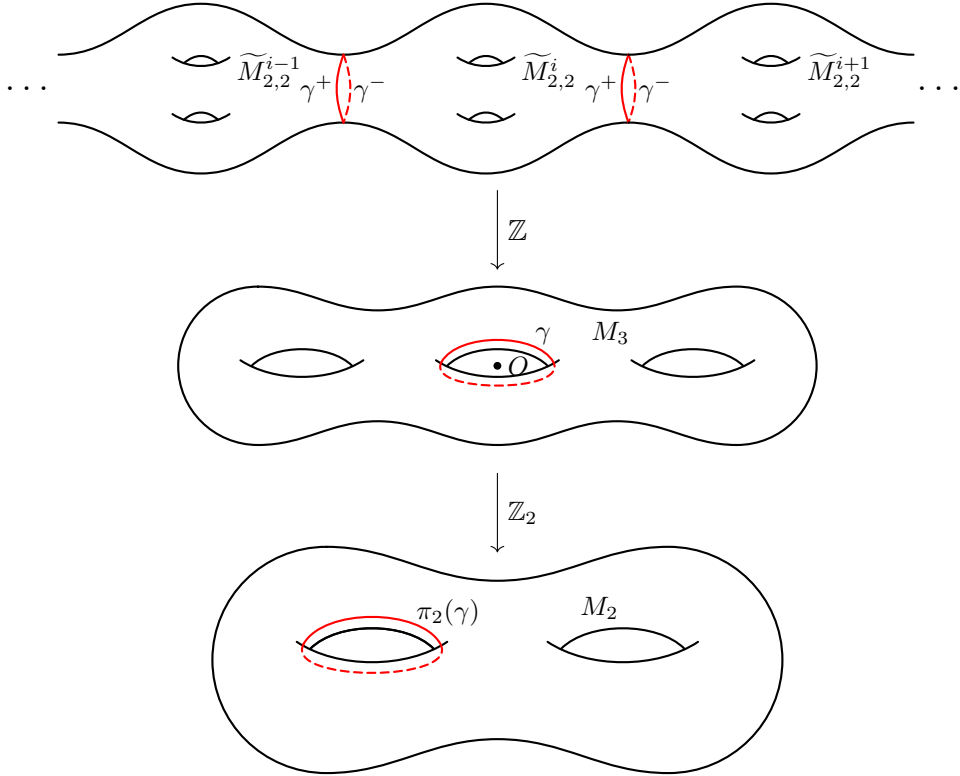
\begin{figure}[htbp!]
    \centering
\begin{tikzpicture}[line join=round, line cap=round, scale=1.5]
% --- 1. THE INFINITE SURFACE (REPEATING GENUS 2 BLOCKS) ---
    \begin{scope}[shift={(1.4,4.2)},scale=0.5]
  % Define the repeating unit for the infinite translation
  % Each unit is 5 units wide and contains 2 holes (Genus 2)
  \foreach \i in {-1, 0, 1} {
    \begin{scope}[shift={(\i*5, 0)}]
      
      % --- SMOOTH BOUNDARY (No cusps) ---
      % The 'out=0' and 'in=180' ensure the join at +/- 2.5 is perfectly horizontal
      \draw[thick] (-2.5, 0.6) to[out=0, in=180] (0, 1.5) 
                            to[out=0, in=180] (2.5, 0.6);
                            
      \draw[thick] (-2.5, -0.6) to[out=0, in=180] (0, -1.5) 
                              to[out=0, in=180] (2.5, -0.6);

      % --- CLEAN HOLES (No crossing) ---
      \foreach \h in {-0.5, 0.5} {
        \begin{scope}[shift={(0, \h)}]
          % Bottom curve (the smile)
          \draw[thick] (-0.5, 0.05) .. controls (-0.2, -0.15) and (0.2, -0.15) .. (0.5, 0.05);
          % Top curve (the frown) - set slightly inside the smile to prevent intersection
          \draw[thick] (-0.3, -0.05) .. controls (-0.1, 0.12) and (0.1, 0.12) .. (0.3, -0.05);
        \end{scope}
      }
    \end{scope}
    % draw gamma
        \ifnum\i=0\relax
            \draw[thick,red] (-2.5,0.6) to[out=-110,in=110] (-2.5,-0.6);
            \draw (-2.5,0) node[left] {\small$\gamma^+$};
            \draw (-2.5,0) node[right] {\small$\gamma^-$};
            \draw[thick,red,densely dashed] (-2.5,0.6) to[out=-70,in=70] (-2.5,-0.6);
            \draw[thick,red] (2.5,0.6) to[out=-110, in=110] (2.5,-0.6);
            \draw[thick,red,densely dashed] (2.5,0.6) to[out=-70, in=70] (2.5,-0.6);
            \draw (2.5,0) node[left] {\small$\gamma^+$};
            \draw (2.5,0) node[right] {\small$\gamma^-$};
            \draw (1.1+\i*5,0.8) node[below] {\small$ \widetilde{M}^{i}_{2,2}$};
            \else
            \ifnum \i=1\relax
            \draw (1.2+\i*5,0.8) node[below] {\small$\widetilde{M}^{i+\i}_{2,2}$};
            \else
            \ifnum \i=-1\relax
            \draw (1.2+\i*5,0.8) node[below] {\small$\widetilde{M}^{i-1}_{2,2}$};
            \fi
            \fi
        \fi
        
  }

  % --- INDICATORS & LABELS ---
  % Ellipsis to show it continues forever
  \node at (-8, 0) {\Large $\dots$};
  \node at (8, 0) {\Large $\dots$};

\end{scope}

 % The map
  \draw[->] (1.5,3.3) -- node[right] {$\mathbb{Z}$} (1.5,2.6);

  % Main outline of genus 3 surface
\begin{scope}[shift={(0.1,1.75)},scale=0.7]

  \draw[thick, smooth, tension=0.6] 
    (-1,1) to [out=0, in=180] (0.5,0.7) 
           to [out=0, in=180] (2,1) 
           to [out=0, in=180] (3.5,0.7) 
           to [out=0, in=180] (5,1)
           to [out=0, in=90]  (6,0) 
           to [out=-90, in=0] (5,-1)
           to [out=180, in=0] (3.5,-0.7) 
           to [out=180, in=0] (2,-1) 
           to [out=180, in=0] (0.5,-0.7) 
           to [out=180, in=0] (-1,-1)
           to [out=180, in=-90] (-2,0) 
           to [out=90, in=180] (-1,1);

  % Hole 1 (Left)
  \begin{scope}[shift={(-1.5,0)},scale=1.4]
    \draw[thick,shift={(0,-0.05)}] (0.2,0.1) .. controls (0.5,-0.1) and (1.0,-0.1) .. (1.3,0.1);
    \draw[thick] (0.3,0)   .. controls (0.5,0.2)  and (1.0,0.2)  .. (1.2,0);
  \end{scope}

  % Hole 2 (Middle)
  \begin{scope}[shift={(0.95,0)},scale=1.4]
    \draw[thick,shift={(0,-0.05)}] (0.2,0.1) .. controls (0.5,-0.1) and (1.0,-0.1) .. (1.3,0.1);
    % curve gamma
    % bottom
    \draw[thick,shift={(0,-0.05)}, color=red,densely dashed] (0.25,0.085) .. controls (0.1,-0.2) and (1.4,-0.2) .. (1.25,0.085);
    % above
    \draw[thick, shift={(0,-0.05)}, color=red] (0.25,0.085) .. controls (0.4,0.35) and (1.1,0.35) .. (1.25,0.085);
    \draw (1.0,0.27) node[right] {\small$\gamma$};
    \draw (1.5,0.3) node[right] {\small$ M_3$};
    
    \draw[thick] (0.3,0)   .. controls (0.5,0.2)  and (1.0,0.2)  .. (1.2,0);
    % Center
    \fill (0.75,0) circle (1pt) node[right] {\small$O$};
  \end{scope}

  % Hole 3 (Right)
  \begin{scope}[shift={(3.4,0)},scale=1.4]
    \draw[thick,shift={(0,-0.05)}] (0.2,0.1) .. controls (0.5,-0.1) and (1.0,-0.1) .. (1.3,0.1);
    \draw[thick] (0.3,0)   .. controls (0.5,0.2)  and (1.0,0.2)  .. (1.2,0);
  \end{scope}
\end{scope}

% Main outline of genus 2 surface
  \begin{scope}[shift={(0,-0.85)}]
  \draw[thick, smooth, tension=0.7] 
    (0,1) to [out=0, in=180] (1.5,0.7) to [out=0, in=180] (3,1) 
    to [out=0, in=90] (4,0) to [out=-90, in=0] (3,-1) 
    to [out=180, in=0] (1.5,-0.7) to [out=180, in=0] (0,-1) 
    to [out=180, in=-90] (-1,0) to [out=90, in=180] (0,1);

  % First hole (left)
  \begin{scope}[shift={(-0.5,0.1)},scale=1.2]
  \draw[thick,shift={(0,-0.05)}] (0.2,0.1) .. controls (0.5,-0.1) and (1.0,-0.1) .. (1.3,0.1);
  \draw[thick] (0.3,0)   .. controls (0.5,0.2)  and (1.0,0.2)  .. (1.2,0);
      % bottom
    \draw[thick,shift={(0,-0.05)}, color=red,densely dashed] (0.25,0.085) .. controls (0.1,-0.2) and (1.4,-0.2) .. (1.25,0.085);
    % above
    \draw[thick, shift={(0,-0.05)}, color=red] (0.25,0.085) .. controls (0.4,0.35) and (1.1,0.35) .. (1.25,0.085);
    \draw (1.0,0.27) node[right] {\small$\pi_2(\gamma)$};
    
    \draw[thick] (0.3,0)   .. controls (0.5,0.2)  and (1.0,0.2)  .. (1.2,0);
    \draw (2.2,0.3) node[right] {\small$ M_2$};
  \end{scope}

  % Second hole (right)
  \begin{scope}[shift={(-0.1,0.1)},scale=1.2]
  \draw[thick,shift={(0,-0.05)}] (1.7,0.1) .. controls (2.0,-0.1) and (2.5,-0.1) .. (2.8,0.1);
  \draw[thick] (1.8,0)   .. controls (2.0,0.2)  and (2.5,0.2)  .. (2.7,0);
  \end{scope}
\end{scope}
 % The map
  \draw[->] (1.5,0.8) -- node[right] {\small$\mathbb{Z}_2$} (1.5,0.1);

\end{tikzpicture}
    \caption{Schematic of a nontrivial type I covering with non-Abelian deck group. The middle map $\pi_2:M_3\to M_2$ is a $\Z_2$-cover induced by an involution $\tau$ with $\tau(\gamma)=\gamma^{-1}$. The top map $\pi_\Z:X\to M_2$ is the $\Z$-cover obtained by cutting $M_3$ along $\gamma$ and gluing copies $\widetilde{M}_{2,2}^{i}$ for all $i\in\Z$ along $\gamma^{\pm}$, producing an infinite-genus surface $X$.}
    \label{type1cover}
\end{figure}

\section{Dynamics of geodesic flow and anisotropic symbol classes}\label{sec: dynamics of geodesic flow}

In this section, we review the geodesic flow and the anisotropic symbol classes on compact hyperbolic surfaces; see \cite{dyatlovzahl2016spectral,dyatlov2018semiclassical}. 

Let $(M, g)$ be an oriented compact hyperbolic surface and $T^* M \setminus \{0\} $ consist of elements of the cotangent bundle $(x,\xi) \in T^*M$ such that $\xi \neq 0$. Denote by $S^* M=\left\{|\xi|_g=1\right\}$ the cosphere bundle, where $|\xi|_g:=\langle \xi,\xi\rangle_g^{1/2}$. Define the symbol $p \in C^{\infty}\left(T^* M \setminus \{0\} ;\mathbb{R}\right)$ by $p(x, \xi)=|\xi|_{g}$. The Hamiltonian flow of $p$ is the homogeneous geodesic flow given by
\begin{equation*}
    \varphi_t:=\exp \left(t H_p\right): T^* M \setminus \{0\} \rightarrow T^* M \setminus \{0\}.
\end{equation*}

We use an explicit frame on $T^* M \setminus \{0\}$ consisting of four vector fields
\begin{equation}
   H_p, \,U_{+}, \, U_{-},\, D \in C^{\infty}\big( T^* M \setminus \{0\} ; T( T^* M \setminus \{0\} )\big) . 
\end{equation}
Here $H_p$ is the generator of $\varphi_t$ and $D=\xi \cdot \partial_{\xi}$ is the generator of dilation. The vector fields $U_{\pm}$ are defined on $S^*M$ as stable ($U_{+}$) and unstable ($U_{-}$) horocyclic vector fields and extended homogeneously to $T^*M\setminus \{0\}$. They satisfy the commutation relations
\begin{equation}\label{commutator}
\left[H_p, U_{ \pm}\right]= \pm U_{ \pm}, \quad \left[U_{ \pm}, D\right]=\left[H_p, D\right]=0.
\end{equation}
Thus on each level set of $p$, the flow $\varphi_t$ has a flow/stable/unstable decomposition, with $U_{+}$ spanning the stable space and $U_{-}$ spanning the unstable space. We use the following notation for the weak stable/unstable spaces:
\begin{equation}\label{stablefoliation}
L_s:=\operatorname{span}\left(H_p, U_{+}\right), \ L_u:=\operatorname{span}\left(H_p, U_{-}\right) \subset T (T^* M \backslash \{0\} ).
\end{equation}
Then, $L_s$ and $ L_u$ are Lagrangian foliations; see \cite[Definition 3.1]{dyatlovzahl2016spectral}. Here we give a brief introduction of Lagrangian foliations.
\begin{definition}
Let $M$ be a manifold, $U \subset T^* M$ be an open set, and
\begin{equation*}
    L_{(x, \xi)} \subset T_{(x, \xi)} (T^* M  ), \qquad  (x, \xi) \in U,
\end{equation*}
a family of subspaces depending smoothly on $(x, \xi)$. We say that $L$ is a \emph{Lagrangian foliation} on $U$ if
\begin{itemize}
\item $L_{(x, \xi)}$ is integrable in the sense that if $X, Y$ are two vector fields on $U$ lying in $L$ at each point (we denote this by $X, Y \in C^{\infty}(U ; L)$), then the Lie bracket $[X, Y]$ lies in $C^{\infty}(U ; L)$ as well;
\item $L_{(x, \xi)}$ is a Lagrangian subspace of $T_{(x, \xi)}\left(T^* M\right)$ for each $(x, \xi) \in U$.
\end{itemize}
\end{definition} 

A basic example of Lagrangian foliation is the vertical foliation on $T^*\R^d$. Let 
\begin{equation*}
    T^*\mathbb{R}^d=\{(y_{1},\cdots,y_{d},\eta_{1},\cdots,\eta_{d})\},
\end{equation*}
and define the vertical foliation $L_0$ by
\begin{equation*}
    L_0=\operatorname{span} \big(\partial_{\eta_1}, \cdots, \partial_{\eta_d} \big).
\end{equation*}
By \cite[Lemma 3.6]{dyatlovzahl2016spectral}, every Lagrangian foliation is locally symplectomorphic to $L_0$:

\begin{lemma}\label{locallagrangian}
 Let $U\subset T^*M$ be open and $L$ be a Lagrangian foliation on $U$. For any $(x_0,\xi_0)\in U$, there exists an open neighborhood $U_0$ of $(x_0,\xi_0)$ and a symplectomorphism $\kappa: U_0 \rightarrow V_0$, such that $U_0 \subset U$, $V_0\subset T^* \mathbb{R}^d$, and
 \begin{equation*}
     (\kappa_{*}L)_{(x,\xi)}=d \kappa_{(x, \xi)} (L_{(x, \xi)})=\left(L_0\right)_{\kappa(x, \xi)}, \qquad \forall(x, \xi) \in U_0.
 \end{equation*}
The triple $(\kappa,U_0,V_0)$ is called a Lagrangian chart.
\end{lemma}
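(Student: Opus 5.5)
The statement is the local straightening lemma: near any point, a Lagrangian foliation $L$ is symplectomorphic to the vertical foliation $L_0$. I will prove it in two stages. First I straighten the foliation by a diffeomorphism. Then I use the Lagrangian property to upgrade that diffeomorphism to a symplectomorphism, via a Darboux-type argument adapted to the leaves.

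\emph{Step 1 (Frobenius and leaf coordinates).} By integrability and the Frobenius theorem, there is a neighborhood $U_1$ of $(x_0,\xi_0)$ and smooth functions $y_1,\dots,y_d$ on $U_1$ with the following properties:
\begin{itemize}
\item the differentials $dy_1,\dots,dy_d$ are linearly independent;
\item the leaves of $L$ are the level sets $\{y=\mathrm{const}\}$, i.e.\ $L=\ker dy_1\cap\dots\cap\ker dy_d$.
\end{itemize}

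\emph{Step 2 (the $y_j$ Poisson-commute).} Let $X,Y\in L$. Since $L$ is Lagrangian, $L^{\omega}=L$. Each Hamiltonian field $H_{y_j}$ satisfies $\omega(H_{y_j},X)=-dy_j(X)=0$ for all $X\in L$, so $H_{y_j}\in L^\omega=L$. Consequently $\{y_i,y_j\}=dy_i(H_{y_j})=0$. The $H_{y_j}$ are therefore commuting vector fields. They are linearly independent because the $dy_j$ are, so they span $L$.

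\emph{Step 3 (conjugate coordinates).} Choose a $d$-dimensional submanifold $\Sigma$ through $(x_0,\xi_0)$ transversal to $L$; for example, fix $\eta$-type coordinates and take a slice. Define coordinates $\eta_1,\dots,\eta_d$ by flowing out from $\Sigma$:
\[
(y,\eta)\mapsto \exp(\eta_1H_{y_1})\circ\cdots\circ\exp(\eta_dH_{y_d})(\sigma(y)),
\]
where $\sigma(y)$ is the point of $\Sigma$ with coordinate $y$. This is a local diffeomorphism because the flows commute and are transversal to $\Sigma$. By construction $H_{y_j}\eta_k=\delta_{jk}$, which gives $\{\eta_k,y_j\}=\delta_{jk}$.

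The remaining brackets $\{\eta_j,\eta_k\}$ need not vanish. This is the main obstacle, and I handle it by choosing $\Sigma$ to be a Lagrangian submanifold transversal to $L$. Such a $\Sigma$ exists locally: take any Lagrangian subspace transversal to $L_{(x_0,\xi_0)}$ in Darboux coordinates, and use the corresponding linear Lagrangian plane.

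With this choice, I verify that $\omega=\sum d\eta_j\wedge dy_j$ as follows.
\begin{itemize}
\item Write $\omega-\sum d\eta_j\wedge dy_j=:\beta$. This form is closed.
\item $\beta$ vanishes when either argument is some $H_{y_j}$, by the brackets computed above. Hence $\beta$ is basic, a pullback of a closed 2-form $\beta_0(y)$ in the $y$ variables.
\item $\beta$ restricts to zero on $\Sigma$: there $\eta\equiv0$ and $\Sigma$ is Lagrangian.
\item Since $\Sigma$ maps diffeomorphically onto $y$-space, these facts force $\beta_0=0$.
\end{itemize}

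If one prefers to avoid the Lagrangian-slice choice, there is an alternative. One can instead correct $\eta$ by $\eta\mapsto\eta+\partial_y F(y)$ after writing $\beta_0=d\alpha$ with $\alpha=\partial_y F$, using the Poincaré lemma and the symmetry coming from closedness.

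Finally I set $\kappa=(y,\eta)$, which is a symplectomorphism onto an open $V_0\subset T^*\mathbb{R}^d$. Since $\kappa_*H_{y_j}=\partial_{\eta_j}$, the image of $L$ is $L_0$, as required.
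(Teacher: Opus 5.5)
Your proof is correct. The paper gives no proof of its own; it refers to \cite[Lemma~3.6]{dyatlovzahl2016spectral}. Your route is the standard argument behind that reference: Frobenius gives defining functions $y_j$, the condition $L^\omega=L$ makes them Poisson-commute, and a Darboux-type completion produces conjugate coordinates. The only difference is that you carry out the completion yourself, by flowing out from a Lagrangian transversal slice, instead of quoting the Darboux--Carath\'eodory theorem.

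Two minor slips should be fixed.

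\begin{itemize}
\item \textbf{Sign.} With your convention $\iota_{H_f}\omega=-df$ you get $\iota_{H_{y_j}}\omega=-dy_j$, while $\iota_{H_{y_j}}\sum_k d\eta_k\wedge dy_k=dy_j$. So the form annihilated by the $H_{y_j}$ is $\omega+\sum_j d\eta_j\wedge dy_j$, not $\omega-\sum_j d\eta_j\wedge dy_j$. Your argument therefore yields $\omega=\sum_j d(-\eta_j)\wedge dy_j$, and the chart should be $\kappa=(y,-\eta)$, which still sends $L$ to $L_0$.
\item \textbf{Optional alternative.} The correction must be $\eta_j\mapsto\eta_j+a_j(y)$, where $\sum_j a_j\,dy_j$ is an arbitrary Poincar\'e-lemma primitive of $\beta_0$. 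An exact $\alpha=dF$ has $d\alpha=0$, and the shift $\eta\mapsto\eta+\partial_yF$ leaves $\sum_j d\eta_j\wedge dy_j$ unchanged, so it corrects nothing.
\end{itemize}
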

In Dyatlov, Zahl \cite{dyatlovzahl2016spectral} and Dyatlov, Jin \cite{dyatlov2018semiclassical}, the authors introduced the anisotropic symbol classes associated with Lagrangian foliations. We review the definition below, following \cite[(2.16) \& Appendix A.1]{dyatlov2018semiclassical}:

\begin{definition}\label{anisosymbolclass}
For two parameters $0 \leq \mu<1$, $0 \leq \mu^{\prime} \leq \frac{\mu}{2}$, $\mu+\mu^{\prime}<1$, an $h$-dependent symbol $a=a(x,\xi;h)$ lies in the class $S_{L, \mu, \mu^{\prime}}^{\mathrm{comp}}(U)$ if
\begin{itemize}
\item $a(x, \xi ; h)$ is smooth in $(x, \xi) \in U$, defined for $0<h \leq 1$, and supported in an $h$-independent compact subset of $U$;
\item $a$ satisfies the derivative bounds
$$
\sup _{x, \xi}\left|Y_1 \cdots Y_m Z_1 \cdots Z_k a(x, \xi ; h)\right| \leq C h^{-\mu k-\mu^{\prime} m}, \quad 0<h \leq 1
$$
for all vector fields $Y_1, \cdots, Y_m, Z_1, \cdots, Z_k$ on $U$ such that $Y_1, \cdots, Y_m$ are tangent to $L$. Here the constant $C$ depends on $Y_1, \cdots, Y_m, Z_1, \cdots, Z_k$ but does not depend on $h$.
\end{itemize}
The class $S_{L, \mu}^{\text{comp}}(U)$ is defined as
\begin{equation}
S_{L, \mu}^{\text{comp}}(U)=\bigcap_{\varepsilon>0} S_{L, \mu+\varepsilon, \varepsilon}^{\mathrm{comp}}(U).
\end{equation}
\end{definition}

The following lemma (see \cite[Lemma A.1]{dyatlov2018semiclassical}) is useful later:
\begin{lemma}
\label{multiofsymbol}
Let $C$ be an arbitrary fixed constant and assume that $a_1, \cdots, a_N \in S_{L, \mu, \mu^{\prime}}^{\mathrm{comp }}(U)$, $1 \leq N \leq C |\log h|$ are such that $\sup \left|a_j\right| \leq 1$ and each $S_{L, \mu, \mu^{\prime}}^{\mathrm {comp }}(U)$ seminorm of $a_j$ is bounded uniformly in $j$. Then for all small $\varepsilon>0$ the product $a_1 \cdots a_N$ lies in $S_{L, \mu+\varepsilon, \mu^{\prime}+\varepsilon}^{\mathrm{comp}}(U)$.
\end{lemma}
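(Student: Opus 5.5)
The plan is to control the derivatives of the product $a = a_1\cdots a_N$ by expanding them with the Leibniz rule and counting terms carefully. Fix vector fields $Y_1,\dots,Y_m$ tangent to $L$ and arbitrary $Z_1,\dots,Z_k$. First note that $a$ is smooth and supported in a fixed compact set. I will assume each $a_j$ is supported in a compact set $K\subset U$ independent of $j$ and $h$; I take this to be part of the uniform-seminorm hypothesis. Applying the Leibniz rule to $Y_1\cdots Y_mZ_1\cdots Z_k(a_1\cdots a_N)$ produces a sum over all ways of distributing the $m+k$ vector fields among the $N$ factors.

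A typical term has the form $\prod_{j=1}^N (\mathcal{D}_j a_j)$. Here $\mathcal{D}_j$ is a composition, in the inherited order, of $m_j$ of the $Y$'s and $k_j$ of the $Z$'s, with $\sum m_j = m$ and $\sum k_j = k$. Two kinds of factor appear:
\begin{itemize}
\item Factors with $\mathcal{D}_j=\mathrm{Id}$ are bounded by $\sup|a_j|\le 1$. This is essential, since there are up to $N\sim|\log h|$ of them and any constant larger than $1$ would blow up.
\item At most $m+k$ factors are differentiated. Each satisfies $|\mathcal{D}_j a_j|\le C h^{-\mu k_j-\mu' m_j}$, with $C$ uniform in $j$ by hypothesis.
\end{itemize}
One subtlety concerns the ordering: within a factor the $Y$'s and $Z$'s may be interleaved, e.g.\ $Z Y$ instead of $Y Z$. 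I would reorder them using commutators. Since $L$ is integrable, $[Y,Y']$ is again tangent to $L$, and each commutator removes one derivative, so it only improves the bound. Alternatively, I would simply note that the symbol class estimates are stated for the fixed ordering $Y\cdots Z\cdots$ and verify that the seminorm bounds can be taken for any ordering, with the same powers of $h$. So each term is bounded by $C^{m+k} h^{-\mu k-\mu' m}$.

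Next I count the terms. Distributing $m+k$ labelled derivatives among $N$ factors gives at most $N^{m+k}\le (C|\log h|)^{m+k}$ terms. The total is therefore bounded by
\[
C_{m,k}\,|\log h|^{m+k}\, h^{-\mu k-\mu' m}\le C_{m,k,\varepsilon}\, h^{-(\mu+\varepsilon)k-(\mu'+\varepsilon)m},
\]
using $|\log h|^{m+k}\le C_\varepsilon h^{-\varepsilon(m+k)}$. This is exactly the $S^{\mathrm{comp}}_{L,\mu+\varepsilon,\mu'+\varepsilon}$ bound.

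It remains to check that the parameter constraints still hold for small $\varepsilon$. The conditions $\mu+\varepsilon<1$ and $\mu+\mu'+2\varepsilon<1$ hold once $\varepsilon$ is small. The condition $\mu'+\varepsilon\le(\mu+\varepsilon)/2$ is more delicate. It requires $\mu'+\varepsilon/2\le\mu/2$, which fails in general if $\mu'=\mu/2$. In that case I would absorb the loss by using the slightly weaker exponent $\mu'+\varepsilon\to$ bound $\le h^{-(\mu+2\varepsilon)/2\cdot m}$, i.e.\ pass to the class with $\mu+2\varepsilon$. Equivalently, I would interpret ``for all small $\varepsilon$'' loosely, up to renaming $\varepsilon$; the estimate itself is unaffected.

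The only genuine obstacle is the combinatorial growth in $N$. It is handled by the sup bound $|a_j|\le 1$ on the undifferentiated factors, together with the fact that polynomial growth in $|\log h|$ costs only $h^{-\varepsilon}$.
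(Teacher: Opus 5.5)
Your proof is correct and is essentially the standard Leibniz-rule counting argument behind \cite[Lemma A.1]{dyatlov2018semiclassical}, which the paper cites without proof: at most $N^{m+k}$ terms, all but at most $m+k$ factors undifferentiated and bounded by $\sup|a_j|\le 1$, and $|\log h|^{m+k}=\mathcal{O}(h^{-\varepsilon(m+k)})$. The reordering worry is unnecessary, because the Leibniz rule preserves the inherited order, so each factor automatically receives its $Y$'s to the left of its $Z$'s; your remark that the edge case $\mu'=\mu/2$ forces passing to $S^{\mathrm{comp}}_{L,\mu+2\varepsilon,\mu'+\varepsilon}$ (the estimate being unaffected) is a fair reading of the statement.
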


Now we return to the case where $M$ is a compact hyperbolic surface. If $a \in C_0^{\infty} (T^* M \setminus \{0\} )$ is an $h$-independent symbol, then it follows from the commutation relations \eqref{commutator} that
\begin{equation}\label{derivativeoffunction}
H_p^k U_{+}^{\ell} U_{-}^m D^n (a \circ \varphi_t )=e^{(m-\ell) t} (H_p^k U_{+}^{\ell} U_{-}^m D^n a ) \circ \varphi_t.
\end{equation}
Therefore, for $0\leq \mu<1$, we have
\begin{equation}\label{propogationofsymbol}
a\circ \varphi_{t} \in S_{L_s, \mu}^{\mathrm{comp}} (T^*M\setminus \{0\} )\text{ uniformly in }t, \qquad 0\leq t\leq \mu |\log h |.
\end{equation}
Similarly, 
\begin{equation}\label{propogationofsymbol2}
 a\circ \varphi_{-t} \in S_{L_u, \mu}^{\mathrm{comp}} (T^*M\setminus \{0\} )\text{ uniformly in }t, \qquad 0\leq t\leq \mu |\log h |.
\end{equation}

In the following, we will introduce the quantization of symbol classes that gives operators between the sections of flat Hilbert bundles with uniform upper bounds for $(\mathcal{H},\rho)$.

\section{Semiclassical analysis on flat Hilbert bundles}
\label{sec:quantization on flat bundles}
In this section, we always assume that $(M,g)$ is a $d$-dimensional compact Riemannian manifold, $\mathcal{H}$ is a separable Hilbert space with a countable orthonormal basis $\{e_i\}_{i \in I}$, and $\pi:F\rightarrow M$ is a flat Hilbert bundle with fibers isomorphic to $\mathcal{H}$.

%$\rho:\pi_1(M)\to \mathrm{U}(\mathcal{H})$ is a unitary representation, and $\pi_{\rho}: F^{\rho}\to M$ is the flat Hilbert bundle given by $\rho$, see Section \ref{defofflatbundle}. 

For any $k\in \mathbb{R}$, let $S^k(T^*M)$ be the Kohn-Nirenberg symbol class (see, \emph{e.g.},\cite[(E.1.6)]{dyatlovzworski2019resonances}):
\begin{equation}\label{standardsymbolclass}
    S^k(T^*M)=\left\{  a(x,\xi;h) \in C^\infty(T^*M):\forall \alpha,\beta \in \mathbb{N}^d,\sup _{h \in\left(0,1\right]} \sup _{\substack{(x,\xi)  \in T^*M  }}\langle\xi\rangle^{|\beta|-k} |\partial_x^\alpha \partial_{\xi}^\beta a  |<\infty \right\}.
\end{equation}
For any $a \in S^k(T^*M)$ and $\rho:\pi_1(M)\to \mathrm{U}(\mathcal{H})$, we will introduce the quantization $\Oph^\rho(a)$ as a pseudodifferential operator on the flat bundle $F^\rho$; see Definition \ref{defofscalarsemipseudo}. The standard semiclassical microlocal analysis is referred to \cite{DimassiSjostrand99,zworski2012semiclassicalanalysis,dyatlovzworski2019resonances}, and the semiclassical analysis on flat bundles and principal bundles with finite-dimensional fibers is referred to \cite{mama2024semiclassical,cekiclefeuvre2024semiclassical}.

Our key observation is Proposition \ref{uniformboundedtheorem}: $\Oph^{\rho}(a)$ is a bounded operator between Sobolev spaces, with bounds independent of $(\mathcal{H},\rho)\in \mathcal{C}$. It allows us to choose uniform constants in the inequalities involving Sobolev norms. Here, we briefly explain the reason for the independence of $(\mathcal{H},\rho)$. 

% By the choice of a parallel local trivialization coordinate chart, see Definition \ref{defoflocaltrivialization}, we can regard the quantization of symbol class as pseudodifferential operator on the Hilbert space valued functions space on each coordinate chart. 

For any open subset $U\subset M$, linear operator $A: C^{\infty}(U)\mapsto C^{\infty}(U)$, and $\phi \in \mathrm{U}(\mathcal{H})$, we define $A\otimes \phi$ as a linear operator on $\mathcal{H}$-valued function space $C^{\infty}(U;\mathcal{H})$ as
\begin{equation}\label{localformofop}
(A\otimes\phi)(u)(x):=\phi\big((Au^i)(x) e_{i}\big), \qquad \forall u=u^i e_i \in C^{\infty}(U;\mathcal{H}).
\end{equation}
If $A$ in \eqref{localformofop} is an $L^2$-bounded operator between $L^2(U)$, we have:
\begin{equation}\label{squaretensorextension}
\|A\otimes \phi\|_{L^2(U;\mathcal{H}) \rightarrow L^2(U;\mathcal{H})}=\|A\|_{L^2(U) \rightarrow L^2(U)}.
\end{equation}
All the operators in Section \ref{sec:quantization on flat bundles} on $C^{\infty}(M;F^{\rho})$ have such a form \eqref{localformofop} on parallel orthonormal local trivialization charts. In what follows, we will follow \cite[Appendix E]{dyatlovzworski2019resonances} to introduce semiclassical microlocal analysis on flat bundles and highlight the constants that are independent of $(\mathcal{H},\rho)$.

\subsection{Semiclassical Sobolev spaces on flat Hilbert bundles}\label{subsec:semisobolev}

We start by generalizing the notion of \emph{cutoff chart}, which is widely used in the quantization of symbols on manifolds (\emph{c.f.} \cite[Definition E.9]{dyatlovzworski2019resonances}), to the setting of flat Hilbert bundles.

%To define the quantization of symbol, we introduce the following \textit{cutoff parallel orthonormal local trivialization chart} similar to \cite[Definition E.9]{dyatlovzworski2019resonances} and \cite[Chapter 14]{zworski2012semiclassicalanalysis} to define semiclassical pseudodifferential operator class by gluing the local form \eqref{localformofop} on each parallel local trivialization chart.

\begin{definition}\label{defofcutofflocaltrivialization}
We say that $(\Phi,U,\varphi,\chi)$ is a \textit{cutoff normal chart} of a flat Hilbert bundle $F$ if $(\Phi,U,\varphi)$ is a parallel orthonormal local trivialization chart in the sense of Proposition \ref{defoflocaltrivialization} and $\chi\in C_{0}^{\infty}(U)$ is a cutoff function. 

%More precisely,
%\begin{itemize}
%    \item $(U,\varphi)$ is a chart on $M$, \emph{i.e.}, $U\subset M$ is open, and $\varphi:U\rightarrow \varphi(U)\subset \mathbb{R}^d$ is a diffeomorphism.
%    \item $\Phi:\pi^{-1}(U)  \subset F \rightarrow U \times \mathcal{H}$ is a local trivialization of $F$, and $\{ \Phi^{-1}(e_i)\}_{i\in I}$ is a local parallel orthonormal frame of $F$.
%    \item $\chi\in C_{0}^{\infty}(U)$ is a cutoff function.
%\end{itemize}
\end{definition}

We can view the local trivialization $\Phi:\pi^{-1}(U)  \subset F \rightarrow U \times \mathcal{H}$ as an operator $\Phi:C^\infty(U;F)\rightarrow C^\infty(U ;\mathcal{H})$: for any $f \in C^\infty(U;F)$, 
\begin{equation*}
     (\Phi f)(x) = \Phi (f(x)), \qquad \forall x\in U.
\end{equation*}
We define the operators
\begin{equation*}
    (\varphi^{-1})^*\Phi \chi :C^\infty(U;F) \rightarrow C_0^\infty(\varphi(U);\mathcal{H}), \qquad (\varphi^{-1})^* \Phi \chi f(x)= \chi(\varphi^{-1}(x)) \Phi\big( f(\varphi^{-1}(x)) \big),
\end{equation*}
and
\begin{equation*}
    \chi \Phi^{-1} \varphi^*:C^\infty(\varphi(U);\mathcal{H}) \rightarrow C_0^\infty(U;F), \qquad \chi \Phi^{-1} \varphi^* g(x)= \chi(x) \Phi^{-1}\big( g(\varphi(x)) \big).
\end{equation*}
These two operators can be naturally extended to
\begin{equation}\label{cutoffoperator}
    (\varphi^{-1})^* \Phi \chi: C^{\infty}(M;F)\to C_{0}^{\infty}(\mathbb{R}^d;\mathcal{H}), \quad \mathrm{and} \quad \chi \Phi^{-1} \varphi^*: C^{\infty}(\mathbb{R}^d;\mathcal{H})\to C^{\infty}(M;F ),
\end{equation}
respectively. 

Let $U_\alpha$ and $U_\beta$ be two overlapping open subsets of $M$, with two cutoff normal charts $(\Phi_\alpha,U_\alpha,\varphi_\alpha,\chi_\alpha)$ and $(\Phi_\beta,U_\beta,\varphi_\beta,\chi_\beta)$. Since $F$ is flat, the transition map $\Phi_{\alpha\beta}(x)=(\Phi_\alpha \circ \Phi_\beta^{-1})(x,\cdot) \in \mathrm{U}(\mathcal{H})$ is a constant unitary operator on $\mathcal{H}$: 
\begin{equation*}
    \Phi_{\alpha \beta} (x) = \Phi_{\alpha \beta} \in \mathrm{U} (\mathcal{H}), \qquad \forall x  \in U_\alpha \cap U_\beta.
\end{equation*}
The composition of the operators in \eqref{cutoffoperator} can be written as:
\begin{equation}\label{transitionmapform}
 \big[ (\varphi_\alpha^{-1})^* \Phi_\alpha \chi_\alpha \big] \circ \big[ \chi_\beta \Phi_\beta^{-1} \varphi_\beta^* \big] = \varphi_{\beta \alpha}^* \chi_{\alpha \beta} \otimes \Phi_{\alpha \beta} =  \chi_{\beta \alpha }  \varphi_{\beta \alpha}^* \otimes \Phi_{\alpha \beta}  . 
\end{equation}
where $\varphi_{\beta \alpha} = \varphi_\beta \circ \varphi_\alpha^{-1}$ denotes the transition function, and $\chi_{\alpha \beta} := (\chi_\alpha \chi_\beta) \circ \varphi_\beta^{-1}$ denotes the multiplication operator:
\begin{equation*}
    (\chi_{\alpha \beta} f)(x)= \chi_\alpha (\varphi_\beta^{-1}(x)) \chi_\beta (\varphi_\beta^{-1}(x))  f(x), \qquad \forall f\in C^{\infty}(\mathbb{R}^d).
\end{equation*}

%For any $x\in U_\alpha \cap U_\beta$, the transition map $\Phi_{\alpha\beta}(x)=(\Phi_\alpha \circ \Phi_\beta^{-1})(x,\cdot) \in \mathrm{U}(\mathcal{H})$ which is a constant unitary operator $\phi_{UU^{\prime}}(x)\equiv \phi_{UU^{\prime}} \in \mathrm{U}(\mathcal{H})$

%For two cutoff normal charts $(\Phi_\alpha,U_\alpha,\varphi_\alpha,\chi_\alpha)$ and $(\Phi_\beta,U_\beta,\varphi_\beta,\chi_\beta)$ such that $U_\alpha\cap U_\beta \not= \varnothing$. Then for any $x \in U\cap U^{\prime}$, the transition map $\phi_{UU^{\prime}}(x)=\Phi \circ (\Phi^{\prime-1})(x,\cdot) \in \mathrm{U}(\mathcal{H})$ which is a constant unitary operator $\phi_{UU^{\prime}}(x)\equiv \phi_{UU^{\prime}} \in \mathrm{U}(\mathcal{H})$ for any $x \in  U\cap U^{\prime}$. We consider the composition of two operators in \eqref{cutoffoperator}. Let $\varphi_{0}=\varphi\circ (\varphi^{\prime})^{-1}$ and $\psi_{0}=\psi\psi^{\prime}\circ (\varphi^{\prime})^{-1}$, we obtain that
%\begin{equation}
%\begin{aligned}
%((\varphi^{-1})^*\Phi \psi)(\psi^{\prime} (\Phi^{\prime})^{-1} (\varphi^{\prime})^*)&=(\varphi_{0}^{-1})^*\psi_{0}\otimes \phi_{UU^{\prime}}: C^{\infty}(\R^d,\mathcal{H})\mapsto C^{\infty}_{0}(\R^d,\mathcal{H})\\
%((\varphi^{\prime})^{-1})^*\Phi^{\prime} \psi^{\prime})(\psi\Phi^{-1} \varphi^*)&=\psi_{0}\varphi_{0}^*\otimes \phi_{UU^{\prime}}^{-1}: C^{\infty}(\R^d,\mathcal{H}) \mapsto C^{\infty}_{0}(\R^d,\mathcal{H}).
%\end{aligned}
%\end{equation}

We now introduce semiclassical Sobolev spaces on the flat Hilbert bundle $F\rightarrow M$. First, for any Radon measure $d\mu$ on $\R^d$, $s\in \mathbb{R}$, $0<h\leq 1$, and $u =   u^{i} e_{i}\in C_0^\infty(\mathbb{R}^d;\mathcal{H})$, we define the semiclassical Sobolev norm of $u$ as:
\begin{equation*}
    \|u\|^2_{H^{s}_{h}(\R^d,\mathcal{H},d\mu)} :=\sum_{i\in I}\|u^{i}\|^2_{H^{s}_{h}(\R^d,d\mu)}=\sum_{i\in I}\|\Oph(\langle \xi\rangle^s)u_{i}\|_{L^2(\R^d,d\mu)}, 
\end{equation*}
where $\langle \xi\rangle= \sqrt{1+|\xi|^2}$ denotes the Japanese bracket and $\Oph$ denotes the standard semiclassical quantization on $\mathbb{R}^d$:
\begin{equation}\label{standquant}
    (\Oph(a) ) u(x)= \frac{1}{(2\pi h)^d} \int_{\mathbb{R}^{2d}} a(x,\xi;h ) u(y) e^{i(x-y)\cdot \xi/h} \,dyd\xi.
\end{equation}

\begin{definition}\label{def:sobolev}
    Let $d\mu$ be a Radon measure on $M$ and $\mathcal{U}=\left\{\left(\Phi_{\alpha}, U_{\alpha}, \varphi_{\alpha},\chi_{\alpha}\right)\right\}_{\alpha\in \Lambda}$ be a finite cover of cutoff normal charts of $F$ such that $\sum_{\alpha\in \Lambda}\chi_{\alpha}^2=1$. For any $s \in \mathbb{R}$, $0<h\leq 1$, and $u \in C^{\infty}(M; F)$, we define the semiclassical Sobolev norm of $u$ as:
\begin{equation}\label{defofsobolevnorm}
\|u\|_{H_h^s(M, F,\mathcal{U},d\mu)}^2:=\sum_{\alpha\in \Lambda}\left\|\big((\varphi_{\alpha}^{-1})^*\Phi_{\alpha}\chi_{\alpha}\big)u\right\|_{H_h^s(\R^d,  \mathcal{H},(\varphi_{\alpha})_{*}d\mu)}^2,
\end{equation}
where $(\varphi_{\alpha})_{*}d\mu$ denotes the pushforward of the measure $d\mu$ to $\R^d$.
\end{definition}

%Recall that $M$ is equipped with a volume measure $d\mu$ induced by the Riemannian metric $g$. More generally, we allow $d\mu$ to be any smooth positive density on $M$. We take a finite cover of cutoff normal charts $\mathcal{U}=\left\{\left(\Phi_{\alpha}, U_{\alpha}, \varphi_{\alpha},\chi_{\alpha}\right)\right\}_{\alpha\in \Lambda}$ of $F$ such that $\sum_{\alpha\in \Lambda}\chi_{\alpha}^2=1$.  

\begin{remark}
    The semiclassical Sobolev norm coincides with the $L^2$ norm when $s=0$:
\begin{equation*}
\begin{aligned}
    \|u\|_{H_h^{s=0}(M, F,\mathcal{U},d\mu)}^2 
    &=\sum_{\alpha\in \Lambda} \int_{\mathbb{R}^d} |\chi_\alpha(\varphi_\alpha^{-1} (x) )|^2\big\| \Phi_\alpha \big( u( \varphi_\alpha^{-1} (x) ) \big) \big\|_{\mathcal{H}}^2 \, (\varphi_{\alpha})_{*}d\mu(x) \\
    &= \sum_{\alpha\in \Lambda} \int_{U_\alpha} |\chi_\alpha( y )|^2\big\| \Phi_\alpha \big( u( y) \big) \big\|_{\mathcal{H}}^2 \, d\mu(y) \\
    &= \int_M \|u(x)\|^2_{F_x}\,d\mu(x),
\end{aligned}
\end{equation*}
where we used $\sum_{\alpha\in \Lambda}\chi_{\alpha}^2=1$ in the last equality. 
\end{remark}

Let $F=F^\rho$ be the flat Hilbert bundle associated with the representation $\rho : \pi_1(M) \rightarrow \mathrm{U}(\mathcal{H})$ (see Section \ref{subsec: twisted laplacian}). At first glance, the semiclassical Sobolev norm defined above appears to depend on the choices of the cover $\mathcal{U}$ and the measure $d\mu$. We will show, however, that the semiclassical Sobolev norms corresponding to different choices of $(\mathcal{U},d\mu)$ are equivalent. More importantly, this equivalence holds uniformly with respect to the semiclassical parameter $h$, the Hilbert space $\mathcal{H}$ and the representation $\rho$.

%More generally, our Sobolev norms are depending on the choice of cover $\mathcal{U}$ and smooth positive density $d\mu$, but we can prove that they are uniformly equivalent with respect to the unitary representation $(\mathcal{H},\rho)$.

\begin{proposition}\label{prop:equivsobolev}
For any $s \in \mathbb{R}$, $0<h\leq 1$, and any two pairs of $(\mathcal{U},d\mu)$ and $(\mathcal{U}',d\mu')$ satisfying the assumptions of Definition \ref{def:sobolev}. There is a constant $C>0$, depending only on $d,s,\mathcal{U}, \mathcal{U}' , d\mu,d\mu'$ and in particular independent of $(h,\mathcal{H},\rho)$, such that for every $u \in C^{\infty}(M;F^{\rho})$ one has
\begin{equation*}
    \frac{1}{C}\|u\|_{H^{s}_{h}(M;F^{\rho},\mathcal{U}^{\prime},d\mu^{\prime})}\leq \|u\|_{H^{s}_{h}(M;F^{\rho},\mathcal{U},d\mu)}\leq C\|u\|_{H^{s}_{h}(M;F^{\rho},\mathcal{U}^{\prime},d\mu^{\prime})}.
\end{equation*}

%For any two finite cutoff parallel local trivialization coordinate chart covers $\mathcal{U}:=\{(\Phi_{\alpha},U_{\alpha},\varphi_{\alpha},\chi_{\alpha})\}_{\alpha \in \Lambda}$ and $\mathcal{U}^{\prime}:=\{(\Phi_{\beta}^{\prime},U_{\beta}^{\prime},\varphi_{\beta}^{\prime},\chi_{\beta}^{\prime})\}_{\beta \in \Lambda^{\prime}}$, and any two positive density $d\mu$ and $d\mu^{\prime}$, then there exists a constant $C>0$ independent of $(\mathcal{H},\rho)$ and $h$ such that 
\end{proposition}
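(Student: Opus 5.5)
By symmetry it suffices to prove one inequality, $\|u\|_{H^s_h(\mathcal U,d\mu)}\le C\|u\|_{H^s_h(\mathcal U',d\mu')}$. The strategy is to reduce everything to scalar estimates on $\R^d$, which are classical and independent of $(\mathcal H,\rho)$, using \eqref{squaretensorextension} and the fact that the transition maps of parallel orthonormal trivializations are constant unitaries.

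The first step is to change the measure on $\R^d$. Both $(\varphi_\alpha)_*d\mu$ and $(\varphi_\alpha)_*d\mu'$ are smooth positive densities on $\varphi_\alpha(U_\alpha)$; in the case of interest they are the Riemannian density or another smooth positive density. On the support of $\chi_\alpha\circ\varphi_\alpha^{-1}$ the two densities are comparable. I would then use the scalar fact that, for $w\in C_0^\infty(K)$ with $K$ a fixed compact set and a smooth positive weight $m$, one has $\|w\|_{H^s_h(\R^d,m\,dx)}\le C\|w\|_{H^s_h(\R^d,dx)}$ uniformly in $h$. This follows by writing $m^{1/2}\Oph(\langle\xi\rangle^s)w=\Oph(\langle\xi\rangle^s)(\tilde m^{1/2}w)+\mathcal O(h^\infty)$-type commutator terms, with $\tilde m$ a cutoff extension. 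This estimate is applied componentwise in $u^i$ and summed over $i\in I$, so the constant does not see $\mathcal H$.

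The second step is to change the charts. For each $\alpha$, I would insert $1=\sum_\beta(\chi'_\beta)^2$ and write
\[
(\varphi_\alpha^{-1})^*\Phi_\alpha\chi_\alpha u=\sum_\beta \big[(\varphi_\alpha^{-1})^*\Phi_\alpha\chi_\alpha\big]\circ\big[\chi'_\beta\Phi'^{-1}_\beta\varphi'^*_\beta\big]\big((\varphi'^{-1}_\beta)^*\Phi'_\beta\chi'_\beta u\big).
\]
By \eqref{transitionmapform}, each composed operator has the form $(\chi\,\varphi_{\beta\alpha}^*)\otimes\Phi_{\alpha\beta}$. Here $\Phi_{\alpha\beta}$ is a constant unitary, and $\chi\varphi^*_{\beta\alpha}$ is a scalar operator: a compactly supported cutoff times pullback by a diffeomorphism between open subsets of $\R^d$. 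On a connected overlap the unitary is constant, and in general it is locally constant; I would refine the covers so that all overlaps are connected, or split $\chi$ by components. The scalar operator $\chi\varphi_{\beta\alpha}^*$ is bounded on $H^s_h(\R^d)$ uniformly in $h$, by invariance of semiclassical Sobolev spaces under diffeomorphisms (\cite[Appendix E]{dyatlovzworski2019resonances}). Since $\Phi_{\alpha\beta}$ is unitary, the Sobolev analogue of \eqref{squaretensorextension} holds: the $H^s_h(\R^d,\mathcal H)$ norm is an $\ell^2$ sum of scalar norms, and a unitary on the fiber preserves it. This follows by expanding in the basis and noting that $\Oph(\langle\xi\rangle^s)\otimes\mathrm{Id}$ commutes with $\mathrm{Id}\otimes\Phi_{\alpha\beta}$. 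Hence each term is bounded by $C\|(\varphi'^{-1}_\beta)^*\Phi'_\beta\chi'_\beta u\|_{H^s_h}$, and summing over the finitely many $\alpha,\beta$ gives the claim.

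The main obstacle is justifying that $\|(A\otimes\phi)v\|_{H^s_h(\R^d,\mathcal H)}\le\|A\|_{H^s_h\to H^s_h}\|v\|_{H^s_h(\R^d,\mathcal H)}$ for infinite-dimensional $\mathcal H$. I would handle this by noting that $H^s_h(\R^d,\mathcal H)\cong H^s_h(\R^d)\widehat\otimes\mathcal H$ as Hilbert spaces. The operator $A\otimes\phi=(A\otimes \mathrm{Id})(\mathrm{Id}\otimes\phi)$ is then a Hilbert tensor product of bounded operators, with norm $\|A\|$, exactly as in \eqref{squaretensorextension}. Approximating $u$ by finite sums in the basis removes any convergence issues.
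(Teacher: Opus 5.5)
Your proposal is correct and follows essentially the same route as the paper. You insert a quadratic partition of unity from the other cover, use \eqref{transitionmapform} to write each chart-change operator as a scalar cutoff-pullback tensored with a constant unitary, and reduce via the Sobolev analogue of \eqref{squaretensorextension} to the $h$-uniform scalar bounds of \cite[Appendix E]{dyatlovzworski2019resonances}.

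Your extra care is a refinement of the same argument. Splitting the cutoffs over connected components of the overlaps is needed because the transition unitary is only locally constant, a point the paper passes over, and you also justify the tensor-norm identity for infinite-dimensional $\mathcal{H}$. The only slip is in the measure step. The direction you state follows immediately from $\|m^{1/2}f\|_{L^2}\le \sup m^{1/2}\,\|f\|_{L^2}$, so no commutator is needed, and the commutator you invoke would be $\mathcal{O}(h)$, not $\mathcal{O}(h^\infty)$.
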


\begin{proof}
%First, we notice that for any $A: H^{s}_{h}(\R^d,w_{1}(x)dx) \to H^{t}_{h}(\R^d,w_{2}(x))$ which is uniformly bounded for $0<h\leq1$, and any $\phi \in \mathrm{U}(\mathcal{H})$, we have $A\otimes \phi: H^{s}_{h}(\R^d,\mathcal{H},w_{1}(x)dx)\to H^{t}_{h}(\R^d,\mathcal{H},w_{2}(x)dx)$ is uniformly bounded and 
%\begin{equation}\label{sobolevtensorextension}
%\|A\otimes \phi\|_{H^{s}_{h}(\R^d,\mathcal{H},w_{1}(x)dx)\to H^{t}_{h}(\R^d,\mathcal{H},w_{2}(x)dx)}=\|A\|_{H^{s}_{h}(\R^d,w_{1}(x)dx) \to H^{t}_{h}(\R^d,w_{2}(x)dx)}.
%\end{equation}
%Now we return to our proof. By symmetry, we only need to prove one side. 

%Let $\chi_{1,\beta}^{\prime}\in C_{0}^{\infty}(U_{\beta}^{\prime})$ be $\psi_{\beta}^{\prime}\equiv 1$ on $\mathrm{supp}\,\chi_{\beta}^{\prime}$. 

Denote $\mathcal{U}=\{(\Phi_{\alpha},U_{\alpha},\varphi_{\alpha},\chi_{\alpha})\}_{\alpha \in \Lambda}$ and $\mathcal{U}^{\prime}=\{(\Phi_{\beta}^{\prime},U_{\beta}^{\prime},\varphi_{\beta}^{\prime},\chi_{\beta}^{\prime})\}_{\beta \in \Lambda^{\prime}}$. Let $u \in C^{\infty}(M;F^{\rho})$. Since $\sum_{\alpha \in \Lambda} \chi_\alpha^2 =1$ and 
\begin{equation*}
    \big[\chi_\alpha\Phi_\alpha^{-1}\varphi_\alpha^* \big] \circ \big[ (\varphi_\alpha^{-1})^*\Phi_\alpha\chi_\alpha  \big] u = \chi_\alpha^2 u, \qquad \forall \alpha \in \Lambda,
\end{equation*}
we get
\begin{equation}\label{for49}
\begin{aligned}
    ((\varphi_{\beta}')^{-1})^*\Phi_{\beta}'\chi_{\beta}'u
    &=((\varphi_{\beta}')^{-1})^*\Phi_{\beta}'\chi_{\beta}' \sum_{\alpha \in \Lambda} \chi_\alpha^2u \\
    &=((\varphi_{\beta}')^{-1})^*\Phi_{\beta}'\chi_{\beta}' \sum_{\alpha \in \Lambda}\big[\chi_\alpha\Phi_\alpha^{-1}\varphi_\alpha^* \big] \circ \big[ (\varphi_\alpha^{-1})^*\Phi_\alpha\chi_\alpha  \big] u \\
    & = \sum_{\alpha \in \Lambda} G_{\beta \alpha} \circ \big[ (\varphi_\alpha^{-1})^*\Phi_\alpha\chi_\alpha  \big] u ,
\end{aligned}
\end{equation}
where $G_{\beta \alpha}$ is the composition operator appear in \eqref{transitionmapform}:
\begin{equation*}
    G_{\beta \alpha} : = \big[((\varphi_{\beta}')^{-1})^*\Phi_{\beta}'\chi_{\beta}'  \big]\circ \big[\chi_\alpha\Phi_\alpha^{-1}\varphi_\alpha^* \big] :C^{\infty}(\R^d;\mathcal{H})\rightarrow C^{\infty}_{0}(\R^d;\mathcal{H}).
\end{equation*}
Denote $\varphi_{\alpha \beta } = \varphi_\alpha \circ (\varphi_\beta')^{-1}$, $\chi_{\beta \alpha} = (\chi_\beta' \chi_\alpha) \circ (\varphi_\beta')^{-1}$ and $\Phi_{\beta \alpha} = \Phi_\beta' \circ \Phi^{-1}_\alpha \in \mathrm{U}(\mathcal{H})$. By \eqref{transitionmapform}, we have
\begin{equation}\label{normdependence}
\begin{aligned}
    &\| G_{\beta \alpha} \|_{H^{s}_{h}(\R^d,\mathcal{H},(\varphi_{\alpha})_{*}d\mu)\to H^{s}_{h}(\R^d,\mathcal{H},(\varphi'_{\beta})_{*}d\mu')} \\
    &= \| \varphi_{\alpha \beta }^* \chi_{\beta \alpha } \otimes \Phi_{\beta \alpha } \|_{H^{s}_{h}(\R^d,\mathcal{H},(\varphi_{\alpha})_{*}d\mu)\to H^{s}_{h}(\R^d,\mathcal{H},(\varphi'_{\beta})_{*}d\mu')} \\
     &= \| \varphi_{\alpha \beta }^* \chi_{\beta \alpha }  \|_{H^{s}_{h}(\R^d,(\varphi_{\alpha})_{*}d\mu)\to H^{s}_{h}(\R^d,(\varphi'_{\beta})_{*}d\mu')} ,
\end{aligned}
\end{equation}
which is a constant depending only on $d,s,\alpha,\beta, d\mu,d\mu'$ (the independence with respect to $h$ is nontrivial, see \cite[Page 561]{dyatlovzworski2019resonances}). It follows from \eqref{for49} and \eqref{normdependence} that 
\begin{equation*}
\begin{aligned}
\|u\|_{H^{s}_{h}(M;F^{\rho},\mathcal{U}',d\mu')}^2&=\sum_{\beta \in \Lambda'}\| ((\varphi_{\beta}')^{-1})^*\Phi_{\beta}'\chi_{\beta}'u \|^2_{H^{s}_{h}(\R^d,\mathcal{H},(\varphi'_{\beta})_{*}d\mu')}\\
& \leq \sum_{\beta \in \Lambda'} \sum_{\alpha \in \Lambda} \|G_{\beta \alpha } ((\varphi_\alpha^{-1})^*\Phi_\alpha\chi_\alpha u)\|_{H^{s}_{h}(\R^d,\mathcal{H},(\varphi'_{\beta})_{*}d\mu')}  \\
& \leq C \sum_{\alpha \in \Lambda}\| (\varphi_{\alpha}^{-1})^*\Phi_{\alpha}\chi_{\alpha}u \|^2_{H^{s}_{h}(\R^d,\mathcal{H},(\varphi_{\alpha})_{*}d\mu)}  \\
& = C \|u\|_{H^{s}_{h}(M;F^{\rho},\mathcal{U},d\mu)}^2,
\end{aligned}
\end{equation*}
where $C>0$ is a constant depending only on $d,s,\mathcal{U}, \mathcal{U}' , d\mu,d\mu'$. The same argument yields the other direction. The proof is complete.
\end{proof}

In view of Proposition \ref{prop:equivsobolev}, we can safely omit $\mathcal{U}$ and $d\mu$ from the notation and  simply write $\|\cdot\|_{H^{s}_{h}(M;F^{\rho})}$ for the semiclassical Sobolev norm. The semiclassical Sobolev space $H^{s}_{h}(M;F^{\rho})$ is defined as the completion of $C^{\infty}(M;F^{\rho})$ with respect to this norm. For $h=1$, we suppress the subscript $h$. 

\begin{proposition}\label{inclusionofSobolev}
    We have the following properties for the semiclassical Sobolev spaces:
    \begin{itemize}
        \item[(a)] Fix $s\in \mathbb{R}$. For different values of $h\in (0,1]$, the spaces $H^s_h(M;F^\rho)$ are the same sets with different norms. These norms are mutually equivalent, but the equivalence is not uniform with respect to $h$.
        \item[(b)] For any $s>t$ and $0<h \leq 1$, one has the continuous embedding $H^{s}_{h}(M;F^{\rho})\subset H^{t}_{h}(M;F^{\rho})$. If $\mathcal{H}$ is finite-dimensional, the natural inclusion $\iota: H^{s}_{h}(M;F^{\rho})\rightarrow H^{t}_{h}(M;F^{\rho})$ is compact. In contrast, when $\mathcal{H}$ is infinite-dimensional, $\iota$ is not compact.
        \item[(c)] For any cutoff normal chart $(\Phi,U,\varphi,\chi)$ and $s\in \R$, there exists a constant $C>0$ independent of $(\mathcal{H},\rho,h)$ such that the operators in \eqref{cutoffoperator} are continuous:
\begin{equation}\label{sobolevnormofcutoff}
\|(\varphi^{-1})^*\Phi \chi\|_{H^{s}_h(M;F^{\rho})\to H^{s}_h(\R^d,\mathcal{H})} +\|\chi\Phi^{-1}\varphi^*\|_{ H^{s}_h(\R^d,\mathcal{H})\to H^{s}_h(M;F^{\rho})}\leq C.
\end{equation}
\end{itemize}
\end{proposition}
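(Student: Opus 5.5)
The plan is to reduce all three items to the scalar statements on $\R^d$ and on $M$, using the tensor-product structure \eqref{localformofop}--\eqref{squaretensorextension} together with the chart computation \eqref{transitionmapform}. Throughout, fix a finite cover $\mathcal{U}=\{(\Phi_\alpha,U_\alpha,\varphi_\alpha,\chi_\alpha)\}$ with $\sum\chi_\alpha^2=1$; by Proposition \ref{prop:equivsobolev} nothing depends on this choice.

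For (a), the starting point is the observation that on $\R^d$ the multipliers $\langle \xi\rangle^s$ and $\langle h\xi\rangle^s$ are comparable with constants $C(h,s)$ that blow up as $h\to 0$. This gives equivalence of $H^s_h(\R^d)$ and $H^s_1(\R^d)$ for each fixed $h$. Applying it componentwise in the basis $\{e_i\}$ and summing over $i$ gives the same equivalence for $H^s_h(\R^d,\mathcal{H})$, and summing over $\alpha$ in \eqref{defofsobolevnorm} transfers it to $M$. To see that the equivalence is not uniform in $h$, I would test on a single component $u=f\,\Phi_\alpha^{-1}(e_1)$ with $f$ oscillating at frequency $\sim h^{-1}$.

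For (b), the continuous embedding follows from $\langle\xi\rangle^t\le\langle\xi\rangle^s$, applied chartwise and componentwise. Compactness in the finite-dimensional case comes from the scalar Rellich theorem on each compactly supported chart piece: on each chart the space is a finite direct sum of scalar Sobolev spaces, and compactness is preserved by finite direct sums and finite sums over $\alpha$. For non-compactness when $\dim\mathcal{H}=\infty$, I would pick $\chi\in C_0^\infty(U_\alpha)$ nonzero and set $u_i=\chi\,\Phi_\alpha^{-1}(e_i)$. These sections are uniformly bounded in $H^s_h$. They are also pairwise orthogonal in $L^2$: the frames are parallel and orthonormal, and the transition maps are constant unitaries, so orthogonality survives passing to other charts. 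Consequently $\|u_i-u_j\|_{H^t_h}\ge c\|u_i-u_j\|_{H^{\min(t,0)}_h}$, and I aim to bound this below by a positive constant, which rules out any convergent subsequence. The cleanest route is to compute the norm in the single chart containing $\mathrm{supp}\,\chi$, using a cover in which that chart appears; this is allowed by Proposition \ref{prop:equivsobolev}. There the $u_i$ become $(\chi\circ\varphi^{-1})e_i$, whose mutual $H^t_h$ distances equal $\sqrt2\,\|\chi\circ\varphi^{-1}\|_{H^t_h}>0$.

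For (c), I would write $\|(\varphi^{-1})^*\Phi\chi u\|$ via the partition as $\sum_\alpha G\circ[(\varphi_\alpha^{-1})^*\Phi_\alpha\chi_\alpha]u$, exactly as in \eqref{for49}. Each composition $G$ has the form $\varphi^*_{\cdot}\chi_{\cdot}\otimes\Phi_{\cdot}$ with $\Phi_\cdot$ a constant unitary, so by the analogue of \eqref{normdependence} its norm equals a scalar operator norm, which is independent of $(\mathcal{H},\rho)$ and uniform in $h$. The operator $\chi\Phi^{-1}\varphi^*$ is handled the same way: the norm of its image is a sum over $\alpha$ of $(\varphi_\alpha^{-1})^*\Phi_\alpha\chi_\alpha\chi\Phi^{-1}\varphi^* g$, each of which is again of transition form.

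The main obstacle is the uniformity in $h$ of scalar change-of-variables and multiplication bounds on $H^s_h(\R^d)$ for negative or non-integer $s$. I will simply cite \cite{dyatlovzworski2019resonances} for this, as in Proposition \ref{prop:equivsobolev}.
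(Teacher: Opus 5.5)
Your proposal is correct and follows essentially the paper's route: the paper declares (a)--(c) routine reductions to the scalar theory of \cite[Appendix E]{dyatlovzworski2019resonances}, which is what your chartwise and componentwise arguments, together with the transition-form bound from the proof of Proposition \ref{prop:equivsobolev}, carry out; for non-compactness it uses exactly your sequence $u_i=g\otimes\Phi^{-1}(e_i)$, whose mutual $H^t_h$-distances equal the constant $\sqrt{2}\,\|g\|_{H^t_h}$. One caveat comes from the statement itself: for $s=0$ every one of these norms equals the $L^2$ norm, so the non-uniformity claim in (a), and your oscillating test function, apply only when $s\neq 0$.
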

\begin{proof}
    The proof in the setting of flat Hilbert bundles follows the same lines as in the manifold case and is entirely routine, so we omit the details. For the corresponding argument for Sobolev spaces on manifolds, we refer to \cite[Appendix E]{dyatlovzworski2019resonances}.
    
    We give an example to show that the natural inclusion $\iota: H^{s}_{h}(M;F^{\rho})\rightarrow H^{t}_{h}(M;F^{\rho})$ is not compact when $s>t$ and $\mathcal{H}$ is infinite-dimensional. Let $(\Phi,U,\varphi,\chi)$ be a cutoff normal chart of $F^\rho$, $g \in H^{s}_{h}(M)$ with $\mathrm{supp}\,g \subset U$ and $u_i = g \otimes  \Phi^{-1}( e_i)$ for every $i \in I$. Then 
    \begin{equation*}
        \|u_i \|_{H^{s}_h(M;F^{\rho})} = \| g \|_{H^s_h(M)}, \qquad \forall i \in I,
    \end{equation*}
    and
    \begin{equation*}
        \|u_i -u_j \|^2_{H^{t}_h(M;F^{\rho})}  = 2\| g \|_{H^t_h(M)} ,\qquad \forall i,j\in I.
    \end{equation*}
    Therefore, $\{u_i\}_{i\in I}$ is bounded in $H^{s}_h(M;F^{\rho})$ but has no convergent subsequence in $H^{t}_h(M;F^{\rho})$. The inclusion $\iota: H^{s}_{h}(M;F^{\rho})\rightarrow H^{t}_{h}(M;F^{\rho})$ is not compact when $\mathcal{H}$ is infinite-dimensional.
\end{proof}

%\begin{equation}
% h^{s}\|u\|_{H^{-s}_h}\lesssim\|u\|_{H^{-s}}\lesssim\|u\|_{H^{-s}_h} \lesssim \|u\|_{H^{s}_h}\lesssim  \|u\|_{H^{s}}\lesssim h^{-s}\|u\|_{H^{s}_h}.
%\end{equation}

%By Definition \eqref{defofsobolevnorm}, \eqref{transitionmapform} and \eqref{sobolevtensorextension}, we have the uniform upper bounded of cutoff parallel local trivialization linear operators:

\subsection{Quantization of scalar symbols on flat Hilbert bundles}\label{subsec:quansymb}

In the general theory of pseudodifferential operators on vector bundles (\emph{c.f.} the semiclassical analysis on flat bundles and on principal bundles with finite-dimensional fibers in \cite{mama2024semiclassical,cekiclefeuvre2024semiclassical}), symbols are smooth sections of $\pi_0^{*}\mathrm{End}(F)$, where $\pi_0^{*}\mathrm{End}(F)$ denotes the pullback of the endomorphism bundle $\mathrm{End}(F)$ over $T^*M$ via the natural projection $\pi_{0}: T^*M \to M$. 

In this work, however, we restrict our attention to scalar symbols of the form $a(x,\xi;h)\mathrm{Id}_{\mathcal{H}}$, which are canonically identified as $a \in C^{\infty}(T^*M)$. We start by showing how pseudodifferential operators transform under a change of coordinates in Euclidean space.

%We start from the local model, notice that \cite[Proposition E.10]{dyatlovzworski2019resonances} implies that $\Psi^{\bullet}_{h}(\R^d) \otimes \mathrm{Id}_{\mathcal{H}}$ is invariant under the transition map \eqref{transitionmapform}.

%\begin{lemma}\label{changevariable}
%Let $U\subset \R^d$ be an open set, $\varphi: U \rightarrow \varphi(U) \subset \R^d$ be a diffeomorphism, $\chi \in C_0^\infty(U)$, and $V ,W\in \mathrm{U}(\mathcal{H})$. For any $k\in \mathbb{R}$ and any $a \in S^k (T^* \mathbb{R}^d )$, there exists $b \in S^k (T^* \mathbb{R}^d )$ such that
%\begin{equation*}
%    (\chi \varphi^*\otimes  V )(\mathrm{Op}_h(a)\otimes \mathrm{Id}_{\mathcal{H}})((\varphi^{-1})^*\chi\otimes W)=\mathrm{Op}_h(b)\otimes VW .
%\end{equation*}
%Here $S^k\left(T^* \mathbb{R}^d\right)$ is defined as in \eqref{standardsymbolclass}, with $M$ replaced by $\R^d$. Moreover, the symbol $b$ admits an asymptotic expansion $b \sim \sum_{j=0}^{\infty} h^j L_j(a \circ \widetilde{\varphi})$, in the sense that,
%\begin{equation*}
%    b-\sum_{j=0}^{N-1} h^j L_j(a \circ \widetilde{\varphi})  \in h^N S^{k-N}(T^*\R^d), \qquad \forall N \in \mathbb{N}.
%\end{equation*}
%Here $\widetilde{\varphi}=(\varphi,(d\varphi)^{-t}): T^*U\to T^*\varphi(U)$ denotes the natural lift of $\varphi$ to the cotangent bundle, $L_0=\chi^2$, and for $j\geq 1$, $L_j$ is an $h$-independent differential operator of order $2 j$ on $T^*U$ with coefficients compactly supported in $x$.
%\end{lemma}
%\begin{proof}
%    See \cite[Proposition E.10]{dyatlovzworski2019resonances}.
%\end{proof}

\begin{lemma}\label{changevariable}
Let $U\subset \R^d$ be an open set, $\varphi: U \rightarrow \varphi(U) \subset \R^d$ be a diffeomorphism, and $\chi \in C_0^\infty(U)$. For any $k\in \mathbb{R}$ and any $a \in S^k (T^* \mathbb{R}^d )$, there exists $b \in S^k (T^* \mathbb{R}^d )$ such that
\begin{equation*}
    \chi \varphi^*\mathrm{Op}_h(a)(\varphi^{-1})^*\chi=\mathrm{Op}_h(b) .
\end{equation*}
Here $S^k\left(T^* \mathbb{R}^d\right)$ is defined as in \eqref{standardsymbolclass}, with $M$ replaced by $\R^d$. Moreover, the symbol $b$ admits an asymptotic expansion $b \sim \sum_{j=0}^{\infty} h^j L_j(a \circ \widetilde{\varphi})$, in the sense that,
\begin{equation*}
    b-\sum_{j=0}^{N-1} h^j L_j(a \circ \widetilde{\varphi})  \in h^N S^{k-N}(T^*\R^d), \qquad \forall N \in \mathbb{N}.
\end{equation*}
Here $\widetilde{\varphi}=(\varphi,(d\varphi)^{-t}): T^*U\to T^*\varphi(U)$ denotes the natural lift of $\varphi$ to the cotangent bundle, $L_0=\chi^2$, and, for $j\geq 1$, $L_j$ is an $h$-independent differential operator of order $2 j$ on $T^*U$ with coefficients compactly supported in $x$.
\end{lemma}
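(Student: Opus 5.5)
This is the standard change-of-variables theorem for semiclassical pseudodifferential operators, essentially \cite[Proposition E.10]{dyatlovzworski2019resonances}. I would prove it directly by the Kuranishi trick.

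\textbf{Step 1: write the kernel.} The Schwartz kernel of $\chi\varphi^*\mathrm{Op}_h(a)(\varphi^{-1})^*\chi$ is
\begin{equation*}
K(x,y)=\frac{\chi(x)\chi(y)\,|\det d\varphi(y)|}{(2\pi h)^d}\int_{\R^d} a(\varphi(x),\eta;h)\,e^{i(\varphi(x)-\varphi(y))\cdot\eta/h}\,d\eta,
\end{equation*}
with $x,y\in U$.

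\textbf{Step 2: localize near the diagonal.} Choose $\psi\in C_0^\infty(U\times U)$ equal to $1$ near the diagonal over $\mathrm{supp}\,\chi$, and supported where the segment $[x,y]$ lies in $U$. On $\mathrm{supp}(1-\psi)$ we have $|\varphi(x)-\varphi(y)|\ge c>0$. Repeated integration by parts in $\eta$ then shows that $(1-\psi)K$ is smooth, compactly supported and $\mathcal{O}(h^\infty)$ together with all derivatives. Such an operator is $\mathrm{Op}_h(r)$ with $r\in h^\infty S^{-\infty}$.

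\textbf{Step 3: the Kuranishi substitution.} On $\mathrm{supp}\,\psi$ write
\begin{equation*}
\varphi(x)-\varphi(y)=\Theta(x,y)(x-y), \qquad \Theta(x,y)=\int_0^1 d\varphi(y+t(x-y))\,dt .
\end{equation*}
The matrix $\Theta$ is invertible near the diagonal, and $\Theta(x,x)=d\varphi(x)$. Substitute $\xi=\Theta(x,y)^t\eta$. This puts $\psi K$ in the form of an amplitude quantization
\begin{equation*}
\frac{1}{(2\pi h)^d}\int c(x,y,\xi)\,e^{i(x-y)\cdot\xi/h}\,d\xi,
\end{equation*}
where the amplitude is
\begin{equation*}
c(x,y,\xi)=\psi\chi(x)\chi(y)\,a\big(\varphi(x),\Theta^{-t}\xi\big)\,\frac{|\det d\varphi(y)|}{|\det\Theta(x,y)|}.
\end{equation*}
This amplitude lies in $S^k$ in $\xi$, uniformly in $(x,y)$, and is compactly supported in $(x,y)$.

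\textbf{Step 4: reduce to a left symbol.} Apply the standard reduction of $(x,y)$-amplitudes to left quantization, stationary phase in $(y-x,\xi)$:
\begin{equation*}
b\sim\sum_\alpha \frac{h^{|\alpha|}}{\alpha!\,i^{|\alpha|}}\,\partial_\xi^\alpha\partial_y^\alpha c(x,y,\xi)\big|_{y=x}.
\end{equation*}
The remainder after $N$ terms lies in $h^NS^{k-N}$.

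\textbf{Step 5: read off the expansion.} At $y=x$ the $j=0$ term is $\chi(x)^2a(\varphi(x),(d\varphi(x))^{-t}\xi)=\chi^2\,(a\circ\widetilde\varphi)$, so $L_0=\chi^2$. Each $\partial_y$ acting on $a(\varphi(x),\Theta^{-t}\xi)$ produces at most $\xi$-derivatives of $a$ times smooth factors that are polynomial in $\xi$. Hence the terms of total order $h^j$ can be collected as $L_j(a\circ\widetilde\varphi)$. Here $L_j$ is a differential operator of order $2j$ in $(x,\xi)$ whose coefficients are compactly supported in $x$ and independent of $h$ and of $a$, and they map $S^k$ into $S^{k-j}$.

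\textbf{Main obstacle.} The delicate point is the symbol-class bookkeeping. One must check that $c$ satisfies uniform $S^k$ bounds, using that $\Theta^{-t}$ is a smooth invertible linear map, so that $\langle\Theta^{-t}\xi\rangle\sim\langle\xi\rangle$. One must also check that the remainder estimate in Step 4 gives exactly $h^NS^{k-N}$, which requires the standard integration-by-parts estimate with $\langle\xi\rangle$-weights.

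Everything above is scalar. No $(\mathcal{H},\rho)$-dependence enters, which is what the later tensorized use requires.
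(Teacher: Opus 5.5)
Your proposal is correct. The paper proves this lemma only by citing \cite[Proposition E.10]{dyatlovzworski2019resonances}, and your argument is the standard Kuranishi-trick proof of that result: an off-diagonal $\mathcal{O}(h^\infty)$ localization, the substitution $\xi=\Theta(x,y)^t\eta$, and reduction of the $(x,y)$-amplitude to a left symbol, with $L_0=\chi^2$ and the $S^k\to S^{k-j}$ bookkeeping for $L_j$ read off correctly. The only step you leave implicit is the identity $(\nabla_\eta a)\circ\widetilde\varphi=d\varphi(x)\,\nabla_\xi(a\circ\widetilde\varphi)$ (and its higher-order analogues), which is what lets you write each $h^j$ term as $L_j(a\circ\widetilde\varphi)$.
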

\begin{proof}
    See \cite[Proposition E.10]{dyatlovzworski2019resonances}.
\end{proof}

To give a fine description of the pseudodifferential operators with scalar symbols on flat Hilbert bundles, we introduce the so-called \emph{locally fiberwise diagonal operators}. For brevity of notation, we write $A = A(\theta_1;\cdots;\theta_n)$ to indicate that $A$ denotes a family of operators indexed by the parameters $\theta_1;\cdots;\theta_n$.

\begin{definition}\label{localdiag}
We say that a $(\mathcal{H},\rho,h)$-family of operators 
\begin{equation*}
     A = A(\mathcal{H};\rho;h) : C^{\infty}(M;F^{\rho}) \to C^\infty(M;F^{\rho}) , \qquad \forall  (\mathcal{H},\rho) \in \mathcal{C},\, 0<h\leq 1,
\end{equation*}
is \emph{locally fiberwise diagonal} if, for any $(\mathcal{H},\rho) \in \mathcal{C}$, $0<h\leq 1$, and any two cutoff normal charts $(\Phi_i,U_i,\varphi_i,\chi_i)$, $i=1,2$, of $F^\rho$, with the transition map $\Phi_{12}:=\Phi_1\circ \Phi_{2}^{-1}$, there exists an operator $A_{\varphi_{12},\chi_{12}}(h): C^{\infty}(\mathbb R^d)\to C^{\infty}(\mathbb R^d)$, independent of $(\mathcal{H},\rho)$, such that
\begin{equation}\label{localdiagonal}
(\varphi^{-1}_1)^{*}\Phi_1\chi_1 A(\mathcal{H};\rho;h) \chi_2\Phi^{-1}_2\varphi^{*}_2
= A_{\varphi_{12},\chi_{12}} (h) \otimes \Phi_{12}.
\end{equation}
\end{definition}

The residual class of semiclassical pseudodifferential operators $A(h)\in h^{\infty}\Psi^{-\infty}(\R^d)$ is defined as in \cite[Definition E.11]{dyatlovzworski2019resonances}. We now introduce the corresponding notion of \emph{locally fiberwise diagonal residual class} in the setting of flat Hilbert bundles.

%it is equivalent to that for any $N\in \mathbb{N}$, there exists $C_{N}>0$ such that $\|A(h)\|_{H^{-N}(\R^d)\to H^{N}(\R^d)}=\mathcal{O}_{N}(h^{\infty})$. We define the general residual class for semiclassical pseudodifferential operators with scalar symbols by Sobolev norm. 

\begin{definition}\label{residualclass}
Let $A=A(\mathcal{H};\rho;h)$ be a family of locally fiberwise diagonal operators. We say that $A \in h^{\infty}\Psi^{-\infty,sc}(M;F^\rho)$ or $A=\mathcal{O}(h^{\infty})_{\Psi^{-\infty,sc}(M;F^\rho)}$, if
\begin{equation}\label{smoothsobolevnorm}
\|A(\mathcal{H};\rho;h)\|_{H^{-N}(M;F^{\rho})\to H^{N}(M;F^{\rho})} \leq C h^N, \qquad \forall (\mathcal{H},\rho) \in \mathcal{C},\, 0<h\leq 1,N \in \mathbb{N}.
\end{equation}
where $C=C(N)>0$ is a constant depending only on $N$. 
\end{definition}
\begin{remark}\label{remark46}
    It is easy to show that $A  \in h^{\infty}\Psi^{-\infty,sc}(M;F^\rho)$ if and only if $A_{\varphi_{12},\chi_{12}} = \Oph(a_{\varphi_{12},\chi_{12}})$ for some $a_{\varphi_{12},\chi_{12}} \in h^\infty S^{-\infty}(T^*\R^d)$ in \eqref{localdiagonal}.
\end{remark}

We are ready to define pseudodifferential operators with scalar symbols on flat Hilbert bundles. These operators are given by finite sums of local pseudodifferential operators defined in cutoff normal charts, modulo the locally fiberwise diagonal residual class.

%by gluing together pullbacks of locally fiberwise diagonal operators on $C^{\infty}(\R^d,\mathcal{H})$ using cutoff parallel local trivialization charts. 

\begin{definition}\label{defofscalarsemipseudo}
Let $k \in \mathbb{R}$. We say that a $(\mathcal{H},\rho,h)$-family of operators 
\begin{equation*}
     A = A(\mathcal{H};\rho;h) : C^{\infty}(M;F^{\rho}) \to C^\infty(M;F^{\rho}) , \qquad \forall  (\mathcal{H},\rho) \in \mathcal{C},\, 0<h\leq 1,
\end{equation*}
belongs to the class $\Psi_{h}^{k,sc}(M;F^\rho)$, if
\begin{equation}\label{defformpseudo}
A=\sum_{\alpha\in \Lambda} \chi_{\alpha }\Phi_{\alpha}^{-1} \varphi_{\alpha}^*( \Oph(a_{\alpha})\otimes \mathrm{Id}_{\mathcal{H}})(\varphi_{\alpha}^{-1})^* \Phi_{\alpha}\chi_{\alpha}  +  \mathcal{O}(h^{\infty})_{\Psi^{-\infty,sc}(M;F^\rho)}.
\end{equation}
Here $ \{ (\Phi_{\alpha}, U_{\alpha}, \varphi_{\alpha},\chi_{\alpha} ) \}_{\alpha\in \Lambda}$ is a finite family of cutoff normal charts of $F^\rho$ and $a_{\alpha}\in S^{k}(T^*\mathbb{R}^d)$. We denote by
\begin{equation*}
    \Psi^{\infty,sc}_h(M;F^{\rho}):=\bigcup\limits_{k\in \R} \Psi^{k,sc}_h(M;F^{\rho})
\end{equation*}
the set of semiclassical pseudodifferential operators with scalar symbols on flat Hilbert bundle $F^{\rho}$.
\end{definition}

Analogous to \cite[Proposition E.13]{dyatlovzworski2019resonances}, we have the following characterization for $\Psi^{k,sc}_{h}(M;F^{\rho})$:

\begin{proposition}\label{criterionofpseudo}
A family of operators $A=A(\mathcal{H};\rho;h): C^{\infty}(M;F^{\rho}) \rightarrow C^{\infty}(M;F^{\rho})$ belongs to the class $\Psi_h^{k,sc}(M;F^\rho)$ if and only if the following two conditions are satisfied:
\begin{itemize}
\item For any cutoff normal chart $(\Phi, U,\varphi,\chi)$, there exists $a_{\varphi, \chi} \in S^k(T^*\mathbb{R}^d )$, independent of $(\mathcal{H},\rho)$, such that
\begin{equation}\label{pseudooflocal}
(\varphi^{-1})^*\Phi \chi A (\mathcal{H};\rho;h) \chi \Phi^{-1} \varphi^*=\Oph(a_{\varphi,\chi})\otimes \mathrm{Id}_{\mathcal{H}}.
\end{equation}
\item For each $\psi, \psi^{\prime} \in C^{\infty}(M)$ such that $\operatorname{supp} \psi \cap \operatorname{supp} \psi^{\prime}=\varnothing$, we have 
\begin{equation}\label{disresi}
    \psi A \psi^{\prime}=\mathcal{O} (h^{\infty} )_{\Psi^{-\infty,sc}(M;F^\rho)}.
\end{equation}
\end{itemize}
\end{proposition}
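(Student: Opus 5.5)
The plan is to reduce both directions to the scalar statement \cite[Proposition E.13]{dyatlovzworski2019resonances}. The two tools are the transition identity \eqref{transitionmapform}, which says transition maps between normal charts are constant unitaries $\Phi_{\alpha\beta}$, and the fact that $\Phi_{\alpha\beta}$ commutes with $\Oph(a)\otimes \mathrm{Id}_{\mathcal{H}}$.

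\emph{Necessity.} Suppose $A$ has the form \eqref{defformpseudo} and fix a cutoff normal chart $(\Phi,U,\varphi,\chi)$. Conjugating each summand gives
\begin{equation*}
(\varphi^{-1})^*\Phi\chi\,\chi_\alpha\Phi_\alpha^{-1}\varphi_\alpha^*(\Oph(a_\alpha)\otimes \mathrm{Id}_{\mathcal{H}})(\varphi_\alpha^{-1})^*\Phi_\alpha\chi_\alpha\,\chi\Phi^{-1}\varphi^* .
\end{equation*}
By \eqref{transitionmapform} this equals
\begin{equation*}
\bigl(\chi_{0}\varphi_{0}^*\,\Oph(a_\alpha)\,(\varphi_0^{-1})^*\chi_0\bigr)\otimes \Phi_{\Phi\alpha}\Phi_{\Phi\alpha}^{-1},
\end{equation*}
where $\varphi_0$ is the relevant transition diffeomorphism and $\chi_0=(\chi\chi_\alpha)\circ\varphi^{-1}$. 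The scalar factor and the unitary factor act on different tensor slots, so they commute. The two unitary factors therefore cancel, leaving $\mathrm{Id}_{\mathcal{H}}$. The scalar factor is $\Oph(b_\alpha)$ for some $b_\alpha\in S^k$ by Lemma \ref{changevariable}, and $b_\alpha$ depends only on the charts and on $a_\alpha$, not on $(\mathcal{H},\rho)$.

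The residual term in \eqref{defformpseudo} is locally fiberwise diagonal. When both charts coincide its transition map is $\mathrm{Id}_{\mathcal{H}}$, so by Remark \ref{remark46} it contributes $\Oph(r)\otimes\mathrm{Id}_{\mathcal{H}}$ with $r\in h^\infty S^{-\infty}$. Summing over $\alpha$ gives \eqref{pseudooflocal}.

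For \eqref{disresi}, take $\psi,\psi'$ with disjoint supports. Then $\psi A\psi'$ is a finite sum of terms $\chi_\alpha\Phi_\alpha^{-1}\varphi_\alpha^*\bigl((\psi\circ\varphi_\alpha^{-1})\Oph(a_\alpha)(\psi'\circ\varphi_\alpha^{-1})\otimes\mathrm{Id}\bigr)(\varphi_\alpha^{-1})^*\Phi_\alpha\chi_\alpha$. The scalar operator inside each term lies in $h^\infty\Psi^{-\infty}(\R^d)$ by the scalar disjoint-support property. The same transition argument shows the term is locally fiberwise diagonal. Its $H^{-N}\to H^N$ bound is uniform in $(\mathcal{H},\rho)$: combine \eqref{squaretensorextension}, applied to Sobolev norms (tensoring with a unitary preserves the scalar operator norm), with Proposition \ref{inclusionofSobolev}(c).

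\emph{Sufficiency.} Fix a finite cover by cutoff normal charts with $\sum\chi_\alpha^2=1$. Choose $\tilde\chi_\alpha\in C_0^\infty(U_\alpha)$ with $\tilde\chi_\alpha=1$ near $\operatorname{supp}\chi_\alpha$. Write
\begin{equation*}
A=\sum_\alpha \chi_\alpha A\chi_\alpha+\sum_{\alpha\ne\beta}\chi_\alpha A\chi_\beta .
\end{equation*}
Each diagonal term equals $\chi_\alpha\Phi_\alpha^{-1}\varphi_\alpha^*\bigl(\Oph(a_{\varphi_\alpha,\chi_\alpha})\otimes\mathrm{Id}\bigr)(\varphi_\alpha^{-1})^*\Phi_\alpha\chi_\alpha$ by \eqref{pseudooflocal}, up to the usual reinsertion of cutoffs, which is exactly of the form required in \eqref{defformpseudo}. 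This uses that $\chi\Phi^{-1}\varphi^*\circ(\varphi^{-1})^*\Phi\chi$ is multiplication by $\chi^2$.

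For an off-diagonal pair, split according to the overlap. If $\operatorname{supp}\chi_\alpha\cap\operatorname{supp}\chi_\beta=\varnothing$, the term is residual by \eqref{disresi}. Otherwise, shrink the partition in advance, as in the scalar proof, so that overlapping supports lie in a single chart $(\Phi_{\alpha\beta},U_{\alpha\beta},\ldots)$ with cutoff $\chi_{\alpha\beta}$ equal to $1$ on $\operatorname{supp}\chi_\alpha\cup\operatorname{supp}\chi_\beta$. Then $\chi_\alpha A\chi_\beta=\chi_\alpha(\chi_{\alpha\beta}A\chi_{\alpha\beta})\chi_\beta$, and \eqref{pseudooflocal} in that chart gives a quantization there, since multiplication by $\chi_\alpha$ and $\chi_\beta$ preserves $\Oph(S^k)$ by composition.

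\emph{Main obstacle.} The hard step is uniformity of the residual class. Every leftover error must be both locally fiberwise diagonal, meaning it fits \eqref{localdiagonal} with a scalar operator independent of $(\mathcal{H},\rho)$, and bounded $H^{-N}\to H^N$ by $Ch^N$ with $C$ independent of $(\mathcal{H},\rho)$. The plan for both is to check that every operator produced is a finite composition of chart maps \eqref{cutoffoperator} and scalar operators tensored with constant unitaries. Then the tensor-norm identity \eqref{squaretensorextension}, applied to Sobolev norms, together with Proposition \ref{inclusionofSobolev}(c), transfers the scalar estimates with constants depending only on the charts.
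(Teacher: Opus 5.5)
Your proposal is correct and follows essentially the paper's route: necessity via conjugation by the constant transition unitaries \eqref{transitionmapform}, Lemma \ref{changevariable}, Remark \ref{remark46}, and the disjoint-support reduction to the scalar case with uniform bounds from Proposition \ref{inclusionofSobolev}(c) matches the paper step by step, and your sufficiency sketch is exactly the argument of \cite[Proposition E.13]{dyatlovzworski2019resonances} that the paper invokes without details. Two small fixes. Since you normalized $\sum_\alpha\chi_\alpha^2=1$, the decomposition should read $A=\sum_{\alpha,\beta}\chi_\alpha^2A\chi_\beta^2$; the diagonal terms are then exactly of the form \eqref{defformpseudo} with no reinsertion of cutoffs. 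Also, in the disjoint-support step you should carry along the term $\psi R\psi'$ coming from the residual part of \eqref{defformpseudo}, which is trivially residual.
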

\begin{proof}
    $(\Rightarrow)$ Assume that $A\in \Psi_h^{k,sc}(M;F^\rho)$. For any cutoff normal chart $(\Phi, U,\varphi,\chi)$, by \eqref{defformpseudo}, we have
    \begin{equation*}
    \begin{aligned}
        (\varphi^{-1})^*\Phi \chi A (\mathcal{H};\rho;h) \chi \Phi^{-1} \varphi^* &= \sum_{\alpha\in \Lambda} (\varphi^{-1})^*\Phi \chi \chi_{\alpha}\Phi_{\alpha}^{-1} \varphi_{\alpha}^*
        ( \Oph(a_{\alpha})\otimes \mathrm{Id}_{\mathcal{H}})
        (\varphi_{\alpha}^{-1})^* \Phi_{\alpha}\chi_{\alpha} \chi \Phi^{-1} \varphi^* \\
        & \quad \, + (\varphi^{-1})^*\Phi \chi R\chi \Phi^{-1} \varphi^* ,
    \end{aligned}
    \end{equation*}
where $ \{ (\Phi_{\alpha}, U_{\alpha}, \varphi_{\alpha},\chi_{\alpha} ) \}_{\alpha\in \Lambda}$ is a finite family of cutoff normal charts, $a_{\alpha}\in S^{k}(T^*\mathbb{R}^d)$, and $R \in h^{\infty}\Psi^{-\infty,sc}(M;F^\rho)$. It follows that $(\varphi^{-1})^*\Phi \chi R\chi \Phi^{-1} \varphi^* = \Oph(r) \otimes \mathrm{Id}_{\mathcal{H}}$ for some $r \in h^\infty S^{-\infty}(T^*\R^d)$ (see Remark \ref{remark46}). By \eqref{transitionmapform}, we have
\begin{equation*}
    (\varphi^{-1})^*\Phi \chi \chi_{\alpha}\Phi_{\alpha}^{-1} \varphi_{\alpha}^* = \chi_{0\alpha } \varphi^*_{0\alpha} \otimes \Phi_{0\alpha}, \quad
    (\varphi_{\alpha}^{-1})^* \Phi_{\alpha}\chi_{\alpha} \chi \Phi^{-1} \varphi^* = (\varphi_{0\alpha }^{-1})^* \chi_{0\alpha}  \otimes \Phi_{0\alpha}^{-1},
\end{equation*}
where $\Phi_{0\alpha} =\Phi \circ \Phi_\alpha^{-1}$, $\varphi_{0\alpha} = \varphi  \circ \varphi_\alpha^{-1}$, and $\chi_{0 \alpha } = (\chi \chi_\alpha  ) \circ \varphi_\alpha^{-1}$. Therefore,
\begin{equation*}
    (\varphi^{-1})^*\Phi \chi A (\mathcal{H};\rho;h) \chi \Phi^{-1} \varphi^*  = \left(\sum_{\alpha\in \Lambda} \chi_{0\alpha } \varphi^*_{0\alpha } \Oph(a_{\alpha})
       (\varphi_{0\alpha }^{-1})^* \chi_{0\alpha}  + \Oph(r)\right)\otimes \mathrm{Id}_{\mathcal{H}}.
\end{equation*}
We get \eqref{pseudooflocal} from Proposition \ref{changevariable}. 

For each $\psi, \psi^{\prime} \in C^{\infty}(M)$ such that $\operatorname{supp} \psi \cap \operatorname{supp} \psi^{\prime}=\varnothing$. Let $\widetilde{\chi}_\alpha \in C_0^\infty(U_\alpha) $ such that $\widetilde{\chi}_\alpha =1$ near $\mathrm{supp}\, \chi_\alpha$. Then,  
\begin{equation}\label{dissupp}
    \psi\chi_{\alpha}\Phi_{\alpha}^{-1} \varphi_{\alpha}^*=\widetilde{\chi}_{\alpha}\Phi_{\alpha}^{-1} \varphi_{\alpha}^* \psi_\alpha \quad \mathrm{and} \quad  (\varphi_{\alpha}^{-1})^* \Phi_{\alpha}\chi_{\alpha} \psi' = \psi'_\alpha (\varphi_{\alpha}^{-1})^* \Phi_{\alpha} \widetilde{\chi}_{\alpha},
\end{equation}
where $\psi_\alpha  = (\varphi_{\alpha}^{-1})^*  (\psi \chi_\alpha )$ and $\psi'_\alpha  = (\varphi_{\alpha}^{-1})^*  (\chi_\alpha \psi')$ have disjoint supports. Substituting \eqref{dissupp} into \eqref{defformpseudo}, we obtain
\begin{equation}\label{formpsiApsi}
    \psi A(\mathcal{H};\rho;h)\psi' = \sum_{\alpha\in \Lambda} \widetilde{\chi}_{\alpha}\Phi_{\alpha}^{-1} \varphi_{\alpha}^* 
        \big( (\psi_\alpha \Oph(a_{\alpha} )\psi'_\alpha) \otimes \mathrm{Id}_{\mathcal{H}} \big)
        (\varphi_{\alpha}^{-1})^* \Phi_{\alpha} \widetilde{\chi}_{\alpha} + \psi R\psi',
\end{equation}
where $R \in h^{\infty}\Psi^{-\infty,sc}(M;F^\rho)$. Since $\psi_\alpha \Oph(a_{\alpha} )\psi'_\alpha \in h^\infty \Psi^{-\infty}(T^*\R^d) $ (see \cite[Proposition E.13]{dyatlovzworski2019resonances}), using \eqref{sobolevnormofcutoff}, we conclude that $\psi A \psi'$ satisfies \eqref{smoothsobolevnorm}. To prove \eqref{disresi}, it remains to show that $\psi A \psi'$ is locally fiberwise diagonal. For any two cutoff normal charts $(\Phi_i,U_i,\varphi_i,\chi_i)$, $i=1,2$, of $F^\rho$, by \eqref{formpsiApsi}, we have
\begin{equation}\label{LFDveri}
    \begin{aligned}
       & (\varphi^{-1}_1)^{*}\Phi_1\chi_1 \psi A(\mathcal{H};\rho;h) \psi '\chi_2\Phi^{-1}_2\varphi^{*}_2 \\
&= \sum_{\alpha\in \Lambda} \Big( (\varphi^{-1}_1)^{*}\Phi_1\chi_1\widetilde{\chi}_{\alpha}\Phi_{\alpha}^{-1} \varphi_{\alpha}^* \Big)
        \Big( (\psi_\alpha \Oph(a_{\alpha} )\psi'_\alpha) \otimes \mathrm{Id}_{\mathcal{H}} \Big)
        \Big(  (\varphi_{\alpha}^{-1})^* \Phi_{\alpha} \widetilde{\chi}_{\alpha} \chi_2\Phi^{-1}_2\varphi^{*}_2 \Big)  \\
        &\quad\, + (\varphi^{-1}_1)^{*}\Phi_1\chi_1\psi R\psi' \chi_2\Phi^{-1}_2\varphi^{*}_2.
    \end{aligned}
\end{equation}
Let $\widetilde{\chi}_i \in C_0^\infty(U_i) $ such that $\widetilde{\chi}_i =1$ near $\mathrm{supp}\, \chi_i$, $i=1,2$. In the spirit of \eqref{transitionmapform} and \eqref{dissupp}, we have
\begin{equation}\label{inspr}
    \begin{aligned}
        & (\varphi^{-1}_1)^{*}\Phi_1\chi_1\widetilde{\chi}_{\alpha}\Phi_{\alpha}^{-1} \varphi_{\alpha}^* = \chi_{1\alpha} \varphi_{1\alpha}^* \otimes \Phi_{1\alpha}, &&  (\varphi_{\alpha}^{-1})^* \Phi_{\alpha} \widetilde{\chi}_{\alpha} \chi_2\Phi^{-1}_2\varphi^{*}_2 = (\varphi_{2\alpha }^{-1})^* \chi_{2\alpha } \otimes \Phi_{\alpha 2}, \\
        & (\varphi^{-1}_1)^{*}\Phi_1\chi_1\psi= \psi_1 (\varphi^{-1}_1)^{*}\Phi_1\widetilde{\chi}_1, && \psi' \chi_2\Phi^{-1}_2\varphi^{*}_2 = \widetilde{\chi}_2 \Phi^{-1}_2\varphi^{*}_2 \psi'_2,
    \end{aligned}
\end{equation}
where $\varphi_{i\alpha} = \varphi_i  \circ \varphi_\alpha^{-1}$ and $\chi_{i \alpha } = (\chi_i \chi_\alpha  ) \circ \varphi_\alpha^{-1}$, $i=1,2$, and $\psi_1  = (\varphi_1^{-1})^*  (\chi_1 \psi  )$ and $\psi'_2 = (\varphi_2^{-1})^*  ( \psi' \chi_2)$. Substituting \eqref{inspr} into \eqref{LFDveri}, we get
\begin{equation}
    \begin{aligned}
        (\varphi^{-1}_1)^{*}\Phi_1\chi_1 \psi A(\mathcal{H};\rho;h) \psi '\chi_2\Phi^{-1}_2\varphi^{*}_2 
&= \sum_{\alpha\in \Lambda}  \chi_{1\alpha} \varphi_{1\alpha}^* \psi_\alpha \Oph(a_{\alpha} )\psi'_\alpha (\varphi_{2\alpha }^{-1})^* \chi_{2\alpha }  \otimes \Phi_{12}    \\
        &\quad\, + \psi_1 (\varphi^{-1}_1)^{*}\Phi_1\widetilde{\chi}_1 R\widetilde{\chi}_2 \Phi^{-1}_2\varphi^{*}_2 \psi'_2 ,
    \end{aligned}
\end{equation}
where $\chi_{1\alpha} \varphi_{1\alpha}^* \psi_\alpha \Oph(a_{\alpha} )\psi'_\alpha (\varphi_{2\alpha }^{-1})^* \chi_{2\alpha }  \in h^\infty \Psi^{-\infty}(T^*\R^d) $, and $(\varphi^{-1}_1)^{*}\Phi_1\widetilde{\chi}_1 R\widetilde{\chi}_2 \Phi^{-1}_2\varphi^{*}_2 = \Oph(r_{12}) \otimes  \Phi_{12}$ for some $r_{12} \in h^\infty S^{-\infty}(T^*\R^d)$. We conclude that $\psi A \psi'$ is locally fiberwise diagonal.

$(\Leftarrow)$ The converse direction can be proved by following the argument of \cite[Proposition E.13]{dyatlovzworski2019resonances}. The only modification is that the cutoff charts used there are replaced by cutoff normal charts in our setting. We therefore omit the details.
\end{proof}

\begin{remark}
    From the proof of Proposition \ref{criterionofpseudo}, it can be seen that $A \in h^{\infty} \Psi^{-\infty,sc}(M;F^\rho)$ if and only if $a_{\varphi,\chi} \in h^\infty S^{-\infty}(T^*\R^d)$ in \eqref{pseudooflocal}. Therefore,
    \begin{equation}\label{charremain}
        h^{\infty} \Psi^{-\infty,sc}(M;F^\rho) = \bigcap_{N\in \N} h^N \Psi_h^{-N,sc}(M;F^\rho).
    \end{equation}
The formula \eqref{charremain} is useful in practice: to verify that an operator $A$ belongs to $h^{\infty} \Psi^{-\infty,sc}(M;F^\rho)$, it is not necessary to check that $A$ is locally fiberwise diagonal. It suffices to show that $A \in h^N \Psi_h^{-N,sc}(M;F^\rho)$ for every $N\in \N$.
\end{remark}

We now establish the quantization rules between $S^k(T^*M)$ and $\Psi^{k,sc}_h(M;F^\rho)$ for any $k \in \mathbb{R}$:
\begin{itemize}
\item the principal symbol map $\sigma^\rho_h:\Psi^{k,sc}_h(M;F^\rho) \rightarrow S^k(T^*M) /hS^{k-1}(T^*M)  $;
\item the quantization map $\Oph^\rho:S^k(T^*M) \rightarrow \Psi^{k,sc}_h(M;F^\rho)$.
\end{itemize}

\begin{definition}\label{defofprincipalsymbol}
Let $A=A(\mathcal{H};\rho;h) \in \Psi_h^{k,sc}(M;F^\rho)$. Assume that $A$ has the form of \eqref{defformpseudo}:
\begin{equation*}
    A=\sum_{\alpha\in \Lambda} \chi_{\alpha }\Phi_{\alpha}^{-1} \varphi_{\alpha}^*( \Oph(a_{\alpha})\otimes \mathrm{Id}_{\mathcal{H}})(\varphi_{\alpha}^{-1})^* \Phi_{\alpha}\chi_{\alpha}  +  \mathcal{O}(h^{\infty})_{\Psi^{-\infty,sc}(M;F^\rho)}.
\end{equation*}
The principal symbol of $A$ is the unique element in $S^k (T^* M)/h S^{k-1}(T^* M)$ defined as 
\begin{equation}
    \sigma_h^{\rho}(A)(x,\xi;h)=\sum_{\alpha \in \Lambda} \chi_{\alpha}(x)^2 a_{\alpha}^0\circ \widetilde{\varphi}_{\alpha}(x,\xi),
\end{equation}
where $a_{\alpha}^{0}=\sigma(\Oph(a_\alpha)) \in S^{k}(T^*\mathbb{R}^d)/h S^{k-1}(T^*\mathbb{R}^d)$ denotes the principal symbol of $\Oph(a_\alpha)$. If $a_0(x,\xi) \in S^{k} (T^*M)$ is independent of $h$ such that $a_{0} \in \sigma_{h}^{\rho}(A)  $, we identify $a_{0} $ as $\sigma_h^{\rho}(A)$.
\end{definition}
\begin{remark}
    For each cutoff normal chart $(\Phi, U,\varphi, \chi)$ and the corresponding symbol  $a_{\varphi, \chi}$ given in \eqref{pseudooflocal}. It follows from Lemma \ref{changevariable} that
\begin{equation*}
    \chi(x)^2 \sigma_h^{\rho}(A)(x,\xi)=a_{\varphi, \chi}\circ \widetilde{\varphi} \quad \mathrm{mod} \ hS^{k-1} (T^*M) \quad \text { on } T^* U.
\end{equation*}
\end{remark}

%In what follows, we quantize the scalar symbol $a(x,\xi;h) \in S^{k}_{1,0}(T^*M)$, see \cite[E.1.2]{dyatlovzworski2019resonances}, into a family of operators on the smooth section space $\Oph^{\rho}(a): C^{\infty}(M;F^{\rho}) \to C^{\infty}(M;F^{\rho})$ for any $(\mathcal{H},\rho)$.

%the scalar quantization of $a$ into the operator $\Oph^{\rho}(a): C^{\infty}(M;F^{\rho}) \to C^{\infty}(M;F^{\rho})$ is given by

\begin{definition}\label{diagofquantizeofscalar}
Choose a finite family of cutoff normal charts $ \{ (\Phi_{\alpha}, U_{\alpha}, \varphi_{\alpha},\chi_{\alpha} ) \}_{\alpha\in \Lambda}$ such that $\sum_{\alpha}\chi_\alpha\equiv 1$ on $M$. For each $\alpha \in \Lambda$, let $\chi'_\alpha\in C_{0}^{\infty}(U_{\alpha})$ such that $\chi'_{\alpha}\equiv 1$ near $\mathrm{supp}\,\chi_{\alpha}$. For any $k \in \mathbb{R}$ and  $a=a(x,\xi;h)\in S^k (T^*M)$, we define the quantization map as:
\begin{equation}\label{formquan}
    \Oph^{\rho}(a):=\sum_{\alpha\in \Lambda} \chi'_{\alpha} \Phi^{-1}_{\alpha} \varphi_{\alpha}^* \big( \mathrm{Op}_h\big( (\chi_{\alpha}a) \circ \widetilde{\varphi}_{\alpha}^{-1}\big)\otimes \mathrm{Id}_{\mathcal{H}} \big)(\varphi_{\alpha}^{-1})^*\Phi_{\alpha} \chi'_{\alpha},
\end{equation}
where $\widetilde{\varphi}_{\alpha}=(\varphi_{\alpha},(d\varphi_{\alpha})^{-t}): T^*U_{\alpha}\to T^*\varphi_{\alpha}(U_{\alpha})$ denotes the natural lift of $\varphi_{\alpha}$ to the cotangent bundle.
%If $a(x) \in C^{\infty}(M)$, then $\Oph^{\rho}(a)u(x)=a(x)u(x)$ for any $u \in C^{\infty}(M;F^{\rho})$.
\end{definition}
\begin{remark}
    Since $\widetilde{\varphi}_{\alpha}$ preserves the Kohn-Nirenberg symbol class $S^k(T^*M)$, the quantization map $\Oph^{\rho}$ defined in \eqref{formquan} maps $S^k(T^*M)$ into $\Psi^{k,sc}_h(M;F^\rho)$. This quantization map is non-canonical, in the sense that it depends on the choice of cutoff normal charts. Nevertheless, using Proposition \ref{criterionofpseudo}, it is easy to show that different choices of such charts lead to quantizations that differ only by terms in $h\Psi^{k-1,sc}_h(M;F^\rho)$.
\end{remark}

%We define the principal symbol as in \cite[Proposition E.14]{dyatlovzworski2019resonances}:

One key observation of this work is the \emph{uniform} boundedness of $\Oph^{\rho}(a)$ between Sobolev spaces.

\begin{proposition}\label{uniformboundedtheorem}
 For any $a(x, \xi ; h) \in S^k\left(T^* M\right)$ and any $s \in \mathbb{R}$, there exists a constant $C=C(a, s, k)>0$, independent of $( \mathcal{H},\rho) \in \mathcal{C}$ and $0<h\leq 1$, such that
\begin{equation}
\left\|\mathrm{Op}_h^\rho(a)\right\|_{H_h^s\left(M;F^{\rho}\right) \rightarrow H_h^{s-k}\left(M;F^{\rho}\right)} \leq C .
\end{equation}
\end{proposition}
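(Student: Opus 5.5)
The plan is to reduce the estimate to the scalar Euclidean bound through the tensor structure \eqref{localformofop}–\eqref{squaretensorextension}. We use the definition \eqref{formquan}:
\begin{equation*}
    \Oph^{\rho}(a)=\sum_{\alpha\in \Lambda} \big[\chi'_{\alpha} \Phi^{-1}_{\alpha} \varphi_{\alpha}^*\big] \big( \mathrm{Op}_h(a_\alpha)\otimes \mathrm{Id}_{\mathcal{H}} \big)\big[(\varphi_{\alpha}^{-1})^*\Phi_{\alpha} \chi'_{\alpha}\big], \qquad a_\alpha:=(\chi_{\alpha}a) \circ \widetilde{\varphi}_{\alpha}^{-1}.
\end{equation*}
Since $\Lambda$ is a finite set determined by $M$ and the fixed charts, and not by $(\mathcal{H},\rho)$, it suffices to bound each summand uniformly. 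Each summand factors into three operators, and we estimate them separately.

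The outer factors are handled by Proposition \ref{inclusionofSobolev}(c). It gives
\begin{equation*}
    \|(\varphi_\alpha^{-1})^*\Phi_\alpha\chi'_\alpha\|_{H^s_h(M;F^\rho)\to H^s_h(\R^d,\mathcal{H})}\leq C,
\end{equation*}
and likewise $\|\chi'_\alpha\Phi_\alpha^{-1}\varphi_\alpha^*\|_{H^{s-k}_h(\R^d,\mathcal{H})\to H^{s-k}_h(M;F^\rho)}\leq C$, with $C$ independent of $(\mathcal{H},\rho,h)$. These bounds are implicitly tied to a Sobolev norm built from some cover. By Proposition \ref{prop:equivsobolev}, that choice costs only a uniform constant. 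If one wants to check (c) directly, take the reference cover in Definition \ref{def:sobolev}. Composing $(\varphi_\alpha^{-1})^*\Phi_\alpha\chi'_\alpha$ with the reference chart maps gives operators of the form $\chi_{\beta\alpha}\varphi_{\beta\alpha}^*\otimes\Phi_{\beta\alpha}$, by \eqref{transitionmapform}. Here $\Phi_{\beta\alpha}$ is a constant unitary, so exactly as in \eqref{normdependence} the norm equals the scalar norm of $\chi_{\beta\alpha}\varphi_{\beta\alpha}^*$ on $H^s_h(\R^d)$. That norm is $h$-uniform by \cite[Appendix E]{dyatlovzworski2019resonances}.

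The middle factor is $\mathrm{Op}_h(a_\alpha)\otimes\mathrm{Id}_{\mathcal{H}}$. The symbol $a_\alpha$ lies in $S^k(T^*\R^d)$ and is compactly supported in $x$, because $\widetilde\varphi_\alpha$ preserves the Kohn–Nirenberg class. The $\mathcal{H}$-valued Sobolev norm is defined componentwise as $\sum_i\|u^i\|^2_{H^s_h}$. We therefore write $u=u^ie_i$ and get
\begin{equation*}
    \|(\mathrm{Op}_h(a_\alpha)\otimes\mathrm{Id})u\|^2_{H^{s-k}_h(\R^d,\mathcal{H})}=\sum_i\|\mathrm{Op}_h(a_\alpha)u^i\|^2_{H^{s-k}_h}\leq C_\alpha^2\sum_i\|u^i\|^2_{H^s_h}.
\end{equation*}
Here $C_\alpha$ is the scalar operator norm $\|\mathrm{Op}_h(a_\alpha)\|_{H^s_h(\R^d)\to H^{s-k}_h(\R^d)}$. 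It is bounded uniformly in $h$ by the standard semiclassical mapping theorem: conjugate by $\mathrm{Op}_h(\langle\xi\rangle^{s-k})$ and $\mathrm{Op}_h(\langle\xi\rangle^{-s})$, then apply the composition calculus and Calderón–Vaillancourt.

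One technical point is that Definition \ref{def:sobolev} uses the measures $(\varphi_\alpha)_*d\mu$ rather than Lebesgue measure. These have smooth positive densities on the compact support, so the resulting norms are uniformly equivalent after a multiplication by a smooth function. That multiplication is again a scalar operator tensored with the identity, and it contributes only a further uniform constant.

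Multiplying the three bounds and summing over the finitely many $\alpha$ gives $C=C(a,s,k)$. This constant depends only on scalar seminorms of $a$ and on the fixed charts, and in particular is independent of $(\mathcal{H},\rho)\in\mathcal{C}$ and $0<h\le1$. The step needing the most care is the uniformity of the chart-transfer bounds in Proposition \ref{inclusionofSobolev}(c). It relies crucially on flatness: the transition maps are constant unitaries, so no $x$-dependent matrices enter the Sobolev derivatives. Every operator that appears is then scalar $\otimes$ unitary, and \eqref{squaretensorextension} (in its Sobolev version) makes all constants representation-independent.
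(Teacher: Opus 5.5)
Your proposal is correct and follows the same route as the paper. Both arguments split $\Oph^\rho(a)$ via \eqref{formquan} into two kinds of factors:
the cutoff operators, which Proposition \ref{inclusionofSobolev}(c) bounds uniformly in $(\mathcal{H},\rho,h)$;
and the middle factor $\mathrm{Op}_h(a_\alpha)\otimes\mathrm{Id}_{\mathcal{H}}$, whose norm equals the $h$-uniform scalar bound.

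The paper simply cites \cite[Proposition E.22]{dyatlovzworski2019resonances} for that scalar bound. You write out the componentwise tensor argument and the handling of the reference measure, which the paper leaves implicit.
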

\begin{proof}
By \cite[Proposition E.22]{dyatlovzworski2019resonances}, the operator $\Oph(a): H^{s}_{h}(M)\to H^{s-k}_{h}(M)$ is uniformly bounded in $h\in(0,1]$. This, combined with \eqref{sobolevnormofcutoff}, which states that the operator-norm of the cutoff operator is uniformly bounded in $(\mathcal{H},\rho)$, yields the uniform boundedness of $A: H^{s}_{h}(M;F^{\rho})\to H^{s-k}_{h}(M;F^{\rho})$ with respect to $(\mathcal{H},\rho,h)$.
\end{proof}

The following algebraic properties are also important:

\begin{proposition}\label{propofquantization}
We have the following properties:
\begin{itemize}
    \item[(a)] For any $k\in \R$ and $A \in \Psi_h^{k,sc}(M;F^\rho)$, there exists $a\in S^k\left(T^* M\right)$ such that
\begin{equation}\label{pseudoequalsquant}
A=\mathrm{Op}_h^{\rho}(a)+\mathcal{O} (h^{\infty} )_{\Psi^{-\infty,sc}(M;F^\rho)}.
\end{equation}
\item[(b)] For any $k\in \R$ and $A \in \Psi_h^{k,sc}(M;F^\rho)$, the formal adjoint $A^*$ belongs to $\Psi_h^{k,sc}(M;F^\rho)$, and
\begin{equation*}
    \sigma_h^{\rho}\left(A^*\right)=\overline{\sigma_h^{\rho}(A)}.
\end{equation*}
\item[(c)] $h^{\infty}\Psi^{-\infty,sc}(M;F^{\rho})$ is an ideal of $\Psi^{\infty,sc}_h(M;F^{\rho})$, \emph{i.e.}, for any $A \in \Psi^{\infty,sc}_h(M;F^{\rho})$ and $R\in h^{\infty}\Psi^{-\infty,sc}(M;F^{\rho})$, we have $AR,\, RA \in h^{\infty}\Psi^{-\infty,sc}(M;F^{\rho})$.
\item[(d)] For any $k,\ell\in \R$ and $A \in \Psi_h^{k,sc}(M;F^\rho)$, $B \in \Psi_h^{\ell,sc}(M;F^\rho)$, we have:
\begin{itemize}
\item[$\bullet$] $AB \in \Psi_h^{k+\ell,sc}(M;F^\rho)$ and $\sigma^\rho_h (AB)=\sigma^\rho_h (A)\sigma^\rho_h (B)$.
\item[$\bullet$]
$[A,B]=AB-BA \in h\Psi_h^{k+\ell-1,sc}(M;F^\rho)$ and $\sigma_h^{\rho} (\frac{i}{h}[A, B] )  = \{\sigma_h^{\rho}(A), \sigma_h^{\rho}(B) \}$. Here $\{\cdot,\cdot\}$ denotes the Poisson bracket on $T^*M$.
\end{itemize}
\end{itemize}
\end{proposition}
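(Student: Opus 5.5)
The plan is to reduce each statement to its scalar counterpart on $\R^d$ (\cite[Appendix E]{dyatlovzworski2019resonances}) through cutoff normal charts. The structural fact doing the work is \eqref{transitionmapform}: in parallel orthonormal trivializations, the transition operators are a scalar change of variables tensored with a \emph{constant} unitary $\Phi_{\alpha\beta}$. Because $\Oph(b)\otimes\mathrm{Id}_{\mathcal{H}}$ commutes with $\mathrm{Id}\otimes\Phi_{\alpha\beta}$, every composition of local pieces becomes (scalar operator)$\,\otimes\,$(constant unitary). The unitaries cancel in pairs or combine into a transition map, so all symbol computations are scalar and independent of $(\mathcal{H},\rho)$. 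The error terms are controlled in $H^{-N}\to H^N$ uniformly by \eqref{squaretensorextension}, \eqref{sobolevnormofcutoff}, and Proposition \ref{uniformboundedtheorem}.

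For (a), start from $\Oph^\rho$ in \eqref{formquan} with a partition $\sum\chi_\alpha=1$. For each $\alpha$, compute $(\varphi_\alpha^{-1})^*\Phi_\alpha\chi'_\alpha A\chi'_\alpha\Phi_\alpha^{-1}\varphi_\alpha^*=\Oph(a_{\alpha})\otimes\mathrm{Id}$ using Proposition \ref{criterionofpseudo}. Following \cite[Prop. E.15]{dyatlovzworski2019resonances}, construct $a$ iteratively: set $a^{(0)}=\sum_\alpha \chi_\alpha\,a_\alpha\circ\widetilde\varphi_\alpha$. Then $A-\Oph^\rho(a^{(0)})\in h\Psi^{k-1,sc}_h$, because its local symbols vanish modulo $h$ by Lemma \ref{changevariable} and its off-diagonal parts are residual by \eqref{disresi}. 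Repeating this and Borel-summing gives $a\sim\sum h^j a^{(j)}$. The remainder lies in $\bigcap_N h^N\Psi^{-N,sc}_h$, which equals $h^\infty\Psi^{-\infty,sc}$ by \eqref{charremain}.

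For (b), take adjoints of \eqref{defformpseudo} term by term. Since $\Phi_\alpha$ is fiberwise unitary, $(\chi_\alpha\Phi_\alpha^{-1}\varphi_\alpha^*)^*=(\varphi_\alpha^{-1})^*\Phi_\alpha\chi_\alpha$ up to a smooth density factor $J_\alpha$ from the Riemannian measure. This gives local pieces $\Oph(a_\alpha)^*J_\alpha\otimes\mathrm{Id}$, with symbols $\overline{a_\alpha}$ modulo $h$. The residual class is stable under adjoint, since the defining estimate \eqref{smoothsobolevnorm} is symmetric under duality $H^{-N}\leftrightarrow H^{N}$ and local fiberwise diagonality is preserved with $\Phi_{12}^{*}=\Phi_{21}$.

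For (c), use \eqref{charremain}. By (a), $A=\Oph^\rho(a)+R'$, so $\|AR\|_{H^{-N}\to H^{N}}\le\|\Oph^\rho(a)\|_{H^{N+k}\to H^{N}}\|R\|_{H^{-N}\to H^{N+k}}$, which is $O(h^N)$ uniformly. Proposition \ref{uniformboundedtheorem} supplies the uniform bound for $\Oph^\rho(a)$, after converting between $h$ and non-$h$ Sobolev norms at the cost of powers of $h$ that are absorbed by $h^\infty$. Local fiberwise diagonality of $AR$ follows by inserting a partition $\sum\chi_\beta^2=1$ between the factors. Each factor is then (scalar)$\otimes$(transition unitary), and the unitaries compose to $\Phi_{12}$.

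For (d), write $A,B$ in the form \eqref{defformpseudo} and use (c) to discard residual parts. In $A_\alpha B_\beta$, either $\operatorname{supp}\chi_\alpha\cap\operatorname{supp}\chi_\beta=\varnothing$, in which case the term is residual by \eqref{disresi}, or both lie in a common chart. In the latter case, rewrite $B_\beta$ in chart $\alpha$ by Lemma \ref{changevariable}; the unitaries $\Phi_{\alpha\beta}^{\pm1}$ cancel. The composition then reduces to $\Oph(a)\Oph(b)\otimes\mathrm{Id}$, and the scalar composition formula gives symbol $ab$ and commutator symbol $-ih\{a,b\}$ modulo $h^2$. \textbf{Main obstacle:} the covering bookkeeping needed to make the charts of $A$ and $B$ compatible, and verifying uniformity of the remainders in $(\mathcal{H},\rho)$. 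I would handle this by refining to a common cover of small charts, so that any two intersecting supports lie in a single normal chart; this is possible because $M$ is compact and parallel trivializations exist on simply connected sets.
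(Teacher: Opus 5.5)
Your proposal is correct and follows essentially the paper's route: reduce each item to the scalar calculus of \cite[Appendix E]{dyatlovzworski2019resonances} through cutoff normal charts whose transitions are constant unitaries, use \eqref{charremain} to handle the local fiberwise diagonality requirement on remainders, and get uniformity in $(\mathcal{H},\rho)$ from Proposition \ref{uniformboundedtheorem} and \eqref{sobolevnormofcutoff}. The differences are only bookkeeping. In (c) the paper splits $\chi_1 A R=\chi_1\chi A\chi^2 R+\chi_1 A(1-\chi^2)R$, so it needs only \eqref{pseudooflocal} in a single chart plus the residual piece coming from \eqref{disresi}; your partition-of-unity insertion is also valid, but it tacitly uses the extra fact that $A$ itself is locally fiberwise diagonal. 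In (d) the paper appeals to (c) and Proposition \ref{criterionofpseudo} as in \cite[Proposition E.17]{dyatlovzworski2019resonances}, rather than refining to a common cover.
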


\begin{proof}
The proofs for (a) and (b) are routine; we refer to \cite[Proposition E.16-17]{dyatlovzworski2019resonances}. The only difference here is the additional requirement that operators in $h^{\infty} \Psi^{-\infty,sc}(M;F^\rho)$ be locally fiberwise diagonal. This condition can be mediated using formula \eqref{charremain}.

For (c), let $A \in \Psi^{k,sc}_h(M;F^{\rho})$ for some $k \in \R$ and $R\in h^{\infty}\Psi^{-\infty,sc}(M;F^{\rho})$. By Proposition \ref{uniformboundedtheorem} and (a), $AR$ satisfies \eqref{smoothsobolevnorm}. It remains to prove that $AR$ is locally fiberwise diagonal. For any two cutoff normal charts $(\Phi_i,U_i,\varphi_i,\chi_i)$, $i=1,2$, of $F^\rho$, we have
\begin{equation*}
        (\varphi^{-1}_1)^{*}\Phi_1\chi_1 AR \chi_2\Phi^{-1}_2\varphi^{*}_2  = (\varphi^{-1}_1)^{*}\Phi_1\chi_1 \chi A\chi^2R \chi_2\Phi^{-1}_2\varphi^{*}_2 + (\varphi^{-1}_1)^{*}\Phi_1\chi_1 A(1-\chi^2)R \chi_2\Phi^{-1}_2\varphi^{*}_2,
\end{equation*}
where $\chi \in C_0^\infty(U_1) $ such that $\chi =1$ near $\mathrm{supp}\, \chi_1$. For the first part of the RHS, since $(\varphi^{-1}_1)^{*}\Phi_1\chi_1 = \chi' (\varphi^{-1}_1)^{*}\Phi_1$ with $\chi' = (\varphi^{-1}_1)^{*} \chi_1$, and $\chi^2 = \chi \Phi_1^{-1} \varphi^*_1 (\varphi^{-1}_1)^{*}\Phi_1\chi$, we get
\begin{equation*}
        (\varphi^{-1}_1)^{*}\Phi_1\chi_1\chi A\chi^2R \chi_2\Phi^{-1}_2\varphi^{*}_2 = \chi' \Big( (\varphi^{-1}_1)^{*}\Phi_1 \chi A\chi \Phi_1^{-1} \varphi^*_1 \Big) \Big( (\varphi^{-1}_1)^{*}\Phi_1\chi R\chi_2\Phi^{-1}_2\varphi^{*}_2 \Big) . 
\end{equation*}
By definition, the above is equal to $\Oph(r) \otimes \Phi_{12}$ for some $r \in h^\infty S^{-\infty}(T^*\R^d)$. For the second part, choose a finite family of cutoff normal charts $ \{ (\Phi_{\alpha}, U_{\alpha}, \varphi_{\alpha},\chi_{\alpha} ) \}_{\alpha\in \Lambda}$ such that $\sum_{\alpha \in \Lambda} \chi_\alpha^2 =1$, we have
\begin{equation*}
        (\varphi^{-1}_1)^{*}\Phi_1\chi_1 A(1-\chi^2)R \chi_2\Phi^{-1}_2\varphi^{*}_2 = \sum_{\alpha \in \Lambda} \Big( (\varphi^{-1}_1)^{*}\Phi_1 \chi A' \chi_\alpha \Phi_\alpha^{-1} \varphi_{\alpha}^* \Big)  \Big( (\varphi^{-1}_\alpha)^{*}\Phi_\alpha \chi_\alpha  R \chi_2\Phi^{-1}_2\varphi^{*}_2  \Big),
\end{equation*}
where $A'=\chi_1 A(1-\chi^2) =\mathcal{O} (h^{\infty} )_{\Psi^{-\infty,sc}(M;F^\rho)}$ by \eqref{disresi}. By definition, the above is also equal to $\Oph(r') \otimes \Phi_{12}$ for some $r' \in h^\infty S^{-\infty}(T^*\R^d)$. We conclude that $AR $ is locally fiberwise diagonal. The case for $RA$ can be proved similarly. Item (d) is a direct consequence of (c) and Proposition \ref{criterionofpseudo}, we refer to \cite[Proposition E.17]{dyatlovzworski2019resonances}.
\end{proof}

%\begin{proposition}
%    The class of locally fiberwise diagonal operators is closed under composition.
%\end{proposition}
%\begin{proof}
 %   Let $\{ (\Phi_\alpha,U_\alpha,\varphi_\alpha,\chi_\alpha)\}_{\alpha\in \Lambda}$ be a finite cover of cutoff normal charts of $F^\rho$ with $\sum_{\alpha \in \Lambda} \chi_\alpha^2 =1$. Then, for any two locally fiberwise diagonal operators $A$ and $B$, and any two cutoff normal charts $(\Phi_i,U_i,\varphi_i,\chi_i)$, $i=1,2$, we have
  %  \begin{equation*}
   %     \begin{aligned}
    %        &(\varphi^{-1}_1)^{*}\Phi_1\chi_1 A(\mathcal{H};\rho;h)B(\mathcal{H};\rho;h) \chi_2\Phi^{-1}_2\varphi^{*}_2 \\
     %       &=\sum_{\alpha \in \Lambda} \Big((\varphi^{-1}_1)^{*}\Phi_1\chi_1 A(\mathcal{H};\rho;h) \chi_\alpha\Phi^{-1}_\alpha\varphi^{*}_\alpha \Big) \Big((\varphi^{-1}_\alpha)^{*}\Phi_\alpha\chi_\alpha B(\mathcal{H};\rho;h) \chi_2\Phi^{-1}_2 \varphi^{*}_2 \Big) \\
      %      &= \sum_{\alpha \in \Lambda} \Big( A_{\varphi_{1\alpha}, \chi_{1\alpha}}(h)\otimes \Phi_{1\alpha} \Big) \Big(  B_{\varphi_{\alpha 2}, \chi_{\alpha 2}}(h)\otimes \Phi_{\alpha 2}  \Big) \\
       %     &= \left( \sum_{\alpha \in \Lambda}  A_{\varphi_{1\alpha}, \chi_{1\alpha}}(h) B_{\varphi_{\alpha 2}, \chi_{\alpha 2}}(h) \right)\otimes \Phi_{12}  .
        %\end{aligned}
    %\end{equation*}
    %The proof is complete.
%\end{proof}

%We note that $\bigcap\limits_{k\in \mathbb{Z}}h^k\Psi^{k,sc}_{h}(M;F^\rho)=h^{\infty}\Psi^{-\infty,sc}(M;F^\rho)$.

Finally, by the same proof of the sharp Gårding inequality (see \cite[Proposition E.24]{dyatlovzworski2019resonances}) and noting that 
\begin{equation}\label{adjointofcutoff}
\begin{aligned}
\text{the adjoint of } & \  (\varphi^{-1})^*\Phi \chi: L^2(M;F)\to L^{2}(\mathbb{R}^d,\mathcal{H},\varphi_{*}d\mu) \\
\text{is} & \  \chi \Phi^{-1} \varphi^*: L^{2}(\mathbb{R}^d,\mathcal{H}, \varphi_{*}d\mu)\to L^2(M;F),
\end{aligned}
\end{equation}
we have the following version of sharp Gårding inequality:

\begin{proposition}\label{uniformsharpgarding}
    For any $a \in S^{0} (T^*M)$ with $a\geq 0$, there exists a constant $C=C(M,a)>0$, independent of $(\mathcal{H},\rho,h)$, such that 
    \begin{equation*}
        \langle \Oph^{\rho}(a)u,u\rangle_{L^2(M;F^\rho)}\geq -Ch \|u \|_{L^2(M;F^\rho)}^2.
    \end{equation*}
\end{proposition}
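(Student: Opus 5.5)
The plan is to transfer the scalar sharp Gårding inequality chartwise, using the fact that $\Oph^{\rho}(a)$ is, by Definition \ref{diagofquantizeofscalar}, a finite sum of conjugated scalar operators tensored with $\mathrm{Id}_{\mathcal{H}}$. The one wrinkle is that the outer cutoffs in \eqref{formquan} are $\chi'_\alpha$, while the symbol carries $\chi_\alpha$. So the first step is to rewrite each term as $B_\alpha^*\big(\Oph(a_\alpha)\otimes \mathrm{Id}_{\mathcal{H}}\big)B_\alpha$, where $B_\alpha:=(\varphi_\alpha^{-1})^*\Phi_\alpha\chi'_\alpha$ and $a_\alpha:=(\chi_\alpha a)\circ\widetilde{\varphi}_\alpha^{-1}\in S^0(T^*\R^d)$. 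By \eqref{adjointofcutoff}, the operator $\chi'_\alpha\Phi_\alpha^{-1}\varphi_\alpha^*$ is exactly the adjoint of $B_\alpha$ once $L^2(\R^d,\mathcal{H})$ carries the pushforward measure $(\varphi_\alpha)_*d\mu$. This measure has a smooth positive density $w_\alpha$ with respect to Lebesgue measure on the relevant compact set.

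To handle the density, I would write $\langle \Oph(a_\alpha)v,v\rangle_{L^2(w_\alpha dx)}=\langle w_\alpha\Oph(a_\alpha)v,v\rangle_{L^2(dx)}$. Replacing $w_\alpha$ by a compactly supported cutoff version $\tilde w_\alpha\ge 0$, equal to $w_\alpha$ on $\varphi_\alpha(\mathrm{supp}\,\chi'_\alpha)$, does not change the pairing, since $v=B_\alpha u$ is supported there. By the scalar composition calculus, $\tilde w_\alpha\Oph(a_\alpha)=\Oph(\tilde w_\alpha a_\alpha)+h\Oph(r_\alpha)$ with $r_\alpha\in S^{-1}$. The new symbol $\tilde w_\alpha a_\alpha$ is nonnegative, and $\Oph(r_\alpha)$ is $L^2$ bounded.

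The next step applies the scalar sharp Gårding inequality on $\R^d$ \cite[Proposition E.24]{dyatlovzworski2019resonances} to $\Oph(\tilde w_\alpha a_\alpha)$, giving $\langle \Oph(\tilde w_\alpha a_\alpha)f,f\rangle\ge -C_\alpha h\|f\|^2$ for scalar $f$. I would extend this to $\mathcal{H}$-valued $v=v^ie_i$ componentwise: because the operator is $\Oph(\cdot)\otimes\mathrm{Id}_{\mathcal{H}}$, the pairing splits as $\sum_i\langle \Oph(\tilde w_\alpha a_\alpha)v^i,v^i\rangle\ge -C_\alpha h\sum_i\|v^i\|^2=-C_\alpha h\|v\|^2$. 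The remainder is handled the same way using \eqref{squaretensorextension}, which bounds $|\langle h\Oph(r_\alpha)\otimes\mathrm{Id}\, v,v\rangle|\le C h\|v\|^2$. All these constants depend only on $a$, the fixed charts and the densities, and not on $(\mathcal{H},\rho,h)$.

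The final step sums over the finite set $\Lambda$ and uses \eqref{sobolevnormofcutoff} with $s=0$, which gives $\|B_\alpha u\|_{L^2}\le C\|u\|_{L^2(M;F^\rho)}$ uniformly. This yields $\langle\Oph^\rho(a)u,u\rangle\ge -Ch\|u\|^2_{L^2(M;F^\rho)}$ with $C=C(M,a)$. The only step requiring care is the density and adjoint bookkeeping in the second paragraph; everything else reduces directly to the scalar result through the tensor structure.
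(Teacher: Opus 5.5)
Your proposal is correct and follows the paper's own route. The paper just invokes the scalar sharp Gårding argument of \cite[Proposition E.24]{dyatlovzworski2019resonances} chartwise, together with the adjoint identity \eqref{adjointofcutoff}, and your conjugation by $B_\alpha$, handling of the pushforward density, componentwise application over the orthonormal basis, and summation over charts is the detailed version of that.

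Two minor remarks. For the left quantization \eqref{standquant} one has $\tilde w_\alpha\Oph(a_\alpha)=\Oph(\tilde w_\alpha a_\alpha)$ exactly, so your remainder $r_\alpha$ is zero. Also, the nonnegativity of $\tilde w_\alpha a_\alpha$ tacitly uses $\chi_\alpha\ge 0$ in Definition \ref{diagofquantizeofscalar}, which one can always arrange.
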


A direct consequence of the sharp Gårding inequality is the sharp bound on the norms of pseudodifferential operators (see \cite[Theorem 13.13]{zworski2012semiclassicalanalysis}): for any $a \in S^{0} (T^*M)$, there exists a constant $C=C(M,a)>0$, independent of $(\mathcal{H},\rho,h)$, such that
\begin{equation}\label{sharpnorm}
        \|\Oph^{\rho}(a)\|_{L^2(M;F^{\rho})\to L^2(M;F^{\rho})}\leq \sup_{(x,\xi)\in T^*M} |a(x,\xi)|+Ch.
\end{equation}

\subsection{Wavefront set, Ellipticity, and Functional calculus}

Let $\overline{T}^*M$ be the fiber-radial compactification of $T^*M$ (see \cite[Appendix E.1.3]{dyatlovzworski2019resonances}). For any $k\in \R$ and $a \in S^k(T^*M)$, let $\operatorname{ess-supp} a \subset \overline{T}^*M$ denote the essential support of $a$ (see \cite[Definition E.26]{dyatlovzworski2019resonances}).

%Since $A\in \Psi^{\bullet,sc}_{h}(M;F^{\rho})$ is quantized by symbol class $a \in S^{\bullet}(T^*M)$, we can define the wavefront set of $A$ by the essential support of $a$, $\operatorname{ess-supp}(a)\subset \overline{T}^*M$, where $\overline{T}^*M$ is the fiber-radial compactification of $T^*M$, see \cite[E.1.3 and Definition E.26]{dyatlovzworski2019resonances}.

\begin{definition}\label{wavefrontset}
Let $k\in \R$ and $A \in \Psi_h^{k,sc}(M;F^\rho)$. By \eqref{pseudoequalsquant}, there exists a symbol $a \in S^{k} (T^*M)$ such that $A=\mathrm{Op}_h^{\rho}(a)+\mathcal{O} (h^{\infty} )_{\Psi^{-\infty,sc}(M;F^\rho)}$. We define the semiclassical wavefront set $\mathrm{WF}_h(A) \subset \overline{T}^* M$ as
\begin{equation*}
    \mathrm{WF}_{h}(A)=\operatorname{ess-supp} a.
\end{equation*}
We say that $A$ is compactly microlocalized if $\mathrm{WF}_h(A)$ is a compact subset of $T^* M$. We denote
\begin{equation*}
    \Psi_h^{\text{comp},sc}(M;F^\rho) = \left\{A \in  \Psi_h^{\infty,sc}(M;F^\rho): A \textrm{ is compactly microlocalized} \right\}.
\end{equation*}
\end{definition}

\begin{definition}
    Let $k\in \R$ and $A,B \in \Psi_h^{k,sc}(M;F^\rho)$. For any open or closed set $U \subset \overline{T}^* M$, we say that 
    \begin{equation*}
        A=B+\mathcal{O}(h^{\infty})_{\Psi^{-\infty,sc}(M;F^\rho)}\quad \text{microlocally on } U,
    \end{equation*}
    in the case when $U$ is open, or
    \begin{equation*}
        A=B+\mathcal{O}(h^{\infty})_{\Psi^{-\infty,sc}(M;F^\rho)}\quad \text{microlocally near } U,
    \end{equation*}
    in the case when $U$ is closed, if $\mathrm{WF}_{h}(A-B)\cap U=\varnothing$.
\end{definition}

%We can easily show that $A$ and $B$ are microlocally equivalent near $U$ if and only if, for every $T\in \bigcup_{k\in \R}\Psi_{h}^{k,sc}(M;F^\rho)$ with $\mathrm{WF}_{h}(T)\subset U$, we have
%\begin{equation}
%(A-B)T\in h^{\infty}\Psi^{-\infty,sc}(M;F^\rho).\footnote{One may also require $T(A-B)\in h^{\infty}\Psi^{-\infty,sc}(M;F^\rho)$ for all such $T$.}
%\end{equation} 

From the microlocal partition of unity \cite[Proposition E.30]{dyatlovzworski2019resonances}, we get the following \emph{uniform} version of the microlocal partition of unity:
\begin{proposition}\label{microlocalpartitionofunity}
Assume that $K \subset \overline{T}^* M$ is compact and
$\{U_{\alpha}\}_{\alpha \in \Lambda}$ is a finite open cover of $K$. Then there exist compactly supported $X_{\alpha} \in \Psi_h^{\mathrm{comp},sc}(M;F^{\rho})$ such that $\mathrm{WF}_h (X_{\alpha} ) \subset U_{\alpha}$ and
\begin{equation*}
    \sum_{\alpha \in \Lambda} X_{\alpha}=I+\mathcal{O}(h^{\infty})_{\Psi^{-\infty,sc}(M;F^{\rho})} \quad \mathrm{ \ microlocally\ near\ } K .
\end{equation*}
\end{proposition}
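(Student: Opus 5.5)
The plan is to transfer the scalar microlocal partition of unity \cite[Proposition E.30]{dyatlovzworski2019resonances} to the bundle through the quantization map $\Oph^{\rho}$ of Definition \ref{diagofquantizeofscalar}. The key fact is that all the symbol-level constructions are independent of $(\mathcal{H},\rho)$.

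First I construct scalar symbols. Choose $a_\alpha \in S^{0}(T^*M)$ (in fact in $C_0^\infty$ away from fiber infinity, or of order $0$ near fiber infinity if $K$ meets $\partial\overline{T}^*M$) with $\operatorname{ess-supp} a_\alpha \subset U_\alpha$ and $\sum_\alpha a_\alpha = 1$ on a neighborhood $W$ of $K$ in $\overline{T}^*M$. This is a standard smooth partition of unity on the compact manifold with boundary $\overline{T}^*M$, subordinate to $\{U_\alpha\}$ together with $\overline{T}^*M\setminus K$. Multiplying by a cutoff in $x$ makes each $a_\alpha$ compactly supported in the base. These symbols are $h$-independent and do not involve $\rho$ at all.

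Next I quantize and identify the error. Set $X_\alpha := \Oph^{\rho}(a_\alpha)$. By Definition \ref{wavefrontset}, $\mathrm{WF}_h(X_\alpha)=\operatorname{ess-supp} a_\alpha\subset U_\alpha$; this is compact when $K\subset T^*M$. If compactness of $\mathrm{WF}_h$ is required in general, one can shrink so that $K$ avoids fiber infinity, and I would state the proposition in that generality. Linearity of $\Oph^{\rho}$ gives
\begin{equation*}
\sum_\alpha X_\alpha - I = \Oph^{\rho}\Big(\sum_\alpha a_\alpha\Big) - \Oph^{\rho}(1) + \big(\Oph^{\rho}(1)-I\big).
\end{equation*}
From \eqref{formquan}, $\Oph^{\rho}(1)=\sum_\alpha \chi'_\alpha\Phi_\alpha^{-1}\varphi_\alpha^*(\chi_\alpha\circ\varphi_\alpha^{-1})(\varphi_\alpha^{-1})^*\Phi_\alpha\chi'_\alpha=\sum_\alpha\chi_\alpha = I$ exactly, because multiplication operators commute with the constant trivializations. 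Hence $\sum_\alpha X_\alpha - I=\Oph^{\rho}(b)$ with $b=\sum_\alpha a_\alpha-1$, and $b$ vanishes on $W$. Therefore $\mathrm{WF}_h(\sum_\alpha X_\alpha - I)\subset\operatorname{ess-supp} b$, which is disjoint from $K$. This is exactly the meaning of "$=I+\mathcal{O}(h^\infty)$ microlocally near $K$".

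The main obstacle is uniformity, meaning that the remainder terms are genuinely in $h^\infty\Psi^{-\infty,sc}(M;F^\rho)$ with constants independent of $(\mathcal{H},\rho)$. This matters whenever the statement is used through the equivalent formulation that $(\sum_\alpha X_\alpha - I)T\in h^\infty\Psi^{-\infty,sc}$ for $T$ with $\mathrm{WF}_h(T)$ in a neighborhood of $K$.

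To handle this, I would use Proposition \ref{propofquantization}(d) and the composition formula in the local charts \eqref{pseudooflocal}. The local full symbol of the composition is the scalar Moyal product of $\rho$-independent symbols whose essential supports are disjoint, so it lies in $h^\infty S^{-\infty}$ by the scalar theory. The locally fiberwise diagonal structure then carries it to the bundle, and Proposition \ref{uniformboundedtheorem} together with \eqref{sobolevnormofcutoff} gives bounds independent of $(\mathcal{H},\rho,h)$.
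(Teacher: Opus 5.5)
Your argument is correct and is essentially the construction behind the paper's one-line appeal to \cite[Proposition E.30]{dyatlovzworski2019resonances}: you quantize a $\rho$-independent partition of unity $\{a_\alpha\}$ by $\Oph^{\rho}$, use that $\Oph^{\rho}(1)=\sum_\alpha\chi_\alpha=I$ exactly, and read off $\mathrm{WF}_h\big(\sum_\alpha X_\alpha-I\big)=\operatorname{ess-supp}\big(\sum_\alpha a_\alpha-1\big)$, which misses $K$. You are also right that requiring $X_\alpha\in\Psi_h^{\mathrm{comp},sc}(M;F^{\rho})$ only makes sense for $K\subset T^*M$ (as in the paper's application with $K=\overline{U}$), and your final uniformity paragraph is unnecessary, since the conclusion is stated purely through wavefront sets of $\rho$-independent symbols.
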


\begin{definition}
    For any $k\in \R$ and $A \in \Psi^{k,sc}_{h}(M;F^\rho)$, we define the elliptic set $\operatorname{ell}_h(A) \subset \overline{T}^*M$ of $A$ as follows: we say that $(x_0,\xi_0) \in \operatorname{ell}_h(A)$ or $A$ is elliptic at $(x_0,\xi_0)$ if there exists a neighborhood $W \subset \overline{T}^*M$ of $(x_0,\xi_0)$ and a constant $c>0$ such that 
    \begin{equation*}
        |\sigma^\rho_h(A)(x,\xi)|>c\langle \xi \rangle^k,\qquad \forall (x,\xi) \in W\cap T^*M.
    \end{equation*}
    Note that the above definition is independent of the choice of the representative element in $\sigma^\rho_{h}(A) \in S^k(T^*M)/hS^{k-1}(T^*M)$.
\end{definition}

The elliptic set is important because it characterizes the microlocal invertibility of $A$ near elliptic points; we refer to \cite[Proposition E.32]{dyatlovzworski2019resonances}. The proof there is readily adapted to our setting:

\begin{proposition}\label{prop:microelliptic}
Let $k,\ell \in \R$, $A \in \Psi_h^{k,sc}(M;F^\rho)$, and $P \in \Psi_h^{\ell,sc}(M;F^\rho)$ such that $\mathrm{WF}_h(A) \subset \operatorname{ell}_h(P)$. Then there exist $Q, \,Q^{\prime} \in \Psi_h^{k-\ell,sc}(M;F^\rho)$ such that
\begin{equation}\label{ellipticparametrix}
A=PQ+\mathcal{O}(h^{\infty})_{\Psi^{-\infty,sc}(M;F^\rho)}=Q^{\prime}P+\mathcal{O}(h^{\infty})_{\Psi^{-\infty,sc}(M;F^\rho)},
\end{equation}
and $\mathrm{WF}_h(Q) \cup \mathrm{WF}_h (Q^{\prime} ) \subset \mathrm{WF}_h(A)$.
\end{proposition}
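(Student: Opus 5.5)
The plan is to follow the scalar argument of \cite[Proposition E.32]{dyatlovzworski2019resonances}: build the parametrix symbolically by iteration and then check that each step stays inside the class $\Psi^{\bullet,sc}_h(M;F^\rho)$, with constants independent of $(\mathcal{H},\rho)$. All the needed algebra is already in Proposition \ref{propofquantization}. I will construct $Q$; the construction of $Q'$ is symmetric, using left instead of right compositions.

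\emph{Step 1 (first approximation).} Write $p$ for a representative of $\sigma^\rho_h(P)$ and $a$ for a symbol with $A=\Oph^\rho(a)+\mathcal{O}(h^\infty)$, which exists by Proposition \ref{propofquantization}(a). Since $\mathrm{WF}_h(A)=\operatorname{ess-supp} a$ is compact in $\overline{T}^*M$ and contained in $\operatorname{ell}_h(P)$, we can choose a cutoff $\chi\in S^0(T^*M)$ with the following properties: $\chi=1$ near $\operatorname{ess-supp}a$, the essential support of $\chi$ lies in $\operatorname{ell}_h(P)$, and $|p|\ge c\langle\xi\rangle^\ell$ on $\operatorname{supp}\chi$ for small $h$. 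Then $q_0:=\chi a/p\in S^{k-\ell}(T^*M)$, and $\operatorname{ess-supp} q_0\subset \operatorname{ess-supp} a$. Set $Q_0=\Oph^\rho(q_0)$. By Proposition \ref{propofquantization}(d), $PQ_0\in\Psi^{k,sc}_h$ with principal symbol $pq_0=a$ modulo $hS^{k-1}$. Hence
\[
R_1:=A-PQ_0\in h\Psi^{k-1,sc}_h(M;F^\rho).
\]
The remainder $R_1$ is locally fiberwise diagonal automatically, because it is a combination of elements of the class. Its wavefront set is contained in $\mathrm{WF}_h(A)\cup\mathrm{WF}_h(Q_0)\subset\mathrm{WF}_h(A)$, by locality of the composition formula in the charts: the local symbols are the Euclidean Moyal products, whose full expansions are supported in the product of supports.

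\emph{Step 2 (iteration and Borel summation).} Repeat Step 1 with $A$ replaced by $R_j\in h^j\Psi^{k-j,sc}_h$, where $\mathrm{WF}_h(R_j)\subset\mathrm{WF}_h(A)$. This produces $q_j\in h^jS^{k-\ell-j}$ with $\operatorname{ess-supp}q_j\subset\mathrm{WF}_h(A)$. Take an asymptotic sum $q\sim\sum_j q_j$ in $S^{k-\ell}(T^*M)$ and set $Q=\Oph^\rho(q)$. Then
\[
A-PQ\in\bigcap_N h^N\Psi^{k-N,sc}_h(M;F^\rho),
\]
and \eqref{charremain} identifies this intersection with $h^\infty\Psi^{-\infty,sc}$. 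This is exactly why the remark after Proposition \ref{criterionofpseudo} was useful: it lets us avoid verifying local fiberwise diagonality of the residual by hand.

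\emph{Main obstacle.} The only genuinely new issue is uniformity in $(\mathcal{H},\rho)$. The symbols $q_j$ and $q$ are defined purely on $T^*M$. I must check that the full local symbol of $P$ in each cutoff normal chart, which is $a_{\varphi,\chi}$ in \eqref{pseudooflocal}, does not depend on $(\mathcal{H},\rho)$. This holds because both the definition of $\Psi^{\ell,sc}_h$ and Proposition \ref{criterionofpseudo} say so. Consequently the symbolic calculus of Proposition \ref{propofquantization}(d) produces remainders whose local symbols are independent of $(\mathcal{H},\rho)$. The $h^\infty$ bounds \eqref{smoothsobolevnorm} are then uniform by Proposition \ref{uniformboundedtheorem}. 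The inclusion $\mathrm{WF}_h(Q)\subset\mathrm{WF}_h(A)$ follows from $\operatorname{ess-supp}q\subset\bigcup_j\operatorname{ess-supp}q_j$. This union is contained in the closed set $\mathrm{WF}_h(A)$, since each $\operatorname{ess-supp} q_j$ is.
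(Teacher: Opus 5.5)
Your proposal is correct and follows the same route as the paper. The paper's own proof simply states that the iterative symbolic construction of \cite[Proposition E.32]{dyatlovzworski2019resonances} adapts readily. Your use of \eqref{charremain}, to avoid checking local fiberwise diagonality of the residual by hand, and your observation that all symbols involved are independent of $(\mathcal{H},\rho)$ are exactly the two points that adaptation requires.
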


For any $k\in \R$, $P \in \Psi_{h}^{k,sc}(M;F^\rho)$ is said to be an elliptic operator if it is elliptic everywhere, \emph{i.e.} $\operatorname{ell}_h(P)=\overline{T}^*M$. It is equivalent to 
\begin{equation}
|\sigma_{h}^{\rho}(P)(x,\xi)| > c\langle \xi \rangle^{k}, \qquad \forall (x,\xi) \in T^*M, 
\end{equation}
for some $c>0$. 

\begin{proposition}\label{inverseofelliptic}
Let $k\in \R$ and $P \in \Psi_{h}^{k,sc}(M;F^\rho)$. By Proposition \ref{uniformboundedtheorem}, for any $s\in \R$,
\begin{equation*}
    P(\mathcal{H};\rho;h):H^{s+k}_h(M;F^\rho) \rightarrow H^{s}_h(M;F^\rho) 
\end{equation*}
is bounded uniformly in $(\mathcal{H},\rho,h)$. If $P$ is elliptic, then there exists $h_0\in (0,1]$, independent of $(\mathcal{H},\rho)$, such that for any $h\in (0,h_0)$, 
$P(\mathcal{H};\rho;h)$ is invertible and the inverse
 $P^{-1} \in \Psi_{h}^{-k,sc}(M;F^\rho)$.
\end{proposition}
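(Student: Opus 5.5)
We follow the standard argument for inverting elliptic semiclassical operators (as in \cite[Proposition E.32]{dyatlovzworski2019resonances}), and at each step check that the constants and the threshold $h_0$ come only from the scalar local symbols, so they do not depend on $(\mathcal{H},\rho)$.

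\textbf{Step 1: Neumann-series inversion.} Since $P$ is elliptic, $\operatorname{ell}_h(P)=\overline{T}^*M$. Apply Proposition~\ref{prop:microelliptic} with $A=I\in\Psi^{0,sc}_h(M;F^\rho)$. This gives $Q,Q'\in\Psi_h^{-k,sc}(M;F^\rho)$ with
\begin{equation*}
PQ=I+R_1,\qquad Q'P=I+R_2,\qquad R_1,R_2\in h^\infty\Psi^{-\infty,sc}(M;F^\rho).
\end{equation*}
The symbols of $Q,Q'$ come from the usual iterative parametrix construction, carried out on scalar symbols in cutoff normal charts. Hence they are independent of $(\mathcal{H},\rho)$. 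By Definition~\ref{residualclass}, $\|R_j\|_{L^2\to L^2}\le Ch$ with $C$ independent of $(\mathcal{H},\rho)$. Choose $h_0$ with $Ch_0<1/2$. Then for $0<h<h_0$ the operators $I+R_j$ are invertible on $L^2(M;F^\rho)$ by a Neumann series. Using the $H^{-N}\to H^N$ bounds, they are also invertible on every $H^s_h$, with uniform constants. So $P$ has the right inverse $Q(I+R_1)^{-1}$ and the left inverse $(I+R_2)^{-1}Q'$. These must coincide, so $P:H^{s+k}_h\to H^s_h$ is invertible with
\begin{equation*}
P^{-1}=Q(I+R_1)^{-1}.
\end{equation*}
Its norm is bounded uniformly by Proposition~\ref{uniformboundedtheorem}.

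\textbf{Step 2: $P^{-1}$ is pseudodifferential.} Write $(I+R_1)^{-1}=I+S$ with
\begin{equation*}
S=-R_1+R_1(I+R_1)^{-1}R_1 .
\end{equation*}
Then $P^{-1}=Q+QS$. By Proposition~\ref{propofquantization}(c), it suffices to show $S\in h^\infty\Psi^{-\infty,sc}(M;F^\rho)$.

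\textbf{Step 3 (main obstacle): $S$ is in the residual class.} There are two requirements.
\begin{itemize}
\item[(i)] \emph{Sobolev bounds.} The estimate \eqref{smoothsobolevnorm} for $R_1(I+R_1)^{-1}R_1$ follows by composing $R_1:H^{-N}\to L^2$, then $(I+R_1)^{-1}$ on $L^2$, then $R_1:L^2\to H^N$. All constants are uniform.
\item[(ii)] \emph{Local fiberwise diagonality.} This is the delicate point. Expand
\begin{equation*}
(I+R_1)^{-1}=\sum_{j\ge0}(-R_1)^j,
\end{equation*}
which converges in operator norm for $h<h_0$. Each term $R_1^j$ is locally fiberwise diagonal: insert a partition $\sum_\alpha\chi_\alpha^2=1$ between factors, as in the proof of Proposition~\ref{propofquantization}(c). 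The local operators are then sums of products $\mathrm{Op}_h(r)\otimes\Phi_{\cdot\cdot}$, and the transition unitaries multiply to $\Phi_{12}$. Summing the series, the scalar local operators converge in every $H^{-N}\to H^N$ norm, because the norms decay geometrically in $j$ with uniform constants. Thus
\begin{equation*}
(\varphi_1^{-1})^*\Phi_1\chi_1 S\chi_2\Phi_2^{-1}\varphi_2^*=S_{12}\otimes\Phi_{12},
\end{equation*}
with $S_{12}$ a scalar operator independent of $(\mathcal{H},\rho)$ that is $\mathcal{O}(h^\infty)$ from $H^{-N}$ to $H^N$. Such an operator has Schwartz kernel in $h^\infty C^\infty$, so $S_{12}=\mathrm{Op}_h(s_{12})$ with $s_{12}\in h^\infty S^{-\infty}$.
\end{itemize}
By Remark~\ref{remark46}, this gives $S\in h^\infty\Psi^{-\infty,sc}(M;F^\rho)$. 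Hence $P^{-1}\in\Psi_h^{-k,sc}(M;F^\rho)$, and $h_0$ does not depend on $(\mathcal{H},\rho)$.
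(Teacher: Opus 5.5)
Your proof is correct. Its first half matches the paper: you get the parametrix from Proposition \ref{prop:microelliptic} with $A=I$, then invert by a Neumann series with $h_0$ fixed by a uniform bound on the remainder. The second half takes a different and shorter route.

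\textbf{The paper's route.} It uses the Neumann series only to show that $P^{-1}-Q$ is locally fiberwise diagonal, with $H^s_h\to H^{s+k}_h$ bounds $C2^{-n}$ on the terms $QR^n$. It then checks the two conditions of Proposition \ref{criterionofpseudo} separately:
\begin{itemize}
\item \eqref{disresi}, via $P^{-1}-Q=P^{-1}R$;
\item \eqref{pseudooflocal}, by applying Beals's theorem to the local operator $A_{\varphi,\chi}$. This needs iterated commutator estimates with the lifted operators $L_{x_i,\widetilde{\chi}}$ and $L_{hD_{x_i},\widetilde{\chi}}$, together with the identity $\mathrm{ad}_L P^{-1}=P^{-1}(\mathrm{ad}_L P)P^{-1}$ (up to sign).
\end{itemize}

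\textbf{Your route.} You estimate the Neumann terms in $H^{-N}\to H^N$ norms, where for $j\ge 2$ they are $\mathcal{O}(h^{2N}(Ch)^{j-2})$. Together with local fiberwise diagonality of each term, this puts $S=(I+R_1)^{-1}-I$ itself in $h^\infty\Psi^{-\infty,sc}(M;F^\rho)$. The ideal property, Proposition \ref{propofquantization}(c), then gives $P^{-1}=Q+QS\in\Psi^{-k,sc}_h(M;F^\rho)$, with no need for Beals's theorem.

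\textbf{What each buys.}
\begin{itemize}
\item Your $h_0$ comes from the $L^2$ bound alone, via $(I+R_1)^{-1}=I-R_1+R_1(I+R_1)^{-1}R_1$. It is therefore independent of $s$, whereas the paper's choice $h_0=(2M_s)^{-1}$ depends on $s$.
\item Your last step, identifying $S_{12}$ with $\mathrm{Op}_h(s_{12})$ and $s_{12}\in h^\infty S^{-\infty}$, is harmless but unnecessary. Definition \ref{residualclass} only asks for local fiberwise diagonality plus \eqref{smoothsobolevnorm}. If you keep it, note that it relies on the compact support of the local kernel coming from the cutoffs $\chi_1,\chi_2$.
\item The paper's heavier commutator argument is a template it reruns in Theorem \ref{mainthmofquantized} for $(P-z)^{-1}$. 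There one needs bounds polynomial in $|\mathrm{Im}\,z|^{-1}$ for all small $h$ at once. The parametrix remainder there is only $\mathcal{O}(h|\mathrm{Im}\,z|^{-K})$, so a Neumann-series argument like yours would work only for $h\lesssim |\mathrm{Im}\,z|^{K}$.
\end{itemize}
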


\begin{proof}
Taking $A=I = \mathrm{Op}_h^\rho(1)$ in Proposition \ref{prop:microelliptic}, there exists $Q,\,Q^{\prime}\in \Psi^{-k,sc}_{h}(M;F^\rho)$ and $R,\,R^{\prime}\in h^{\infty}\Psi^{-\infty,sc}(M;F^\rho)$ such that 
\begin{equation*}
I=PQ+R=Q^{\prime}P+R^{\prime}.
\end{equation*}
Since $R \in h\Psi^0_h(M;F^\rho)$, by Proposition \ref{uniformboundedtheorem}, there exists $M_{s}>0$, independent of $(\mathcal{H},\rho,h)$, such that 
\begin{equation}\label{normR}
    \|R(\mathcal{H};\rho;h)\|_{H^s_h(M;F^\rho) \rightarrow H^s_h(M;F^\rho)} \leq M_{s}h.
\end{equation}
Taking $h_0 = (2M_s)^{-1}$. Then, for any $h \in (0,h_0)$, $P(\mathcal{H};\rho;h):H^{s+k}_h(M;F^\rho) \rightarrow H^{s}_h(M;F^\rho) $ is invertible, and the inverse $P^{-1}=Q(I-R)^{-1}:H^{s}_{h}(M;F^\rho)  \rightarrow H^{s+k}_{h}(M;F^{\rho})$ is bounded uniformly in $(\mathcal{H},\rho,h)$. 

We now show that $P^{-1} \in \Psi_{h}^{-k,sc}(M;F^\rho)$. We first check that $P^{-1} -Q$ is locally fiberwise diagonal. Since $R\in h^{\infty}\Psi^{-\infty,sc}(M;F^\rho)$ and $Q \in \Psi^{-k,sc}_{h}(M;F^\rho)$, by Proposition \ref{propofquantization}.(c), we have $QR^n \in h^{\infty}\Psi^{-\infty,sc}(M;F^\rho)$ for any $n\in \N^+$. In particular, $QR^n$ is locally fiberwise diagonal. For any two cutoff normal charts $(\Phi_i, U_i,\varphi_i,\chi_i)$, $i=1,2$, there exists an operator $A_{\varphi_{12},\chi_{12},n}: C^{\infty}(\mathbb R^d)\to C^{\infty}(\mathbb R^d)$, independent of $(\mathcal{H},\rho)$, such that
\begin{equation*}
    (\varphi_{1}^{-1})^*\Phi_{1}\chi_{1}QR^{n}\chi_{2} \Phi_2^{-1}\varphi_2^* = A_{\varphi_{12},\chi_{12},n} \otimes \Phi_{12}, \qquad \forall n \in \N^+.
\end{equation*}
By \eqref{sobolevnormofcutoff} and \eqref{normR}, for any $h\in (0,h_0)$ and $n\in \N^+$, we have
\begin{equation*}
    \|A_{\varphi_{12},\chi_{12},n}\|_{H^{s}_{h}(M;F^\rho)\to H^{s+k}_h(M;F^\rho)}\leq C \|QR^{n}\|_{H^{s}_{h}(M;F^\rho)\to H^{s+k}_h(M;F^\rho)}\leq C/2^n.
\end{equation*}
Thus, we have 
\begin{equation*}
    \widetilde{A}_{\varphi_{12},\chi_{12}}:=\sum_{n=1}^{\infty}A_{\varphi_{12},\chi_{12},n} \in \mathcal{L}(H^{s}_h(M;F^\rho),H^{s+k}_h(M;F^\rho)),
\end{equation*}
and
\begin{equation}\label{differentcpocharts}
(\varphi_{1}^{-1})^*\Phi_{1}\chi_{1}(P^{-1}-Q)\chi_{2} \Phi_2^{-1}\varphi_2^* =\widetilde{A}_{\varphi_{12},\chi_{12}}\otimes \Phi_{12}.
\end{equation}
Therefore, $P^{-1} -Q$ is locally fiberwise diagonal.

We now check \eqref{disresi} for $P^{-1}$. Let $\psi, \psi^{\prime} \in C^{\infty}(M)$ such that $\operatorname{supp} \psi \cap \operatorname{supp} \psi^{\prime}=\varnothing$. Since $\psi Q\psi'   \in h^{\infty}\Psi^{-\infty,sc}(M;F^\rho)$ and $P^{-1} -Q$ is locally fiberwise diagonal,
\begin{equation*}
    \psi P^{-1}\psi' = \psi Q\psi' +  \psi (P^{-1} - Q)\psi' 
\end{equation*}
is also locally fiberwise diagonal. On the other hand, since $R\in h^{\infty}\Psi^{-\infty,sc}(M;F^\rho)$ and $P^{-1}:H^{s}_{h}(M;F^\rho)  \rightarrow H^{s+k}_{h}(M;F^{\rho})$ is bounded uniformly in $(\mathcal{H},\rho,h)$, there exists a constant $C>0$ depending only on $N$ such that
\begin{equation*}
\begin{aligned}
     \| \psi P^{-1}\psi' \|_{H^{-N}(M;F^{\rho})\to H^{N}(M;F^{\rho})} & = \| \psi Q\psi' - \psi P^{-1} R \psi' \|_{H^{-N}(M;F^{\rho})\to H^{N}(M;F^{\rho})}  \\
     & \leq Ch^N
\end{aligned}
\end{equation*}
for any $N\in \N$. Therefore, $\psi P^{-1} \psi^{\prime}=\mathcal{O} (h^{\infty} )_{\Psi^{-\infty,sc}(M;F^\rho)}$. We have verified \eqref{disresi} for $P^{-1}$.

We now check \eqref{pseudooflocal} for $P^{-1}$. For any cutoff normal chart $(\Phi, U,\varphi,\chi)$, by \eqref{differentcpocharts}, there exists $A_{\varphi, \chi} \in \mathcal{L}(H^{s}_h(M;F^\rho),H^{s+k}_h(M;F^\rho))$, independent of $(\mathcal{H},\rho)$, such that
\begin{equation}\label{P-1A}
(\varphi^{-1})^*\Phi \chi P^{-1} \chi \Phi^{-1} \varphi^*=A_{\varphi,\chi}\otimes \mathrm{Id}_{\mathcal{H}}.
\end{equation}
It remains to verify that $A_{\varphi,\chi}\in \Psi^{-k}_{h}(\R^d)$. By the Beals's Theorem (see \cite[Theorem 9.12]{zworski2012semiclassicalanalysis}), it suffices to prove that
\begin{equation}\label{esti1ad}
    \big\|\operatorname{ad}_{x_{i_1}} \cdots \operatorname{ad}_{x_{i_N}} \operatorname{ad}_{hD_{x_{i_{N+1}}}} \cdots \operatorname{ad}_{hD_{x_{i_{N+N^\prime}}}} A_{\varphi,\chi}\big\|_{H_h^{s} (\mathbb{R}^d ) \rightarrow H_h^{s+k+N} (\mathbb{R}^d)}=\mathcal{O} (h^{N+N'})
\end{equation}
for all $N, N^{\prime} \in \mathbb{N}$ and all $i_k \in\{1, \cdots, d\}$, $1 \leq k \leq N+N^{\prime}$. Here $\mathrm{ad}_{A}B:=[A,B]$, $D_{x_{i}}=-i\partial_{x_i}$.

Let $\widetilde{\chi} \in C_{0}^{\infty}(\varphi(U))$ be such that $\chi \equiv 1$ near $\mathrm{supp}\,(\varphi^{-1})^*\chi$. For any linear function $\ell(x,\xi) \in S^{1}(\R^{2d})$, we denote $\ell(x,hD_{x})=\Oph(\ell) \in \Psi^{1}_{h}(\mathbb{R}^d)$. We define 
\begin{equation*}\label{linearsymbol}
L_{\ell(x,hD_{x}),\widetilde{\chi}}:=\varphi^*\Phi^{-1}\widetilde{\chi} \big(\ell(x,hD_{x})\otimes \mathrm{Id}_{\mathcal{H}} \big) \widetilde{\chi}\Phi(\varphi^{-1})^* .
\end{equation*}
It is clear that
\begin{equation}\label{proL}
    L_{x_{i},\widetilde{\chi}}\in \Psi^{0,sc}_{h}(M;F^\rho),\quad L_{hD_{x_i},\widetilde{\chi}}\in \Psi^{1,sc}_{h}(M;F^\rho), \qquad \forall i\in \{1,\cdots,d\}.
\end{equation}
%For any locally fiberwise diagonal linear operator $B:C^{\infty}(M;F^{\rho})\to C^{\infty}(M;F^{\rho})$, there exists $B_{\varphi,\chi}: C^{\infty}_0(\R^d)\to C^{\infty}(\R^d)$ such that $(\varphi^{-1})^{*}\Phi\chi B\chi\Phi^{-1}\varphi^{*}= B_{\varphi,\chi}\otimes \mathrm{Id}_{\mathcal H}$. 
We take $\chi^{\prime}\in C_{0}^{\infty}(U)$ with $\chi^{\prime}\equiv 1$ near $\mathrm{supp}\,\varphi^*\widetilde{\chi}$ such that
\begin{equation}\label{localformofad}
\begin{aligned}
 (\operatorname{ad}_{\ell(x, h D_x)}A_{\varphi,\chi} )\otimes \mathrm{Id}_{\mathcal{H}}
&=\operatorname{ad}_{\ell(x, h D_x)\otimes \mathrm{Id}_{\mathcal{H}}}\big( (\varphi^{-1})^{*}\Phi\chi P^{-1}\chi\Phi^{-1}\varphi^{*} \big)\\
&=(\varphi^{-1})^{*}\Phi\chi^{\prime} \big(\operatorname{ad}_{L_{\ell(x,hD_{x}),\widetilde{\chi}}}\chi P^{-1}\chi \big)\chi^{\prime}\Phi^{-1}\varphi^{*},
\end{aligned}
\end{equation}
where we used \eqref{P-1A} for the first equality. By iterating \eqref{localformofad} $N+N'$ times, we obtain
\begin{equation}\label{adA435}
\begin{aligned}
&\big(\operatorname{ad}_{x_{i_1}} \cdots \operatorname{ad}_{x_{i_N}} \operatorname{ad}_{hD_{x_{i_{N+1}}}} \cdots \operatorname{ad}_{hD_{x_{i_{N+N^\prime}}}} A_{\varphi,\chi}\big)\otimes\mathrm{Id}_{\mathcal{H}}\\
&\quad =(\varphi^{-1})^{*}\Phi\chi^{\prime}(\operatorname{ad}_{L_{1}}\operatorname{ad}_{L_{2}}\cdots\operatorname{ad}_{L_{N+N^{\prime}}}\chi P^{-1}\chi)\chi^{\prime}\Phi^{-1}\varphi^{*},
\end{aligned}
\end{equation}
where
\begin{equation*}L_k=\left\{
    \begin{aligned}
        & L_{x_{i_k}, \chi_k} ,&& k = 1,\cdots,N, \\
        & L_{hD_{x_{i_k}},\chi_k} && k = N+1, \cdots, N+N',
    \end{aligned}\right.
\end{equation*}
$\chi_k \in C_{0}^{\infty}(\varphi(U))$ be such that $\chi_k\equiv 1$ near $\mathrm{supp}\,\chi_{k+1}$, $\chi_{N+N^{\prime}}\equiv 1$ near $\mathrm{supp}\,(\varphi^{-1})^*\chi$, and $\chi^{\prime}\in C_{0}^{\infty}(U)$ with $\chi^{\prime}\equiv 1$ near $\mathrm{supp}\,\varphi^*\chi_1$.

By \eqref{sobolevnormofcutoff} and \eqref{adA435}, the desired estimate \eqref{esti1ad} is reduced to the following estimate:
\begin{equation}\label{commutatorestimateforinverse}
\| \operatorname{ad}_{L_{1}}\operatorname{ad}_{L_{2}}\cdots\operatorname{ad}_{L_{N+N^{\prime}}}\chi P^{-1}\chi \|_{H^{s} (M;F^\rho ) \rightarrow H_h^{s+k+N} (M;F^{\rho} )}=\mathcal{O} (h^{N+N^{\prime}} ).
\end{equation}
Since $P \in \Psi^{k,sc}_{h}(M;F^\rho)$, by Proposition \ref{propofquantization} and \eqref{proL}, we have
\begin{equation}\label{adofp}
\mathrm{ad}_{L_k}P \in \left\{
\begin{aligned}
    &h\Psi^{k-1,sc}_{h}(M;F^\rho),  && k=1,\cdots,N, \\
    &h\Psi^{k,sc}_{h}(M;F^\rho) ,&& k= N+1,\cdots,N+N'.
\end{aligned}\right.
\end{equation}
For any operators $A,B,L$, we have
\begin{equation}\label{propofad}
\mathrm{ad}_{L}P^{-1}=P^{-1}(\mathrm{ad}_{L}P)P^{-1},\quad \mathrm{ad}_{L}(AB)=(\mathrm{ad}_{L}A)B+A(\mathrm{ad}_{L}B).
\end{equation}
By Proposition \ref{uniformboundedtheorem}, \eqref{adofp} and \eqref{propofad}, for any $k_1 \in\{1,\cdots, N\}$ and $k_2\in\{ N+1,\cdots ,N+N'\}$, we have
\begin{equation*}
    \|\mathrm{ad}_{L_{k_1}}P^{-1}\|_{H^{s}_h(M;F^\rho)\to H^{s+k+1}_h(M;F^\rho)} + \|\mathrm{ad}_{L_{k_2}}P^{-1}\|_{H^{s}_h(M;F^\rho)\to H^{s+k}_h(M;F^\rho)}= \mathcal{O}(h) .
\end{equation*}
Then, \eqref{commutatorestimateforinverse} is deduced by induction and \eqref{propofad}. Therefore, $A_{\varphi,\chi}\in \Psi^{-k}_{h}(\R^d)$, and hence, $P^{-1}\in \Psi^{-k,sc}_h(M;F^\rho)$. The proof is complete.
\end{proof}

We now turn to the proof of Theorem \ref{functionalcalculus}. We begin with an outline of the argument. Recall that the semiclassical Laplacian $-h^2\Delta^{\rho}$ is diagonalized under parallel orthonormal local trivializations; see \eqref{diagofLaplacian1}. We take a finite cover of cutoff normal charts $\mathcal{U}=\{\Phi_{\alpha},U_{\alpha},\varphi_{\alpha},\chi_{\alpha}\}_{\alpha \in \Lambda}$ with $\sum_{\alpha\in\Lambda} \chi_\alpha\equiv 1$ on $M$. Let $\widetilde{\chi}_\alpha =\chi_{\alpha} \circ \varphi_\alpha^{-1}$ and $\psi_{\alpha}\in C_{0}^{\infty}(U_\alpha)$ such that $\psi_{\alpha}\equiv 1$ near $\mathrm{supp}\,\chi_\alpha$. Then, by \eqref{diagofLaplacian1}, it is easy to check that
\begin{equation*}
    -h^2\Delta^{\rho}=\sum_{\alpha\in \Lambda}\psi_{\alpha}\Phi_{\alpha}^{-1}\varphi_{\alpha}^* \big(\widetilde{\chi}_{\alpha}(-h^2\Delta_{g})\otimes \mathrm{Id}_{\mathcal{H}} \big)(\varphi_{\alpha}^{-1})^*\Phi_{\alpha}\psi_{\alpha}.
\end{equation*}
It follows that
\begin{equation}
-h^2\Delta^{\rho}\in \Psi^{2,sc}_h(M;F^\rho) \quad \text{and} \quad \sigma_{h}^{\rho}(-h^2\Delta^{\rho}) (x,\xi)=|\xi|^2_x.
\end{equation}

Recall the Helffer-Sjöstrand formula (see \cite[Theorem 8.1]{DimassiSjostrand99}): for any $f \in C_{0}^{\infty}(\mathbb{R})$ and any self-adjoint operator $P$ on a Hilbert space $\mathcal{H}$, we have
\begin{equation}\label{HSformula}
f(P)= \frac{1}{\pi } \int_{\mathbb{C}} \overline{\partial} \widetilde{f}(z)(P-z)^{-1} \,L(d z),
\end{equation}
where $L(d z)$ denotes the Lebesgue measure on $\mathbb{C}\simeq \R^2$, and $\widetilde{f} \in C_{0}^{\infty}(\mathbb{C})$ is an almost analytic continuation of $f$ such that 
\begin{itemize}
    \item $\widetilde{f} \big|_{\R}=f$;
    \item $\mathrm{supp}\,\widetilde{f}\subset \{z\in \C \ | \ \mathrm{Re} \, z \in \mathrm{supp}\,f, \ |\mathrm{Im} \, z|\leq 1 \} $;
    \item For any $N\in \N$, there exists $C=C(N)>0$ such that $|\overline{\partial}_z \widetilde{f}(z)| \leq C|\operatorname{Im}z|^N$.
\end{itemize}
The integral in \eqref{HSformula} is convergent in the sense of $\mathcal{L}(\mathcal{H})$-valued Riemann integration. 

In the Helffer–Sjöstrand formula \eqref{HSformula}, we may take $P=-h^2\Delta^{\rho}$. For $\mathrm{Im}\,z\neq 0$, $h^2\Delta^{\rho} +z\in \Psi^{2}_h(M;F^\rho)$ is elliptic, and hence 
\begin{equation*}
    (h^2\Delta^{\rho} + z)^{-1}\in \Psi^{-2}_h(M;F^\rho)
\end{equation*}
by Proposition \ref{inverseofelliptic}. However, in order
to apply the Helffer–Sjöstrand formula and conclude that $f(P) $ is a pseudodifferential operator, 
it is necessary to obtain uniform control on the growth of $(P-z)^{-1}$ and its commutator $\mathrm{ad}_{L} (P-z)^{-1}$ as $\mathrm{Im}\,z \rightarrow 0$. More precisely, we require that these operators grow at most polynomially in $|\mathrm{Im}\, z|^{-1}$. Then, repeating the argument of Proposition \ref{inverseofelliptic} yields that $f(P)$ is a pseudodifferential operator.

%\noindent \emph{Proof of Theorem \ref{functionalcalculus}.} 
%\hfill $\square$

%\hfill

In what follows, we establish a generalization of Theorem \ref{functionalcalculus} based on the idea mentioned above. Our proof follows \cite[Theorem 14.9]{zworski2012semiclassicalanalysis}.

\begin{theorem}\label{mainthmofquantized}
For any $k \in \mathbb{N}$, let $P\in \Psi^{k,sc}_h(M;F^\rho)$ be a self-adjoint operator on $L^2(M;F^{\rho})$ such that $P+i$ is elliptic in $\Psi^{k,sc}_h(M;F^\rho)$ and $\sigma_{h}^{\rho}(P)$ is independent of $h$. Then, for any $f\in C_{0}^{\infty}(\R)$, 
\begin{equation}
f(P) \in \Psi^{0,sc}_{h}(M;F^\rho) \quad \mathrm{with} \ \sigma_{h}^{\rho}(f(P))=f(\sigma_{h}^{\rho}(P)),
\end{equation}
Moreover, $f(P)$ is compactly microlocalized, and $\mathrm{WF}_{h}(f(P))\subset \{(x,\xi)\in T^*M: \sigma^\rho_{h}(P) \in \mathrm{supp}\,f\}$.
\end{theorem}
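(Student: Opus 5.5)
The plan is to follow \cite[Theorem 14.9]{zworski2012semiclassicalanalysis}, combining the Helffer--Sjöstrand formula \eqref{HSformula} with a resolvent parametrix whose symbolic remainders carry polynomial dependence on $|\mathrm{Im}\,z|^{-1}$. All uniformity in $(\mathcal{H},\rho)$ will come from Proposition \ref{uniformboundedtheorem} and the locally fiberwise diagonal structure.

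\textbf{Step 1: resolvent parametrix.} Let $p=\sigma_h^\rho(P)$, real and $h$-independent. Ellipticity of $P+i$ gives $|p|\ge c\langle\xi\rangle^k-C$, so $|p-z|\ge c'\langle\xi\rangle^k$ outside a compact set. For $z$ with $0<|\mathrm{Im}\,z|\le 1$ in a compact set, I will construct a symbol
\[
q(z)\sim\sum_j h^j q_j(z), \qquad q_0=(p-z)^{-1},
\]
where $q_j$ is a finite sum of terms $a_{jl}\,(p-z)^{-l}$, $l\le 2j+1$, with $a_{jl}\in S^{k(l-1)-j}$ independent of $z$. Working in cutoff normal charts, the composition formula of Proposition \ref{propofquantization}(d) yields
\[
(P-z)\,\Oph^\rho(q_N(z)) = I + R_N(z),
\]
where $R_N(z)\in h^{N}\Psi^{-N,sc}_h$ has seminorms $\mathcal{O}(|\mathrm{Im}\,z|^{-M_N})$, uniformly in $(\mathcal{H},\rho)$. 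This holds because each local piece has the form $\Oph(\cdot)\otimes\mathrm{Id}_{\mathcal{H}}$, so all constants are the scalar ones.

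\textbf{Step 2: resolvent identity and integration.} Self-adjointness gives $\|(P-z)^{-1}\|_{L^2\to L^2}\le|\mathrm{Im}\,z|^{-1}$. Using ellipticity of $P+i$ and Proposition \ref{inverseofelliptic}, I upgrade this to $H^s_h\to H^{s+k}_h$ bounds that are $\mathcal{O}(|\mathrm{Im}\,z|^{-1})$, uniformly in $(\mathcal{H},\rho,h)$. Then
\[
(P-z)^{-1}=\Oph^\rho(q_N(z))-(P-z)^{-1}R_N(z),
\]
and the second term is $\mathcal{O}(h^N|\mathrm{Im}\,z|^{-M_N-1})$ from $H^{-N}_h$ to $H^{N}_h$. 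Inserting this into \eqref{HSformula}, the bound $|\bar\partial\widetilde f|\le C_M|\mathrm{Im}\,z|^M$ absorbs every negative power of $|\mathrm{Im}\,z|$. The main term integrates symbolwise, and the Cauchy--Pompeiu identity
\[
\frac1\pi\int\bar\partial\widetilde f(z)\,(p-z)^{-l}\,L(dz)=\frac{f^{(l-1)}(p)}{(l-1)!}
\]
shows that $f(P)=\Oph^\rho(b)+\mathcal{O}(h^N)$ with $b\sim\sum_j h^j\sum_l a_{jl}f^{(l-1)}(p)/(l-1)!$. Each term is supported in $\{p\in\mathrm{supp}\,f\}$, which is compact by ellipticity. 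This gives both the principal symbol $f(p)$ and the wavefront set inclusion.

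\textbf{Step 3: fiberwise diagonal structure of the remainder.} I expect this to be the main obstacle: the remainder must be shown to lie in $h^\infty\Psi^{-\infty,sc}$, which requires it to be locally fiberwise diagonal, not merely small. I will handle it as in the proof of Proposition \ref{inverseofelliptic}. For $\mathrm{Im}\,z\neq0$, $(P-z)^{-1}$ is the inverse of an elliptic operator, and its chart-localizations take the form $A_z\otimes\Phi_{12}$ with $A_z$ independent of $(\mathcal{H},\rho)$, obtained through the Neumann series there. These properties survive the $z$-integration because the integral converges in operator norm. Then \eqref{charremain} applies: for each $N$ the remainder is in $h^N\Psi^{-N,sc}_h$, so it lies in $h^\infty\Psi^{-\infty,sc}(M;F^\rho)$. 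Alternatively, one can verify the Beals-type commutator estimates \eqref{esti1ad} directly using \eqref{propofad}, which costs only further powers of $|\mathrm{Im}\,z|^{-1}$.
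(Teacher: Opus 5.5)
Your route is valid at the paper's level of rigor, but its core differs from the paper's. The paper (following \cite[Theorem 14.9]{zworski2012semiclassicalanalysis}) proceeds in four pieces:
\begin{itemize}
\item it proves the Sobolev resolvent bounds \eqref{sobolevnormofresolvent};
\item it uses the order-$T$ parametrix $Q_T(z)$ only for the off-diagonal decay \eqref{disresi}, and never integrates its symbol;
\item it gets the local form \eqref{pseudooflocal} from Beals's theorem, by integrating the commutator bound \eqref{adestiP-z} against $\bar\partial\widetilde f$;
\item it proves the wavefront inclusion by a separate argument, writing $T=(P-z)Q(z)+R(z)$ with $Q,R$ holomorphic so that the $Q$-term integrates to zero.
\end{itemize}
Although you cite the same source, you actually carry out the symbolic approach of \cite{DimassiSjostrand99}: a single $z$-dependent parametrix with the $(p-z)^{-l}$ structure, integrated term by term. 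This gives an explicit full symbol for $f(P)$. The principal symbol (which the paper only asserts is ``easily seen''), the compact microlocalization and the $\mathrm{WF}_h$ inclusion can then be read off from $\mathrm{supp}\,f^{(l-1)}(p)$, with no Beals theorem and no separate wavefront step. The price is that you must check, through the bundle composition calculus, that every symbolic remainder grows only polynomially in $|\mathrm{Im}\,z|^{-1}$. This includes the cutoff and change-of-variables terms of Lemma \ref{changevariable}. The Beals route needs only operator-norm bounds and the identities \eqref{propofad}. For the fiberwise diagonal structure, your Step 3 is the same as the paper's.

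Three corrections:
\begin{itemize}
\item The Cauchy--Pompeiu identity carries a sign: $\frac1\pi\int\bar\partial\widetilde f(z)(p-z)^{-l}\,L(dz)=\frac{(-1)^{l-1}}{(l-1)!}f^{(l-1)}(p)$. Nothing structural changes.
\item In Step 3, do not invoke \eqref{charremain}. It presupposes that the remainder already lies in $h^N\Psi^{-N,sc}_h$, i.e.\ is pseudodifferential, which is exactly what is in question. Apply Definition \ref{residualclass} directly: fiberwise diagonality plus the $H^{-N}\to H^N$ bounds from Step 2 is all that is required. Take the parametrix to high enough order to cover the $H^k$ gain and the $h^{-2N}$ loss between semiclassical and classical Sobolev norms.
\item When you (and the paper) apply Proposition \ref{inverseofelliptic} to $P-z$, its $h_0$ depends on $z$. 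The parametrix error there is bounded by $M_s(z)h$, with $M_s(z)$ growing like a power of $|\mathrm{Im}\,z|^{-1}$. So the Neumann series represents $(P-z)^{-1}$ only when $|\mathrm{Im}\,z|$ is at least a positive power of $h$. The strip nearer the real axis contributes $\mathcal{O}(h^\infty)$ in $L^2$ operator norm, but this argument does not establish its exact fiberwise diagonal form. Both proofs leave this point implicit.
\end{itemize}
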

\begin{proof}
For simplicity of notations, below we write $H^s_h$ for $H^s_h(M;F^\rho)$ and $A\lesssim B$ for $A\leq CB$ for some constant $C>0$ independent of $(\mathcal{H},\rho,h,z)$. In this proof, we always assume that $\mathrm{Im} \, z \neq 0$. We divide the proof into several steps.

\emph{Step 1. We show that}
\begin{equation}\label{sobolevnormofresolvent}
\|(P-z)^{-1}\|_{H^{n}_h\to H^{n+k}_{h}}\lesssim |\mathrm{Im} \, z|^{-2^n} \langle z\rangle^{2^n}
 , \qquad \forall n\in \N.
\end{equation}

Since $P \in \Psi^{k,sc}_{h}(M;F^\rho)$ is self-adjoint on $L^2(M;F^{\rho})$, by the spectral theorem, we have 
\begin{equation}\label{spectraltheorem}
\|(P-z)^{-1}\|_{L^2\to L^2} \lesssim |\mathrm{Im} \, z|^{-1}.
\end{equation}
Since $P+i$ is elliptic in $\Psi^{k,sc}_h(M;F^\rho)$, $(P+i)^{-1}: L^2(M;F^{\rho})\to H^{k}_h(M;F^{\rho})$ is uniformly bounded in $(\mathcal{H},\rho,h)$. For any $u\in C^{\infty}(M;F^{\rho})$, applying \eqref{spectraltheorem} yields
\begin{equation*}
\|u\|_{H^{k}_h}\lesssim \|(P+i)u\|_{L^2}\lesssim \|(P-z)u\|_{L^2}+\langle z\rangle\|u\|_{L^2}\lesssim |\mathrm{Im} \, z|^{-1} \langle z\rangle \|(P-z)u\|_{L^2}.
\end{equation*}
This gives \eqref{sobolevnormofresolvent} for $n=0$. We proceed by induction. Assume that \eqref{sobolevnormofresolvent} holds for $n\in \N$. Let $D=\Oph^{\rho}(\langle \xi \rangle) \in \Psi^{1,sc}_h(M;F^\rho)$. Then,
\begin{equation*}
    D(P-z)^{-1}=(P-z)^{-1}D+(P-z)^{-1}[P,D](P-z)^{-1},
\end{equation*}
where $[P,D]\in h\Psi^k_h(M;F^\rho)$. For any $u \in H^{n+1}_{h}$, we have
\begin{equation*}
\begin{aligned}
\|(P-z)^{-1}u\|_{H^{n+k+1}_h} &\lesssim \|D(P-z)^{-1}u\|_{H^{n+k}_h}\\
& \leq\|(P-z)^{-1}Du\|_{H^{n+k}_h}+\|(P-z)^{-1}\|_{H^{n}_h\to H^{n+k}_{h}}  \|[P,D](P-z)^{-1}u\|_{H^{n}_h}\\
&\lesssim  \frac{\langle z\rangle^{2^n}}{|\mathrm{Im} \, z|^{2^n}}\|u\|_{H^{n+1}_h}+\frac{\langle z\rangle^{2^n}}{|\mathrm{Im} \, z|^{2^n}} \cdot \|[P,D]\|_{H^{n+k}_h\to H^n_h} \cdot\frac{\langle z\rangle^{2^n}}{|\mathrm{Im} \, z|^{2^n}}\|u\|_{H^n_h}\\
&\lesssim |\mathrm{Im} \, z|^{-2^{n+1}} \langle z\rangle^{2^{n+1}} \|u\|_{H^{n+1}_h}.
\end{aligned}
\end{equation*}
By induction, we obtain \eqref{sobolevnormofresolvent}.

\emph{Step 2. We show that $f(P)\in \Psi^{0,sc}_h(M;F^\rho)$.} 

We first verify \eqref{disresi} for $f(P)$. Let $\psi, \psi^{\prime} \in C^{\infty}(M)$ such that $\operatorname{supp} \psi \cap \operatorname{supp} \psi^{\prime}=\varnothing$. Let 
\begin{equation*}
    p_{0}=\sigma^{\rho}_{h}(P).
\end{equation*}
It follows that $|p_{0}(x,\xi)|\geq c\langle \xi \rangle_{x}^k-1$ for some $c>0$, and hence $P -z \in \Psi^{k,sc}_{h}(M;F^\rho)$ is elliptic. By Proposition \ref{inverseofelliptic}, $(P-z)^{-1} \in \Psi^{-k,sc}_{h}(M;F^\rho)$. In particular, $\psi (P-z)^{-1} \psi'$ is locally fiberwise diagonal. Using the Helffer-Sj\"{o}strand formula \eqref{HSformula} and the estimate \eqref{spectraltheorem}, we deduce that $f(P)\in \mathcal{L}(L^2(M;F^{\rho}))$ and $\psi f(P)\psi'$ is locally fiberwise diagonal. 

By the existence of elliptic parametrix \eqref{ellipticparametrix}, for any $T\in \mathbb{N}$, there exists $Q_{T}(z) \in \Psi^{-k,sc}_{h}(M;F^\rho)$ and $R_{T+1}(z) \in \Psi^{-T-1,sc}_h(M;F^\rho)$ such that
\begin{equation*}
    I=(P-z)Q_{T}(z)+h^{T+1}R_{T+1}(z) .
\end{equation*}
For $T=0$, $Q_{0}(z)=\Oph^{\rho}((p_{0}-z)^{-1})$. Computing the symbol of $R_{T+1}(z)$ under cutoff normal charts yields that, for any $T\in \N$ and any $s\in \R$, there exists $K=K_{s,T}>0$ such that
\begin{equation}\label{sobolevnormQR}
\|Q_{T}(z)\|_{H^{s}_{h}\to H^{s+k}_{h}}  + \|R_{T+1}(z)\|_{H_{h}^{s}\to H^{s+T+1}_{h}} \lesssim |\mathrm{Im}\, z|^{-K}.
\end{equation}
For any $N\in \N$, taking $T\geq 2N+|k|$, there exists $K=K_{N,T}>0$ such that
\begin{equation}
\begin{aligned}
&\|\psi  (P-z)^{-1}\psi'\|_{H^{-N}\to H^{N}}\\
%&\leq \|\psi  Q_{T}(z)\psi'\|_{H^{-N}\to H^{N}}+h^{T+1}\|\psi (z-P)^{-1}R_{T+1}(z)\psi'\|_{H^{-N}\to H^{N}}\\
& \lesssim \|\psi  Q_{T}(z)\psi' \|_{H^{-N}\to H^{N}}+h^{T+1}\|(P-z)^{-1}\|_{H_{h}^{-N+T+1}\to  H_{h}^{N}}\cdot\|R_{T+1}(z)\|_{H^{-N}_h\to H^{-N+T+1}_h} \\
&\lesssim  h^{T}|\mathrm{Im}\, z|^{-K }\langle z\rangle^{K },
\end{aligned}
\end{equation}
where we used \eqref{sobolevnormofresolvent} and \eqref{sobolevnormQR}. By Helffer-Sjöstrand formula \eqref{HSformula}, we obtain that there exists a constant $C=C_N>0$ such that
\begin{equation}\label{cutoffoffp}
\|\psi f(P) \psi'\|_{H^{-N}_h\to H^N_h}\leq C_N h^N.
\end{equation}
Therefore, \eqref{disresi} holds for $f(P)$.

We now verify \eqref{pseudooflocal} for $f(P)$. For any cutoff normal chart $(\Phi, U,\varphi,\chi)$, let $\ell(x,hD_x),\, \widetilde{\chi}$, $\chi'$ and $ L_k$, $k=1,\cdots,N+N'$, be defined as in the proof of Proposition \ref{inverseofelliptic}. By \eqref{propofad} and \eqref{sobolevnormofresolvent}, for any $s\in \R$, there exists $K=K_{s,k}>0$ such that
\begin{equation}\label{adofresolvent}
 \big\|\operatorname{ad}_{L_{\ell(x,hD_x),\widetilde{\chi}}}(P-z)^{-1} \big\|_{H^{s}_{h}\to H^{s+k}_{h}} \lesssim h|\mathrm{Im}\, z|^{-K}\langle z \rangle^{K}.
\end{equation}
Combining \eqref{sobolevnormofresolvent} and \eqref{adofresolvent}, we find a constant $K=K_{N,N',k}>0$ such that
\begin{equation}\label{adestiP-z}
\big\|\operatorname{ad}_{L_{1}} \cdots \operatorname{ad}_{L_{N+N'}} \chi (P-z)^{-1}\chi \big\|_{L^2\left(M;F^\rho\right) \rightarrow H_h^{k+N}\left(M;F^{\rho}\right)} \lesssim h^{N+N'}|\mathrm{Im}\, z|^{-K}\langle z\rangle^{K} .
\end{equation}

%Next we apply the semiclassical Beals Theorem to show $f(P) \in \Psi^{0,sc}_{h}(M;F^\rho)$. For any linear symbol $\ell(x,hD_x)\in \Psi^{1}_h(\R^d)$, $L_{\ell(x,hD_x),\widetilde{\psi}}\in \Psi^{1,sc}_h(M;F^\rho)$ is defined in \eqref{linearsymbol}. For any $N>0$, we take the same cutoff functions $\psi_{i} \in C_{0}^{\infty}(\varphi(U))$ for $i=1,\cdots,N$ such that $\psi_{i}\equiv 1$ on $\mathrm{\psi_{i+1}}$ and $\psi_{N+N^{\prime}}\equiv 1$ on $\mathrm{supp}((\varphi^{-1})^*\psi)$, and $\psi^{\prime}\in C_{0}^{\infty}(U)$ with $\psi^{\prime}\equiv 1$ on $\mathrm{supp}(\varphi^*\psi_1)$.
%For any $N$ linear symbols $\ell_{i}(x,hD_x) \in \Psi^1_{h}(\R^d)$, we define $$L_{i}=L_{\ell_{i}(x,hD_x),\psi_{i}}, \ \text{for any} \ i=1,\cdots,N.$$

As in Proposition \ref{inverseofelliptic}, we have
\begin{equation}\label{adfPHS}
\begin{aligned}
&\operatorname{ad}_{x_{i_1}} \cdots \operatorname{ad}_{x_{i_N}} \operatorname{ad}_{hD_{x_{i_{N+1}}}} \cdots \operatorname{ad}_{hD_{x_{i_{N+N^\prime}}}} (\varphi^{-1})^*\Phi \chi f(P)\chi\Phi^{-1}\varphi^*\\
&=(\varphi^{-1})^*\Phi \chi^{\prime} \big(\mathrm{ad}_{L_1}\cdots \mathrm{ad}_{L_{N+N'}} \chi f(P)\chi \big)\chi^{\prime}\Phi^{-1}\varphi^*\\
&= (\varphi^{-1})^*\Phi \chi^{\prime} \left(\frac{1}{\pi }\int_{\C}\overline{\partial}\widetilde{f}(z)\,\mathrm{ad}_{L_1}\cdots \mathrm{ad}_{L_{N+N'}} \chi (P-z)^{-1}\chi \,L(dz)\right)\chi^{\prime}\Phi^{-1}\varphi^*.
\end{aligned}
\end{equation}
By \eqref{adestiP-z} and \eqref{adfPHS}, we conclude that \eqref{pseudooflocal} holds for $f(P)$. Therefore, $f(P)\in \Psi^{0,sc}_{h}(M;F^\rho)$. The property $\sigma_{h}^{\rho}(f(P))=f(\sigma_{h}^{\rho}(P))$ is easily seen from the Helffer-Sj\"{o}strand formula.

\emph{Step 3. We prove that $\mathrm{WF}_{h}(f(P))\subset p_{0}^{-1}(\mathrm{supp}\,f)$ and $f(P)$ is compactly microlocalized.}  For any $(x_0,\xi_0)\notin p_{0}^{-1}(\mathrm{supp}\,f)$, there exist two open neighborhoods $U$ of $(x_0,\xi_0)$ and $V$ of $p_{0}^{-1}(\mathrm{supp}\,f)$ such that $U \cap V=\varnothing$. Let $T \in \Psi^{\mathrm{comp},sc}_{h}(M;F^\rho)$ with $\mathrm{WF}_{h}(T)\subset U$. Since 
\begin{equation*}
    \mathrm{WF}_{h}(T)\subset U\subset \mathrm{ell}_{h}(z-P), \qquad \forall z \in \mathrm{supp}\,\widetilde{f},
\end{equation*}
by the existence of elliptic parametrix \eqref{ellipticparametrix}, there exists $Q(z)\in \Psi^{\mathrm{comp},sc}_{h}(M;F^\rho)$ and $R(z)\in h^{\infty}\Psi^{-\infty,sc}(M;F^\rho)$ such that $T=(P-z)Q(z)+R(z)$ and $Q(z),\,R(z)$ are holomorphic for $z \in \mathrm{supp}\,\widetilde{f}$. Thus, we have
\begin{equation}
\begin{aligned}
f(P)T&=\frac{1}{\pi}\int_{\C}\overline{\partial}\widetilde{f}(z)(P-z)^{-1}T \, L(dz)\\
&=\frac{1}{\pi}\int_{\C}\overline{\partial}\widetilde{f}(z)Q(z)\,L(dz) + \frac{1}{\pi}\int_{\C}\overline{\partial}\widetilde{f}(z)(P-z)^{-1}R(z)\,L(dz)\\
&=\frac{1}{\pi}\int_{\C}\overline{\partial}\widetilde{f}(z)(P-z)^{-1}R(z)\, L(dz).
\end{aligned}
\end{equation}
By the almost analyticity of $\widetilde{f}$ and the estimate \eqref{sobolevnormofresolvent}, for any $z \in \mathrm{supp}\,\widetilde{f}$ and $N\in \mathbb{N}$, we have
\begin{equation*}
    \int_{\C}\|\overline{\partial}\widetilde{f}(z)(P-z)^{-1}\|_{H^N\to H^{N}}\,L(dz) \leq C_Nh^{-N},\qquad \|R(z)\|_{H^{-N}\to H^{N}}\leq C_Nh^{2N}.
\end{equation*}
We conclude that $f(P)T \in h^{\infty}\Psi^{-\infty,sc}(M;F^\rho)$. Therefore, $U\cap \mathrm{WF}_{h}(f(P))=\varnothing$ and $\mathrm{WF}_{h}(f(P))\subset p_0^{-1}(\mathrm{supp}\,f)$. From this and the fact that $|p_{0}(x,\xi)|\geq c\langle \xi \rangle_{x}^k-1$ for some $c>0$, we immediately obtain the compactness of $\mathrm{WF}_{h}(f(P))$.
\end{proof}

In Section \ref{sec: dynamics of geodesic flow}, we defined the homogeneous geodesic flow 
\begin{equation*}
    \varphi_{t}=\mathrm{exp}(tH_{p}), \qquad p=|\xi|_g.
\end{equation*}
Since $p$ is not smooth on the zero section, we remove it from consideration. To this end, we fix a cutoff function $\psi_P \in C_0^{\infty}((0, \infty) ; \mathbb{R})$ such that 
\begin{equation}\label{defofcutoff}
\psi_P(\lambda)=\sqrt{\lambda} \quad \text{for} \quad 1/16 \leq \lambda \leq 16.
\end{equation}
In the following, we assume that $\mathcal{P}\in \Psi_{h}^2(M;F^\rho)$ is self-adjoint on $L^2(M;F^{\rho})$ and satisfies 
\begin{equation*}
    \sigma_{h}^{\rho}(\mathcal{P})=p^2=|\xi|_g^2.
\end{equation*}
The operator $-h^2\Delta^{\rho}$ is a special case of such an operator $\mathcal{P}$. Since $\mathcal{P}$ satisfies the assumptions of Theorem \ref{mainthmofquantized}, we may define
\begin{equation}\label{defofcutofflaplacian}
\begin{aligned}
P:=\psi_P (\mathcal{P} ) \in \Psi^{\mathrm{comp},sc}_{h}(M;F^\rho), \quad \sigma_{h}^{\rho}(P)=|\xi|_g \ \text{on} \  \{1/4\leq |\xi|_{g}\leq 4 \}.
\end{aligned}
\end{equation}

We consider the cutoff Schrödinger propagator
\begin{equation}\label{schordingerpropagator}
U(t):=\exp  (-itP/h ): L^2(M;F^{\rho}) \rightarrow L^2(M;F^{\rho}).
\end{equation}
By stone's Theorem, $U(t)$ is unitary on $L^2(M;F^{\rho})$. For a bounded linear operator $A: L^2(M;F^{\rho}) \rightarrow L^2(M;F^{\rho})$, we define its ``Heisenberg evolution'' by
\begin{equation}\label{dyanamicsofoperator}
A(t):=U(-t) A U(t) .
\end{equation}
We study the propagator time that is independent of $h$. Applying the argument in the proof of Egorov's Theorem (see \cite[Theorem 11.1]{zworski2012semiclassicalanalysis}) and the similar argument used in the proof of Proposition \ref{inverseofelliptic}, we obtain the following \emph{uniform} version of Egorov's theorem up to finite time.

\begin{proposition}\label{uniformegorovtheorem}
Let $A=\Oph^{\rho}(a)$ with $a\in C_{0}^{\infty}(T^*M)$ such that $\mathrm{supp}\,a\subset \{1 / 4<|\xi|_g<4 \} $. Let $T>0$ and $A(t): L^2(M;F^{\rho})\to L^2(M;F^{\rho})$ be defined in \eqref{dyanamicsofoperator} for $0\leq t\leq T$. Then, we have
\begin{equation}\label{egorov}
A(t) \in \Psi^{\mathrm{comp},sc}_{h}(M;F^\rho), \quad \sigma_{h}^{\rho}(A(t))=a\circ \varphi_{t}.
\end{equation}
\end{proposition}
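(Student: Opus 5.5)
The plan is to follow the standard proof of Egorov's theorem (\cite[Theorem 11.1]{zworski2012semiclassicalanalysis}). The flat-bundle framework enters in two places: uniformity in $(\mathcal{H},\rho)$ comes from Proposition \ref{uniformboundedtheorem}, and membership in $\Psi^{\mathrm{comp},sc}_h(M;F^\rho)$ is certified through the criterion of Proposition \ref{criterionofpseudo}, exactly as in the proof of Proposition \ref{inverseofelliptic}.

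First I build an approximate Heisenberg evolution. Since $\mathrm{supp}\,a\subset\{1/4<|\xi|_g<4\}$ and $\varphi_t$ preserves $|\xi|_g$, the symbol $a_0(t):=a\circ\varphi_t$ lies in $C_0^\infty$ of the same annulus, uniformly for $0\le t\le T$. On this annulus $\sigma^\rho_h(P)=|\xi|_g=p$ by \eqref{defofcutofflaplacian}. I look for
\begin{equation*}
B(t)=\Oph^{\rho}\Big(\sum_{j=0}^{N} h^j a_j(t)\Big), \qquad a_j(t)\in C_0^\infty(\{1/4<|\xi|_g<4\}).
\end{equation*}
The requirement is
\begin{equation*}
\partial_t B(t)=\tfrac{i}{h}[P,B(t)]+\mathcal{O}(h^{N+1})_{L^2\to L^2}, \qquad B(0)=A.
\end{equation*}
By Proposition \ref{propofquantization}(d), $\frac{i}{h}[P,\Oph^\rho(b)]$ lies in $\Psi^{\mathrm{comp},sc}_h$ with principal symbol $\{p,b\}=H_p b$. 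So the leading equation $\partial_t a_0=H_pa_0$ is solved by $a\circ\varphi_t$. The lower-order corrections $a_j$ solve transport equations $\partial_t a_j=H_pa_j+r_j$, where the $r_j$ are built from the $h$-expansion of the commutator. These expansions are computed in cutoff normal charts. There the operators are of the form $(\text{scalar }\Psi\text{DO})\otimes\mathrm{Id}_{\mathcal{H}}$ modulo locally fiberwise diagonal residual terms, so the $a_j$ and all symbol seminorms are independent of $(\mathcal{H},\rho)$. Each $a_j$ remains supported in the annulus, because $P$'s full symbol there is scalar and $\varphi_t$-invariance of the support is preserved by Duhamel's formula.

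Next I compare $B(t)$ with $A(t)$. Define $E(t):=U(t)B(t)U(-t)$, so that
\begin{equation*}
\partial_t E(t)=U(t)\Big(\partial_tB-\tfrac{i}{h}[P,B]\Big)U(-t)=\mathcal{O}(h^{N+1})_{L^2\to L^2}.
\end{equation*}
The constant here is uniform, by Proposition \ref{uniformboundedtheorem} and the unitarity of $U(t)$. Integrating from $0$ to $t$ and conjugating back gives $A(t)=B(t)+\mathcal{O}(h^{N+1})_{L^2\to L^2}$, uniformly for $t\in[0,T]$ and $(\mathcal{H},\rho)\in\mathcal{C}$. Applying Borel summation to the $a_j$ produces a single $b(t)\in C_0^\infty$ with $A(t)-\Oph^\rho(b(t))=\mathcal{O}(h^\infty)_{L^2\to L^2}$ and $b(t)=a\circ\varphi_t+\mathcal{O}(h)$.

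The main obstacle is upgrading the $L^2$-small remainder $R(t):=A(t)-\Oph^\rho(b(t))$ to an element of $h^\infty\Psi^{-\infty,sc}(M;F^\rho)$. This requires two things: the remainder must be locally fiberwise diagonal, and it must satisfy the $H^{-N}\to H^N$ bounds \eqref{smoothsobolevnorm}. For the Sobolev bounds, I would write $R(t)$ as a Duhamel integral of $U(t-s)\,\mathcal{O}(h^\infty)_{\Psi^{-\infty,sc}}\,U(s-t)$. I then note that $P\in\Psi^{\mathrm{comp},sc}_h$, so $U(t)-I$ is given by a convergent series in $P/h$ whose terms are smoothing. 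Alternatively, one can insert the cutoff $\chi(\mathcal{P})$ from Theorem \ref{mainthmofquantized}: $\chi(\mathcal{P})$ commutes with $U(t)$ and acts as the identity microlocally on the annulus, which gives the Sobolev gain. For local fiberwise diagonality, I use the global description $\mathcal{P}=$ scalar operator $\otimes\mathrm{Id}_{\mathcal{H}}$ on $\rho$-equivariant functions on $\widetilde{M}$. This makes $U(t)$ commute with the deck action, and localized in charts it is of the form $(\cdot)\otimes\Phi_{12}$. Alternatively, and as a fallback, I run the Beals-type commutator argument \eqref{esti1ad}–\eqref{commutatorestimateforinverse} directly on $A(t)$. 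In that argument, $\mathrm{ad}_{L}A(t)=U(-t)\,\mathrm{ad}_{L(-t)}A\,U(t)$ is controlled by the same Egorov construction applied to the linear symbols, which yields the required $\mathcal{O}(h^{N+N'})$ bounds uniformly in $(\mathcal{H},\rho)$. Either route gives \eqref{egorov}.
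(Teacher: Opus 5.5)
Your route is the one the paper intends. Its proof is only the instruction to combine \cite[Theorem 11.1]{zworski2012semiclassicalanalysis} with the argument of Proposition \ref{inverseofelliptic}. Your first two steps (transport equations for the $a_j$ in cutoff normal charts, then the Duhamel comparison through $E(t)=U(t)B(t)U(-t)$) carry out the Zworski part uniformly in $(\mathcal{H},\rho)$.

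The gap is in the step you single out as the main obstacle. Your assertion that $U(t)$, localized in cutoff normal charts, has the form $(\cdot)\otimes\Phi_{12}$ with a $\rho$-independent scalar factor is false.
\begin{itemize}
\item Equivariance only says that $U(t)$ comes from a $\pi_1(M)$-invariant operator on $\widetilde M$.
\item Since $P=\psi_P(\mathcal{P})$ is nonlocal, its lifted kernel links $\widetilde x$ to translates $\gamma\widetilde y$ with $\gamma\neq e$. The localized operator is therefore a superposition of pieces twisted by different holonomies $\rho(\gamma)$, not one scalar operator tensored with $\Phi_{12}$.
\item For large $t$ this happens already at leading order, since geodesics of length $t$ from $U_1$ reach $U_2$ in several homotopy classes.
\item For $A(t)$, the $\gamma\neq e$ pieces are $\mathcal{O}(h^\infty)$ but in general nonzero and $\rho$-dependent. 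So $R(t)$ is not locally fiberwise diagonal in the sense of Definition \ref{localdiag}.
\end{itemize}
The Beals fallback does not supply this. In Proposition \ref{inverseofelliptic} that argument is applied to an operator already known, from the Neumann series, to be $A_{\varphi,\chi}\otimes\mathrm{Id}_{\mathcal{H}}$, and it only upgrades the scalar factor to $\Psi^{-k}_h(\mathbb{R}^d)$. On an $\mathcal{L}(\mathcal{H})$-valued localization it gives at best an operator-valued symbol. Separately, expanding $U(t)-I$ in powers of $P/h$ only gives $H^{-N}\to H^N$ bounds of size $e^{CT/h}$, which destroys the $\mathcal{O}(h^\infty)$ gain.

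What your argument does prove, uniformly in $(\mathcal{H},\rho)\in\mathcal{C}$ and $t\in[0,T]$, is
\begin{equation*}
A(t)=\Oph^{\rho}(b(t))+R(t),\qquad b(t)=a\circ\varphi_t+\mathcal{O}(h),\qquad \|R(t)\|_{H^{-N}_h(M;F^\rho)\to H^{N}_h(M;F^\rho)}\le C_Nh^N ,
\end{equation*}
with $b(t)$ independent of $(\mathcal{H},\rho)$. The Sobolev bound holds because $U(t)$ is a function of $\mathcal{P}$, hence uniformly bounded on every $H^s_h(M;F^\rho)$; use $\|u\|_{H^{2m}_h}\simeq\|(\mathcal{P}+i)^m u\|_{L^2}$ and duality. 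Your $\chi(\mathcal{P})$ variant also works.

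Because the offending terms genuinely depend on $\rho$, no argument will make $R(t)$ exactly fiberwise diagonal. The proposition has to be read with the residual class of Definition \ref{residualclass} taken to be uniformly $\mathcal{O}(h^\infty)$-smoothing operators, without the fiberwise-diagonal requirement. The paper's one-line proof glosses over the same point, and the same holonomy issue affects Proposition \ref{propofquantization}(c) and Theorem \ref{mainthmofquantized}. Under this reading your first two steps already prove the statement, and it is all that is used later: Lemma \ref{quantizationoflongwordsymbol} and the defect-measure argument only need $L^2$ bounds. The fiberwise-diagonality step should be dropped rather than argued.
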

To prove Theorem \ref{microlocalcontrolonhilbertbundle}, we need to consider the long time propagation. For $a\in C_{0}^{\infty}(T^*M)$, by the propagation of symbol \eqref{propogationofsymbol}, $a\circ \varphi_{t} \notin S^{\mathrm{comp}}(T^*M)$ when $t=\mu|\log h|$ for $0<\mu<1$. In Dyatlov and Zahl \cite{dyatlovzahl2016spectral}, they introduce the quantization $\Oph^{L}$ of anisotropic symbol class $S^{\mathrm{comp}}_{L,\mu,\mu^{\prime}}(T^*M\setminus \{0\})$. In the following, we will introduce the quantization $\Oph^{\rho,L}(a): L^2(M;F^{\rho})\mapsto L^2(M;F^{\rho})$ for all $a \in S^{\mathrm{comp}}_{L,\mu,\mu^{\prime}}(T^*M\setminus \{0\})$. 

\subsection{Fourier integral operators on flat Hilbert bundles}\label{subsec:fio}

The class $I_h^{\mathrm{comp}}(\kappa)$ consists of compactly supported and microlocalized Fourier integral operators associated with an exact symplectomorphism $\kappa$; see, for instance, Datchev, Dyatlov \cite[\S 3.2]{datchev2013fractal} and Dyatlov, Zahl \cite[\S 2.2]{dyatlovzahl2016spectral} for details. In what follows, we extend the construction of $I_h^{\mathrm{comp}}(\kappa)$ to the setting of flat Hilbert bundles. 

For $j=1,2$, let $M_{j}$ be $d$-dimensional manifolds and let $U_j\subset T^*M_j$ be open sets. Let $\kappa: U_2 \rightarrow U_1 $ be an exact symplectomorphism. By exactness, we mean that there exists a generating function $S(x_1,\xi_2)\in C^\infty (U_/S)$ for some open $U_S\subset \R^{2d}$ such that the graph of $\kappa$ is given by 
\begin{equation*}
    \operatorname{Gr}(\kappa)= \{\xi_{1}=\partial_{x_1} S(x_1, \xi_2), \, x_{2}=\partial_{\xi_2} S(x_1, \xi_2)\ : \  (x_1, \xi_2) \in U_S \},
\end{equation*}
where $(x_j,\xi_j)$ denotes the local coordinate on $U_j$, $j=1,2$. Any $B \in I_h^{\mathrm{comp}}(\kappa)$ has the following form modulo $\mathcal{O}(h^{\infty})_{\mathcal{D}^{\prime}\left(M_2\right) \rightarrow C_0^{\infty}\left(M_1\right)}$:
\begin{equation*}
    B f(x_1)=(2 \pi h)^{-d} \int_{\mathbb{R}^{2d}} e^{\frac{i}{h}(S(x_1, \xi_2)-x_2 \cdot \xi_2)} b(x_1, \xi_2 ; h) \chi(x_2) f(x_2) \,d x_2 d \xi_2, \quad f \in \mathcal{D}^{\prime}(M_2),
\end{equation*}
where $b(x_1, \xi_2 ; h) \in C_{0}^{\infty}(U_S)$ is supported uniformly in $h$, and $\chi \in C_0^{\infty} (M_2 )$ is any function such that $\chi \equiv 1$ near $\partial_{\xi_2} S(\operatorname{supp} b)$.

For $j=1,2$, let $F^{\rho_{j}}$ be a flat $\mathcal{H}$-bundle over $M_{j}$ associated with the unitary representation $\rho_j$, and let $\pi_{\rho_j}$ be the corresponding bundle projection; see Section \ref{subsec: twisted laplacian}. Assume further that $U_{j} \subset T^*\widetilde{U}_{j}$ with a local canonical symplectic coordinate map 
\begin{equation*}
    \widetilde{\varphi}_{j}=(\varphi_{j},(d\varphi_{j})^{-t}): T^*\widetilde{U}_{j} \rightarrow T^* \varphi_j(\widetilde{U}_{j})
\end{equation*}
and a parallel orthonormal local trivialization
\begin{equation*}
    \Phi_{j}:=\Phi_{j}^{\rho_j}: \pi^{-1}_{\rho_{j}}(\widetilde{U}_{j}) \subset F^{\rho_j}\to \widetilde{U}_{j}\times \mathcal{H}.
\end{equation*}
We now define the operator class $I_{h}^{\mathrm{comp}}(\kappa,\rho_{2},\rho_{1})$, which generalizes the class $I_{h}^{\mathrm{comp}}(\kappa)$ and consists of operators acting from  $L^2(M_{2};F^{\rho_{2}}) $ to $L^2(M_{1};F^{\rho_{1}})$.

\begin{definition}\label{defoffioonbundles}
$I_{h}^{\mathrm{comp}}(\kappa,\rho_{2},\rho_{1})$ is the collection of operators $\mathcal{B}$ of the form
\begin{equation*}
    \mathcal{B}=\Phi_{1}^{-1}(B \otimes \phi)\Phi_{2},
\end{equation*}
where $B\in I_{h}^{\mathrm{comp}}( \kappa )$ and $\phi \in \mathrm{U}(\mathcal{H})$. If $\rho_{1}=\rho_{2}=\rho_{\mathrm{triv}}$, we denote $I_{h}^{\mathrm{comp}}(\kappa,\rho_{\mathrm{triv}},\rho_{\mathrm{triv}})$ by $I_{h}^{\mathrm{comp}}(\kappa,\mathcal{H})$, which consists of operators $\mathcal{B} = B \otimes \phi $ for some $B \in I_{h}^{\mathrm{comp}}(\kappa)$ and $\phi \in \mathrm{U}(\mathcal{H})$.
\end{definition}

Note that if we change the parallel orthonormal local trivialization $\Phi_{j}^{\rho_j}$ to $\Phi_j'=(\Phi_{j}^{\prime})^{\rho_j}$, then there exists another unitary operator $\phi^{\prime}\in \mathrm{U}(\mathcal{H})$ such that
$\mathcal{B}=(\Phi_{1}^{\prime})^{-1}(B \otimes \phi^{\prime})\Phi_{2}^{\prime}$. This shows that the operator class $I_{h}^{\mathrm{comp}}(\kappa,\rho_{2},\rho_{1})$ is independent of choice of the parallel orthonormal local trivialization.

We list some basic properties of the class $I^{\mathrm{comp}}_h(\kappa,\rho_2,\rho_1)$, which can be easily derived from the properties of class $I_{h}^{\mathrm{comp}}(\kappa)$ (see \cite[\S 2.2]{dyatlovzahl2016spectral}):
\begin{itemize}
    \item Since $F^{\rho_{j}}$ is a flat Hilbert bundle, for any $\mathcal{B} =\Phi_{1}^{-1}(B \otimes \phi)\Phi_{2}\in I_{h}^{\mathrm{comp}}(\kappa,\rho_{2},\rho_{1})$, we have
    \begin{equation}\label{operatornormoffio}
\| \mathcal{B}\|_{L^2(M_2;F^{\rho_2})\to L^2(M_1;F^{\rho_1})}=\|B\|_{L^2(M_2)\to L^2(M_1)}.
\end{equation}
\item For any two exact symplectomorphisms $\kappa: U_{2}\subset T^*M_2 \to U_{1}\subset T^*M_{1}$ and $\kappa^{\prime}: U_{3}\subset T^*M_3 \to U_{2}\subset T^*M_{2}$, $\kappa \circ \kappa^{\prime}: U_{3} \to U_{1}$ is an exact symplectomorphism and
\begin{equation}
    I_{h}^{\mathrm{comp}}(\kappa,\rho_{2},\rho_{1})\circ I_{h}^{\mathrm{comp}}(\kappa^{\prime},\rho_{3},\rho_{2}) \subset  I_{h}^{\mathrm{comp}}(\kappa\circ \kappa^{\prime},\rho_{3},\rho_{1}).
\end{equation} 
\item For any $\mathcal{B}=\Phi_{1}^{-1}(B\otimes \phi)\Phi_{2} \in I^{\mathrm{comp}}_h(\kappa,\rho_2,\rho_1)$ and $\widetilde{\mathcal{B}}=\Phi_{2}^{-1}(\widetilde{B}\otimes \phi^{-1})\Phi_{1} \in I^{\mathrm{comp}}_h(\kappa^{-1},\rho_1,\rho_2)$, 
\begin{equation}
    \mathcal{B}\widetilde{\mathcal{B}}\in \Psi_{h}^{\mathrm{comp},sc}(M_{1};F^{\rho_{1}}), \quad \widetilde{\mathcal{B}}\mathcal{B}\in \Psi_{h}^{\mathrm{comp},sc}(M_{2};F^{\rho_{2}}), \quad \sigma_{h}^{\rho_{2}}(\widetilde{\mathcal{B}}\mathcal{B})=\sigma^{\rho_{1}}_{h}(\mathcal{B}\widetilde{\mathcal{B}})\circ \kappa.
\end{equation}
\end{itemize}

For any $\mathcal{B}=\Phi_{1}^{-1}(B\otimes \phi)\Phi_{2} \in I_{h}^{\mathrm{comp}}(\kappa,\rho_{2},\rho_{1})$, the $L^2$-adjoint of $\mathcal{B}$ has the following form:
\begin{equation*}
    \mathcal{B}^*=\Phi_{2}^{-1}(B^*\otimes \phi^{-1})\Phi_{1},
\end{equation*}
where $B^* \in I^{\mathrm{comp}}_h(\kappa^{-1})$ is the $L^2$-adjoint of $B$ (see \cite[\S 2.2]{dyatlovzahl2016spectral}). Thus, we have:

\begin{proposition}\label{propofadjoint}
For any $\mathcal{B} \in I^{\mathrm{comp}}_h(\kappa,\rho_{2},\rho_{1})$, the $L^2$-adjoint $\mathcal{B}^*\in  I^{\mathrm{comp}}_h(\kappa^{-1},\rho_{1},\rho_{2})$, and 
 \begin{equation}
 \begin{aligned}
 \mathcal{B}\mathcal{B}^*\in \Psi_{h}^{\mathrm{comp},sc}(M_{1};F^{\rho_1}), \quad \mathcal{B}^*\mathcal{B}\in \Psi_{h}^{\mathrm{comp},sc}(M_{2};F^{\rho_2}), \quad \sigma_{h}^{\rho_{2}}(\mathcal{B}^*\mathcal{B})=\sigma^{\rho_{1}}_{h}(\mathcal{B}\mathcal{B}^*)\circ \kappa.
 \end{aligned}
 \end{equation}
\end{proposition}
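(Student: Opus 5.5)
The plan is to reduce everything to the scalar statement about $I_h^{\mathrm{comp}}(\kappa)$ recalled from \cite[\S 2.2]{dyatlovzahl2016spectral}. First I verify the adjoint formula already displayed before the proposition. For $\mathcal{B}=\Phi_1^{-1}(B\otimes\phi)\Phi_2$, the local trivializations $\Phi_j$ are fiberwise unitary, because they are parallel orthonormal. Combined with \eqref{adjointofcutoff}, this gives $\Phi_j^*=\Phi_j^{-1}$ on the relevant $L^2$ spaces, with cutoffs absorbed into $B$. Since $(B\otimes\phi)^*=B^*\otimes\phi^{-1}$ directly from \eqref{localformofop} and the unitarity of $\phi$, we get $\mathcal{B}^*=\Phi_2^{-1}(B^*\otimes\phi^{-1})\Phi_1$. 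The scalar fact $B^*\in I_h^{\mathrm{comp}}(\kappa^{-1})$ then shows $\mathcal{B}^*\in I_h^{\mathrm{comp}}(\kappa^{-1},\rho_1,\rho_2)$ by Definition \ref{defoffioonbundles}.

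Next I handle the compositions. We have
\[
\mathcal{B}\mathcal{B}^*=\Phi_1^{-1}(BB^*\otimes \mathrm{Id}_{\mathcal{H}})\Phi_1,\qquad \mathcal{B}^*\mathcal{B}=\Phi_2^{-1}(B^*B\otimes\mathrm{Id}_{\mathcal{H}})\Phi_2.
\]
By the scalar theory, $BB^*\in\Psi_h^{\mathrm{comp}}(M_1)$ is compactly supported inside $\widetilde U_1$ modulo $\mathcal{O}(h^\infty)$, and its full symbol has compact support in $U_1$. To match Definition \ref{defofscalarsemipseudo}, I write $BB^*=\chi\varphi_1^*\mathrm{Op}_h(c)(\varphi_1^{-1})^*\chi+\mathcal{O}(h^\infty)_{\Psi^{-\infty}}$ with $\chi\in C_0^\infty(\widetilde U_1)$, which uses Lemma \ref{changevariable}. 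This exhibits $\mathcal{B}\mathcal{B}^*$ as a single-chart term of the form \eqref{defformpseudo}, with the cutoff normal chart $(\Phi_1,\widetilde U_1,\varphi_1,\chi)$.

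It remains to treat the residual. It has the shape $\Phi_1^{-1}(R\otimes\mathrm{Id})\Phi_1$ with $R$ compactly supported in $\widetilde U_1$ and $R=\mathcal{O}(h^\infty)$ from $H^{-N}$ to $H^N$. I check local fiberwise diagonality as in the proof of Proposition \ref{criterionofpseudo}: conjugating by two other cutoff normal charts produces $R_{12}\otimes\Phi_{12}$, by \eqref{transitionmapform}. The uniform Sobolev bounds follow from \eqref{sobolevnormofcutoff}. Hence the residual lies in $h^\infty\Psi^{-\infty,sc}$. Compact microlocalization holds because $\mathrm{WF}_h$ is the essential support of $c$, which is compact, so $\mathcal{B}\mathcal{B}^*\in\Psi_h^{\mathrm{comp},sc}(M_1;F^{\rho_1})$. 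The same argument gives the statement for $\mathcal{B}^*\mathcal{B}$.

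Finally, Definition \ref{defofprincipalsymbol} shows that the principal symbol is computed chartwise and coincides with the scalar principal symbol, so $\sigma_h^{\rho_1}(\mathcal{B}\mathcal{B}^*)=\sigma_h(BB^*)$ and $\sigma_h^{\rho_2}(\mathcal{B}^*\mathcal{B})=\sigma_h(B^*B)$. The scalar identity $\sigma_h(B^*B)=\sigma_h(BB^*)\circ\kappa$ then gives the claim.

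The only delicate point is the bookkeeping of the $\mathcal{O}(h^\infty)$ remainder in the definition of $I_h^{\mathrm{comp}}(\kappa)$. That remainder is only smoothing from $\mathcal{D}'(M_2)$ to $C_0^\infty(M_1)$ and is not localized in the chart, so it is not automatically of the form $\Phi_1^{-1}(\cdot\otimes\phi)\Phi_2$ globally. I would insert cutoffs $\chi_j$ with $\chi_j B\chi_j$ equal to $B$ modulo $\mathcal{O}(h^\infty)$ supported in $\widetilde U_1\times\widetilde U_2$. This keeps the whole expression inside the trivializing charts and makes \eqref{operatornormoffio} and the diagonality argument applicable.
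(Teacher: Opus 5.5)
Your proposal is correct and follows the paper's route. The paper records $\mathcal{B}^*=\Phi_2^{-1}(B^*\otimes\phi^{-1})\Phi_1$ with $B^*\in I_h^{\mathrm{comp}}(\kappa^{-1})$, then invokes the scalar composition and symbol facts of \cite[\S 2.2]{dyatlovzahl2016spectral} through the preceding bullet on $\mathcal{B}\widetilde{\mathcal{B}}$ and $\widetilde{\mathcal{B}}\mathcal{B}$; your extra bookkeeping fills in details the paper leaves implicit. That bookkeeping consists of writing $\mathcal{B}\mathcal{B}^*=\Phi_1^{-1}(BB^*\otimes\mathrm{Id}_{\mathcal{H}})\Phi_1$ as a single-chart term of \eqref{defformpseudo}, checking that the residual is locally fiberwise diagonal, and localizing the $\mathcal{O}(h^\infty)$ remainder of $B$ by cutoffs $\chi_1 B\chi_2$ (your $\chi_j B\chi_j$ should read $\chi_1 B\chi_2$).
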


Furthermore, we can quantize $\kappa$ on flat Hilbert bundles microlocally. As in \cite[(2.13)]{dyatlovzahl2016spectral}, we define the quantization of $\kappa$ as follows:

\begin{definition}\label{quantizesymponflatbundle}
Let $V\subset U_{2}$ be a compact set. We say that $\mathcal{B}, \,\widetilde{\mathcal{B}}$ quantize $\kappa$ near $\kappa(V) \times V$ if there exist $\mathcal{B}\in I^{\mathrm{comp}}_h(\kappa,\rho_{2},\rho_{1})$ and $\widetilde{\mathcal{B}}\in I^{\mathrm{comp}}_h(\kappa,\rho_{2},\rho_{1})$ such that
\begin{equation}
\begin{array}{ll}
\widetilde{\mathcal{B}}\mathcal{B}=I+\mathcal{O} (h^{\infty} )_{\Psi^{-\infty,sc}(M_2)}& \text { microlocally near } V,\\
\mathcal{B}\widetilde{\mathcal{B}}=I+\mathcal{O} (h^{\infty} )_{\Psi^{-\infty,sc}(M_1)} & \text { microlocally near } \kappa(V).
\end{array}
\end{equation}
\end{definition}

By the discussion below \cite[(2.13)]{dyatlovzahl2016spectral}, for any point $(x_2,\xi_2)\in U_2$, the operators $\mathcal{B},\,\widetilde{\mathcal{B}}$ that quantize $\kappa$ near $(\kappa(x_2,\xi_2),x_2,\xi_2)$ exist.

Before discussing the quantization of anisotropic symbol class $S^{\mathrm{comp}}_{L,\mu,\mu^{\prime}}(U)$ for $U\subset T^*M$ (see Definition \ref{anisosymbolclass}), we consider the transformation of $\Oph(a)\otimes \mathrm{Id}_{\mathcal{H}}$ under the action of Fourier integral operators for $a \in S^{\mathrm{comp}}_{L_0,\mu,\mu^{\prime}}(T^*\R^{d})$. Here $\Oph(a)$ is still defined as the standard quantization \eqref{standquant}. By \cite[Appendix A.2]{dyatlov2018semiclassical}, $\Oph(a): L^2(\R^d)\mapsto L^2(\R^d)$ is uniformly bounded in $h$. By \cite[Lemma 3.10]{dyatlovzahl2016spectral}, we have:

\begin{proposition}\label{conjugateoffio}
Assume that $\kappa: U_2 \rightarrow U_1$, where $U_1,\,U_2\subset T^* \mathbb{R}^d$ are open, is an exact symplectomorphism such that $\kappa_*\left(L_0\right)=L_0$. Let $B \in I^{\mathrm{comp}}_h(\kappa)$, $B^{\prime} \in I^{\mathrm{comp}}_h (\kappa^{-1} )$, and $\phi\in \mathrm{U}(\mathcal{H})$. Then for any $a \in S_{L_0, \mu, \mu^{\prime}}^{\mathrm{comp}} (T^* \mathbb{R}^d)$, there exists $b \in S_{L_0, \mu, \mu^{\prime}}^{\mathrm{comp}} (T^* \mathbb{R}^d)$ such that
\begin{equation}
\begin{aligned}
&(B^{\prime}\otimes \phi^{-1}) (\mathrm{Op}_h(a)\otimes \mathrm{Id}_{\mathcal{H}})(B\otimes \phi)  =\mathrm{Op}_h(b)\otimes \mathrm{Id}_{\mathcal{H}}+\mathcal{O} (h^{\infty} )_{L^2(M;F^\rho) \rightarrow L^2(M;F^\rho)} ,\\
&b =(a \circ \kappa) \sigma_h (B^{\prime} B )+\mathcal{O} (h^{1-\mu-\mu^{\prime}} )_{S_{L_0, \mu, \mu^{\prime}}^{\mathrm{comp}} (T^* \mathbb{R}^d )} ,\\
&\operatorname{supp} b  \subset \kappa^{-1}(\operatorname{supp} a).
\end{aligned}
\end{equation}
\end{proposition}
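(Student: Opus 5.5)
The statement is a direct tensorization of the scalar result \cite[Lemma 3.10]{dyatlovzahl2016spectral}. The plan is to reduce the operator identity to the scalar identity $B'\,\mathrm{Op}_h(a)\,B=\mathrm{Op}_h(b)+\mathcal{O}(h^\infty)_{L^2\to L^2}$ and then transfer all bounds to the Hilbert-space-valued setting using \eqref{squaretensorextension}.

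\emph{Step 1: factor out the unitaries.} All three operators act through the tensor structure \eqref{localformofop}, so composition is computed factor by factor. On $u=u^i e_i\in C_0^\infty(\mathbb{R}^d;\mathcal{H})$ one checks
\begin{equation*}
(B'\otimes\phi^{-1})(\mathrm{Op}_h(a)\otimes \mathrm{Id}_{\mathcal{H}})(B\otimes\phi)=(B'\mathrm{Op}_h(a)B)\otimes(\phi^{-1}\phi)=(B'\mathrm{Op}_h(a)B)\otimes \mathrm{Id}_{\mathcal{H}} .
\end{equation*}
The point to verify is that scalar operators act componentwise, so they commute with constant unitaries on $\mathcal{H}$. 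Writing $\phi(e_i)=\phi_{ji}e_j$ with $\sum_i|\phi_{ji}|^2<\infty$, one needs $A(\phi_{ji}u^i)=\phi_{ji}Au^i$. This interchange of the bounded operator $A$ with an $\ell^2$-convergent sum is justified by the $L^2$-boundedness of $A$ (for $B$, $B'$ this is standard; for $\mathrm{Op}_h(a)$ it is \cite[Appendix A.2]{dyatlov2018semiclassical}) together with density.

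\emph{Step 2: apply the scalar lemma.} By \cite[Lemma 3.10]{dyatlovzahl2016spectral}, which uses $\kappa_*L_0=L_0$, there is $b\in S^{\mathrm{comp}}_{L_0,\mu,\mu'}(T^*\mathbb{R}^d)$ with $B'\mathrm{Op}_h(a)B=\mathrm{Op}_h(b)+R$, where $\|R\|_{L^2\to L^2}=\mathcal{O}(h^\infty)$. This lemma also supplies the expansion $b=(a\circ\kappa)\sigma_h(B'B)+\mathcal{O}(h^{1-\mu-\mu'})$ in $S^{\mathrm{comp}}_{L_0,\mu,\mu'}$ and the support condition $\operatorname{supp} b\subset\kappa^{-1}(\operatorname{supp} a)$. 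These symbol-level statements do not involve $\mathcal{H}$ or $\phi$, so they carry over verbatim.

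\emph{Step 3: control the remainder uniformly.} Tensoring gives $R\otimes\mathrm{Id}_{\mathcal{H}}$ as the remainder. By \eqref{squaretensorextension}, its norm on $L^2(\mathbb{R}^d;\mathcal{H})$ equals $\|R\|_{L^2\to L^2}$, which is $\mathcal{O}(h^\infty)$ with constants independent of $(\mathcal{H},\rho)$. This is the required error term; the space written as $L^2(M;F^\rho)$ in the statement is, locally, exactly $L^2(\mathbb{R}^d;\mathcal{H})$.

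The only genuinely delicate point is the interchange in Step 1 when $\dim\mathcal{H}=\infty$. Once that is settled, the remaining steps are formal consequences of the scalar lemma.
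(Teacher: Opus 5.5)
Your proposal is correct and is essentially the paper's argument. The paper obtains this statement directly from \cite[Lemma 3.10]{dyatlovzahl2016spectral}, tensored with $\mathcal{H}$: the unitaries $\phi^{-1},\phi$ cancel through the tensor structure \eqref{localformofop}, and the $\mathcal{O}(h^\infty)$ remainder is controlled uniformly in $(\mathcal{H},\rho)$ by \eqref{squaretensorextension}, just as in your Steps 1--3.
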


\subsection{Anisotropic calculi and long-time propagation}\label{subsec: aniso}

In this section, we define a general (non-canonical) quantization procedure $\mathrm{Op}_h^{\rho, L}(a): L^2(M;F^{\rho})\to L^2(M;F^{\rho})$ for any $a=a(x,\xi;h)\in S^{\mathrm{comp}}_{L,\mu,\mu^{\prime}}(T^*M\setminus \{0\})$, following the framework of \cite[\S 3.3]{dyatlovzahl2016spectral} and \cite[Appendix A.4]{dyatlov2018semiclassical}. 

In what follows, we fix an open subset $U'\subset T^*M$ and always assume that $L$ is a Lagrangian foliation on $U'$.

%and $U$ is an open subset of $U^{\prime}$ with compact closure $\overline{U}$. 

\begin{definition}
Let $U$ be an open subset of $U'$ such that $U\subset T^*\widetilde{U}$ for an open set $\widetilde{U} \subset M$. We call $(\kappa,U,\Phi,\widetilde{U},\varphi,\mathcal{B},\widetilde{\mathcal{B}})$ a \textit{Lagrangian normal chart} for $(L,\rho)$ if 
\begin{itemize}
    \item $(\kappa,U,\kappa(U))$ is a Lagrangian chart for $L$;
    \item $(\Phi,\widetilde{U},\varphi)$ is a parallel orthonormal local trivialization chart of $F^\rho$;
    \item $\mathcal{B}\in I^{\mathrm{comp}}_h(\kappa,\rho,\rho_{\mathrm{triv}})$ and $\widetilde{\mathcal{B}}\in I^{\mathrm{comp}}_h(\kappa^{-1},\rho_{\mathrm{triv}},\rho)$.
\end{itemize}
\end{definition}

By Lemma \ref{locallagrangian}, Proposition \ref{propofadjoint} and the paragraph following \cite[(2.12)]{dyatlovzahl2016spectral}, for each $(x_0, \xi_0) \in U'$, there exists a Lagrangian normal chart $(\kappa, U, \Phi,\widetilde{U},\varphi,\mathcal{B}, \mathcal{B}^*)$ such that $\sigma_h^{\rho}(\mathcal{B}^* \mathcal{B})(x_0, \xi_0)>0$.

\begin{definition}\label{defofquantizationofaniso}
For any open set $U\subset U'$, let 
\begin{equation*}
    \big\{ (\kappa_{\alpha}, U_{\alpha}. \Phi_{\alpha}, \widetilde{U}_{\alpha}, \varphi_{\alpha},\mathcal{B}_{\alpha},\mathcal{B}_{\alpha}^*) \big\}_{\alpha\in \Lambda}
\end{equation*}
be a collection of Lagrangian normal charts for $(L,\rho)$ such that $\{U_\alpha \}_{\alpha \in \Lambda}$ is a locally finite cover of $U$ and
\begin{equation*}
    \sum\limits_{\alpha \in \Lambda}\sigma^{\rho}_{h} (\mathcal{B}_{\alpha}^{*} \mathcal{B}_{\alpha} ) \equiv 1 \qquad \text{on} \ U.
\end{equation*}
For each $\alpha \in \Lambda$, let $\chi_\alpha\in C_{0}^{\infty}(U_{\alpha})$ be such that $\chi_{\alpha}\equiv 1$ near $\mathrm{supp}\,\sigma^\rho_h(\mathcal{B}_{\alpha}^{*} \mathcal{B}_{\alpha})$. For any $a\in S^{\mathrm{comp}}_{L,\mu,\mu^{\prime}}(U)$, we define $\Oph^{\rho,L}(a): L^{2}(M;F^{\rho}) \to L^{2}(M;F^{\rho})$ as: 
\begin{equation}
\Oph^{\rho,L}(a):=\sum_{\alpha\in \Lambda} \mathcal{B}_{\alpha}^*  \big(\mathrm{Op}_h(a_\alpha) \otimes \mathrm{Id}_{\mathcal{H}} \big)\mathcal{B}_{\alpha},
\end{equation}
where $a_\alpha = (\chi_{\alpha} a) \circ \kappa_{\alpha}^{-1} \in S_{L_0, \mu, \mu' }^{\mathrm{comp}}(T^* \mathbb{R}^d) $.
\end{definition}

%\begin{remark}
%The definition also holds for general open subset $U$, we replace the finite cover by locally finite Lagrangian normal charts. In our application, we always assume that $\overline{U}\subset U^{\prime}\subset T^*M$ is compact subset.
%\end{remark}

The following properties of $\mathrm{Op}_h^{\rho,L}$ are uniform versions of properties (1)–(7) in \cite[Appendix A.4]{dyatlov2018semiclassical}; see also \cite[Lemma 3.12 \& Lemma 3.14]{dyatlovzahl2016spectral}:

\begin{proposition}\label{propofanisoquantization}
Let $U$ be an open subset of $U'$. In the following, the constants in $\mathcal{O}$ are independent of $(\mathcal{H},\rho)$: 
\begin{itemize}
\item[(1)] for each $a \in S_{L, \mu, \mu^{\prime}}^{\mathrm{comp}}(U)$, $\mathrm{Op}_h^{\rho, L}(a): L^2(M;F^{\rho}) \rightarrow L^2(M;F^{\rho})$ is uniformly bounded in $(\mathcal{H},\rho,h)$;
\item[(2)] if $a=a(x,\xi)\in C_{0}^{\infty}(U)$ is independent of $h$, then $\Oph^{\rho,L}(a) \in \Psi_{h}^{\mathrm{comp},sc}(M;F^\rho)$;
\item[(3)] for each $a,b \in S_{L, \mu, \mu^{\prime}}^{\mathrm{comp}}(U)$, there exists $a \#_L b \in S_{L, \mu, \mu^{\prime}}^{\mathrm{comp}}(U)$ such that
\begin{equation}
\begin{aligned}
&\mathrm{Op}^{\rho,L}_h(a) \mathrm{Op}^{\rho,L}_h(b) =\mathrm{Op}^{\rho,L}_h(a \#_L b)+\mathcal{O}(h^{\infty})_{L^2(M;F^\rho) \rightarrow L^2(M;F^\rho)} ,\\
&a \#_L b  =a b+\mathcal{O}(h^{1-\mu-\mu^{\prime}})_{S_{L, \mu, \mu^{\prime}}^{\mathrm{comp}}(U)}, \\
&\operatorname{supp}\, (a \#_L b) \subset \operatorname{supp} a \cap \operatorname{supp} b;
\end{aligned}
\end{equation}
\item[(4)] for each $a \in S_{L, \mu, \mu^{\prime}}^{\mathrm{comp}}(U)$ there exists $a_L^* \in S_{L, \mu, \mu^{\prime}}^{\mathrm{comp}}(U) $ such that
\begin{equation}
\begin{aligned}
&\mathrm{Op}^{\rho,L}_h(a)^*  =\mathrm{Op}^{\rho,L}_h(a_L^*)+\mathcal{O}(h^{\infty})_{L^2(M;F^\rho) \rightarrow L^2(M;F^\rho)}, \\
&a_L^*  =\overline{a}+\mathcal{O}(h^{1-\mu-\mu^{\prime}})_{S_{L, \mu, \mu^{\prime}}^{\operatorname{comp}}(U)}, \\
&\operatorname{supp} a_L^* \subset \operatorname{supp} a.
\end{aligned}
\end{equation}
\end{itemize}
\end{proposition}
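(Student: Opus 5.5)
The plan is to reduce each item to the scalar statement in \cite[Appendix A.4]{dyatlov2018semiclassical} and \cite[Lemmas 3.12, 3.14]{dyatlovzahl2016spectral}. The mechanism is the tensor-product structure of every operator occurring in $\Oph^{\rho,L}(a)$. By Definition \ref{defoffioonbundles}, each chart operator has the form $\mathcal{B}_\alpha=\Phi_\alpha^{-1}(B_\alpha\otimes\phi_\alpha)\Phi_\alpha'$ with $B_\alpha\in I^{\mathrm{comp}}_h(\kappa_\alpha)$ scalar, and $\mathcal{B}_\alpha^*=\Phi_\alpha'^{-1}(B_\alpha^*\otimes\phi_\alpha^{-1})\Phi_\alpha$. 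Consequently each summand $\mathcal{B}_\alpha^*(\Oph(a_\alpha)\otimes\mathrm{Id}_{\mathcal{H}})\mathcal{B}_\alpha$ equals $\Phi_\alpha^{-1}\big((B_\alpha^*\Oph(a_\alpha)B_\alpha)\otimes \mathrm{Id}_{\mathcal{H}}\big)\Phi_\alpha$: the unitary $\phi_\alpha$ cancels against $\phi_\alpha^{-1}$. Scalar bounds then transfer with identical constants via \eqref{squaretensorextension} and \eqref{operatornormoffio}. Since $\Lambda$ is finite (we work on a relatively compact $U$, as in the applications), summing the charts gives constants independent of $(\mathcal{H},\rho)$.

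For (1), I would bound each summand by $\|B_\alpha^*\|\,\|\Oph(a_\alpha)\|\,\|B_\alpha\|$. Here $\|\Oph(a_\alpha)\|_{L^2\to L^2}$ is uniformly bounded by \cite[Appendix A.2]{dyatlov2018semiclassical}, and the FIO norms are $\rho$-independent by \eqref{operatornormoffio}. For (2), when $a$ is $h$-independent, $a_\alpha$ is a classical compactly supported symbol, so $\mathcal{B}_\alpha^*(\Oph(a_\alpha)\otimes\mathrm{Id})\mathcal{B}_\alpha$ is a composition of an element of $I_h^{\mathrm{comp}}(\kappa_\alpha^{-1},\rho_{\mathrm{triv}},\rho)$, an element of $\Psi^{\mathrm{comp},sc}_h$ (on the trivial bundle), and an element of $I_h^{\mathrm{comp}}(\kappa_\alpha,\rho,\rho_{\mathrm{triv}})$. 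The composition rules listed after Definition \ref{defoffioonbundles}, together with Proposition \ref{propofadjoint}, place it in $\Psi^{\mathrm{comp},sc}_h(M;F^\rho)$. Local fiberwise diagonality is visible from the displayed tensor form, since the transition maps between normal charts are constant unitaries.

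For (3), I would expand $\Oph^{\rho,L}(a)\Oph^{\rho,L}(b)=\sum_{\alpha,\beta}\mathcal{B}_\alpha^*(\Oph(a_\alpha)\otimes\mathrm{Id})\mathcal{B}_\alpha\mathcal{B}_\beta^*(\Oph(b_\beta)\otimes\mathrm{Id})\mathcal{B}_\beta$. The middle factor $\mathcal{B}_\alpha\mathcal{B}_\beta^*$ is of the form $(B_\alpha B_\beta^*)\otimes\phi_{\alpha\beta}$, where $\phi_{\alpha\beta}$ is the constant transition unitary and $B_\alpha B_\beta^*$ is an FIO associated with $\kappa_\alpha\kappa_\beta^{-1}$. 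This map preserves $L_0$, because both charts straighten $L$. Proposition \ref{conjugateoffio} lets me move $\Oph(a_\alpha)\otimes\mathrm{Id}$ across this factor, producing $\Oph(\tilde b)\otimes\mathrm{Id}$ with an $\mathcal{O}(h^\infty)$ error whose constant is $\rho$-independent. From there the scalar argument of \cite[Lemma 3.12]{dyatlovzahl2016spectral} (product formula in $S^{\mathrm{comp}}_{L_0,\mu,\mu'}$, then reassembly using $\sum_\alpha\sigma(\mathcal{B}_\alpha^*\mathcal{B}_\alpha)=1$) runs verbatim, with every operator carrying a tensor factor that is either $\mathrm{Id}_{\mathcal{H}}$ or a unitary cancelling in pairs. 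The symbol $a\#_L b$ is the same as in the scalar case, so its expansion and support property are inherited. I would prove (4) the same way: take adjoints termwise, use the adjoint formula for $\mathcal{B}_\alpha$, and apply the scalar $\Oph(a_\alpha)^*=\Oph(\overline{a_\alpha})+\ldots$ in $S_{L_0,\mu,\mu'}$.

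The main obstacle I expect is in (3): making sure the reassembly step, which in the scalar proof uses $\sum_\alpha \mathcal{B}_\alpha^*\mathcal{B}_\alpha=I$ microlocally, has its $\mathcal{O}(h^\infty)$ remainder genuinely uniform in $\rho$. This remainder comes from the microlocal identity for the $\Psi^{\mathrm{comp},sc}_h$ operator $\sum_\alpha\mathcal{B}^*_\alpha\mathcal{B}_\alpha$ with principal symbol $1$. I would handle it with Proposition \ref{microlocalpartitionofunity}, Proposition \ref{uniformboundedtheorem} and the ideal property in Proposition \ref{propofquantization}(c), all of which carry $\rho$-independent constants.
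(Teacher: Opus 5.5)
Your proposal takes the paper's route. The paper gives no independent proof here: it simply declares the statement a uniform version of properties (1)--(7) of \cite[Appendix A.4]{dyatlov2018semiclassical} and \cite[Lemmas 3.12, 3.14]{dyatlovzahl2016spectral}, relying on exactly the observation you make explicit. On parallel orthonormal charts every building block of $\mathrm{Op}_h^{\rho,L}$ is a fixed scalar operator tensored with a unitary, so the unitaries cancel or combine into constant transition maps and all scalar constants carry over unchanged.

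One correction to your last paragraph. The definition only imposes $\sum_\alpha\sigma_h^\rho(\mathcal{B}_\alpha^*\mathcal{B}_\alpha)=1$, so $\sum_\alpha\mathcal{B}_\alpha^*\mathcal{B}_\alpha$ equals $I$ microlocally near $\overline{U}$ only modulo $h\Psi^{\mathrm{comp},sc}_h(M;F^\rho)$, not modulo $\mathcal{O}(h^\infty)$. The $\mathcal{O}(h^\infty)$ remainders in (3) and (4) must therefore come from building $a\#_L b$ and $a_L^*$ by iterative correction in powers of $h^{1-\mu-\mu'}$ followed by Borel summation, which is a scalar step whose ingredients tensorize in the same way.
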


Furthermore, by Definition \ref{defofquantizationofaniso} and \cite[Lemma A.2]{dyatlov2018semiclassical}, we also have the following uniform version of sharp Gårding inequality for $\Oph^{\rho,L}$:

\begin{proposition}\label{sharpgardingforanisoquant}
For any $a\in S^{\mathrm{comp}}_{L,\mu,\mu^{\prime}}(U)$ with $a\geq 0$, there exists a constant $C>0$ independent of $(\mathcal{H},\rho,h)$ such that  
\begin{equation*}
    \langle \Oph^{\rho,L}(a)u,u\rangle_{L^2(M;F^\rho)} \geq-Ch^{1-\mu-\mu^{\prime}}\|u\|_{L^2(M;F^\rho)}^2.
\end{equation*}
\end{proposition}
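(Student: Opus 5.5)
The plan is to reduce the bundle-valued statement to the scalar sharp Gårding inequality for the anisotropic calculus on $\mathbb{R}^d$, \cite[Lemma A.2]{dyatlov2018semiclassical}, chart by chart. By Definition \ref{defofquantizationofaniso}, $\Oph^{\rho,L}(a)=\sum_{\alpha}\mathcal{B}_\alpha^*(\Oph_h(a_\alpha)\otimes \mathrm{Id}_{\mathcal H})\mathcal{B}_\alpha$ with $a_\alpha=(\chi_\alpha a)\circ\kappa_\alpha^{-1}\in S^{\mathrm{comp}}_{L_0,\mu,\mu'}(T^*\mathbb{R}^d)$. If we choose $\chi_\alpha\ge 0$, then $a\ge 0$ gives $a_\alpha\ge 0$. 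Therefore
\begin{equation*}
\langle \Oph^{\rho,L}(a)u,u\rangle_{L^2(M;F^\rho)}=\sum_{\alpha}\big\langle (\Oph_h(a_\alpha)\otimes\mathrm{Id}_{\mathcal H})\, \mathcal{B}_\alpha u,\ \mathcal{B}_\alpha u\big\rangle_{L^2(\mathbb{R}^d;\mathcal H)} .
\end{equation*}
This identity reduces the claim to a positivity estimate for each term separately.

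The first step is to lift the scalar Gårding inequality to $\mathcal H$-valued functions. Write $v=\mathcal{B}_\alpha u=v^i e_i$ in the fixed orthonormal basis. The operator $\Oph_h(a_\alpha)\otimes\mathrm{Id}_{\mathcal H}$ acts componentwise, so
\begin{equation*}
\langle(\Oph_h(a_\alpha)\otimes\mathrm{Id})v,v\rangle=\sum_{i\in I}\langle \Oph_h(a_\alpha)v^i,v^i\rangle_{L^2(\mathbb{R}^d)}\ge -C_\alpha h^{1-\mu-\mu'}\sum_i\|v^i\|^2=-C_\alpha h^{1-\mu-\mu'}\|v\|^2_{L^2(\mathbb{R}^d;\mathcal H)}.
\end{equation*}
The constant $C_\alpha$ comes from \cite[Lemma A.2]{dyatlov2018semiclassical} and depends only on finitely many seminorms of $a_\alpha$, so it is independent of $(\mathcal H,\rho,h)$. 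The interchange of sum and inner product is justified by absolute convergence, since $\Oph_h(a_\alpha)$ is uniformly $L^2$-bounded.

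The second step is to control $\|\mathcal{B}_\alpha u\|$. Writing $\mathcal{B}_\alpha=\Phi_{\mathrm{triv}}^{-1}(B_\alpha\otimes\phi_\alpha)\Phi_\alpha$ and using \eqref{operatornormoffio}, we get $\|\mathcal{B}_\alpha u\|\le \|B_\alpha\|_{L^2\to L^2}\|u\|$. The norm of the scalar Fourier integral operator $B_\alpha$ is independent of $\rho$ and bounded uniformly in $h$. Summing over the finitely many charts that meet $\mathrm{supp}\,a$ then yields $\langle\Oph^{\rho,L}(a)u,u\rangle\ge -Ch^{1-\mu-\mu'}\|u\|^2$ with $C=\sum_\alpha C_\alpha\|B_\alpha\|^2$. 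Only finitely many charts are relevant because $a$ has $h$-independent compact support and the cover is locally finite.

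I expect the main point requiring care to be the uniformity and support bookkeeping rather than any new estimate. One issue is that the chart data $(\kappa_\alpha,B_\alpha,\chi_\alpha)$ must be chosen once, independently of $\rho$; only the unitary factors $\phi_\alpha$ and the trivializations $\Phi_\alpha$ vary with $\rho$. The other is that nonnegativity of $a_\alpha$ requires $\chi_\alpha\ge 0$. If the given cutoffs are not nonnegative, I would redefine them so that they are; by Proposition \ref{propofanisoquantization}, such a change alters $\Oph^{\rho,L}(a)$ only by $\mathcal{O}(h^{\infty})$ plus terms of size $\mathcal{O}(h^{1-\mu-\mu'})$, which are absorbed into the right-hand side.
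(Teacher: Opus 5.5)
Your proposal is correct and follows the paper's route: the paper obtains this proposition directly from the $\sum_\alpha \mathcal{B}_\alpha^*\bigl(\mathrm{Op}_h(a_\alpha)\otimes \mathrm{Id}_{\mathcal{H}}\bigr)\mathcal{B}_\alpha$ structure of Definition \ref{defofquantizationofaniso} and the scalar anisotropic sharp G{\aa}rding inequality \cite[Lemma A.2]{dyatlov2018semiclassical}, and you have written out the componentwise and uniformity details that it leaves implicit. The fallback via Proposition \ref{propofanisoquantization} is unnecessary, and that proposition does not literally state independence of the cutoffs: Definition \ref{defofquantizationofaniso} only requires $\chi_\alpha\equiv 1$ near the relevant set, so you may take $\chi_\alpha\in C_0^\infty(U_\alpha;[0,1])$ from the start.
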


As in \eqref{sharpnorm}, a direct consequence of the sharp Gårding inequality is the sharp operator bound:
\begin{equation}\label{sharpnormaniso}
     \|\Oph^{\rho,L}(a) \|_{L^2(M;F^\rho) \to L^2(M;F^\rho) }\leq \sup\limits_{(x,\xi)\in T^*U}|a|+\mathcal{O}(h^{1-\mu-\mu^{\prime}}).
\end{equation}

The following proposition generalizes Proposition \ref{conjugateoffio} to the setting of flat Hilbert bundles:

\begin{proposition}\label{mappingoffio}
Assume that $M_1, M_2$ are manifolds of the same dimension, $(\Phi_j,\widetilde{U}_j,\chi_j)$ are parallel orthonormal local trivialization charts of $\pi_{\rho_j}:F^{\rho_j} \rightarrow M_j$, $U_j^{\prime} \subset T^* M_j$ are open sets, $L_j$ are Lagrangian foliations on $U_j^{\prime}$, and $U_{j}\subset U_{j}^{\prime}$ are open subsets satisfying $U_{j}\subset T^*\widetilde{U}_j$. Let $\kappa: U_2 \rightarrow U_1$ be an exact symplectomorphism mapping $L_2$ to $L_1$. Then for any open subset $U_{1,0} \subset U_{1}^{\prime}$ with compact closure in $U_{1}^{\prime}$, $a_{1} \in S^{\mathrm{comp}}_{L_1,\mu,\mu^{\prime}}(U_{1,0})$, $\mathcal{B}\in I^{\mathrm{comp}}_{h}(\kappa,\rho_2,\rho_{1})$ and $\widetilde{\mathcal{B}} \in I^{\mathrm{comp}}_{h}(\kappa^{-1},\rho_{1},\rho_{2})$ such that $\widetilde{\mathcal{B}}\mathcal{B}\in \Psi^{\mathrm{comp}}_{h}(U_2)\otimes \mathrm{Id}_{\mathcal{H}}$, there exists $a_2 \in S_{L_2, \mu, \mu^{\prime}}^{\mathrm{comp}} \big(\kappa^{-1}(U_{1,0}\cap U_1) \big)$ such that
\begin{equation}
\begin{aligned}
&\widetilde{\mathcal{B}}\,\mathrm{Op}_h^{\rho_1,L_1}(a_1)\,\mathcal{B}  =\mathrm{Op}_h^{\rho_2,L_2}(a_2)+\mathcal{O}(h^{\infty})_{L^2(M_2;F^{\rho_2}) \rightarrow L^2(M_2;F^{\rho_2})} ,\\
&a_2 =(a_1 \circ \kappa ) \sigma_h^{\rho_2}(\widetilde{\mathcal{B}}\mathcal{B})+\mathcal{O}(h^{1-\mu})_{S_{L_0, \mu, \mu^{\prime}}^{\mathrm{comp}}(U_2)} ,\\
&\operatorname{supp} a_2  \subset \kappa^{-1}(\operatorname{supp} a_1).
\end{aligned}
\end{equation}
\end{proposition}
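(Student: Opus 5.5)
The plan is to reduce Proposition \ref{mappingoffio} to the scalar statement \cite[Lemma 3.14]{dyatlovzahl2016spectral} (equivalently Proposition \ref{conjugateoffio}). The reduction goes chart by chart, using the fact that every operator involved is a tensor product of a scalar operator with a constant unitary on $\mathcal{H}$.

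First, I would expand $\mathrm{Op}_h^{\rho_1,L_1}(a_1)$ using Definition \ref{defofquantizationofaniso} as
\begin{equation*}
\sum_{\alpha}\mathcal{B}_\alpha^*\big(\mathrm{Op}_h(a_{1,\alpha})\otimes \mathrm{Id}_{\mathcal{H}}\big)\mathcal{B}_\alpha .
\end{equation*}
Since $\overline{U_{1,0}}$ is compact, only finitely many Lagrangian normal charts are needed. Write $\mathcal{B}=\Phi_1^{-1}(B\otimes\phi)\Phi_2$ and $\widetilde{\mathcal{B}}=\Phi_2^{-1}(\widetilde B\otimes\widetilde\phi)\Phi_1$, and $\mathcal{B}_\alpha=(B_\alpha\otimes\phi_\alpha)\Phi_{1,\alpha}$. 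The hypothesis $\widetilde{\mathcal{B}}\mathcal{B}\in\Psi^{\mathrm{comp}}_h\otimes\mathrm{Id}_{\mathcal{H}}$ forces $\widetilde\phi=\phi^{-1}$, modulo an $\mathcal{O}(h^\infty)$ part that can be discarded. Each summand $\widetilde{\mathcal{B}}\mathcal{B}_\alpha^*(\mathrm{Op}_h(a_{1,\alpha})\otimes\mathrm{Id})\mathcal{B}_\alpha\mathcal{B}$ then becomes, after inserting the constant transition unitaries between $\Phi_1$ and $\Phi_{1,\alpha}$, an expression of the form
\begin{equation*}
\Phi_2^{-1}\Big( \big(\widetilde B B_\alpha^*\,\mathrm{Op}_h(a_{1,\alpha})\,B_\alpha B\big)\otimes \psi_\alpha^{-1}\psi_\alpha\Big)\Phi_2 .
\end{equation*}
The unitary factors cancel exactly because $\mathrm{Op}_h(a_{1,\alpha})\otimes \mathrm{Id}$ commutes with $\mathrm{Id}\otimes\psi$.

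Second, I would choose Lagrangian normal charts $\mathcal{B}_\beta$ for $(L_2,\rho_2)$ on $\kappa^{-1}(U_{1,0}\cap U_1)$ and insert a partition $\sum_\beta\mathcal{B}_\beta^*\mathcal{B}_\beta$, which equals the identity microlocally there. Each resulting piece has the form $\mathcal{B}_\beta^*\big(T_{\alpha\beta}\otimes\mathrm{Id}\big)\mathcal{B}_\beta$, where $T_{\alpha\beta}$ is a scalar conjugation of $\mathrm{Op}_h(a_{1,\alpha})$ by FIOs associated with $\kappa_\beta\circ\kappa^{-1}\circ\kappa_\alpha^{-1}$. This map preserves $L_0$ because $\kappa_*L_2=L_1$. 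Proposition \ref{conjugateoffio} therefore gives $T_{\alpha\beta}=\mathrm{Op}_h(b_{\alpha\beta})\otimes\mathrm{Id}+\mathcal{O}(h^\infty)$, with the stated principal part and support. Alternatively, one can simply quote the scalar \cite[Lemma 3.14]{dyatlovzahl2016spectral} and note that it yields a symbol $a_2$ that does not depend on $(\mathcal{H},\rho)$. Summing over $\alpha,\beta$ and using $\sum_\alpha\sigma(\mathcal{B}_\alpha^*\mathcal{B}_\alpha)=1$ gives the formula for $a_2$, namely $(a_1\circ\kappa)\sigma_h^{\rho_2}(\widetilde{\mathcal{B}}\mathcal{B})$ modulo $h^{1-\mu}$, together with the support inclusion.

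The main obstacle is the uniformity of the remainders in $(\mathcal{H},\rho)$. I would handle it using \eqref{squaretensorextension} and \eqref{operatornormoffio}: every $\mathcal{O}(h^\infty)$ error is a scalar $\mathcal{O}(h^\infty)$ operator tensored with a unitary, so its norm equals the scalar norm, and only finitely many charts occur. The additional part of $\widetilde{\mathcal{B}}\mathcal{B}$ lying outside the chart region is handled by the support property of $a_{1,\alpha}$ together with the fact that disjointly microlocalized products are $\mathcal{O}(h^\infty)$.
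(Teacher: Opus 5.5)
Your proposal is correct and is essentially the argument the paper has in mind: the paper gives no separate proof, treating the statement as the flat-bundle version of \cite[Lemma 3.14]{dyatlovzahl2016spectral} (and of Proposition \ref{conjugateoffio}), obtained exactly as you do by writing every operator as a scalar operator tensored with a constant unitary, cancelling the unitaries, and using \eqref{squaretensorextension} and \eqref{operatornormoffio} for uniformity in $(\mathcal{H},\rho)$. The one thing to fix is your first version of the second step: under Definition \ref{defofquantizationofaniso} the sum $\sum_\beta\mathcal{B}_\beta^*\mathcal{B}_\beta$ equals $I$ only modulo $\mathcal{O}(h)$ (the partition condition is imposed only on principal symbols), and inserting it on one side does not give the two-sided form $\mathcal{B}_\beta^*(T_{\alpha\beta}\otimes\mathrm{Id}_{\mathcal{H}})\mathcal{B}_\beta$ without microlocal inverses, so the route that actually works is your alternative: apply the scalar lemma to $\widetilde B\,\mathrm{Op}_h^{L_1}(a_1)\,B$ and tensor the result back with $\mathrm{Id}_{\mathcal{H}}$.
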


\begin{proposition}\label{multiplicationofoperators}
Let $a_1, \cdots, a_N \in S_{L, \mu, \mu^{\prime}}^{\mathrm{comp}}(U)$ be as in Lemma \ref{multiofsymbol} and assume that $A_1, \cdots, A_N$ are operators on $L^2(M;F^{\rho})$ such that 
\begin{equation*}
    A_j=\mathrm{Op}_h^{\rho,L}(a_j)+\mathcal{O} \big(h^{(1-\mu-\mu^{\prime})-} \big)_{L^2(M;F^\rho) \rightarrow L^2(M;F^\rho)} ,
\end{equation*}
where the constants in $\mathcal{O}(\bullet)$ are independent of $j$ and $(\mathcal{H},\rho)$. Then,
\begin{equation*}
    A_1 \cdots A_N=\mathrm{Op}_h^{\rho,L} (a_1 \cdots a_N )+\mathcal{O} \big(h^{(1-\mu-\mu^{\prime})-} \big)_{L^2(M;F^\rho) \rightarrow L^2(M;F^\rho)}.
\end{equation*}
\end{proposition}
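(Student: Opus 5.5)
The plan is to reduce to the scalar statement of \cite[Lemma A.6]{dyatlov2018semiclassical} (equivalently \cite[Lemma A.8]{dyatlovzahl2016spectral}) and follow its telescoping argument. The only extra ingredient is that every constant must be independent of $(\mathcal{H},\rho)$. The argument never uses the symbolic structure of the $A_j$ beyond operator norms, so it should transfer almost verbatim. The uniform inputs are Proposition \ref{propofanisoquantization}(1),(3), the sharp norm bound \eqref{sharpnormaniso}, and Lemma \ref{multiofsymbol}.

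\emph{Step 1 (products of quantizations).} Set $b_k:=a_1\cdots a_k$. By Lemma \ref{multiofsymbol}, for every small $\varepsilon>0$ the family $\{b_k\}_{1\le k\le N}$ is bounded in $S^{\mathrm{comp}}_{L,\mu+\varepsilon,\mu'+\varepsilon}(U)$ uniformly in $k$. Since $\sup|a_j|\le 1$, we also have $\sup|b_k|\le1$. Proposition \ref{propofanisoquantization}(3), applied in the class with $\mu+\varepsilon,\mu'+\varepsilon$, then gives
\[
\Oph^{\rho,L}(b_{k})\Oph^{\rho,L}(a_{k+1})=\Oph^{\rho,L}(b_{k+1})+\Oph^{\rho,L}(r_k)+\mathcal{O}(h^\infty).
\]
Here $r_k=b_k\#_La_{k+1}-b_ka_{k+1}=\mathcal{O}(h^{1-\mu-\mu'-2\varepsilon})$ in that class. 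The seminorms depend only on those of $b_k,a_{k+1}$, hence are uniform in $k$ and in $(\mathcal{H},\rho)$, because the remainder bounds come from the scalar local calculus tensored with $\mathrm{Id}_{\mathcal{H}}$. Proposition \ref{propofanisoquantization}(1) then turns this into $\|\Oph^{\rho,L}(r_k)\|\lesssim h^{1-\mu-\mu'-2\varepsilon}$.

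\emph{Step 2 (uniform norm bounds).} By \eqref{sharpnormaniso} in the enlarged class, $\|\Oph^{\rho,L}(b_k)\|\le 1+Ch^{1-\mu-\mu'-2\varepsilon}$ and $\|A_j\|\le 1+Ch^{(1-\mu-\mu')-}$, with $C$ uniform in $k,j,\rho$. Since $N\le C|\log h|$, every partial product $A_{k+1}\cdots A_N$ has norm at most $(1+Ch^{\delta})^{N}\le 2$ for $h$ small, where $\delta>0$.

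\emph{Step 3 (telescoping).} Write
\[
A_1\cdots A_N-\Oph^{\rho,L}(b_N)=\sum_{k=1}^{N}\big(\Oph^{\rho,L}(b_{k-1})A_k-\Oph^{\rho,L}(b_k)\big)A_{k+1}\cdots A_N,
\]
with the convention $\Oph^{\rho,L}(b_0)=I$. To justify this, treat the first factor directly: $A_1-\Oph^{\rho,L}(b_1)=\mathcal{O}(h^{(1-\mu-\mu')-})$. Each bracket splits as $\Oph^{\rho,L}(b_{k-1})(A_k-\Oph^{\rho,L}(a_k))$, plus the composition error from Step 1. By Steps 1 and 2 each bracket is $\mathcal{O}(h^{1-\mu-\mu'-3\varepsilon})$. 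Summing $N\lesssim|\log h|$ terms and using the bound of Step 2 on the tails gives $\mathcal{O}(|\log h|\,h^{1-\mu-\mu'-3\varepsilon})$. Since $\varepsilon$ is arbitrary, this is $\mathcal{O}(h^{(1-\mu-\mu')-})$. Finally, I would note that $\Oph^{\rho,L}(b_N)$ defined in the enlarged class coincides with the same formula in Definition \ref{defofquantizationofaniso}, so no ambiguity arises.

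The main obstacle is Step 1. One must check that the remainder in Proposition \ref{propofanisoquantization}(3) depends only on finitely many seminorms of the factors, uniformly in $(\mathcal{H},\rho)$, so that the $k$-uniformity supplied by Lemma \ref{multiofsymbol} really propagates. I would verify this by unwinding Definition \ref{defofquantizationofaniso}. On each Lagrangian normal chart the composition reduces, via Proposition \ref{mappingoffio} and \eqref{operatornormoffio}, to scalar anisotropic calculus on $\R^d$ tensored with unitary operators. The scalar estimates of \cite[Appendix A]{dyatlov2018semiclassical} are quantitative in the seminorms, and tensoring with unitaries does not change operator norms.
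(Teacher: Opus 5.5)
Your proposal is correct and is essentially the paper's proof. It uses the same telescoping identity $A_1\cdots A_N-\Oph^{\rho,L}(a_1\cdots a_N)=\sum_j B_jA_{j+1}\cdots A_N$ with $B_j=\Oph^{\rho,L}(a_1\cdots a_{j-1})A_j-\Oph^{\rho,L}(a_1\cdots a_j)$, Lemma \ref{multiofsymbol} for uniform membership of the partial products in $S^{\mathrm{comp}}_{L,\mu+\varepsilon,\mu'+\varepsilon}(U)$, the bound \eqref{sharpnormaniso} with $N=\mathcal{O}(|\log h|)$ for the tails, and properties (1) and (3) of Proposition \ref{propofanisoquantization} to get $\|B_j\|=\mathcal{O}(h^{(1-\mu-\mu')-})$ uniformly in $j$ and $(\mathcal{H},\rho)$; your explicit splitting of each $B_j$ just spells out a step the paper leaves implicit.
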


\begin{proof}
We have
\begin{equation}
\begin{gathered}
A_1 \cdots A_N-\mathrm{Op}_h^{\rho,L} (a_1 \cdots a_N )=\sum_{j=1}^N B_j A_{j+1} \cdots A_N, \quad \text{where} \\
B_j:= \begin{cases}A_1-\mathrm{Op}_h^{\rho,L}\left(a_1\right), & j=1 ; \\
\mathrm{Op}_h^{\rho,L} (a_1 \cdots a_{j-1} ) A_j-\mathrm{Op}_h^{\rho,L} (a_1 \cdots a_j ), & 2 \leq j \leq N .\end{cases}
\end{gathered}
\end{equation}
Here $\mathrm{Op}_h^{\rho,L}\left(a_1 \cdots a_{j-1}\right)$ is well-defined by Lemma \ref{multiofsymbol}, $a_1 \cdots a_{j-1} \in S_{L, \mu+\varepsilon, \mu^{\prime}+\varepsilon}^{\mathrm{comp}}(U)$ uniformly in $j$ for any small $\varepsilon>0$.

Since $\sup  |a_j | \leq 1$, by \eqref{sharpnormaniso}, we have, for some $C$ independent of $(\mathcal{H},\rho,h,j)$,
\begin{equation*}
 \|A_j \|_{L^2(M;F^\rho) \rightarrow L^2(M;F^\rho)} \leq 1+C h^{1-\mu-\mu^{\prime}}   .
\end{equation*}
Since $N=\mathcal{O}(|\log h|)$, we have, uniformly in $j$,
\begin{equation*}
     \|A_{j+1} \cdots A_N \|_{L^2(M;F^\rho) \rightarrow L^2(M;F^\rho)} \leq C .
\end{equation*}
By properties (1) and (3) in Proposition \ref{propofanisoquantization}, we conclude
\begin{equation*}
    \|B_j \|_{L^2(M;F^\rho) \rightarrow L^2(M;F^\rho)}=\mathcal{O} \big(h^{(1-\mu-\mu^{\prime})-} \big)_{L^2(M;F^\rho) \rightarrow L^2(M;F^\rho)} \quad \text{uniformly in} \ j.
\end{equation*}
The proof is complete.
\end{proof}
Let $P \in \Psi_h^{\mathrm{comp},sc}(M;F^\rho)$ be self-adjoint with principal symbol $p=\sigma_h(P) \in C_0^{\infty}\left(T^* M ; \mathbb{R}\right)$. We assume that
\begin{equation}\label{conditionoffoliation}
L_{(x, \xi)} \subset \operatorname{ker} d p(x, \xi) \text { for all }(x, \xi) \in U^{\prime} ;
\end{equation}
this is equivalent to the Hamiltonian vector field $H_p$ lying inside $L$. Let $\varphi_{t}=\exp(tH_{p})$ be the Hamiltonian flow generated by $H_{p}$. The operator $e^{-i t P / h}: L^2(M;F^\rho) \rightarrow L^2(M;F^{\rho})$ is unitary. We start with the following fixed-time statement similar to \cite[Lemma 3.17]{dyatlovzahl2016spectral}:

\begin{proposition}\label{fixtimeegorov}
Let $a \in S_{L, \mu, \mu^{\prime}}^{\mathrm{comp}}(U)$, $A=\Oph^{\rho,L}(a)$ and fix an $h$-independent constant $T \geq 0$ such that $\varphi_{-t}(\operatorname{supp} a) \subset U^{\prime}$ for all $t \in[0, T]$. Then
$$
A(t)=\Oph^{\rho,L}(a \circ \varphi_{t})+\mathcal{O}(h^{1-\mu-\mu^{\prime}})_{L^2(M;F^\rho) \rightarrow L^2(M;F^\rho)} \qquad \mathrm{for\ } 0 \leq t \leq T.
$$
\end{proposition}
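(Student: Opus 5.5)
The plan is to follow the proof of \cite[Lemma 3.17]{dyatlovzahl2016spectral} (the scalar case), checking that every step only uses ingredients already shown to be uniform in $(\mathcal{H},\rho)$. The approach is the standard Duhamel/derivative argument. Set $a_t:=a\circ\varphi_t$. Because $H_p\in L$ by \eqref{conditionoffoliation} and $\varphi_t$ preserves the foliation generated by $L$ (its flow lines lie in leaves), $a_t\in S^{\mathrm{comp}}_{L,\mu,\mu'}(U')$ uniformly for $t\in[0,T]$. Indeed, $d\varphi_t$ maps $L$ to $L$, since $[H_p,Y]\in C^\infty(U';L)$ for $Y\in C^\infty(U';L)$ by integrability, and $T$ is $h$-independent. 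Hence $\Oph^{\rho,L}(a_t)$ is defined after restricting to a neighbourhood of $\bigcup_{t\le T}\varphi_{-t}(\operatorname{supp}a)$ with compact closure in $U'$.

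I would then write
\begin{equation*}
U(t)\Oph^{\rho,L}(a_t)U(-t)-A=\int_0^t U(s)\Big(\partial_s\Oph^{\rho,L}(a_s)-\tfrac{i}{h}\big[P,\Oph^{\rho,L}(a_s)\big]\Big)U(-s)\,ds,
\end{equation*}
where $U(t)=e^{-itP/h}$. Since $U(\pm s)$ are unitary, it suffices to prove
\begin{equation*}
\tfrac{i}{h}\big[P,\Oph^{\rho,L}(b)\big]=\Oph^{\rho,L}(H_pb)+\mathcal{O}(h^{1-\mu-\mu'})_{L^2\to L^2}
\end{equation*}
uniformly for $b$ in bounded subsets of $S^{\mathrm{comp}}_{L,\mu,\mu'}$. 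Applying this with $b=a_s$, and noting $\partial_s a_s=H_p a_s$ (with the sign convention matching $A(t)=U(-t)AU(t)$), the integrand is $\mathcal{O}(h^{1-\mu-\mu'})$. Integrating over $[0,T]$ then conjugating by $U(\mp t)$ gives the claim.

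The commutator identity is the main obstacle. I would reduce it to charts. By Definition~\ref{defofquantizationofaniso}, $\Oph^{\rho,L}(b)=\sum_\alpha\mathcal{B}_\alpha^*(\Oph_h(b_\alpha)\otimes\mathrm{Id}_{\mathcal{H}})\mathcal{B}_\alpha$, so it suffices to handle each term, which splits as
\begin{equation*}
[P,\mathcal{B}_\alpha^*(\cdots)\mathcal{B}_\alpha]=[P,\mathcal{B}_\alpha^*](\cdots)\mathcal{B}_\alpha+\mathcal{B}_\alpha^*[\mathcal{B}_\alpha P\widetilde{\mathcal{B}}_\alpha,\cdots]\mathcal{B}_\alpha+\cdots,
\end{equation*}
modulo terms where a microlocal identity $\widetilde{\mathcal{B}}_\alpha\mathcal{B}_\alpha=I$ near the support is inserted with $\mathcal{O}(h^\infty)$ errors. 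Using Definition~\ref{defoffioonbundles}, $\mathcal{B}_\alpha=\Phi^{-1}(B_\alpha\otimes\phi_\alpha)\Phi$. On the other hand $P$ is locally fiberwise diagonal: in a normal chart it has the form $\Oph_h(p_{\varphi,\chi})\otimes\mathrm{Id}_{\mathcal{H}}$ by Proposition~\ref{criterionofpseudo}, up to off-diagonal pieces that are $\mathcal{O}(h^\infty)$ by \eqref{disresi}. Thus $\phi_\alpha$ commutes through, and each conjugated operator becomes a scalar operator tensored with $\mathrm{Id}_{\mathcal{H}}$ or a constant unitary. Each commutator is then literally (scalar operator)$\otimes$(unitary), so its norm equals the scalar one by \eqref{squaretensorextension} and \eqref{operatornormoffio}. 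The scalar statement is exactly the commutator step in \cite[Lemma 3.17]{dyatlovzahl2016spectral}, which relies on Proposition~\ref{conjugateoffio}: $B_\alpha PB_\alpha^*$ has a symbol $q$ in $L_0$-adapted coordinates with $H_q\in L_0$, together with the $L_0$-calculus expansion $\frac{i}{h}[\Oph_h(q),\Oph_h(b_\alpha)]=\Oph_h(H_qb_\alpha)+\mathcal{O}(h^{1-\mu-\mu'})$. The point of $H_q$ being tangent to $L_0$ is that only one anisotropic derivative appears, so no $h^{-\mu}$ loss occurs. Resumming over $\alpha$ with $\sum_\alpha\sigma_h^\rho(\mathcal{B}_\alpha^*\mathcal{B}_\alpha)=1$ gives the needed identity; the terms from $[P,\mathcal{B}_\alpha^*\mathcal{B}_\alpha]$ sum to $\mathcal{O}(h)$ near $\operatorname{supp}b$.

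Constants arise only from scalar estimates and finitely many charts, so they are independent of $(\mathcal{H},\rho)$.
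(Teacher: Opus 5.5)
Your outer structure matches the paper's. That covers the invariance of $S^{\mathrm{comp}}_{L,\mu,\mu'}$ under $\varphi_t$ (because $H_p\in L$), the Heisenberg-derivative identity (your signs are right), and the reduction to $\frac{i}{h}[P,\Oph^{\rho,L}(b)]=\Oph^{\rho,L}(H_pb)+\mathcal{O}(h^{1-\mu-\mu'})$. It also covers, in a Lagrangian chart, the scalar formula \cite[(A.13)]{dyatlov2018semiclassical} together with $\mathcal{B}P\widetilde{\mathcal{B}}\in\Psi^{\mathrm{comp}}_h(\R^d)\otimes\mathrm{Id}_{\mathcal{H}}$.

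The gap is in your chart localization. You conjugate by the FIOs $\mathcal{B}_\alpha$ of Definition~\ref{defofquantizationofaniso}, but these form a partition of unity, $\sum_\alpha\sigma_h^\rho(\mathcal{B}_\alpha^*\mathcal{B}_\alpha)=1$, so each $\sigma_h(\mathcal{B}_\alpha)$ vanishes at the edge of its support. Hence no $\widetilde{\mathcal{B}}_\alpha$ with $\widetilde{\mathcal{B}}_\alpha\mathcal{B}_\alpha=I$ microlocally near that support exists. Any transported operator built from $\mathcal{B}_\alpha$ (your $\mathcal{B}_\alpha P\widetilde{\mathcal{B}}_\alpha$, or $B_\alpha PB_\alpha^*$) has symbol $(p\circ\kappa_\alpha^{-1})\,c_\alpha$, with $c_\alpha$ non-constant exactly where $\mathcal{B}_\alpha^*(\cdot)\mathcal{B}_\alpha$ sees it. Its Hamiltonian field is then \emph{not} tangent to $L_0$, contrary to what you claim. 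The commutator with $\mathrm{Op}_h(b_\alpha)$ produces terms $(p\circ\kappa_\alpha^{-1})\,\partial_\eta c_\alpha\,\partial_y b_\alpha$ of size $h^{-\mu}$ rather than $h^{-\mu'}$. In general it also produces a second-order remainder $\mathcal{O}(h^{2-2\mu})$, which is not $o(h)$ for $\mu$ near $1$. Separately, $[P,\mathcal{B}_\alpha^*]$ is not defined, since $\mathcal{B}_\alpha^*$ maps $L^2(\R^d;\mathcal{H})$ to $L^2(M;F^\rho)$.

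The paper localizes the symbol instead:
\begin{itemize}
\item By Proposition~\ref{microlocalpartitionofunity} it writes $X_\alpha\Oph^{\rho,L}(b)=\Oph^{\rho,L}(b_\alpha)+\mathcal{O}(h^\infty)$ with $\operatorname{supp}b_\alpha$ inside one Lagrangian chart.
\item It picks a \emph{new} pair $\mathcal{B}_\alpha,\widetilde{\mathcal{B}}_\alpha$ quantizing $\kappa_\alpha$ (hence elliptic) near $\operatorname{supp}b_\alpha$.
\item It conjugates the whole commutator: $[P,\Oph^{\rho,L}(b_\alpha)]=\widetilde{\mathcal{B}}_\alpha[\mathcal{B}_\alpha P\widetilde{\mathcal{B}}_\alpha,\mathcal{B}_\alpha\Oph^{\rho,L}(b_\alpha)\widetilde{\mathcal{B}}_\alpha]\mathcal{B}_\alpha+\mathcal{O}(h^\infty)$.
\end{itemize}
Proposition~\ref{mappingoffio} turns the inner anisotropic operator into $\mathrm{Op}_h(\widetilde{b})\otimes\mathrm{Id}_{\mathcal{H}}$. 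Moreover $\sigma_h(\mathcal{B}_\alpha P\widetilde{\mathcal{B}}_\alpha)=p\circ\kappa_\alpha^{-1}$ exactly near $\operatorname{supp}\widetilde{b}$, so its $\partial_\eta$-derivative vanishes there and (A.13) applies. No cross terms arise and nothing needs to cancel between charts.

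Your route can be salvaged by writing $\mathcal{B}_\alpha=G_\alpha\mathcal{B}'_\alpha+\mathcal{O}(h^\infty)$, with $\mathcal{B}'_\alpha$ elliptic near $\mathrm{WF}_h(\mathcal{B}_\alpha)$ and $G_\alpha$ a scalar pseudodifferential operator. But then the contributions $b\,H_p\sigma_h^\rho(\mathcal{B}_\alpha^*\mathcal{B}_\alpha)$ (your ``terms from $[P,\mathcal{B}_\alpha^*\mathcal{B}_\alpha]$'') are each of size one after dividing by $h$, and they cancel only after summing over $\alpha$. That cancellation must be carried out at the bundle level, via Propositions~\ref{propofanisoquantization} and~\ref{mappingoffio}, since different charts use different trivializations. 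It is still uniform in $(\mathcal{H},\rho)$, but it is not a chart-by-chart scalar estimate as your last sentence suggests.
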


\begin{proof}
We first claim that for each $b \in S_{L, \mu, \mu^{\prime}}^{\mathrm{comp}}(U)$,
\begin{equation}\label{commutatorestimate}
\left[P, \Oph^{\rho,L}(b)\right]=-i h \Oph^{\rho,L} (H_p b)+\mathcal{O} \big(h^{2-\mu-\mu^{\prime}}\big)_{L^2(M;F^\rho) \rightarrow L^2(M;F^\rho)} .
\end{equation}
Using the microlocal partition of unity on $\overline{U}$ (see Proposition \ref{microlocalpartitionofunity} and \eqref{pseudoequalsquant}), we take a finite family of $X_{\alpha}=\Oph^{\rho}(\chi_{\alpha})+\mathcal{O}(h^{\infty})_{\Psi^{-\infty,sc}_h(M;F^\rho)}$ such that $\mathrm{\chi_{\alpha}}\in C_{0}^{\infty}(U_\alpha)$ and
\begin{equation*}
    \sum_{\alpha}X_{\alpha}=I+\mathcal{O}(h^{\infty})_{\Psi^{-\infty,sc}_h(M;F^\rho)}, \quad \text{microlocally near} \ U,
\end{equation*}
where $(\kappa_{\alpha},U_{\alpha},\kappa_{\alpha}(U_{\alpha}))$ are Lagrangian charts for $L$. Note that there exist $\mathcal{B}_{\alpha} \in I^{\mathrm{comp}}_h(\kappa_{\alpha},\rho,\rho_{\mathrm{triv}})$ and $\widetilde{\mathcal{B}}_{\alpha} \in I^{\mathrm{comp}}_h(\kappa_{\alpha}^{-1},\rho_{\mathrm{triv}},\rho)$ quantize $\kappa_{\alpha}(U_{\alpha})\times U_{\alpha}$; see Definition \ref{quantizesymponflatbundle}. Thus, there exists $b_{\alpha}\in S^{\mathrm{comp}}_{L,\mu,\mu^{\prime}}(U_{\alpha}\cap U)$ such that 
\begin{equation*}
    X_{\alpha}\Oph^{\rho,L}(b)=\Oph^{\rho,L}(b_{\alpha})+\mathcal{O}(h^{\infty})_{L^2(M;F^\rho) \rightarrow L^2(M;F^\rho)}.
\end{equation*} 
Thus, we only need to prove \eqref{commutatorestimate} for $b_{\alpha}$. Since $B_{\alpha}$ and $\widetilde{B}_{\alpha}$ quantize $\kappa(U_{\alpha})\times U_{\alpha}$ and $\mathrm{supp}\,b_{\alpha}\subset U_{\alpha}$, we have
\begin{equation}
\begin{aligned}
&{\left[P, \mathrm{Op}_h^{\rho,L}(b_{\alpha})\right] } \\
& =\widetilde{\mathcal{B}}_{\alpha} \mathcal{B}_{\alpha}\Big(P \widetilde{\mathcal{B}}_{\alpha} \mathcal{B}_{\alpha} \Oph^{\rho,L}(b_{\alpha})-\Oph^{\rho,L}(b_{\alpha}) \widetilde{\mathcal{B}}_{\alpha} \mathcal{B}_{\alpha} P\Big) \widetilde{\mathcal{B}}_{\alpha} \mathcal{B}_{\alpha}+\mathcal{O}(h^{\infty})_{L^2(M;F^\rho) \rightarrow L^2(M;F^\rho)} \\
& =\widetilde{\mathcal{B}}_{\alpha}\Big[\mathcal{B}_{\alpha} P \widetilde{\mathcal{B}}_{\alpha} ,\, \mathcal{B}_{\alpha} \Oph^{\rho,L}(b_{\alpha}) \widetilde{\mathcal{B}}_{\alpha}\Big] \mathcal{B}_{\alpha}+\mathcal{O}(h^{\infty})_{L^2(M;F^\rho) \rightarrow L^2(M;F^\rho)}.
\end{aligned}
\end{equation}
In the following, we omit the index $\alpha$. By Proposition \ref{mappingoffio}, we have
\begin{equation}
\begin{aligned}
\mathcal{B} \mathrm{Op}_h^{\rho,L}(b) \widetilde{\mathcal{B}}& =\mathrm{Op}_h(\widetilde{b})\otimes \mathrm{Id}_{\mathcal{H}}+\mathcal{O}(h^{\infty})_{L^2(M;F^\rho) \rightarrow L^2(M;F^\rho)} \quad \text { for some } \widetilde{b} \in S_{L_0, \mu, \mu^{\prime}}^{\mathrm{comp}}(T^* \mathbb{R}^d) ,\\
\widetilde{b}&=b \circ \kappa^{-1}+\mathcal{O}\big(h^{1-\mu-\mu^{\prime}} \big)_{S_{L_0, \mu, \mu^{\prime}}^{\mathrm{comp}}(T^* \mathbb{R}^d)}, \qquad \operatorname{supp} \widetilde{b} \subset \kappa(\operatorname{supp} b).
\end{aligned}
\end{equation}
Next, $\mathcal{B} P \widetilde{\mathcal{B}} \in \Psi^{\mathrm{comp}}_h(\mathbb{R}^d)\otimes \mathrm{Id}_{\mathcal{H}}$, $\sigma_h(\mathcal{B} P \widetilde{\mathcal{B}})=(p \circ \kappa^{-1}) \sigma_h(\mathcal{B} \widetilde{\mathcal{B}})$ is equal to $p \circ \kappa^{-1}$ near $\operatorname{supp} \widetilde{b}$. By \eqref{conditionoffoliation}, we then have $\partial_\eta \sigma_h(\mathcal{B} P \widetilde{\mathcal{B}})=0$ near $\operatorname{supp} \widetilde{b}$. By \cite[(A.13)]{dyatlov2018semiclassical}, we have
\begin{equation}\label{commuatorformula}
\begin{aligned}
{\left[P, \Oph^{\rho,L}(b)\right] } & =\widetilde{\mathcal{B}}\left[\mathcal{B} P \widetilde{\mathcal{B}}, \mathrm{Op}_h(\widetilde{b})\right] \mathcal{B}+\mathcal{O}(h^{\infty})_{L^2(M;F^\rho) \rightarrow L^2(M;F^\rho)} \\
& =-i h \widetilde{\mathcal{B}} \,\mathrm{Op}_h \big(\{p \circ \kappa^{-1}, b \circ \kappa^{-1}\} \big) \mathcal{B}+\mathcal{O} \big(h^{2-\mu-\mu^{\prime}}\big)_{L^2(M;F^\rho) \rightarrow L^2(M;F^\rho)}.
\end{aligned}
\end{equation}
Since
\begin{equation*}
     \{p \circ \kappa^{-1}, b \circ \kappa^{-1}\}=(H_p b) \circ \kappa^{-1} \in h^{-\mu^{\prime}} S_{L_0, \mu, \mu^{\prime}}^{\mathrm{comp}}(T^* \mathbb{R}^d) ,
\end{equation*}
the second line of \eqref{commuatorformula} is equal to 
\begin{equation*}
    -i h \mathrm{Op}_h^{\rho,L}\left(H_p b\right)+\mathcal{O} \big(h^{2-\mu-\mu^{\prime}}\big)_{L^2(M;F^\rho) \rightarrow L^2(M;F^\rho)}.
\end{equation*}
This completes the proof of \eqref{commutatorestimate}.

Now, we put $a_t:=a \circ \varphi_{t}$, $t \in[0, T]$. By \eqref{conditionoffoliation}, the map $e^{t H_p}$ preserves the foliation $L$ on $\operatorname{supp} a$, so $a_t \in S_{L, \mu, \mu^{\prime}}^{\mathrm{comp}}(U)$. Since $\partial_t a_t=H_p a_t$, by \eqref{commutatorestimate}, we have
\begin{equation}
\begin{aligned}
i h \partial_t\left(e^{-i t P / h} \Oph^{\rho,L}\left(a_t\right) e^{i t P / h}\right) & =e^{-i t P / h}\left(i h \Oph^{\rho,L}\left(\partial_t a_t\right)+\left[P, \Oph^{\rho,L}\left(a_t\right)\right]\right) e^{i t P / h} \\
& =\mathcal{O} \big(h^{2-\mu-\mu^{\prime}}\big)_{L^2(M;F^\rho) \rightarrow L^2(M;F^\rho)}, \quad 0 \leq t \leq T.
\end{aligned}
\end{equation}
Integrating this from $0$ to $t$, we conclude the proof.
\end{proof}

We now restrict ourselves to the case when $M$ is a compact hyperbolic surface, $U^{\prime}=T^* M \setminus \{0\}$, $L \in\left\{L_u, L_s\right\}$ with $L_u, L_s$ defined in \eqref{stablefoliation}, and $U=\left\{(x,\xi): 1/4<|\xi|_g<4\right\}$. Let $\varphi_t$ be the homogeneous geodesic flow, $P \in \Psi^{\mathrm{comp},sc}_h(M;F^\rho)$ be defined in \eqref{defofcutofflaplacian}, and $U(t)=e^{-i t P / h}$ as in \eqref{schordingerpropagator}. The following Egorov's theorem is a long-time version of Proposition \ref{uniformegorovtheorem} for times up to $\mu|\log h|$.

\begin{theorem}\label{longtimeegorovtheorem}
Assume that $a=a(x,\xi) \in C_0^{\infty}\left(\left\{1 / 4<|\xi|_g<4\right\}\right)$ is $h$-independent. Then we have, uniformly in $t \in[0, \mu |\log h|]$,
\begin{equation}\label{longtimeweakegorovtheorem}
\begin{aligned}
& A(t)=\mathrm{Op}_h^{\rho,L_s} (a \circ \varphi_t )+\mathcal{O} (h^{1-\mu} |\log h| )_{L^2(M;F^\rho) \rightarrow L^2(M;F^\rho)} ,\\
& A(-t)=\mathrm{Op}_h^{\rho,L_u} (a \circ \varphi_{-t} )+\mathcal{O} (h^{1-\mu} |\log h| )_{L^2(M;F^\rho) \rightarrow L^2(M;F^\rho)}  .
\end{aligned}
\end{equation}
\end{theorem}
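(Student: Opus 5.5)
The strategy is the one of \cite[Proposition 3.8]{dyatlov2018semiclassical} (itself following \cite[Lemma 3.18]{dyatlovzahl2016spectral}): write the long time as a product of unit-time steps, use the fixed-time Egorov statement at each step, and sum the errors. We prove the statement for $A(t)$ with $L=L_s$; the case of $A(-t)$ with $L_u$ follows by time reversal, replacing $\varphi_t$ by $\varphi_{-t}$ and \eqref{propogationofsymbol} by \eqref{propogationofsymbol2}.

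\emph{Step 1 (reduction to integer times).} Set $N:=\lfloor t\rfloor\le \mu|\log h|$ and $s:=t-N\in[0,1)$. By Proposition \ref{uniformegorovtheorem} together with Proposition \ref{propofanisoquantization}(2), $A(s)=\Oph^{\rho}(a\circ\varphi_s)+\mathcal O(h)$. Since $a\circ\varphi_s$ is $h$-independent, Proposition \ref{propofanisoquantization}(2) lets us replace this by $\Oph^{\rho,L_s}(a\circ\varphi_s)$ modulo $\mathcal O(h)$. Both conversions have constants uniform in $(\mathcal H,\rho)$ and in $s\in[0,1]$, so we may assume $t=N$ is an integer, after replacing $a$ by $a\circ\varphi_s$.

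\emph{Step 2 (iteration).} Put $a_k:=a\circ\varphi_k$ for $0\le k\le N$. By \eqref{propogationofsymbol}, the $a_k$ lie in $S^{\mathrm{comp}}_{L_s,\mu}(U)$ uniformly in $k$, with supports in $\{1/4<|\xi|_g<4\}$. Moreover $L_s$ contains $H_p$, so \eqref{conditionoffoliation} holds on $U'=T^*M\setminus\{0\}$, and $P$ has symbol $|\xi|_g$ there. Applying Proposition \ref{fixtimeegorov} with $T=1$ to $a_k\in S^{\mathrm{comp}}_{L_s,\mu+\varepsilon,\varepsilon}$ gives
\begin{equation*}
U(-1)\,\Oph^{\rho,L_s}(a_k)\,U(1)=\Oph^{\rho,L_s}(a_{k+1})+\mathcal O(h^{1-\mu-2\varepsilon}),
\end{equation*}
with the constant uniform in $k$ and $(\mathcal H,\rho)$.

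The main obstacle is to justify this uniformity in $k$. The constant in Proposition \ref{fixtimeegorov} depends only on finitely many seminorms of $b$ in the commutator estimate \eqref{commutatorestimate}, and these are controlled uniformly by \eqref{derivativeoffunction}. Uniformity in $(\mathcal H,\rho)$ comes from the fact that every ingredient is a tensor product with $\mathrm{Id}_{\mathcal H}$ on Lagrangian normal charts, via \eqref{operatornormoffio} and \eqref{squaretensorextension}.

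\emph{Step 3 (summing the errors).} Write the telescoping sum
\begin{equation*}
A(N)-\Oph^{\rho,L_s}(a_N)=\sum_{k=0}^{N-1}U(-(N-k-1))\bigl[U(-1)\Oph^{\rho,L_s}(a_k)U(1)-\Oph^{\rho,L_s}(a_{k+1})\bigr]U(N-k-1).
\end{equation*}
Since $U$ is unitary, each term has norm $\mathcal O(h^{1-\mu-2\varepsilon})$. As there are $N=\mathcal O(|\log h|)$ terms, the total error is $\mathcal O(h^{1-\mu-2\varepsilon}|\log h|)$.

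To remove the $\varepsilon$-loss and reach the claimed bound $h^{1-\mu}|\log h|$, follow \cite{dyatlov2018semiclassical}. The $h^{-\mu'}$-type loss in \eqref{commutatorestimate} comes only from derivatives along $L$. Along $L_s$, the symbols $a\circ\varphi_k$ are in fact uniformly bounded, because $U_+$ and $H_p$ derivatives do not grow by \eqref{derivativeoffunction}. Hence one may take $\mu'=0$ and use $S^{\mathrm{comp}}_{L_s,\mu,0}$ directly, which gives a per-step error $\mathcal O(h^{1-\mu})$. Multiplying by the $N=\mathcal O(|\log h|)$ steps yields \eqref{longtimeweakegorovtheorem}.
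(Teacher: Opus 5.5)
Your proposal is correct and is essentially the paper's argument: a telescoping sum over $\mathcal{O}(|\log h|)$ bounded-time steps, applying Proposition~\ref{fixtimeegorov} at each step to $a\circ\varphi_k$ regarded as an element of $S^{\mathrm{comp}}_{L_s,\mu,0}$ (so $\mu'=0$ and each step costs $\mathcal{O}(h^{1-\mu})$), and summing the errors using unitarity of $U(t)$. The differences are cosmetic. You split off the fractional time with Proposition~\ref{uniformegorovtheorem} and then use unit steps, whereas the paper writes $t=Ns$ with $0\le s\le 2$, $N\le|\log h|$, and takes $N$ equal steps; the paper also works in the $\mu'=0$ class from the start rather than first passing through $S^{\mathrm{comp}}_{L_s,\mu+\varepsilon,\varepsilon}$.
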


\begin{proof}
We prove first line of \eqref{longtimeweakegorovtheorem}, with second line proved similarly (replacing $P$ by $-P$). By property (2) in Proposition \ref{propofanisoquantization}, we may replace $\mathrm{Op}_h^{\rho}(a)$ by $\mathrm{Op}_h^{\rho,L_{s}}(a)$ with an $\mathcal{O}(h)_{L^2(M;F^\rho) \rightarrow L^2(M;F^\rho)}$ error.

We write $t=N s$ where $0 \leq s \leq 2$ and $N \in \mathbb{N}, N \leq |\log h|$. Then
\begin{equation}
\begin{aligned}
&A(t)-\mathrm{Op}_h^{\rho,L_s}\left(a \circ \varphi_t\right) \\
&=\sum_{j=0}^{N-1} U(-j s)\left( U(-s) \mathrm{Op}_h^{\rho,L_s} \big(a \circ \varphi_{(N-1-j) s} \big) U(s)-\mathrm{Op}_h^{\rho,L_s}  \big(a \circ \varphi_{(N-j) s}\big)  \right) U(j s) .
\end{aligned}
\end{equation}
Since $U(j s)$ is unitary, it suffices to prove that uniformly in $j=0, \cdots, N-1$,
\begin{equation}\label{uniformupperbound}
U(-s) \mathrm{Op}_h^{\rho,L_s} \big(a \circ \varphi_{(N-1-j) s} \big) U(s)-\mathrm{Op}_h^{\rho,L_s}  \big(a \circ \varphi_{(N-j) s}\big)=\mathcal{O}(h^{1-\mu})_{L^2(M;F^\rho) \rightarrow L^2(M;F^\rho)}.
\end{equation}
Now \eqref{uniformupperbound} follows from Proposition \ref{fixtimeegorov} applied to $a \circ \varphi_{(N-1-j) s} \in S_{L_{s}, \mu,0}^{\mathrm{comp}}(T^* M \setminus \{0\})$. Here $\varphi_t=\exp \left(t H_{\sigma_h(P)}\right)$ on $\{1 / 4<|\xi|_g<4\}$ with $\sigma_h(P)=|\xi|_g$ on $U$.
\end{proof}

\section{Uniform semiclassical control on flat Hilbert bundles}\label{sec:uniform semiclassical control}

In this section, we prove the uniform semiclassical control estimate:

\begin{theorem}\label{generalizedmicrolocalcontrolonhilbertbundle}
Let $M$ be a compact hyperbolic surface and $a \in C_{0}^{\infty}(T^* M)$ such that $ a |_{S^* M} \not \equiv 0$. There exist constants $C=C(M,a)>0$ and $h_{0}  = h_0(M,a)>0$ such that for any $(\mathcal{H},\rho)\in \mathcal{C}$, any $0<h<h_{0}$, any self-adjoint $\mathcal{P}=\mathcal{P}(h)\in \Psi^{2,sc}_{h}(M;F^\rho)$ with $\sigma_{h}^{\rho}(\mathcal{P})=|\xi|_g^2$ and any $u \in H^2(M;F^{\rho})$,
\begin{equation}\label{generalizedmicrolocalcontrolonhilbertbundleq}
\|u\|_{L^2(M;F^{\rho})} \leq C\left( \|\Oph^{\rho}(a) u\|_{L^2(M;F^{\rho})}+\frac{|\log h|}{h}\left\|(\mathcal{P} -I\right) u\|_{L^2(M;F^{\rho})} \right).
\end{equation}
\end{theorem}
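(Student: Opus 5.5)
The plan is to transplant the proof of \cite[Theorem 2]{dyatlov2018semiclassical} verbatim into the flat-bundle calculus of Section \ref{sec:quantization on flat bundles}. The only new input is the uniform Fractal Uncertainty Principle, which is needed for scalar operators tensored with the identity on $\mathcal{H}$. Every other constant will be tracked through Propositions \ref{uniformboundedtheorem}, \ref{uniformsharpgarding}, \ref{propofanisoquantization}, \ref{multiplicationofoperators} and Theorem \ref{longtimeegorovtheorem}, all of which are uniform in $(\mathcal{H},\rho)$.

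\textbf{Step 1: reduction to the energy shell.} Let $\chi_0\in C_0^\infty((1/4,4))$ with $\chi_0=1$ near $1$. By Theorem \ref{mainthmofquantized}, $I-\chi_0(\mathcal{P})$ is microlocally supported away from $S^*M$. There the operator $\mathcal{P}-I$ is elliptic, so Proposition \ref{prop:microelliptic} gives
\[
\|(I-\chi_0(\mathcal{P}))u\|\le C\|(\mathcal{P}-I)u\|,
\]
with $C$ uniform. By the same argument, $\Oph^\rho(a)$ may be replaced by $\Oph^\rho(a')$ with $a'$ supported near $S^*M$. Since $a|_{S^*M}\not\equiv0$, we may choose $a'\ge0$ with $a'\ne0$ somewhere on $S^*M$.

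\textbf{Step 2: Dyatlov--Jin partition.} Following \cite{dyatlov2018semiclassical}, I will pick $b_1,b_2\in C^\infty(T^*M)$ with $b_1+b_2=1$ near $S^*M$. Here $b_1$ vanishes on a small ball in $S^*M$ where $a'>0$, and $b_2$ is supported where $a'>0$. Set $B_j=\Oph^\rho(b_j)$. Next, fix $\rho'\in(0,1)$ close to $1$ and take $N\approx\rho'|\log h|$ steps of unit length. For words $w$ of length $2N$, define
\[
B_w=B_{w_{2N}}(2N-1)\cdots B_{w_1}(0),
\]
where $B(t)=U(-t)BU(t)$ is the Heisenberg evolution under the cutoff propagator $P=\psi_P(\mathcal{P})$. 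Split the words into the ``controlled'' set $\mathcal{X}$, where the proportion of $2$'s is at least $\alpha$, and its complement $\mathcal{Y}$. Then $A_{\mathcal{X}}=\sum_{w\in\mathcal{X}}B_w$ and $A_{\mathcal{Y}}$ satisfy $A_{\mathcal{X}}+A_{\mathcal{Y}}=U(-2N)\cdots$, a product of $(B_1+B_2)(t)$ factors, and this equals $I$ up to $\mathcal{O}(h^\infty)$ on $\chi_0(\mathcal{P})u$.

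Since $U(t)u=e^{-it/h}u+\mathcal{O}(|t|h^{-1}\|(\mathcal{P}-I)u\|)$, the bound on $A_{\mathcal{X}}u$ follows as in \cite[Lemma 4.1 / Prop.\ 3.5]{dyatlov2018semiclassical}, by counting visits to $\{b_2\neq0\}$. This uses only the uniform bounds from Step 1 and the almost-orthogonality estimates in \cite{dyatlov2018semiclassical}, which rely on sharp Gårding (Proposition \ref{uniformsharpgarding}). It gives
\[
\|A_{\mathcal{X}}u\|\le C\|\Oph^\rho(a')u\|+C\frac{|\log h|}{h}\|(\mathcal{P}-I)u\|.
\]
The $|\log h|$ factor comes from the propagation time $2N$.

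\textbf{Step 3: the decay estimate for $A_{\mathcal{Y}}$.} I need $\|A_{\mathcal{Y}}\|_{L^2\to L^2}\le Ch^{\beta}$ for some $\beta>0$, with $C$ uniform in $(\mathcal{H},\rho)$. Following \cite{dyatlov2018semiclassical}, write $A_{\mathcal{Y}}$ (up to acceptable errors) as $A_-A_+$. Using Theorem \ref{longtimeegorovtheorem}, Lemma \ref{multiofsymbol} and Proposition \ref{multiplicationofoperators}, $A_+=\Oph^{\rho,L_s}(a_+)$ and $A_-=\Oph^{\rho,L_u}(a_-)$, with $a_\pm$ in $S^{\mathrm{comp}}_{L_{s/u},\rho'+}$. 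These symbols are supported in $h^{\rho'}$-porous sets along the stable and unstable directions, with the porosity constant coming from the choice of $\alpha$. The number of words in $\mathcal{Y}$ grows like $h^{-\epsilon}$, which is absorbed as in \cite{dyatlov2018semiclassical}.

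The key claim is $\|\Oph^{\rho,L_u}(a_-)\Oph^{\rho,L_s}(a_+)\|\le Ch^{\beta}$, and this is the step I expect to be the main obstacle. My plan is to localize with a microlocal partition of unity (Proposition \ref{microlocalpartitionofunity}) into Lagrangian normal charts. In each chart, after conjugating by the Fourier integral operators of Definition \ref{defoffioonbundles} and applying Proposition \ref{mappingoffio}, the operator becomes $(\Oph_h(\tilde a_-)\,B\,\Oph_h(\tilde a_+))\otimes\phi$. Here $B$ is a scalar Fourier integral operator whose canonical relation straightens the two foliations, and $\phi\in\mathrm{U}(\mathcal{H})$ is a constant unitary. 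By \eqref{squaretensorextension} and \eqref{operatornormoffio}, its norm equals that of the scalar operator. The scalar operator is exactly the one bounded in \cite[\S5]{dyatlov2018semiclassical} via the Bourgain--Dyatlov fractal uncertainty principle, which gives $\le Ch^\beta$ with $C$ independent of $\rho$.

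The delicate point is that the flat-bundle transition maps must be constant unitaries that cancel consistently across charts. This is where the locally fiberwise diagonal structure of Definition \ref{localdiag} is used, so that the gluing errors remain $\mathcal{O}(h^\infty)$ uniformly.

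\textbf{Step 4: conclusion.} Combining the steps,
\[
\|u\|\le\|A_{\mathcal{X}}u\|+Ch^\beta\|u\|+C\frac{|\log h|}{h}\|(\mathcal{P}-I)u\|.
\]
Choosing $h_0$ small enough absorbs the $Ch^\beta\|u\|$ term into the left-hand side, which gives \eqref{generalizedmicrolocalcontrolonhilbertbundleq}.
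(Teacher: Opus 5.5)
Your overall strategy is the paper's: run the Dyatlov--Jin argument with the uniform flat-bundle calculus, and in the fractal-uncertainty step conjugate by $\mathcal B_\pm,\widetilde{\mathcal B}_\pm$ so that the operator becomes $(\mathrm{Op}_h(\tilde a_-)\otimes\mathrm{Id}_{\mathcal H})(B\otimes\phi)(\mathrm{Op}_h(\tilde a_+)\otimes\mathrm{Id}_{\mathcal H})$. Its norm equals the scalar one by \eqref{squaretensorextension}, and that reduction is correct. The gap is in the word bookkeeping of Steps 2--3. It is not the one in \cite{dyatlov2018semiclassical}, and as you wrote it the controlled-region estimate does not go through.

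\textbf{The controlled region.} You call a word of length $2N\approx 2\rho'|\log h|$ controlled if the proportion of $2$'s in the \emph{whole} word is at least $\alpha$, and then say the bound follows ``as in Dyatlov--Jin''.
\begin{itemize}
\item Their controlled-region estimate (Lemma \ref{estimateofcontrolpart} here) passes through the almost-monotonicity inequality \eqref{monotonicity}. That is proved by sharp G\aa rding in the anisotropic calculus, which needs the weighted sums $a_c=\sum_{\mathbf w}c(\mathbf w)a_{\mathbf w}$ to lie in a fixed class (Lemma \ref{pseudoofweighted}).
\item For words of length $2N$ with $2\rho'>1$ this fails. By \eqref{derivativeoffunction}, the factors $a_{w_j}\circ\varphi_j$ with $j$ near $2N$ have $U_-$-derivatives of size $h^{-2\rho'}$, which is past the Ehrenfest time. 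So $a_{\mathbf w}$ lies in no calculus, and the $2^{2N}$ terms would ruin the seminorms anyway.
\item Your controlled set also has no product structure, so $A_{\mathcal X}$ cannot be factored.
\end{itemize}
What is missing is Dyatlov--Jin's block structure:
\begin{itemize}
\item blocks of length $N_0=\lceil\frac{\mu}{4}|\log h|\rceil$, so that $2^{N_0}\le h^{-1/4}$ and $a_c\in S^{\mathrm{comp}}_{L_s,1/2,1/4}$;
\item the set $\mathcal Z=\{F\ge\alpha\}\subset\mathcal W(N_0)$;
\item words of length $2N_1=8N_0$, declared controlled when \emph{some block} lies in $\mathcal Z$;
\item the factorization $A_{\mathcal Y_\ell}=A_{\mathcal Q}(7N_0)\cdots A_{\mathcal Q}(\ell N_0)A_{\mathcal Z}((\ell-1)N_0)(A_1+A_2)^{(\ell-1)N_0}$.
\end{itemize}
With this, almost monotonicity is only applied to the short-word operator $A_{\mathcal Z}$. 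That operator is then moved back by the propagation estimate at cost $\frac{|\log h|}{h}\|(\mathcal P-I)u\|$. These identities use that $A_1+A_2=I-\psi_0(\mathcal P)$ commutes with $U(t)$. Your $B_1+B_2=\mathrm{Op}_h^{\rho}(b_1+b_2)$ does not, so you should build the partition as a function of $\mathcal P$, as the paper does.

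\textbf{The uncontrolled region.} In Step 3, $A_{\mathcal Y}$ is a sum over words and cannot be written as one product $A_-A_+$.
\begin{itemize}
\item The decay must be proved word by word, $\sup_{\mathbf w\in\mathcal W(2N_1)}\|A_{\mathbf w}\|\le Ch^\beta$ (Proposition \ref{uncontrolwordpart}), using $A_{\mathbf w}=U(-N_1)\mathcal A_-\mathcal A_+U(N_1)$.
\item The porosity of $\operatorname{supp}a_\pm$ does not come from $\alpha$. It comes from each letter's symbol vanishing on a fixed open set. In your set-up these are the ball where $b_1=0$ and an open subset of $S^*M$ where $b_2=0$; you must arrange the latter exists by shrinking $a'$. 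Hence $\beta$ depends only on $M$ and these two sets.
\item Only then can the number of uncontrolled words (the paper's $\mathcal X$, your $\mathcal Y$), at most $C(\alpha)h^{-4\sqrt\alpha}$, be absorbed by choosing $\alpha=\beta^2/64$ afterwards, as in \eqref{choicealpha}.
\end{itemize}
If $\beta$ depended on $\alpha$, as you assert, the absorption would need an extra, unproved comparison between $\beta(\alpha)$ and the counting exponent.
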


Our proof follows the argument of Dyatlov and Jin \cite[Theorem 2]{dyatlov2018semiclassical}. We emphasize that their proof involves only the $L^2$-norm estimate in semiclassical microlocal analysis. In section \ref{sec:quantization on flat bundles}, we establish the semiclassical calculus for scalar symbols with operator-norm bounds uniform over all flat $\mathcal{H}$-bundles. Here we only list the main steps and highlight that the constants are independent of $(\mathcal{H},\rho)$. The strategy of the proof is as follows.

We decompose the phase space $T^* M$ into several distinct regions:
\begin{itemize}
\item Outside of $S^* M$, elliptic estimates imply that $u$ can be controlled by $(\mathcal{P}-1) u$. 
\item On $S^*M$, we further divide into two parts: the region controlled by $a$, and the region not controlled by $a$:
\begin{itemize}
     \item the part controlled by $a$ is precisely the union of all geodesic flow orbits passing through $\{a \neq 0\}$. In this region, by propagation estimates and Egorov's theorem, $u$ can be controlled by $\mathrm{Op}_h^{\rho}(a) u$ and $(\mathcal{P}-1)u$;
\item the uncontrolled part is given by
\begin{equation*}
    K:=S^* M \setminus \bigcup_{t \in \mathbb{R}} \varphi_t(\{a \neq 0\}).
\end{equation*}
On $K$, which is ``fractal'' in both stable and unstable directions, we apply the fractal uncertainty principle, see \cite{dyatlov2018semiclassical}, to obtain control.
\end{itemize}
\end{itemize}

\subsection{Partitions and key estimates}
We take $U^{\prime}=T^*M\setminus \{0\}$ and $U=\left\{1/4<|\xi|_g<4\right\}$ in this section. Fix conical open sets
\begin{equation*}
    \mathcal{U}_1, \,\mathcal{U}_2 \subset T^* M \setminus \{0\}, \quad \mathcal{U}_1, \,\mathcal{U}_2 \neq \varnothing, \quad \overline{\mathcal{U}_1} \cap \overline{\mathcal{U}_2}=\varnothing, \quad \overline{\mathcal{U}_2} \cap S^* M \subset \{a \not= 0\}.
\end{equation*}

We start with a rough microlocal partition of unity $I=A_0+A_1+A_2$ constructed as follows. 
\begin{itemize}
    \item Let $\psi_0 \in C^{\infty}(\mathbb{R};[0,1])$ satisfy
\begin{equation}\label{cutofffunctioninproof}
\operatorname{supp} \psi_0 \cap [1/4,4]=\varnothing, \quad \operatorname{supp}\left(1-\psi_0\right) \subset(1 / 16,16).
\end{equation}
Then,
\begin{equation}
    A_{0}:=\psi_{0}(\mathcal{P}) \in \Psi^{0,sc}_h(M;F^\rho)
\end{equation}
is microlocalized in the region away from $S^* M$. Since $1-\psi_0\in C_{0}^{\infty}(\R)$, by Theorem \ref{mainthmofquantized}, $I-A_0\in \Psi^{\mathrm{comp},sc}_h(M;F^\rho)$ and there exists $b \in C_{0}^{\infty}(U)$ such that
\begin{equation}
I-A_0=\Oph^{\rho}(b)+\mathcal{O}(h^\infty)_{\Psi^{-\infty,sc}(M;F^\rho)},
\end{equation}
where $b\equiv 1-\psi_{0}(|\xi|^2_g)$ mod $hS^{-1}(T^*M)$.
\item Let 
\begin{equation}
    \Omega_1:=(U \setminus \overline{\mathcal{U}_1}) \cap \big(\{a \not= 0\} \cup (T^* M \setminus S^* M ) \big).
\end{equation}
Roughly speaking, $\Omega_{1}$ represents the region ``controlled'' by $a$. Let $\Omega_2:=U \setminus \overline{\mathcal{U}_2}$. It is clear that $\mathrm{WF}_h(I-A_0)\subset U \subset\Omega_1 \cup \Omega_2$. For $i=1,2$, choosing $\chi_{i} \in C_{0}^{\infty}(\Omega_i;[0,1])$ such that $\chi_{1}+\chi_{2} \equiv 1$ near $\mathrm{supp}\,b$, we define
\begin{equation}
\begin{aligned}
    &A_{1}:=\Oph^{\rho}(\chi_{1}b) \in \Psi^{\mathrm{comp},sc}_h(M;F^\rho), \\
    &A_{2}:=I-A_{0}-A_1=\Oph^{\rho}(\chi_{2}b)+\mathcal{O}(h^\infty)_{\Psi^{-\infty,sc}(M;F^\rho)}.
\end{aligned}
\end{equation}
%It is clear that $\mathrm{WF}_h(A_i)\subset \Omega_{i}$. Let $a_{i}(x,\xi)=\chi_{i}\big(1-\psi_0(|\xi|^2_g)\big)=\chi_i b+\mathcal{O}(h)_{S_{1,0}^{\mathrm{comp}}(T^*M)}$ which are independent of $h$. 
\end{itemize}
In summary, we have constructed a microlocal partition of unity $I=A_0+A_1+A_2$ such that:
\begin{equation}\label{microlocalpartition}
\begin{aligned}
&A_{0}=\psi_0 (\mathcal{P}) \in \Psi^{0,sc}_h(M;F^\rho), \qquad A_{1}, \, A_{2} \in \Psi^{\mathrm{comp},sc}_h(M;F^\rho);\\
&a_{i}:=\sigma^{\rho}_{h}(A_{i})=\chi_{i}\big(1-\psi_0(|\xi|^2_g)\big)\in C_{0}^{\infty}(\Omega_{i};[0,1]),\\
&\mathrm{WF}_h(A_i)\subset \Omega_{i} , \qquad \ \ i=1,2. 
\end{aligned}
\end{equation}

We next dynamically refine the partition $A_j$. For each $n \in \mathbb{N}^+$, define the set of words of length $n$,
$$
\mathcal{W}(n):=\{1,2\}^n=\big\{\mathbf{w}=w_0 \cdots w_{n-1} :w_0, \cdots, w_{n-1} \in\{1,2\}\big\} .
$$
For each word $\mathbf{w}=w_0 \cdots w_{n-1} \in \mathcal{W}(n)$, we define the operator and the symbol by
\begin{equation}\label{defoflongword}
A_{\mathbf{w}}=A_{w_{n-1}}(n-1) A_{w_{n-2}}(n-2) \cdots A_{w_1}(1) A_{w_0}(0), \qquad a_{\mathbf{w}}=\prod_{j=0}^{n-1}\left(a_{w_j} \circ \varphi_j\right).
\end{equation}
For a subset $\mathcal{E} \subset \mathcal{W}(n)$, we define the operator $A_{\mathcal{E}}$ and the symbol $a_{\mathcal{E}}$ by
\begin{equation*}
    A_{\mathcal{E}}:=\sum_{\mathbf{w} \in \mathcal{E}} A_{\mathbf{w}}, \qquad a_{\mathcal{E}}:=\sum_{\mathbf{w} \in \mathcal{E}} a_{\mathbf{w}} .
\end{equation*}

Since $A_1+A_2=I-A_0=I-\psi_0 (\mathcal{P})$ and $P=\psi_P(\mathcal{P})$ (see \eqref{defofcutoff} and \eqref{defofcutofflaplacian} for the definitions of $\psi_P$ and $\mathcal{P}$) are both functions of $\mathcal{P}$, they commute with each other. Consequently, $A_1+A_2$ commutes with $U(t)$. It follows that
\begin{equation*}
    A_{\mathcal{W}(n)}=\left(A_1+A_2\right)^n.
\end{equation*}
This operator is equal to the identity microlocally near $S^*M$, implying the following lemma (see \cite[Lemma 3.1]{dyatlov2018semiclassical}):
\begin{lemma}\label{awayfromcosphere}
For any $n\in \N$ and any $u \in H^2(M;F^{\rho})$,
\begin{equation}
\|u-(A_1+A_2)^n u\|_{L^2(M;F^\rho)} \leq C\|(\mathcal{P}-I) u\|_{L^2(M;F^\rho)} ,
\end{equation}
where $C>0$ is a constant independent of $u,n,\mathcal{P}$ and $(\mathcal{H},\rho,h)$.
\end{lemma}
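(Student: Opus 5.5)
The idea is to express $I-(A_1+A_2)^n$ through the functional calculus of $\mathcal{P}$ alone. Since $A_1+A_2=I-A_0=I-\psi_0(\mathcal{P})$, we have
\begin{equation*}
I-(A_1+A_2)^n = g_n(\mathcal{P}), \qquad g_n(\lambda):=1-\big(1-\psi_0(\lambda)\big)^n .
\end{equation*}
The whole estimate then becomes a statement about a bounded Borel function of the self-adjoint operator $\mathcal{P}$. Here $\psi_0$ takes values in $[0,1]$, and $1-\psi_0$ is supported in $(1/16,16)$ while $\psi_0=0$ on $[1/4,4]$. Therefore $g_n$ vanishes on $[1/4,4]$ and satisfies $0\le g_n\le 1$ everywhere, for every $n$.

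Next, define $f_n(\lambda):=g_n(\lambda)/(\lambda-1)$. Since $g_n=0$ on $[1/4,4]$, this is a well-defined bounded Borel function, and
\begin{equation*}
\sup_{\lambda\in\R}|f_n(\lambda)|\le \sup_{|\lambda-1|\ge 3/4}\frac{1}{|\lambda-1|}=\frac43 .
\end{equation*}
The bound is independent of $n$. It is also independent of $h$, of $\mathcal{P}$ and of $(\mathcal{H},\rho)$, because only the real line enters.

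By the spectral theorem for the self-adjoint operator $\mathcal{P}$ on $L^2(M;F^\rho)$, we have $g_n(\mathcal{P})=f_n(\mathcal{P})(\mathcal{P}-I)$ on the domain of $\mathcal{P}$, and $\|f_n(\mathcal{P})\|_{L^2\to L^2}\le 4/3$. This domain contains $H^2(M;F^\rho)$: indeed $\mathcal{P}\in\Psi^{2,sc}_h$ maps $H^2_h$ to $L^2$ by Proposition \ref{uniformboundedtheorem}, and $H^2$ and $H^2_h$ coincide as sets by Proposition \ref{inclusionofSobolev}. Hence, for $u\in H^2(M;F^\rho)$,
\begin{equation*}
\|u-(A_1+A_2)^n u\| =\|f_n(\mathcal{P})(\mathcal{P}-I)u\|\le \tfrac43\,\|(\mathcal{P}-I)u\|,
\end{equation*}
which is the claim with $C=4/3$.

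The only point needing care is the identification $A_0=\psi_0(\mathcal{P})$ as a genuine operator function. The paper defines it this way through the spectral calculus of the self-adjoint $\mathcal{P}$, so the functional identities hold exactly and no $\mathcal{O}(h^\infty)$ remainders appear. The fact that $A_1+A_2$ equals $I-\psi_0(\mathcal{P})$ exactly is built into the construction $A_2:=I-A_0-A_1$. Consequently no microlocal argument is needed, and uniformity in $(\mathcal{H},\rho,h,n,\mathcal{P})$ is automatic.
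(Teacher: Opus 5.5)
Your proof is correct and is essentially the argument the paper defers to (\cite[Lemma 3.1]{dyatlov2018semiclassical}). You write $I-(A_1+A_2)^n=g_n(\mathcal{P})$ with $g_n=1-(1-\psi_0)^n$, which is bounded by $1$ and vanishes on $[1/4,4]$, and then apply the spectral theorem with the constant $\sup_\lambda|g_n(\lambda)/(\lambda-1)|\le 4/3$, which is independent of $n$, $h$, $\mathcal{P}$ and $(\mathcal{H},\rho)$.
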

Take $\mu \in (0,1)$ very close to $1$, to be chosen later (in Proposition \ref{uncontrolwordpart}), and put
\begin{equation*}
    N_0:=\left\lceil\frac{\mu}{4} |\log h|\right\rceil \in \mathbb{N}, \qquad N_1:=4 N_0 \approx \mu |\log h|.
\end{equation*}
By Proposition \ref{uniformegorovtheorem} and \ref{multiplicationofoperators}, we have:

\begin{lemma}\label{quantizationoflongwordsymbol}
For each $\mathbf{w} \in \mathcal{W}\left(N_0\right)$,
\begin{equation}\label{refinedwordquant}
a_{\mathbf{w}} \in S_{L_s, \mu / 4}^{\mathrm{comp}}(U), \qquad A_{\mathbf{w}}=\mathrm{Op}_h^{\rho, L_s}(a_{\mathbf{w}})+\mathcal{O}(h^{3/4})_{L^2(M;F^\rho) \rightarrow L^2(M;F^\rho)} ;
\end{equation}
if instead $\mathbf{w} \in \mathcal{W}(N_1)$, then
\begin{equation}\label{longerwordquant}
a_{\mathbf{w}} \in S_{L_s, \mu}^{\mathrm{comp}}(U), \qquad A_{\mathbf{w}}=\mathrm{Op}_h^{\rho, L_s}(a_{\mathbf{w}})+\mathcal{O}(h^{1-\mu-})_{L^2(M;F^\rho) \rightarrow L^2(M;F^\rho)} .
\end{equation}
Here the constant in $\mathcal{O}(\bullet)$ is independent of $\mathbf{w}$ and $(\mathcal{H},\rho,h)$.
\end{lemma}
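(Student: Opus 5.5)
The plan follows \cite[Lemma 3.1--3.2]{dyatlov2018semiclassical}. The only change is to check that every constant comes from the uniform calculus of Section \ref{sec:quantization on flat bundles}. Fix $\mathbf{w}\in\mathcal{W}(n)$ with $n\in\{N_0,N_1\}$. There are two claims: a symbol-class statement for $a_{\mathbf{w}}$, and an operator statement for $A_{\mathbf{w}}$.

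\emph{Symbol class.} By \eqref{microlocalpartition}, each $a_{w_j}\in C_0^\infty(U;[0,1])$ is $h$-independent. The commutation relations give \eqref{derivativeoffunction}, so for $0\le j\le n-1\le \mu'|\log h|$ we have $a_{w_j}\circ\varphi_j\in S^{\mathrm{comp}}_{L_s,\mu'}(U)$. Here $\mu'=\mu/4$ for $n=N_0$ and $\mu'=\mu$ for $n=N_1$. The seminorms are uniform in $j$, since derivatives along $L_s=\mathrm{span}(H_p,U_+)$ do not grow and the remaining derivatives grow at most like $e^{j}\le h^{-\mu'}$. Also $\sup|a_{w_j}\circ\varphi_j|\le1$. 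Lemma \ref{multiofsymbol} then applies with $N=n\le C|\log h|$ factors, giving $a_{\mathbf{w}}\in S^{\mathrm{comp}}_{L_s,\mu'+\varepsilon,\varepsilon}(U)$ for every $\varepsilon>0$. This is exactly $a_{\mathbf{w}}\in S^{\mathrm{comp}}_{L_s,\mu'}(U)$. None of this involves $(\mathcal{H},\rho)$.

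\emph{Operators.} The long-time Egorov Theorem \ref{longtimeegorovtheorem} applies to $A_{w_j}=\Oph^\rho(a_{w_j})+\mathcal{O}(h^\infty)$, using Proposition \ref{uniformboundedtheorem} for the remainder. It gives
\[
A_{w_j}(j)=\Oph^{\rho,L_s}(a_{w_j}\circ\varphi_j)+\mathcal{O}(h^{1-\mu'}|\log h|)_{L^2\to L^2},
\]
uniformly in $j\le n-1$ and in $(\mathcal{H},\rho)$. For bounded $j$ one could instead use Proposition \ref{uniformegorovtheorem} together with property (2) of Proposition \ref{propofanisoquantization}. The error is $\mathcal{O}(h^{(1-\mu'-\varepsilon)-})$ in the sense required by Proposition \ref{multiplicationofoperators}, with $(\mu,\mu')$ there taken as $(\mu'+\varepsilon,\varepsilon)$. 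Applying Proposition \ref{multiplicationofoperators} to the product in \eqref{defoflongword}, with factors ordered by decreasing $j$, gives
\[
A_{\mathbf{w}}=\Oph^{\rho,L_s}(a_{\mathbf{w}})+\mathcal{O}(h^{(1-\mu'-2\varepsilon)-}).
\]
For $n=N_1$ this is $\mathcal{O}(h^{1-\mu-})$ after letting $\varepsilon\to0$. For $n=N_0$ the exponent is $1-\mu/4-$, which exceeds $3/4$ since $\mu<1$, so the error is $\mathcal{O}(h^{3/4})$.

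\emph{Main obstacle.} The delicate point is uniformity in $\mathbf{w}$, $j$ and $(\mathcal{H},\rho)$ when roughly $|\log h|$ operators are multiplied. Proposition \ref{multiplicationofoperators} handles this: the sharp bound \eqref{sharpnormaniso} gives $\|A_j\|\le 1+Ch^{1-\mu-\mu'}$, and raising this to a power $\mathcal{O}(|\log h|)$ stays bounded. The remaining point is that the telescoping errors $B_j$ must be controlled through the composition rule (3) of Proposition \ref{propofanisoquantization}, applied to $a_1\cdots a_{j-1}$ in the slightly worse class. The constants there are independent of $\rho$ because the anisotropic calculus is built from $\Oph\otimes\mathrm{Id}_{\mathcal{H}}$ conjugated by the Fourier integral operators of Definition \ref{defoffioonbundles}, whose norms satisfy \eqref{operatornormoffio}. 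I would check carefully that the $\mathcal{O}$-constants in Theorem \ref{longtimeegorovtheorem} depend only on finitely many seminorms of $a_{w_j}$. Since there are only two such symbols, the uniformity in $\mathbf{w}$ follows.
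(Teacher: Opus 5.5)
Your proposal is correct and follows essentially the same route as the paper, which just invokes Egorov's theorem together with Proposition \ref{multiplicationofoperators} (as in \cite[Lemma 3.2]{dyatlov2018semiclassical}); your use of the long-time Theorem \ref{longtimeegorovtheorem}, rather than the finite-time Proposition \ref{uniformegorovtheorem} cited there, is in fact what the range $j\lesssim\mu|\log h|$ requires. One small correction: since $a_{w_j}$ is only the principal symbol, you have $A_{w_j}=\Oph^\rho(a_{w_j})+\mathcal{O}(h)_{L^2\to L^2}$ rather than $+\mathcal{O}(h^\infty)$, but this $\mathcal{O}(h)$ term is preserved under conjugation by the unitary $U(j)$ and is absorbed into your $\mathcal{O}(h^{1-\mu'}|\log h|)$ Egorov error.
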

Now, define the density function
\begin{equation}\label{defofdensityoffunc}
F: \mathcal{W}\left(N_0\right) \rightarrow[0,1], \qquad F(w_0 \cdots w_{N_0-1})=\frac{\# \big\{j \in \{0, \cdots, N_0-1\} : w_j=1\big\}}{N_0}.
\end{equation}
Fix small $\alpha \in(0,1)$ to be chosen later (in \eqref{choicealpha}) and define
\begin{equation*}
    \mathcal{Z}:=\{F \geq \alpha\} \subset \mathcal{W}(N_0) .
\end{equation*}
Writing words in $\mathcal{W}(2 N_1)$ as concatenations $\mathbf{w}^{(1)} \cdots \mathbf{w}^{(8)}$, where $\mathbf{w}^{(1)}, \cdots, \mathbf{w}^{(8)} \in \mathcal{W}(N_0)$, we define the partition
\begin{equation}\label{partitionofregions}
\begin{aligned}
\mathcal{W}(2 N_1) & =\mathcal{X} \sqcup \mathcal{Y}, \\
\mathcal{X} & :=\left\{\mathbf{w}^{(1)} \cdots \mathbf{w}^{(8)} : \mathbf{w}^{(\ell)} \notin \mathcal{Z} \text { for all } \ell\right\} ,\\
\mathcal{Y} & :=\left\{\mathbf{w}^{(1)} \cdots \mathbf{w}^{(8)} : \text {there exists } \ell \text { such that } \mathbf{w}^{(\ell)} \in \mathcal{Z}\right\}.
\end{aligned}
\end{equation}
By \cite[Lemma 3.3]{dyatlov2018semiclassical}, the number of elements in $\mathcal{X}$ is bounded by
\begin{equation}\label{numberofuncontrolwords}
\# \mathcal{X} \leq C(\alpha)h^{-4 \sqrt{\alpha}} .
\end{equation}
Now we state two key estimates. The first one estimates the mass of an approximate eigenfunction on the controlled region $\mathcal{Y}$; see \cite[Proposition 3.4]{dyatlov2018semiclassical}:

\begin{proposition}\label{controlwordpart}
For any $\alpha \in (0,1)$ and any $u \in H^2(M;F^{\rho})$,
\begin{equation*}
    \|A_{\mathcal{Y}} u\|_{L^2(M;F^\rho) } \leq \frac{C}{\alpha}\|\mathrm{Op}_h^{\rho}(a) u\|_{L^2(M;F^\rho) }+\frac{C |\log h|}{\alpha h}\|(\mathcal{P}-I) u\|_{L^2(M;F^\rho) }+Ch^{1/8}\|u\|_{L^2(M;F^\rho) } .
\end{equation*}
Here the constant $C>0$ is independent of $u,\alpha,\mathcal{P}$ and $(\mathcal{H},\rho,h)$.
\end{proposition}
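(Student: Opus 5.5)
We follow the proof of \cite[Proposition 3.4]{dyatlov2018semiclassical}, tracking that every constant comes from the uniform calculus of Section \ref{sec:quantization on flat bundles}. The plan has four steps: (1) a basic control estimate for $A_1$ alone; (2) propagation of that estimate to $A_1(j)$ through $U(t)$; (3) summation over the words in $\mathcal{Z}$, which is where $\alpha$ enters; (4) the concatenation into words of length $2N_1$, which costs at most $h^{1/8}$.

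\textbf{Step 1.} We first show
\begin{equation*}
\|A_1 u\|\le C\|\Oph^\rho(a)u\|+C\|(\mathcal{P}-I)u\|+Ch\|u\|.
\end{equation*}
Since $\mathrm{WF}_h(A_1)\subset\Omega_1$, and $\Omega_1$ lies in $\{a\neq0\}\cup(T^*M\setminus S^*M)$, the operator $A_1$ is microlocally supported where $\Oph^\rho(a)$ and $\mathcal{P}-I$ are jointly elliptic. Concretely, $\Oph^\rho(\bar a a)+(\mathcal{P}-I)^*(\mathcal{P}-I)$ is elliptic on $\mathrm{WF}_h(A_1)$. Proposition \ref{prop:microelliptic} then gives
\begin{equation*}
A_1=Q_1\Oph^\rho(a)+Q_2(\mathcal{P}-I)+\mathcal{O}(h^\infty),
\end{equation*}
with $Q_i$ uniformly bounded by Proposition \ref{uniformboundedtheorem}.

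\textbf{Step 2.} Conjugating by the unitary $U(t)$ gives $A_1(j)=U(-j)A_1U(j)$. We use
\begin{equation*}
\|[U(t),\mathcal{P}]\|=0,\qquad \|(P-I)u\|\lesssim\|(\mathcal{P}-I)u\|.
\end{equation*}
The second bound holds because $P=\psi_P(\mathcal{P})$, so $P-I$ is a function of $\mathcal{P}$ vanishing at $1$. Next we estimate $U(t)u-e^{-it/h}u$. Its norm is at most $\frac{|t|}{h}\|(P-I)u\|$, so
\begin{equation*}
\|A_1(j)u\|\le \|A_1u\|+\frac{Cj}{h}\|(\mathcal{P}-I)u\|, \qquad 0\le j\le 2N_1\lesssim|\log h|.
\end{equation*}

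\textbf{Step 3.} Fix $\mathbf{w}\in\mathcal{Z}$ of length $N_0$, shifted to occupy positions $\ell N_0,\dots,(\ell+1)N_0-1$. It contains at least $\alpha N_0$ letters equal to $1$. Following Dyatlov–Jin, we write $A_{\mathcal{Z}}$ via the identity
\begin{equation*}
\sum_{\mathbf{w}\in\mathcal{W}(N_0)}F(\mathbf{w})A_{\mathbf{w}}=\frac1{N_0}\sum_{j=0}^{N_0-1}(A_1+A_2)^{N_0-1-j}A_1(j)(A_1+A_2)^{j}.
\end{equation*}
This identity holds because $A_1+A_2$ commutes with $U(t)$.

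We use $\alpha\,\mathbf 1_{\mathcal{Z}}\le F$. This inequality is applied at the symbol level: by Lemma \ref{quantizationoflongwordsymbol}, $a_{\mathcal{Z}}\le \alpha^{-1}a_F$, where $a_F$ denotes the symbol of $\sum_{\mathbf{w}}F(\mathbf{w})A_{\mathbf{w}}$. Both symbols lie in $S^{\mathrm{comp}}_{L_s,\mu/4}$ and take values in $[0,1]$. The uniform sharp Gårding inequality (Proposition \ref{sharpgardingforanisoquant}) then gives
\begin{equation*}
\|A_{\mathcal{Z}}v\|\le \alpha^{-1}\Bigl\|\sum_{\mathbf{w}} F(\mathbf{w})A_{\mathbf{w}}v\Bigr\|+Ch^{1/2}\|v\|.
\end{equation*}
The powers $(A_1+A_2)^k$ have norm $\le 1+Ch$, and $k\lesssim|\log h|$, so they are uniformly bounded. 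Combining with Step 2 yields
\begin{equation*}
\|A_{\mathcal{Z}}v\|\lesssim \alpha^{-1}\Bigl(\|\Oph^\rho(a)v\|+\frac{|\log h|}{h}\|(\mathcal{P}-I)v\|\Bigr)+Ch^{1/2}\|v\|.
\end{equation*}

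\textbf{Step 4.} It remains to pass from $\mathcal{Z}$ to $\mathcal{Y}$. We decompose $\mathcal{Y}=\bigsqcup_{\ell=1}^8\mathcal{Y}_\ell$, where $\ell$ is the first index with $\mathbf{w}^{(\ell)}\in\mathcal{Z}$. Then
\begin{equation*}
A_{\mathcal{Y}_\ell}=A_{\mathcal{W}(N_0)}^{(8-\ell)}(\text{shifted})\,A_{\mathcal{Z}}((\ell-1)N_0)\,A_{\mathcal{W}(N_0)\setminus\mathcal{Z}}^{(\ell-1)}.
\end{equation*}
The outer factors are handled as follows. Those of type $A_{\mathcal{W}(N_0)}$ equal $(A_1+A_2)^{N_0}$ and are uniformly bounded. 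Those of type $A_{\mathcal{W}(N_0)\setminus\mathcal{Z}}$, time-shifted, are bounded by $1+Ch^{1/8}$. That bound comes from Lemma \ref{quantizationoflongwordsymbol}, the bound $0\le a_{\mathcal{W}\setminus\mathcal{Z}}\le1$, and \eqref{sharpnormaniso}; for the shifted words we apply Theorem \ref{longtimeegorovtheorem}. The middle factor $A_{\mathcal{Z}}(s)=U(-s)A_{\mathcal{Z}}U(s)$ satisfies the Step 3 estimate applied to $v=U(s)(\cdots)u$. The unitarity of $U(s)$ and its commutation with $\mathcal{P}$ transfer the right-hand side back to $u$, at the cost of the errors in Step 2.

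\textbf{Main obstacle.} The hardest step is controlling the error terms coming from the long-time Egorov and product calculus. These must be $\mathcal{O}(h^{1/8})$ uniformly over the exponentially many words. This is achieved only because sums over words are handled at the level of the symbol $a_{\mathcal{E}}$, never by counting words. Everything there is already uniform in $(\mathcal{H},\rho)$ by Propositions \ref{propofanisoquantization}, \ref{multiplicationofoperators} and Lemma \ref{quantizationoflongwordsymbol}.
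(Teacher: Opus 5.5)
Steps 1 and 2 and the overall architecture match the paper and Dyatlov--Jin, but Step 4 fails because of how you split $\mathcal{Y}$. You take $\ell$ to be the \emph{first} index with $\mathbf{w}^{(\ell)}\in\mathcal{Z}$. This puts the restricted factors $A_{\mathcal{Q}}(kN_0)$, with $\mathcal{Q}=\mathcal{W}(N_0)\setminus\mathcal{Z}$ and $0\le k\le \ell-2$, to the \emph{right} of $A_{\mathcal{Z}}((\ell-1)N_0)$, so they act on $u$ first. Your Step 3 estimate is then applied to $w=A_{\mathcal{Q}}((\ell-2)N_0)\cdots A_{\mathcal{Q}}(0)u$, not to $u$. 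Its right-hand side contains $\|\mathrm{Op}_h^{\rho}(a)w\|$ and $\frac{|\log h|}{\alpha h}\|(\mathcal{P}-I)w\|$. Unitarity of $U(s)$ and $[U(s),\mathcal{P}]=0$ only dispose of the conjugation by $U(s)$. They do not move $\mathcal{P}-I$ or $\mathrm{Op}_h^{\rho}(a)$ past the $A_{\mathcal{Q}}$ factors, and norm bounds on those factors are useless in this position.

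Commutators cannot repair this. One has $[\mathcal{P},A_j]=-ih\,\mathrm{Op}_h^{\rho}(\{p^2,a_j\})+\mathcal{O}(h^2)$ with $\{p^2,a_j\}=(1-\psi_0(p^2))\{p^2,\chi_j\}\not\equiv 0$, because the cutoffs $\chi_j$ are not flow-invariant. So in general you only get $\|(\mathcal{P}-I)w\|\le C\|(\mathcal{P}-I)u\|+\mathcal{O}(h)\|u\|$. After multiplication by $|\log h|/(\alpha h)$, this leaves a term of size $\alpha^{-1}|\log h|\,\|u\|$. That term cannot be absorbed in the proof of Theorem \ref{generalizedmicrolocalcontrolonhilbertbundle}, which needs the coefficient of $\|u\|$ to tend to $0$.

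The remedy is the paper's decomposition by the \emph{last} index. Let $\mathcal{Y}_\ell$ consist of the words with $\mathbf{w}^{(\ell)}\in\mathcal{Z}$ and $\mathbf{w}^{(\ell+1)},\dots,\mathbf{w}^{(8)}\in\mathcal{Q}$, so that
\[
A_{\mathcal{Y}_\ell}=A_{\mathcal{Q}}(7N_0)\cdots A_{\mathcal{Q}}(\ell N_0)\,A_{\mathcal{Z}}((\ell-1)N_0)\,(A_1+A_2)^{(\ell-1)N_0}.
\]
Now the restricted factors sit on the left, where only $\|A_{\mathcal{Q}}(s)\|=\|A_{\mathcal{Q}}\|\le C$ is used. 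No long-time Egorov theorem is needed for the shifted factors, since conjugation by $U(s)$ is isometric. The factor on the right is a function of $\mathcal{P}$, and Lemma \ref{awayfromcosphere} replaces it by $I$ at cost $C\|(\mathcal{P}-I)u\|$. Then \eqref{propagationestimate} and Lemma \ref{estimateofcontrolpart} finish the proof.

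The same principle is missing from your Step 3. In $\frac{1}{N_0}\sum_j (A_1+A_2)^{N_0-1-j}A_1(j)(A_1+A_2)^j$, the right factor $(A_1+A_2)^j$ must be removed by Lemma \ref{awayfromcosphere}, not merely bounded; this works there because it commutes with $\mathcal{P}$. Two smaller corrections in Step 3:
\begin{itemize}
\item $a_{\mathcal{Z}}$ and $a_F$ lie in $S^{\mathrm{comp}}_{L_s,1/2,1/4}(U)$ (Lemma \ref{pseudoofweighted}), not in $S^{\mathrm{comp}}_{L_s,\mu/4}$.
\item Comparing \emph{norms} requires sharp G\aa rding for the squared symbols, i.e.\ the almost monotonicity \eqref{monotonicity}. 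This gives an error of order $h^{1/8}$, not $h^{1/2}$.
\end{itemize}
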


The second estimate, based on a fractal uncertainty principle, gives a norm-bound on the operator corresponding to every single word of length $2 N_1 \approx 2 \mu |\log h|$; see \cite[Proposition 3.5]{dyatlov2018semiclassical}:

\begin{proposition}\label{uncontrolwordpart}
There exist $\beta>0$, $\mu \in(0,1)$ depending only on $M, \mathcal{U}_1$ and $ \mathcal{U}_2$ such that
\begin{equation*}
    \sup _{\mathbf{w} \in \mathcal{W}(2 N_1)}\|A_{\mathbf{w}}\|_{L^2(M;F^\rho) \rightarrow L^2(M;F^\rho)} \leq C h^\beta.
\end{equation*}
\end{proposition}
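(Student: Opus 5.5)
We follow \cite[Proposition 3.5]{dyatlov2018semiclassical}, reducing the bundle statement to the scalar fractal uncertainty principle (FUP) via the structure of Lagrangian normal charts. Fix $\mathbf{w}\in\mathcal{W}(2N_1)$ and split it as $\mathbf{w}=\mathbf{w}_+\mathbf{w}_-$ with $\mathbf{w}_\pm\in\mathcal{W}(N_1)$. By the definition \eqref{defoflongword} and unitarity of $U(t)$,
\begin{equation*}
A_{\mathbf{w}}=U(-N_1)\,A_{\mathbf{w}_+}\,U(N_1)\,A_{\mathbf{w}_-}.
\end{equation*}
Hence $\|A_{\mathbf{w}}\|=\|A_{\mathbf{w}_+}'A_{\mathbf{w}_-}\|$, where $A'_{\mathbf{w}_+}$ is the backward-propagated word (indices $-1,\dots,-N_1$).

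We first apply Lemma \ref{quantizationoflongwordsymbol} to $A_{\mathbf{w}_-}$, together with its time-reversed analogue obtained from the second line of Theorem \ref{longtimeegorovtheorem} and Proposition \ref{multiplicationofoperators}. This gives
\begin{equation*}
A_{\mathbf{w}_-}=\Oph^{\rho,L_s}(a_-)+\mathcal{O}(h^{1-\mu-}), \qquad A'_{\mathbf{w}_+}=\Oph^{\rho,L_u}(a_+)+\mathcal{O}(h^{1-\mu-}),
\end{equation*}
with $a_-\in S^{\mathrm{comp}}_{L_s,\mu}$ and $a_+\in S^{\mathrm{comp}}_{L_u,\mu}$, all constants uniform in $\mathbf{w}$ and $(\mathcal{H},\rho,h)$. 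It therefore suffices to show
\begin{equation*}
\|\Oph^{\rho,L_u}(a_+)\Oph^{\rho,L_s}(a_-)\|\le Ch^{\beta}
\end{equation*}
uniformly. The supports of $a_\pm$ are porous along the unstable and stable directions respectively: each time step $a_{w_j}\circ\varphi_{\pm j}$ excludes $\mathcal{U}_1$ or $\mathcal{U}_2$. This is a purely symbolic statement from \cite{dyatlov2018semiclassical}, independent of $\rho$.

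Next we localize. Using microlocal partitions of unity (Proposition \ref{microlocalpartitionofunity}) on the compact set $\overline U$, we reduce to a finite number of pieces, fixed independently of $\rho$, each supported in a small set where there is a symplectomorphism straightening both $L_s$ and $L_u$ simultaneously, as in \cite[\S 4]{dyatlov2018semiclassical}. On each piece we pick Lagrangian normal charts $(\kappa,\ldots,\mathcal{B},\widetilde{\mathcal{B}})$ with $\mathcal{B}=\Phi^{-1}(B\otimes\phi)\Phi$. By Proposition \ref{mappingoffio}, conjugation turns the localized product into $\widetilde{\mathcal{B}}\big((\mathcal{T}\otimes \mathrm{Id}_{\mathcal H})\big)\mathcal{B}+\mathcal{O}(h^{1-\mu-})$. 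Here $\mathcal{T}$ is precisely the scalar operator on $L^2(\R)$-type model estimated in \cite{dyatlov2018semiclassical}: a composition of anisotropic quantizations whose norm is bounded by FUP by $Ch^\beta$.

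The key point is that every operator appearing is of the form (scalar operator) $\otimes$ (constant unitary) in parallel charts, since the transition maps are constant. By \eqref{squaretensorextension} and \eqref{operatornormoffio}, every norm therefore equals its scalar counterpart. The number of pieces and all seminorms are independent of $(\mathcal{H},\rho)$, so summing gives the claim with $\beta,\mu$ depending only on $M,\mathcal{U}_1,\mathcal{U}_2$.

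The main obstacle is this localization step. We must check that the cross terms between different chart pieces, and the errors incurred when passing from the global anisotropic quantizations $\Oph^{\rho,L_u}$, $\Oph^{\rho,L_s}$ to the local tensorized model, remain $\mathcal{O}(h^{1-\mu-})$ uniformly. This requires the uniform versions of Propositions \ref{propofanisoquantization} and \ref{mappingoffio}. Once it is in place, FUP itself is imported verbatim from the scalar case.
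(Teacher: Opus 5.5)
Your opening reductions match the paper: you split $\mathbf w$ into two halves, conjugate by $U(N_1)$, and replace the halves by $\Oph^{\rho,L_s}(a_-)$ and $\Oph^{\rho,L_u}(a_+)$ via Theorem~\ref{longtimeegorovtheorem} and Proposition~\ref{multiplicationofoperators}. What remains is to bound the product of these two by $Ch^\beta$. There is one small slip in the order. With \eqref{defoflongword} and $\mathbf w=\mathbf w_+\mathbf w_-$ one gets $U(N_1)A_{\mathbf w}U(-N_1)=A_{\mathbf w_-}\,A_{\mathbf w_+}(-N_1)$, so the forward factor stands on the left. Your order is the adjoint one, which is harmless: by Proposition~\ref{propofanisoquantization}(4), taking adjoints only replaces $a_\pm$ by symbols with the same supports.

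The gap is in the reduction to the scalar fractal uncertainty principle. You localize to pieces carrying a single symplectomorphism that straightens $L_s$ and $L_u$ simultaneously, and conjugate once to a model $\mathcal T\otimes\mathrm{Id}_{\mathcal H}$. No such chart exists. On the lift to $T^*\mathbb H^2$, $L_s$ and $L_u$ are the foliations by level sets of $(|\xi|,B_+)$ and $(|\xi|,B_-)$, where $B_\pm\in\partial\mathbb H^2$ are the endpoints of the geodesic at $\pm\infty$. The reduced symplectic form on the space of geodesics is $\propto (u-v)^{-2}\,du\wedge dv$ in the endpoint coordinates $(u,v)=(B_+,B_-)$ (half-plane model). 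Mapping both foliations to linear ones would force this form to split as $\alpha(u)\beta(v)\,du\wedge dv$, and it does not split even locally.

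Nor is a composition of two anisotropic quantizations in one chart what \cite{dyatlov2018semiclassical} estimate. The product $a_-a_+$ oscillates at scale $h^{\mu}$ in both conjugate directions $U_\pm$ with $2\mu>1$, so no symbolic calculus applies. The FUP is a statement about the Fourier integral operator intertwining a chart adapted to $L_u$ with a chart adapted to $L_s$.

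The paper, following \cite[\S 5]{dyatlov2018semiclassical}, proceeds as follows:
\begin{itemize}
\item It inserts $Q=\Oph^{\rho}(q)$, taken from a finite, $\rho$-independent microlocal partition of unity, \emph{between} the two factors. This is why no cross terms ever arise, so the obstacle you flag is not where the difficulty lies.
\item It uses two charts, $\kappa_0^{+}$ straightening $L_u$ and $\kappa_0^-$ straightening $L_s$, with bundle FIOs $\mathcal B_\pm,\widetilde{\mathcal B}_\pm$ quantizing them near $\operatorname{supp} q$.
\item It then writes
\[
\Oph^{\rho,L_s}(a_-)\,Q\,\Oph^{\rho,L_u}(a_+)=\widetilde{\mathcal B}_-A_-\,\mathcal B\,A_+\mathcal B_++\mathcal O(h^\infty),\qquad \mathcal B:=\mathcal B_-\widetilde{\mathcal B}_+,
\]
with $A_-=\mathcal B_-\Oph^{\rho,L_s}(a_-)\widetilde{\mathcal B}_-$ and $A_+=\mathcal B_+Q\,\Oph^{\rho,L_u}(a_+)\widetilde{\mathcal B}_+$.
\end{itemize}

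The bundle-specific step missing from your plan concerns the transition operator $\mathcal B\in I_h^{\mathrm{comp}}(\kappa_0^-\circ(\kappa_0^+)^{-1},\mathcal H)$. It equals $B\otimes\phi$, with $B$ a scalar FIO and $\phi\in\mathrm U(\mathcal H)$ constant; this is where the constant transition maps of the flat bundle enter. Meanwhile $A_\pm=\mathrm{Op}_h(\widetilde a_\pm)\otimes\mathrm{Id}_{\mathcal H}+\mathcal O(h^\infty)$ by Proposition~\ref{mappingoffio}. Then \eqref{squaretensorextension} reduces everything to $\|\mathrm{Op}_h(\widetilde a_-)\,B\,\mathrm{Op}_h(\widetilde a_+)\|\le Ch^\beta$, which is \cite[(5.12)]{dyatlov2018semiclassical}. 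Your tensor-structure principle is the right tool, but it must be applied to this intertwining operator, which your one-chart model leaves out.
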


Propositions \ref{controlwordpart} and \ref{uncontrolwordpart} will be proved in the following two sections. Assuming that these results hold at this moment, we can prove Theorem \ref{generalizedmicrolocalcontrolonhilbertbundle}:

\vspace{.1cm}

\begin{proof}[Proof of Theorem \ref{generalizedmicrolocalcontrolonhilbertbundle}] Take $\beta, \mu$ from Proposition \ref{uncontrolwordpart}; we may assume that $\beta<1 / 8$. Since $A_{\mathcal{X}}+A_{\mathcal{Y}}=A_{\mathcal{W}(2 N_1)}=(A_1+A_2)^{2 N_1}$, we have, for any $u \in H^2(M;F^{\rho})$,
\begin{equation*}
    \|u\|_{L^2(M;F^{\rho})} \leq\|A_{\mathcal{X}} u\|_{L^2(M;F^{\rho})}+\|A_{\mathcal{Y}} u\|_{L^2(M;F^{\rho})}+ \big\|u-(A_1+A_2)^{2 N_1} u \big\|_{L^2(M;F^{\rho})} .
\end{equation*}
Combining \eqref{numberofuncontrolwords} with Proposition \ref{uncontrolwordpart} and using the triangle inequality, we have
\begin{equation*}
    \|A_{\mathcal{X}} u\|_{L^2(M;F^{\rho})}=\mathcal{O}\big(h^{\beta-4 \sqrt{\alpha}}\big)\|u\|_{L^2(M;F^{\rho})} .
\end{equation*}
Combining this with Proposition \ref{controlwordpart} and Lemma \ref{awayfromcosphere} we obtain
\begin{equation}\label{controleq}
\begin{aligned}
    &\|u\|_{L^2(M;F^{\rho})} \\
    &\leq \frac{C}{\alpha}\|\Oph^{\rho}(a) u\|_{L^2(M;F^{\rho})}+\frac{C|\log h|}{\alpha h}\|(\mathcal{P} -I) u\|_{L^2(M;F^{\rho})}+Ch^{\beta-4 \sqrt{\alpha}} \|u\|_{L^2(M;F^{\rho})}.
\end{aligned}
\end{equation}
The proof is complete by choosing 
\begin{equation}\label{choicealpha}
    \alpha:=\frac{\beta^2}{64}, \quad \mathrm{so\ that\ }\beta-4 \sqrt{\alpha}=\frac{\beta}{2},
\end{equation}
and taking $h$ small enough to remove the last term on the right-hand side of \eqref{controleq}.
\end{proof}

\subsection{The controlled region: propagation estimates}
In this section, we prove Proposition \ref{controlwordpart}. Recall that 
\begin{equation*}
    \mathrm{supp}\,a_1\cap S^*M \subset \{a\not=0\},
\end{equation*}
there exist $b,q\in C_{0}^{\infty}(T^*M)$ such that $a_1=ab+q(p^2-1)$. Since $\sigma_{h}^{\rho}(A_1)=a_1$ and $\sigma_{h}^{\rho}(\mathcal{P} -I)=p^2-1$, we have
\begin{equation*}
    A_{1}=\Oph^{\rho}(b)\Oph^{\rho}(a)+\Oph^{\rho}(q)(\mathcal{P} -I)+\mathcal{O}(h)_{L^2(M;F^\rho) \rightarrow L^2(M;F^\rho)}.
\end{equation*}
By Proposition \ref{uniformboundedtheorem}, we have the elliptic estimate: there is a constant $C>0$ independent of $u,\mathcal{P}$ and $(\mathcal{H},\rho,h)$ such that, for any $u\in H^2(M;F^{\rho})$,
\begin{equation}\label{ellipticestimate}
 \|A_1 u \|_{L^2(M;F^\rho)} \leq C \|\mathrm{Op}_h^{\rho}(a) u \|_{L^2(M;F^\rho)} + C \| (\mathcal{P} -I ) u \|_{L^2(M;F^\rho)}+C h\|u\|_{L^2(M;F^\rho)}. 
\end{equation}

Following \cite[Lemma 4.2]{dyatlov2018semiclassical}, we have the uniform version of the propagation estimate:

\begin{lemma}
    Assume that $A=A(\mathcal{H};\rho;h): L^2(M;F^{\rho}) \rightarrow L^2(M;F^{\rho})$ is uniformly bounded: there is a universal constant $\Lambda>0$ such that
    \begin{equation*}
        \|A(\mathcal{H};\rho;h)\|_{L^2(M;F^\rho) \rightarrow L^2(M;F^\rho)} < \Lambda, \qquad \forall \ (\mathcal{H},\rho)\in\mathcal{C},\,0<h\leq 1.
    \end{equation*}
Then, for any $t \in \mathbb{R}$ and $u \in H^2(M;F^{\rho})$,
\begin{equation}\label{propagationestimate}
\|A(t) u\|_{L^2(M;F^\rho)} \leq\|A u\|_{L^2(M;F^\rho)}+\frac{C\Lambda|t|}{h}\|(\mathcal{P}-I) u\|_{L^2(M;F^\rho)}.
\end{equation}
Here the constant $C>0$ is independent of $u,t,\mathcal{P}$ and $(\mathcal{H},\rho,h)$.
\end{lemma}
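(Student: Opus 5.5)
The plan is a Duhamel argument, following \cite[Lemma 4.2]{dyatlov2018semiclassical}. The only new point is to make sure that nothing depends on $(\mathcal{H},\rho)$. Recall that $A(t)=U(-t)AU(t)$ with $U(t)=e^{-itP/h}$ and $P=\psi_P(\mathcal{P})$. Since $U(\pm t)$ is unitary,
\begin{equation*}
\|A(t)u\|_{L^2(M;F^\rho)}=\|AU(t)u\|_{L^2(M;F^\rho)}\leq \|Au\|_{L^2(M;F^\rho)}+\Lambda\|U(t)u-u\|_{L^2(M;F^\rho)}.
\end{equation*}
So it suffices to show that $\|U(t)u-u\|\leq C|t|h^{-1}\|(\mathcal{P}-I)u\|$ with $C$ universal.

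For this we write $U(t)u-u=-\frac{i}{h}\int_0^t U(s)(P-I)u\,ds$. Here we use $\psi_P(1)=1$, so that the constant $1$ in $e^{-it\cdot 1/h}$ is absorbed. The precise identity is
\begin{equation*}
e^{-itP/h}u-e^{-it/h}u=-\tfrac{i}{h}\int_0^t e^{-i(t-s)/h}e^{-isP/h}(P-I)u\,ds .
\end{equation*}
This differs from the desired quantity only by the scalar phase $e^{-it/h}$. The cleanest way to handle it is to replace $U(t)$ by $e^{it/h}U(t)$ from the start. This is harmless, because $A(t)$ is unchanged: the phase cancels between $U(-t)$ and $U(t)$. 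It then follows that $\|e^{it/h}U(t)u-u\|\leq |t|h^{-1}\|(P-I)u\|$.

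The remaining step is to bound $\|(P-I)u\|\leq C\|(\mathcal{P}-I)u\|$ uniformly. Write $\psi_P(\lambda)-1=(\lambda-1)g(\lambda)$, where $g(\lambda)=(\psi_P(\lambda)-1)/(\lambda-1)$. Since $\psi_P(1)=1$ and $\psi_P$ is smooth and compactly supported, $g$ is smooth and bounded on $\mathbb{R}$. In fact $g(\lambda)\to 0$ as $\lambda\to\infty$, since there $g=-1/(\lambda-1)$. By the spectral theorem for the self-adjoint operator $\mathcal{P}$, we have $P-I=g(\mathcal{P})(\mathcal{P}-I)$ with $\|g(\mathcal{P})\|\leq\sup|g|$, which is a constant depending only on $\psi_P$. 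Combining the steps gives \eqref{propagationestimate} with $C=\sup|g|$, independent of $u,t,\mathcal{P}$ and $(\mathcal{H},\rho,h)$.

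There is no serious obstacle here. The only points needing care are the phase normalization of $U(t)$ and using the functional calculus, rather than symbolic estimates, so that the constant is visibly independent of the bundle. For $u\in H^2$, the Duhamel formula is justified because $u$ lies in the domain of $\mathcal{P}$ and $P$ is bounded.
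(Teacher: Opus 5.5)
Your argument is correct and is essentially the proof of \cite[Lemma 4.2]{dyatlov2018semiclassical}, which the paper cites without reproducing. It uses unitarity of $U(t)$, the Duhamel identity for $e^{it/h}U(t)u-u$ (the scalar phase cancels in $A(t)$), and the spectral-theorem bound $\|(P-I)u\|\le \sup|g|\,\|(\mathcal{P}-I)u\|$, which comes from $\psi_P(1)=1$. The only constant that appears is $\sup|g|$, which depends on $\psi_P$ alone, so the uniformity in $(\mathcal{H},\rho,h)$ and $\mathcal{P}$ is immediate.
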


More generally, one can consider operators obtained by assigning a coefficient to each word. For a function $c: \mathcal{W}(N_0) \rightarrow \mathbb{C}$, define the operator $A_c$ and the symbol $a_c$ by
\begin{equation*}
    A_c:=\sum_{\mathbf{w} \in \mathcal{W}(N_0)} c(\mathbf{w}) A_{\mathbf{w}}, \qquad a_c:=\sum_{\mathbf{w} \in \mathcal{W}(N_0)} c(\mathbf{w}) a_{\mathbf{w}} .
\end{equation*}
It follows from \cite[Lemma 4.4]{dyatlov2018semiclassical} and Proposition \ref{quantizationoflongwordsymbol} that $A_c$ is a pseudodifferential operator modulo a small remainder:

\begin{lemma}\label{pseudoofweighted}
Assume that $\sup |c| \leq 1$. Then,
\begin{equation*}
    a_c \in S_{L_s,1/2,1/4}^{\mathrm{comp}}(U), \qquad A_c=\mathrm{Op}_h^{\rho, L_s}(a_c)+\mathcal{O}(h^{1 / 2})_{L^2(M;F^\rho) \rightarrow L^2(M;F^\rho)} .
\end{equation*}
The $S_{L_s, 1 / 2,1 / 4}^{\mathrm{comp}}$ seminorms of $a_c$ and the constant in $\mathcal{O}(h^{1 / 2})$ are independent of $c$.
\end{lemma}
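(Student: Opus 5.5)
The plan is to follow \cite[Lemma 4.4]{dyatlov2018semiclassical}, checking only that every constant is independent of $(\mathcal{H},\rho)$ and $c$. The argument has two parts: a symbol estimate, which is purely classical, and an operator estimate, which uses Lemma \ref{quantizationoflongwordsymbol} together with a refinement that avoids summing $2^{N_0}$ error terms naively.

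\emph{Symbol part.} I would write $a_c$ as a product instead of a sum. Set $b_1 = a_1$ and $b_2 = a_2$. By \eqref{defoflongword}, $a_c = \sum_{\mathbf{w}} c(\mathbf{w}) \prod_j (a_{w_j}\circ\varphi_j)$. Since $0\le a_i\le 1$ and $a_1+a_2 = 1-\psi_0(|\xi|_g^2)$, which is invariant under the flow, we have $\sum_{\mathbf{w}} a_{\mathbf{w}} \le 1$ pointwise, and hence $|a_c|\le 1$. For the derivatives I would use \eqref{derivativeoffunction}. A derivative $Y$ tangent to $L_s$ does not grow along the forward flow. A general derivative $Z$ acting on $a_{w_j}\circ\varphi_j$ produces a factor $e^{j}\le e^{N_0}\approx h^{-\mu/4}$. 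Distributing $k$ such derivatives by the Leibniz rule over the $N_0$ factors costs at most $N_0^k$ combinatorial terms. Summing over words with weights $|c|\le 1$, I would dominate the total by the corresponding derivative of $\sum_{\mathbf{w}}\prod_j|\cdot|$. This quantity factorizes as $\prod_j(\text{bounded sum})$, where each differentiated factor is controlled by $\sup_i |Z a_i\circ\varphi_j|$ and each undifferentiated factor by $a_1+a_2\le 1$. The result is a bound $C_k(N_0e^{N_0})^k\le C h^{-k/2}$. Tangential derivatives contribute at most $h^{-m/4}$, with room to spare. This gives $a_c\in S^{\mathrm{comp}}_{L_s,1/2,1/4}(U)$ with seminorms independent of $c$, and no operator theory enters this step.

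\emph{Operator part.} A naive sum of the errors in \eqref{refinedwordquant} fails, because $2^{N_0}h^{3/4}$ is not small. Instead I would decompose $A_c$ multiplicatively. I would split $N_0$ steps into blocks of bounded length, or rather write $A_c$ through the recursion in $n$: for a function $c$ on $\mathcal{W}(n+1)$, write $A_c = A_1(n)A_{c_1}+A_2(n)A_{c_2}$, where $c_i(\mathbf{w}') = c(\mathbf{w}' i)$. Each step then contributes an error of size $\mathcal{O}(h^{1-\mu/4-})$, coming from Proposition \ref{uniformegorovtheorem} and Theorem \ref{longtimeegorovtheorem} for $A_i(n)$, and from composition (3) of Proposition \ref{propofanisoquantization}. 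The point is to bound the error of the sum over words by the sharp operator bound \eqref{sharpnormaniso} applied to the intermediate symbols $a_{c'}$, whose sup norm is at most $1$, and not by a count of words. This requires an induction hypothesis of the form $\|A_{c'}-\Oph^{\rho,L_s}(a_{c'})\|\le \epsilon_n$ for every $c'$ with $\sup|c'|\le1$, with $\epsilon_{n+1}\le\epsilon_n+Ch^{1/2+}$. After $N_0=\mathcal{O}(|\log h|)$ steps this gives $\mathcal{O}(h^{1/2})$. Uniformity in $(\mathcal{H},\rho)$ comes from Propositions \ref{propofanisoquantization} and \ref{sharpgardingforanisoquant} and \eqref{sharpnormaniso}, all of which have uniform constants.

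The main obstacle is exactly this induction. One has to check that the composition $A_i(n)\,\Oph^{\rho,L_s}(a_{c_i})$ is quantized with an error independent of $c_i$, using only seminorm bounds on $a_{c_i}$ that are independent of $c$. These are exactly what the symbol part supplies, applied at every intermediate length $n\le N_0$. The error from each step must then be accumulated linearly in $n$, not geometrically. To keep the error terms linear, I would first try to follow Dyatlov--Jin's proof verbatim, which handles this through the uniform symbol bounds.
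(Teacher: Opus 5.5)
Your symbol estimate is fine. The factorization $\sum_{\mathbf{w}}\prod_j g_{w_j,j}=\prod_j(g_{1,j}+g_{2,j})$, combined with $a_1+a_2\le 1$, even gives $a_c\in S^{\mathrm{comp}}_{L_s,\mu/4}(U)$ uniformly in $c$, which is more than the statement asks for; the paper simply cites \cite[Lemma 4.4]{dyatlov2018semiclassical} here.

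The gap is in the operator part, and it starts from a miscalculation: the naive summation does not fail. Since $N_0\le\frac{\mu}{4}|\log h|+1$, $\mu<1$ and $\log 2<1$, we have
\[
\#\mathcal{W}(N_0)=2^{N_0}\le 2\,h^{-\mu\log 2/4}\le 2\,h^{-1/4}.
\]
The errors in \eqref{refinedwordquant} are $\mathcal{O}(h^{3/4})$, uniformly in $\mathbf{w}$ and $(\mathcal{H},\rho)$. Summing them with weights $|c(\mathbf{w})|\le 1$ therefore gives $\mathcal{O}(h^{1/2})$. Linearity of $\Oph^{\rho,L_s}$ then recombines $\sum_{\mathbf{w}}c(\mathbf{w})\Oph^{\rho,L_s}(a_{\mathbf{w}})=\Oph^{\rho,L_s}(a_c)$, and the constants are independent of $c$ and $(\mathcal{H},\rho)$ because those of Lemma \ref{quantizationoflongwordsymbol} are. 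This two-line argument is exactly the paper's proof.

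The induction you propose in its place does not deliver the linear accumulation you claim. Your hypothesis is $\|A_{c'}-\Oph^{\rho,L_s}(a_{c'})\|\le\epsilon_n$ for all $c'$ with $\sup|c'|\le 1$. The recursion $A_c=A_1(n)A_{c_1}+A_2(n)A_{c_2}$ then produces the propagated error $A_1(n)E_1+A_2(n)E_2$, where $E_i=A_{c_i}-\Oph^{\rho,L_s}(a_{c_i})$. The only available bound on it is $(\|A_1(n)\|+\|A_2(n)\|)\,\epsilon_n\approx 2\epsilon_n$. The sharp bound \eqref{sharpnormaniso} controls $\Oph^{\rho,L_s}(a_{c_i})$, not the error operators $E_i$, so nothing converts $2\epsilon_n$ into $\epsilon_n$. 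The recursion is therefore geometric, $\epsilon_{N_0}\lesssim 2^{N_0}\times(\text{per-step error})$. That is precisely the word count you were trying to avoid, so the induction buys nothing over direct summation.

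As written, the step $\epsilon_{n+1}\le\epsilon_n+Ch^{1/2+}$ is unsupported, and you acknowledge you do not know how to establish it. The missing observation is simply that $2^{N_0}=\mathcal{O}(h^{-1/4})$.
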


\begin{proof}
We first notice that $\# \mathcal{W}(N_0)=2^{N_0}=\mathcal{O}(h^{-1/4})$. By \cite[Lemma 4.4]{dyatlov2018semiclassical}, $a_c \in S_{L_s,1/2,1/4}^{\mathrm{comp}}(U) $. By Lemma \ref{quantizationoflongwordsymbol}, we have
\begin{equation*}
\begin{aligned}
    A_c&=\sum_{\mathbf{w} \in \mathcal{W}(N_0)} c(\mathbf{w})\left(\mathrm{Op}_h^{\rho,L_s}(a_{\mathbf{w}})+\mathcal{O}(h^{3 / 4})_{L^2(M;F^\rho) \rightarrow L^2(M;F^\rho)}\right) \\
    &=\mathrm{Op}_h^{L_s}(a_c)+\mathcal{O}(h^{1 / 2})_{L^2(M;F^\rho) \rightarrow L^2(M;F^\rho)}.
\end{aligned}
\end{equation*}
The proof is complete.
\end{proof}

By the same argument in \cite[Lemma 4.5]{dyatlov2018semiclassical}, Proposition \ref{sharpgardingforanisoquant} and \ref{pseudoofweighted}, we have the following almost monotonicity property.

Assume $c, d: \mathcal{W}(N_0) \rightarrow \mathbb{R}$ and $|c(\mathbf{w})| \leq d(\mathbf{w}) \leq 1$ for all $\mathbf{w} \in \mathcal{W}(N_0)$. Then, for all $u \in L^2(M;F^{\rho})$, 
\begin{equation}\label{monotonicity}
 \|A_c u \|_{L^2(M;F^\rho)} \leq \|A_d u \|_{L^2(M;F^\rho)}+C h^{1 / 8}\|u\|_{L^2(M;F^\rho)} ,
\end{equation}
where the constant $C$ is independent of $u,c, d$ and $(\mathcal{H},\rho,h)$.

Since $0 \leq \alpha \mathbf{1}_{\mathcal{Z}} \leq F \leq 1$, we combine the elliptic estimate \eqref{ellipticestimate}, the propagation estimate \eqref{propagationestimate}, almost monotonicity \eqref{monotonicity}, and follow the proof of \cite[Lemma 4.6]{dyatlov2018semiclassical} to obtain:

\begin{lemma}\label{estimateofcontrolpart}
For any $u \in H^2(M;F^{\rho})$, 
\begin{equation}\label{estimateofonecontrolledpart}
\|A_{\mathcal{Z}} u\|_{L^2(M;F^\rho)} \leq \frac{C}{\alpha}\|\mathrm{Op}_h^{\rho}(a) u\|_{L^2(M;F^\rho)}+\frac{C|\log h|}{\alpha h}\|(\mathcal{P}-I) u\|_{L^2(M;F^\rho)}+ Ch^{1/8} \|u\|_{L^2(M;F^\rho)},
\end{equation}
where the constant $C>0$ is independent of $u,\alpha,\mathcal{P}$ and $(\mathcal{H},\rho,h)$.
\end{lemma}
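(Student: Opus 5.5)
The plan is to follow \cite[Lemma 4.6]{dyatlov2018semiclassical}. The core idea is to bound $A_{\mathcal{Z}}$ by the weighted sum $A_F$, and then write $A_F$ as an average over time of conjugates of $A_1$.

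\emph{Step 1: rewrite $A_F$ as a time average.} Unfolding the definition \eqref{defofdensityoffunc}, we have $F(\mathbf{w})=\frac{1}{N_0}\sum_{j=0}^{N_0-1}\mathbf{1}_{\{w_j=1\}}$. Summing over the other letters $w_k$, $k\neq j$, each inner sum collapses to $A_1+A_2$. Since $A_1+A_2$ is a function of $\mathcal{P}$, it commutes with $U(t)$, so
\begin{equation*}
A_F=\frac{1}{N_0}\sum_{j=0}^{N_0-1}(A_1+A_2)^{N_0-1-j}\,A_1(j)\,(A_1+A_2)^{j}.
\end{equation*}

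\emph{Step 2: remove the factors $(A_1+A_2)^k$.} By Lemma \ref{awayfromcosphere}, $(A_1+A_2)^{j}u=u+O(\|(\mathcal{P}-I)u\|)$. For the left factor, I use the sharp norm bound \eqref{sharpnorm} together with $0\le 1-\psi_0\le 1$: this gives $\|(A_1+A_2)^k\|\le 1+Ch$, with $C$ uniform in $(\mathcal{H},\rho)$. Since $k\le N_0\lesssim|\log h|$, these powers stay uniformly bounded. Alternatively, one can use the functional calculus directly, which gives $\|(I-\psi_0(\mathcal{P}))^k\|\le1$ exactly. Finally, $\|A_1(j)\|$ is uniformly bounded by Proposition \ref{uniformboundedtheorem} and unitarity of $U(t)$. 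Together these give
\begin{equation*}
\|A_Fu\|\le \frac{C}{N_0}\sum_{j}\|A_1(j)u\|+C\|(\mathcal{P}-I)u\|.
\end{equation*}

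\emph{Step 3: propagation and the elliptic estimate.} For each $j\le N_0\lesssim|\log h|$, the propagation estimate \eqref{propagationestimate} with $A=A_1$ (uniformly bounded) gives $\|A_1(j)u\|\le\|A_1u\|+\frac{C|\log h|}{h}\|(\mathcal{P}-I)u\|$. The elliptic estimate \eqref{ellipticestimate} then bounds $\|A_1u\|$ by $C\|\Oph^\rho(a)u\|+C\|(\mathcal{P}-I)u\|+Ch\|u\|$. Combining,
\begin{equation*}
\|A_Fu\|\le C\|\Oph^\rho(a)u\|+\frac{C|\log h|}{h}\|(\mathcal{P}-I)u\|+Ch\|u\|.
\end{equation*}

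\emph{Step 4: monotonicity.} Apply \eqref{monotonicity} with $c=\alpha\mathbf{1}_{\mathcal{Z}}$ and $d=F$; this is allowed since $0\le\alpha\mathbf{1}_{\mathcal{Z}}\le F\le 1$. It yields $\alpha\|A_{\mathcal{Z}}u\|\le\|A_Fu\|+Ch^{1/8}\|u\|$. Dividing by $\alpha$ produces a remainder $Ch^{1/8}\alpha^{-1}\|u\|$, not the claimed $Ch^{1/8}\|u\|$. To fix this, I would apply \eqref{monotonicity} instead to the normalized pair $c=\mathbf{1}_{\mathcal{Z}}$, $d=\min(1,F/\alpha)$. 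The operator $A_d$ can then be compared with $\alpha^{-1}A_F$ through one more use of monotonicity, or absorbed using Lemma \ref{pseudoofweighted}. Failing that, one can accept the $\alpha$-dependence, since $\alpha$ is fixed later by \eqref{choicealpha}.

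\emph{Main obstacle.} Every constant must be uniform in $(\mathcal{H},\rho)$. This comes from the uniform calculus of Section \ref{sec:quantization on flat bundles}, in particular Proposition \ref{sharpgardingforanisoquant} underlying \eqref{monotonicity}. The genuinely delicate points are the $\alpha$-bookkeeping in Step 4 and keeping the uniform bound on the powers $(A_1+A_2)^k$ in Step 2.
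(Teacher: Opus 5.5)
Your Steps 1--4 are exactly the argument the paper invokes when it says to follow \cite[Lemma 4.6]{dyatlov2018semiclassical}. That argument combines the uniform estimates \eqref{ellipticestimate}, \eqref{propagationestimate} and \eqref{monotonicity} with $0\le\alpha\mathbf{1}_{\mathcal{Z}}\le F\le 1$, so your proposal is correct and follows the paper's route.

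The only flaw is your attempted repair in Step 4. Comparing $A_{\min(1,F/\alpha)}$ with $\alpha^{-1}A_F$ through \eqref{monotonicity} forces you to apply it to $\min(\alpha,F)\le F$. That reintroduces the same $\alpha^{-1}h^{1/8}$ loss, and sharp G\aa rding via Lemma \ref{pseudoofweighted} gives the same. So your fallback, a remainder $C\alpha^{-1}h^{1/8}\|u\|$, is what this argument actually yields, including the paper's own version of it. This is harmless, because $\alpha$ is fixed by \eqref{choicealpha} before $h$ is taken small.
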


\noindent \textit{Proof of Proposition \ref{controlwordpart}.} Recall the partition \eqref{partitionofregions}, we have:
\begin{equation*}
    \mathcal{Y}=\bigsqcup_{\ell=1}^8 \mathcal{Y}_{\ell}, \qquad \mathcal{Y}_{\ell}:=\left\{\mathbf{w}^{(1)} \cdots \mathbf{w}^{(8)} : \mathbf{w}^{(\ell)} \in \mathcal{Z}, \  \mathbf{w}^{(\ell+1)}, \cdots, \mathbf{w}^{(8)} \in \mathcal{W}(N_0) \setminus \mathcal{Z}\right\}.
\end{equation*}
Then, $A_{\mathcal{Y}}=\sum_{\ell=1}^8 A_{\mathcal{Y}_{\ell}}$. Let $\mathcal{Q}:=\mathcal{W}\left(N_0\right) \setminus \mathcal{Z}$, we have the following factorization:
\begin{equation*}
    A_{\mathcal{Y}_{\ell}}=A_{\mathcal{Q}}(7 N_0) \cdots A_{\mathcal{Q}}(\ell N_0) A_{\mathcal{Z}}((\ell-1) N_0)(A_1+A_2)^{(\ell-1) N_0}.
\end{equation*}
By Proposition \ref{propofanisoquantization}, we have 
\begin{equation*}
    \|A_{\mathcal{Q}}\|_{L^2(M;F^\rho) \rightarrow L^2(M;F^\rho)} + \|A_{\mathcal{Z}}\|_{L^2(M;F^\rho) \rightarrow L^2(M;F^\rho)} \leq C .
\end{equation*}
By estimating $(A_1+A_2)^{(\ell-1) N_0} u-u$ using Lemma \ref{awayfromcosphere}, we obtain
\begin{equation}\label{estimateofy}
\|A_{\mathcal{Y}}u\|_{L^2(M;F^\rho)} \leq C \sum_{\ell=1}^8 \|A_{\mathcal{Z}}((\ell-1) N_0) u\|_{L^2(M;F^\rho)}+C\|(\mathcal{P} -I) u\|_{L^2(M;F^\rho)}.
\end{equation}
We use the propagation estimate \eqref{propagationestimate} to obtain:
\begin{equation}\label{estimateofz}
\|A_{\mathcal{Z}}((\ell-1) N_0) u \|_{L^2(M;F^\rho)} \leq \|A_{\mathcal{Z}} u \|_{L^2(M;F^\rho)}+\frac{C |\log h|}{h}\|(\mathcal{P}-I) u\|_{L^2(M;F^\rho)}.
\end{equation}
By using Lemma \ref{estimateofcontrolpart} to bound $\|A_{\mathcal{Z}} u\|_{L^2(M;F^\rho)}$ and by combining \eqref{estimateofy} with \eqref{estimateofz}, we obtain Proposition \ref{controlwordpart}. \hfill $\square$

\subsection{The uncontrolled region: a fractal uncertainty principle} 

In this section, we prove Proposition \ref{uncontrolwordpart}. Take an arbitrary word $\mathbf{w} \in \mathcal{W}(2 N_1)$ and write it as a concatenation of two words in $\mathcal{W}(N_1)$: $
\mathbf{w}=\mathbf{w}_{+} \mathbf{w}_{-}$, where $\mathbf{w}_{ \pm} \in \mathcal{W}(N_1)$. Define the operators $\mathcal{A}_{+}:=A_{\mathbf{w}_{+}}(-N_1)$ and $ \mathcal{A}_{-}:=A_{\mathbf{w}_{-}}$. Then,
\begin{equation*}
    A_{\mathbf{w}}=U(-N_1) \mathcal{A}_{-} \mathcal{A}_{+} U(N_1) .
\end{equation*}
We relabel the letters in the words $\mathbf{w}_{ \pm}$ as
$\mathbf{w}_{+}=w_{N_1}^{+} \cdots w_1^{+}$ and $\mathbf{w}_{-}=w_0^{-} \cdots w_{N_1-1}^{-}$, and we define the symbols $a_{\pm}$ by
\begin{equation*}
    a_{+}=\prod_{j=1}^{N_1}\left(a_{w_j^{+}} \circ \varphi_{-j}\right), \qquad a_{-}=\prod_{j=0}^{N_1-1}\left(a_{w_j^{-}} \circ \varphi_j\right) .
\end{equation*}
Recall from \eqref{defoflongword} that
\begin{equation*}
    \begin{aligned}
        & \mathcal{A}_{-}=A_{w_{N_1-1}^{-}}(N_1-1) A_{w_{N_1-2}^{-}}(N_1-2) \cdots A_{w_1^{-}}(1) A_{w_0^{-}}(0) ,\\
& \mathcal{A}_{+}=A_{w_1^{+}}(-1) A_{w_2^{+}}(-2) \cdots A_{w_{N_1-1}^{+}}(1-N_1) A_{w_{N_1}^{+}}(-N_1).
\end{aligned}
\end{equation*}

\begin{lemma}
Similar with Lemma \ref{quantizationoflongwordsymbol}, the symbols $a_{\pm}$ and the operators $\mathcal{A}_{\pm}$ satisfy
\begin{equation}
\begin{aligned}
    &a_{+} \in S_{L_u, \mu}^{\mathrm{comp}}(U), \qquad \qquad a_{-} \in S_{L_s, \mu}^{\mathrm{comp}}(U) , \\
&\mathcal{A}_{+}=\mathrm{Op}_h^{\rho, L_u} (a_{+} )+\mathcal{O}\big(h^{(1-\mu)-}\big)_{L^2(M;F^\rho) \rightarrow L^2(M;F^\rho)}, \\
&\mathcal{A}_{-}=\mathrm{Op}_h^{\rho, L_s}(a_{-})+\mathcal{O}\big(h^{(1-\mu)-}\big)_{L^2(M;F^\rho) \rightarrow L^2(M;F^\rho)} .
\end{aligned}
\end{equation}
\end{lemma}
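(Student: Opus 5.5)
The plan is to follow the proof of Lemma \ref{quantizationoflongwordsymbol}, i.e.\ \cite[Lemma 4.3/(3.12)]{dyatlov2018semiclassical}, treating the two signs separately. The $\mathcal{A}_-$ statement is literally \eqref{longerwordquant} applied to the word $\mathbf{w}_-\in\mathcal{W}(N_1)$: $a_-$ coincides with $a_{\mathbf{w}_-}$ and $\mathcal{A}_-$ with $A_{\mathbf{w}_-}$. So we only need to check $\mathcal{A}_+$, which is the time-reversed version with $L_s$ replaced by $L_u$.

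First, the symbol class. Each $a_{w}$, $w\in\{1,2\}$, is $h$-independent and lies in $C_0^\infty(U)$ by \eqref{microlocalpartition}. By \eqref{propogationofsymbol2}, $a_{w_j^+}\circ\varphi_{-j}\in S^{\mathrm{comp}}_{L_u,\mu}(U)$ uniformly for $1\le j\le N_1\le \mu|\log h|+1$. (The extra $+1$ in time costs only a harmless constant factor via \eqref{derivativeoffunction}.) Moreover $\sup|a_{w}|\le1$. Lemma \ref{multiofsymbol} then gives $a_+\in S^{\mathrm{comp}}_{L_u,\mu+\varepsilon,\varepsilon}(U)$ for every $\varepsilon>0$, i.e.\ $a_+\in S^{\mathrm{comp}}_{L_u,\mu}(U)$. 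The same argument using \eqref{propogationofsymbol} handles $a_-$.

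Second, the operator statement. For each factor, the second line of Theorem \ref{longtimeegorovtheorem} applied with $t=j$, together with property (2) of Proposition \ref{propofanisoquantization} to pass between $\Oph^{\rho}$ and $\Oph^{\rho,L_u}$ at $t=0$, yields
\begin{equation*}
A_{w_j^+}(-j)=\Oph^{\rho,L_u}\big(a_{w_j^+}\circ\varphi_{-j}\big)+\mathcal{O}\big(h^{1-\mu}|\log h|\big)_{L^2(M;F^\rho)\to L^2(M;F^\rho)}.
\end{equation*}
This holds uniformly in $j$ and $(\mathcal{H},\rho)$. Here we also use that $A_{w}=\Oph^{\rho}(a_w)+\mathcal{O}(h)$ and that $U(t)$ is unitary, and $h^{1-\mu}|\log h|=\mathcal{O}(h^{(1-\mu)-})$. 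Since $\mathcal{A}_+$ is the ordered product of these $N_1=\mathcal{O}(|\log h|)$ factors, Proposition \ref{multiplicationofoperators} (with $L=L_u$, $\mu'=\varepsilon$ small, and letting $\varepsilon\to0$) gives $\mathcal{A}_+=\Oph^{\rho,L_u}(a_+)+\mathcal{O}(h^{(1-\mu)-})$.

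The main point to verify is that all constants stay uniform in $(\mathcal{H},\rho)$. This holds because Theorem \ref{longtimeegorovtheorem}, the sharp norm bound \eqref{sharpnormaniso}, and Proposition \ref{multiplicationofoperators} are all stated with constants independent of $(\mathcal{H},\rho,h)$. A smaller technicality is the range of times: the Egorov statement requires $t\le\mu|\log h|$, while $N_1$ may exceed this by $1$. We handle this by applying the theorem with a slightly larger $\mu'<1$, which only shifts the exponent by an arbitrarily small amount and is absorbed in the notation $(1-\mu)-$.
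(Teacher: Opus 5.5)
Your proposal is correct and follows the route the paper intends: $\mathcal{A}_-$ is literally \eqref{longerwordquant}, and for $\mathcal{A}_+$ you apply the long-time Egorov Theorem~\ref{longtimeegorovtheorem} factor by factor, then Lemma~\ref{multiofsymbol} for the symbol class and Proposition~\ref{multiplicationofoperators} for the product, with all constants uniform in $(\mathcal{H},\rho)$. One small slip: $N_1=4\lceil\frac{\mu}{4}|\log h|\rceil$ can exceed $\mu|\log h|$ by up to $4$ rather than $1$, but your bounded-time-excess argument handles this without change.
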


Therefore, it suffices to prove the estimate
\begin{equation}
\left\|\mathrm{Op}_h^{\rho, L_s}(a_-) \mathrm{Op}_h^{\rho, L_u}(a_+)\right\|_{L^2(M;F^\rho) \rightarrow L^2(M;F^\rho)} \leq C h^\beta.
\end{equation}
We notice that $\mathrm{supp}\,a_{\pm}\subset \{1/4<|\xi|<4\}$. Applying the microlocal partition of unity on $\overline{U}=\{1/4\leq |\xi|\leq 4\}$ and the similar argument in Proposition \ref{fixtimeegorov}, we only need to show that, for any open subset $V\subset U$ satisfying the condition in \cite[(5.10)]{dyatlov2018semiclassical} and for any $q \in C_{0}^{\infty}(V)$, 
\begin{equation*}
    \left\|\mathrm{Op}_h^{\rho, L_s}(a_-) \Oph^{\rho}(q)\mathrm{Op}_h^{\rho, L_u}(a_+)\right\|_{L^2(M;F^\rho) \rightarrow L^2(M;F^\rho)} \leq C h^\beta.
\end{equation*}
We lift $V \subset U\subset U^{\prime}=T^*M\setminus \{0\}$ to a subset of $T^* \mathbb{H}^2 \setminus \{0\}$ and use $\kappa^{ \pm}$ (see \cite{dyatlovzahl2016spectral}) to define the symplectomorphisms onto their images
\begin{equation*}
    \kappa_0^{ \pm}: V \rightarrow T^*(\mathbb{R}^{+} \times \mathbb{S}^1).
\end{equation*}
Note that we can ensure that $\kappa_0^{ \pm}(V)$ is contained within a compact subset of $T^*(\mathbb{R}^{+} \times \mathbb{S}^1)$, which depends only on $M$.

We define the operators $\mathcal{B}_{ \pm} \in I^{\mathrm{comp}}_h(\kappa_0^{ \pm},\rho,\rho_{\mathrm{triv}})$ and $\widetilde{\mathcal{B}}_{ \pm} \in I^{\mathrm{comp}}_h(\kappa_0^{ \pm},\rho_{\mathrm{triv}},\rho)$ that quantize $\kappa_{0}^{\pm}$ near $\mathrm{supp}\,q$ such that 
\begin{equation}
\begin{aligned}
&\widetilde{\mathcal{B}}_{ \pm}\mathcal{B}_{ \pm}=I+\mathcal{O}(h^{\infty})_{L^2(M;F^{\rho})\to L^2(M;F^{\rho})} \quad \text{microlocally near} \ \mathrm{supp}\,q ,\\
&\mathcal{B}_{ \pm}\widetilde{\mathcal{B}}_{ \pm}=I+\mathcal{O}(h^{\infty})_{L^2(\mathbb{R}^{+} \times \mathbb{S}^1;\mathcal{H})\to L^2(\mathbb{R}^{+} \times \mathbb{S}^1;\mathcal{H})} \quad \text{microlocally near} \ \kappa_0^{ \pm}(\mathrm{supp}\,q).
\end{aligned}
\end{equation}

Consider the following operators on $L^2(\mathbb{R}^{+} \times \mathbb{S}^1;\mathcal{H})$:
\begin{equation}
\begin{aligned}
A_{-}:=\mathcal{B}_{-} \mathrm{Op}_h^{\rho, L_s}(a_-)\widetilde{\mathcal{B}}_{-}, \quad A_{+}:=\mathcal{B}_{+}Q\,\mathrm{Op}_h^{\rho, L_u}(a_+)\widetilde{\mathcal{B}}_{+}, \quad \mathcal{B} :=\mathcal{B}_{-}\widetilde{\mathcal{B}}_{+},
\end{aligned}
\end{equation}
where $Q=\Oph^{\rho}(q)$. Then we have
\begin{equation*}
    \mathrm{Op}_h^{\rho,L_s}(a_-) Q \,\mathrm{Op}_h^{\rho, L_u}(a_+)=\widetilde{\mathcal{B}}_{-} A_{-} \mathcal{B} A_{+} \mathcal{B}_{+}+\mathcal{O}(h^{\infty})_{L^2(M;F^{\rho})\rightarrow L^2(M;F^{\rho})}.
\end{equation*}
By Proposition \ref{mappingoffio}, there exist $\widetilde{a}_{ \pm} \in S_{L_0, \rho}^{\mathrm{comp}}\left(T^*(\mathbb{R}^{+} \times \mathbb{S}^1)\right)$, where $\widetilde{a}_{ \pm}$ are the same as \cite[(5.11)]{dyatlov2018semiclassical}, such that
\begin{equation*}
    A_{ \pm}=\mathrm{Op}_h(\widetilde{a}_{ \pm})\otimes \mathrm{Id}_{\mathcal{H}}+\mathcal{O}(h^{\infty})_{L^2(\mathbb{R}^{+} \times \mathbb{S}^1;\mathcal{H})\to L^2(\mathbb{R}^{+} \times \mathbb{S}^1;\mathcal{H})}, \quad \operatorname{supp} \widetilde{a}_{ \pm} \subset \kappa_0^{ \pm}(V \cap \operatorname{supp} a_{ \pm}).
\end{equation*}
Therefore, it suffices to prove that
\begin{equation*}
    \big\|\big(\mathrm{Op}_h(\widetilde{a}_{-})\otimes \mathrm{Id}_{\mathcal{H}} \big) \mathcal{B} \big(\mathrm{Op}_h(\widetilde{a}_{+})\otimes \mathrm{Id}_{\mathcal{H}}\big)\big\|_{L^2(\mathbb{R}^{+} \times \mathbb{S}^1;\mathcal{H}) \rightarrow L^2(\mathbb{R}^{+} \times \mathbb{S}^1;\mathcal{H})} \leq C h^\beta.
\end{equation*}
Since $\mathcal{B}\in  I^{\mathrm{comp}}_h\left(\kappa^{-}_{0} \circ(\kappa^{+}_{0})^{-1},\mathcal{H}\right)$, by the Definition \ref{defoffioonbundles}, there exist
\begin{equation*}
    B\in I^{\mathrm{comp}}_h\left(\kappa^{-}_{0} \circ(\kappa^{+}_{0})^{-1} \right) \quad \text{and} \quad \phi \in \mathrm{U}(\mathcal{H})
\end{equation*}
such that $\mathcal{B}=B\otimes \phi$. By \eqref{squaretensorextension}, we only need to show 
\begin{equation*}
    \|\mathrm{Op}_h(\widetilde{a}_{-}) B\mathrm{Op}_h(\widetilde{a}_{+})\|_{L^2(\mathbb{R}^{+} \times \mathbb{S}^1) \rightarrow L^2(\mathbb{R}^{+} \times \mathbb{S}^1)} \leq C h^\beta,
\end{equation*}
which is proved in \cite[(5.12)]{dyatlov2018semiclassical} by a fractal uncertainty principle; see also Bourgain and Dyatlov \cite{2018bourgaindyatlov}. Hence, the proof of Proposition \ref{uncontrolwordpart} is complete.

\subsection{Applications}\label{subsec:application}

In this section, we present several applications of Theorem \ref{microlocalcontrolonhilbertbundle}. 
First, we derive a uniform semiclassical control estimate on Riemannian covering spaces, \emph{i.e.}, Theorem \ref{highfrequencycontrol}. 
We then explain how Theorem \ref{microlocalcontrolonhilbertbundle} yields uniform delocalization consequences in spectral theory and quantum chaos, formulated in terms of semiclassical defect measures of eigensections on flat Hilbert bundles. 
Finally, we briefly explain how the same idea extends to the mixed semiclassical/Berezin–Toeplitz quantization framework, leading to the full support of the corresponding augmented semiclassical measures.

\subsubsection{Semiclassical control on covering spaces}

By Theorem \ref{microlocalcontrolonhilbertbundle}, we have the following corollary.

\begin{theorem}\label{controlonflathilbertbundle}
Let $M$ be a compact hyperbolic surface and $0\not \equiv a \in C^{\infty}(M)$. There exist constants $C=C(M,a)>0$ and $h_{0}  = h_0(M,a)>0$ such that for any $(\mathcal{H},\rho)\in \mathcal{C}$, any $0<h<h_{0}$ and any $u \in H^2(M;F^{\rho})$,
\begin{equation}\label{controlonflathilbertbundleq}
\|u\|_{L^2(M;F^{\rho})} \leq C\left( \|au\|_{L^2(M;F^{\rho})}+\frac{|\log h|}{h}\left\|(-h^2 \Delta^{\rho}-I ) u\right\|_{L^2(M;F^{\rho})} \right).
\end{equation}
\end{theorem}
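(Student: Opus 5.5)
We will deduce this from Theorem \ref{microlocalcontrolonhilbertbundle}, applied with the symbol $\tilde a(x,\xi):=a(x)\chi(|\xi|_g^2)$. Here $\chi\in C_0^\infty((1/4,4);[0,1])$ is fixed with $\chi\equiv 1$ near $1$. Then $\tilde a\in C_0^\infty(T^*M)$, and $\tilde a|_{S^*M}=a\circ\pi_0\not\equiv 0$, so the hypothesis of Theorem \ref{microlocalcontrolonhilbertbundle} holds. It gives constants $C,h_0$ depending only on $M$ and $\tilde a$, hence only on $(M,a)$, such that
\begin{equation*}
\|u\|_{L^2(M;F^{\rho})}\le C\Big(\|\Oph^{\rho}(\tilde a)u\|_{L^2(M;F^{\rho})}+\frac{|\log h|}{h}\|(-h^2\Delta^\rho-I)u\|_{L^2(M;F^{\rho})}\Big).
\end{equation*}
It then remains to bound $\|\Oph^{\rho}(\tilde a)u\|$ by $\|au\|$ plus admissible errors, uniformly in $(\mathcal{H},\rho)$.

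For this step I factor the operator. Set $A:=\chi(-h^2\Delta^\rho)$. By Proposition \ref{functionalcalculus}, $A\in\Psi^{\mathrm{comp},sc}_h(M;F^\rho)$ with $\sigma^\rho_h(A)=\chi(|\xi|_g^2)$. Multiplication by $a$ equals $\Oph^\rho(a)$ modulo $h\Psi^{-\infty}$ (in fact exactly, after choosing the charts suitably in \eqref{formquan}). Proposition \ref{propofquantization}(d) then gives
\begin{equation*}
\Oph^\rho(\tilde a)=A\,a+hR,
\end{equation*}
where $R\in\Psi^{\mathrm{comp},sc}_h$. By Proposition \ref{uniformboundedtheorem}, $\|R\|_{L^2\to L^2}\le C$ uniformly in $(\mathcal{H},\rho,h)$. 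The spectral theorem gives $\|A\|_{L^2\to L^2}\le\sup|\chi|\le 1$. Combining these,
\begin{equation*}
\|\Oph^\rho(\tilde a)u\|\le\|au\|+Ch\|u\|,
\end{equation*}
with $C$ independent of $(\mathcal{H},\rho,h)$.

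Inserting this into the first estimate gives $\|u\|\le C\|au\|+C'h\|u\|+C\frac{|\log h|}{h}\|(-h^2\Delta^\rho-I)u\|$. Shrinking $h_0$ so that $C'h_0\le 1/2$, we absorb $C'h\|u\|$ into the left side, which yields \eqref{controlonflathilbertbundleq}.

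The only delicate point is the uniformity of the remainder $hR$ with respect to $(\mathcal{H},\rho)$. It is ensured because every operator involved is locally fiberwise diagonal, of the form $\Oph(\cdot)\otimes\mathrm{Id}_{\mathcal{H}}$ in cutoff normal charts, so all norms reduce to scalar ones by \eqref{squaretensorextension}. Equivalently, one could avoid the composition and use \eqref{sharpnorm} directly. This gives the same uniform $\mathcal{O}(h)$ bound on the difference $\Oph^\rho(\tilde a)-A\,a$, which has vanishing principal symbol.
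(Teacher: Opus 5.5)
Your argument is correct. Its overall structure matches the paper's: apply Theorem \ref{microlocalcontrolonhilbertbundle} to $a(x)$ times a cutoff in $|\xi|_g^2$ near $1$, compare with multiplication by $a$, and absorb an $\mathcal{O}(h)\|u\|$ error. The difference lies in the comparison step.

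The paper uses the symbol cutoff $1-\psi_0(|\xi|_g^2)$ and the fact that $\Oph^\rho(a)$ is exactly multiplication by $a$. This lets it write $\Oph^\rho\bigl(a(1-\psi_0(|\xi|_g^2))\bigr)=a-\Oph^\rho\bigl(a\psi_0(|\xi|_g^2)\bigr)$. Since $a\psi_0(|\xi|_g^2)$ vanishes near $S^*M$, the paper then divides by the symbol of $-h^2\Delta^\rho-I$. With $b=a\psi_0(|\xi|_g^2)(|\xi|_g^2-1)^{-1}\in S^{-2}(T^*M)$ this gives $\|\Oph^\rho(a\psi_0(|\xi|_g^2))u\|\le C(\|(-h^2\Delta^\rho-I)u\|+h\|u\|)$.

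You instead factor $\Oph^\rho(\tilde a)=\chi(-h^2\Delta^\rho)\,a+hR$ and use the spectral-theorem bound $\|\chi(-h^2\Delta^\rho)\|\le 1$. Your route relies on the functional calculus of Proposition \ref{functionalcalculus}, but it needs no elliptic division and produces no extra $\|(-h^2\Delta^\rho-I)u\|$ term. The paper's route uses only the composition calculus and Proposition \ref{uniformboundedtheorem}. Its extra term is harmlessly dominated by the $\frac{|\log h|}{h}$ term. Uniformity in $(\mathcal{H},\rho)$ holds in both for the same reason.

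One small correction to your closing aside: \eqref{sharpnorm} does not let you bypass the composition calculus. To apply it, you must first know that $\Oph^\rho(\tilde a)-\chi(-h^2\Delta^\rho)a$ equals $\Oph^\rho$ of a symbol of size $\mathcal{O}(h)$, modulo $\mathcal{O}(h^\infty)$. That is exactly what Proposition \ref{propofquantization}(a) and (d) provide. The aside is not needed for your proof in any case.
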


\begin{proof}
For any $a \in C^\infty(M)$, we take the cutoff function $\chi(x,\xi)=1-\psi_{0}(|\xi|^2_x)\in C_{0}^{\infty}(T^*M)$ in \eqref{cutofffunctioninproof}. Then, for any $u\in L^2(M;F^\rho)$, we have 
    \begin{equation}\label{estimate1}
        \left\| \Oph^{\rho}(a \chi )u\right\|_{L^2(M;F^\rho)}\leq \|a u\|_{L^2(M;F^\rho)}+\left\| \Oph^{\rho}\big(a(1- \chi)\big)u\right\|_{L^2(M;F^\rho)}.
    \end{equation}
Let $b(x,\xi)=a(x)\psi_0(|\xi|_x^2)(|\xi|_x^2-1)^{-1} \in S^{-2} (T^*M)$. By Proposition \ref{uniformboundedtheorem} and \ref{propofquantization}, we have
    \begin{equation}\label{estimate2}
        \left\| \Oph^{\rho}\big(a(1- \chi)\big)u\right\|_{L^2(M;F^\rho)}\leq C\left(\|(-h^2\Delta^{\rho}-I)u\|_{L^2(M;F^\rho)}+h\|u\|_{L^2(M;F^\rho)} \right).
    \end{equation}
Combining Theorem \ref{microlocalcontrolonhilbertbundle} with the two estimates \eqref{estimate1} and \eqref{estimate2}, we get Theorem \ref{controlonflathilbertbundleq}.
\end{proof}

\noindent \textit{Proof of Theorem \ref{highfrequencycontrol}.} Recall that $\pi:X\to M$ is an arbitrary Riemannian cover. It induces a unitary representation $\rho:\Gamma_{M}\to \mathrm{U}(\ell^2(V))$, where $V=\Gamma_{X}\backslash \Gamma_{M}$, and a flat $\ell^2(V)$-bundle $F^{\rho}$ over $M$. By the properties of non-commutative Bloch transform \eqref{nonblochtwisted} and \eqref{unitaryofnonbloch}, $\mathcal{B}_{\mathrm{NC}}: L^2(X)\mapsto L^2(M;F^{\rho})$ is an isometry and $\mathcal{B}_{\mathrm{NC}}: H^2(X)\mapsto H^2(M;F^{\rho})$ is an invertible bounded operator. 
For any $u \in H^2(X)$ and any open subset $\Omega \subset M$, we take any $0\not \equiv a\in C_{0}^{\infty}(\Omega)$, then by Theorem \ref{controlonflathilbertbundle},
\begin{equation}
\begin{aligned}
\|u\|_{L^2(X)}&=\|\mathcal{B}_{\mathrm{NC}}u\|_{L^2(M;F^{\rho})}\\
&\leq C\left(\|a\mathcal{B}_{\mathrm{NC}}u\|_{L^2(M;F^{\rho})}+\frac{|\log h|}{h}\|(-h^2\Delta^{\rho}-I)\mathcal{B}_{\mathrm{NC}}u\|_{L^2(M;F^{\rho})}\right)\\
&\leq C\left(\|(\pi^{*}a)u\|_{L^2(X)}+\frac{|\log h|}{h}\|(-h^2\Delta_g-I)u\|_{L^2(X)}\right)\\
&\leq C\left(\|u\|_{L^2(\pi^{-1}(\Omega))}+\frac{|\log h|}{h}\|(-h^2\Delta_g-I)u\|_{L^2(X)}\right).
\end{aligned}
\end{equation}
The proof is complete. \hfill $\square$

\subsubsection{Control of eigensections on flat Hilbert bundles}

Corollary \ref{controlofeigensection} provides a uniform lower bound for the $L^2$-norm of eigensections on any open subset $\Omega\subset M$. Next, we investigate how Theorem \ref{microlocalcontrolonhilbertbundle} can be applied in spectral theory and quantum chaos. Dyatlov and Jin \cite[Theorem 1]{dyatlov2018semiclassical} established that any semiclassical defect measure has full support on the cosphere bundle $S^*M$. Motivated by this, we will define the semiclassical defect measure associated with eigensections and demonstrate a uniform delocalization result for any Laplacian corresponding to a unitary representation.

We define the microlocal lift of the $L^2$-normalized eigensection $u_{j,\rho}$ as the Wigner measure
\begin{equation}\label{wigner}
W_{j,\rho}: C_0^{\infty}(T^*M) \to \mathbb{R}, \quad 
W_{j,\rho}(a) := \langle \mathrm{Op}^{\rho}_{h_{j,\rho}}(a)u_{j,\rho}, u_{j,\rho} \rangle_{L^2(M;F^{\rho})}, \quad h_{j,\rho} = \lambda_{j,\rho}^{-1/2}.
\end{equation}
A semiclassical defect measure $\mu$ is a weak-$*$ limit of the sequence $W_{j,\rho}$ as $\lambda_{j,\rho} \to +\infty$. Using the existence of the elliptic parametrix \eqref{ellipticparametrix}, Egorov’s Theorem \ref{uniformegorovtheorem}, and the sharp Gårding inequality Proposition \ref{uniformsharpgarding}, one concludes that $\mu$ is supported on $S^*M$, is invariant under the geodesic flow $\varphi_t$, and is a probability measure. Furthermore, Theorem \ref{microlocalcontrolonhilbertbundle} provides a uniformly fully supported property for $\mu$:

\begin{theorem}\label{semiclassicaldefectmeasure}
For any open subset $\Omega\subset S^*M$, there exists a constant $C(M,\Omega)>0$ independent of $(\mathcal{H},\rho)$ such that for any semiclassical defect measure $\mu$ given by $W_{j,\rho}$ as $\lambda_{j,\rho}\to \infty$, we have $\mu(\Omega)\ge C(M,\Omega)$. Thus, $\mathrm{supp}\,\mu=S^*M$.
\end{theorem}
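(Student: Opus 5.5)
The plan is to argue by contradiction with Theorem \ref{microlocalcontrolonhilbertbundle}, applied to a symbol adapted to $\Omega$. Since $\Omega\subset S^*M$ is open and nonempty, I first choose an $h$-independent $a\in C_0^\infty(T^*M;[0,1])$ with $a|_{S^*M}\not\equiv 0$ and $\operatorname{supp} a\cap S^*M\subset\Omega$. For instance, take $a(x,\xi)=\chi(x,\xi/|\xi|_g)\,\eta(|\xi|_g)$, where $\chi\in C_0^\infty(\Omega)$ satisfies $0\le\chi\le 1$ and $\chi\not\equiv0$, and $\eta$ is a cutoff equal to $1$ near $1$. Theorem \ref{microlocalcontrolonhilbertbundle} then gives constants $C=C(M,a)$ and $h_0=h_0(M,a)$, both independent of $(\mathcal{H},\rho)$, since the construction of $a$ depends only on $M$ and $\Omega$.

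Next I apply the estimate \eqref{microlocalcontrolonhilbertbundleq} to $u=u_{j,\rho}$ with $h=h_{j,\rho}=\lambda_{j,\rho}^{-1/2}$. Then $(-h^2\Delta^\rho-I)u_{j,\rho}=0$, so for $h<h_0$ we get
\begin{equation*}
1\le C\,\|\mathrm{Op}^\rho_{h}(a)u_{j,\rho}\|_{L^2(M;F^\rho)}.
\end{equation*}
I then convert this norm into the Wigner measure. By Proposition \ref{propofquantization}, $\mathrm{Op}^\rho_h(a)^*\mathrm{Op}^\rho_h(a)=\mathrm{Op}^\rho_h(a^2)+\mathcal{O}(h)$, with a remainder uniform in $(\mathcal{H},\rho)$ by Proposition \ref{uniformboundedtheorem}. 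Hence
\begin{equation*}
C^{-2}\le \|\mathrm{Op}^\rho_h(a)u_{j,\rho}\|^2 = W_{j,\rho}(a^2)+\mathcal{O}(h).
\end{equation*}
Passing to the weak-$*$ limit along the sequence that defines $\mu$ yields $\int a^2\,d\mu\ge C^{-2}$.

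It remains to relate this integral to $\mu(\Omega)$. Since $\mu$ is supported on $S^*M$ and $0\le a^2\le 1$ with $\operatorname{supp}a^2\cap S^*M\subset\Omega$, we have $\int a^2\,d\mu\le\mu(\Omega)$. This gives $\mu(\Omega)\ge C(M,\Omega):=C(M,a)^{-2}$. Full support follows by applying this to arbitrarily small open sets around each point of $S^*M$.

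The main point to check is that the support, flow invariance and probability properties of $\mu$ stated before the theorem hold, since these are used above. The other point is that the $\mathcal{O}(h)$ remainder in the composition is uniform in the bundle, which Proposition \ref{uniformboundedtheorem} provides. I also need $h_{j,\rho}<h_0$ eventually, which holds because $\lambda_{j,\rho}\to\infty$. None of these steps looks like a real obstacle, since the uniformity is inherited from Theorem \ref{microlocalcontrolonhilbertbundle}.
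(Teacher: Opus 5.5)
Your proposal is correct and follows the same route as the paper: apply Theorem \ref{microlocalcontrolonhilbertbundle} to $u_{j,\rho}$ with $h=h_{j,\rho}$, pass from $\|\mathrm{Op}^\rho_h(a)u_{j,\rho}\|^2$ to $\int a^2\,d\mu$, and use $\operatorname{supp}\mu\subset S^*M$ to get $\int a^2\,d\mu\le\mu(\Omega)$. You also justify the composition step $\|\mathrm{Op}^\rho_h(a)u\|^2=W_{j,\rho}(a^2)+\mathcal{O}(h)$, which the paper leaves implicit, and your constant $C^{-2}$ is the correct one (the paper's $C^{-1}$ is a harmless slip, since the lower bound applies to the squared norm).
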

\begin{proof}
For any open subset $\Omega \subset S^*M$, we take $a \in C_0^{\infty}(T^* M;[0,1])$ such that $a|_{S^* M} \not \equiv 0$ and $\operatorname{supp} a \cap S^* M \subset \Omega$. Let $u_{j,\rho}$ and $h_{j,\rho}$ be as in \eqref{wigner}. Theorem \ref{microlocalcontrolonhilbertbundle} implies that 
\begin{equation*}
    \left\|\mathrm{Op}_{h_{j,\rho}}^{\rho}(a) u_{j,\rho}\right\|_{L^2(M;F^{\rho})} \geq C^{-1}, \quad \forall\, h_{j,\rho}<h_0.
\end{equation*}
However, if $W_{j,\rho}$ weak-$*$ converges to some measure $\mu$, then
\begin{equation*}
    \lim_{h_{j,\rho}\to 0}\left\|\mathrm{Op}_{h_{j,\rho}}^{\rho}(a) u_{j,\rho}\right\|_{L^2(M;F^{\rho})}^2 =\int_{T^* M}|a|^2 d \mu \leq \mu(\Omega).
\end{equation*}
It follows that $\mu(\Omega)\geq C^{-1}$, where $C$ is the constant in Theorem \ref{microlocalcontrolonhilbertbundle}.
\end{proof}

\subsubsection{Application in mixed quantization}
The Berezin-Toeplitz quantization provides another quantization procedure on the positive line bundle $L$ with Chern connection $\nabla^{L}$ over a compact complex manifold $N$. It implies that $N$ is a Kähler manifold. 

For $p \in \mathbb{N}$, let $L^p=L^{\otimes p}$ be the $p$-th tensor power of $L$, and let $H^{(0,0)}(N, L^p)$ be the space of holomorphic sections of $L^p$ on $N$, which is a finite-dimensional space. Let 
\begin{equation*}
    P_p: L^2(N, L^p) \rightarrow H^{(0,0)}(N, L^p)
\end{equation*}
be the associated orthogonal projection. The Berezin-Toeplitz quantization of $\mathcal{H} \in C^{\infty}(N)$ is given by 
\begin{equation*}
    T_{\mathcal{H}, p}=P_{p}\mathcal{H}P_{p} \in \operatorname{End}\left(L^2(N, L^{p})\right).
\end{equation*} 
Recently, two independent teams, Ma, Ma \cite{mama2024semiclassical} and Cekić, Lefeuvre \cite{cekiclefeuvre2024semiclassical}, developed a mixed quantization that combines semiclassical quantization with Berezin-Toeplitz quantization. In the article by the second team, this is referred to as Borel-Weil calculus.

Let $\mathrm{U}$ be the group of isomorphisms on $L$ that preserve the Chern connection $\nabla^L$, with each element restricting to a holomorphic isomorphism of $N$. Let $\rho: \pi_1(X) \rightarrow \mathrm{U}$ be a finite-dimensional unitary representation. We set 
\begin{equation*}
    F_p:=\widetilde{M} \times_{\rho^{\otimes p}} H^{(0,0)}(N, L^p)
\end{equation*}
to be a sequence of flat Hilbert bundles over $M$. Let $\Delta^{F_p}$ be the Laplacian acting on $C^{\infty}(M; F_p)$. Let the eigenvalues of $\Delta^{F_p}$ be $\lambda_{j,p}=\lambda_{j,\rho^{\otimes p}}$ and the associated eigensections be $u_{j,p}=u_{j,\rho^{\otimes p}}$.

Let $q: \mathcal{N}=\widetilde{M}\times_{\rho}N\to M$ be the fiber bundle with fiber $N$. The pullback bundle $q^*(T^*M)$ is referred to as the total phase space. For any $\mathcal{A}\in C_{0}^{\infty}(q^*(T^*M))$, the mixed quantization $\Oph(\mathcal{A})$ is given by 
\begin{equation*}
    \Oph^{\rho^{\otimes p}}(T_{\mathcal{A},p}): L^2(M;F_p)\to L^2(M;F_p);
\end{equation*}
see \cite{mama2024semiclassical, cekiclefeuvre2024semiclassical}. Furthermore, if the range of $\rho:\pi_{1}(M)\to \mathrm{U}$ is dense in $\mathrm{U}$, the uniform quantum ergodicity holds for $\{u_{j,p}\}$. See \cite[Theorem 1.5 and 1.6]{mama2024semiclassical} and \cite[Section 5.3]{cekiclefeuvre2024semiclassical}.

By \cite[Proposition 5.2]{mama2024semiclassical}, as $\lambda_{j,p}\to \infty$, there exists an augmented semiclassical measure $\nu_{q^*(T^*M)}$ satisfying: there is a series of eigensections $\{u_{j_{\ell},p_{\ell}}\}_{\ell \in \mathbb{N}}$ such that $\lim_{\ell \rightarrow \infty} \lambda_{p_{\ell}, j_{\ell}}=\infty$ and, for any $\mathcal{A} \in C^{\infty}_{0}(q^*(T^* M))$,
\begin{equation*}
    \lim _{\ell \rightarrow \infty}\left\langle\mathrm{Op}_{h_{ j_{\ell},p_{\ell}}}(\mathcal{A}) u_{j_{\ell},p_{\ell}}, u_{j_{\ell},p_{\ell}}\right\rangle_{L^2\left(M, F_{p_{\ell}}\right)}=\int_{q^*\left(T^* M\right)} \mathcal{A} d \nu.
\end{equation*}
Furthermore, $\nu$ is a probability measure on $q^*(T^*M)$ with $\mathrm{supp}\,\nu\subset q^*(S^*M)$, and is invariant under the horizontal geodesic flow on $ q^*(S^*M)$.

In the first version of Ma and Ma's work \cite{mama2024semiclassical}, they showed that if the finite-dimensional case of Theorem \ref{microlocalcontrolonhilbertbundle} holds, the augmented semiclassical measure $\nu_{q^*\left(T^*M\right)}$ is fully supported on the total phase space $q^*(S^*M)$. Now we give the complete proof.

\begin{proposition}
We consider two cases as follows.
\begin{itemize}
\item For any fixed $p\in \mathbb{N}$, the augmented semiclassical measure $\nu_{p}$ is given by $u_{j,p}$ as $j\to \infty$. Then $\mathrm{supp}\,\nu_p=q^*(S^*M)$;
\item If the range of $\rho:\pi_{1}(M)\to \mathrm{U}$ is dense in $\mathrm{U}$, then for any augmented semiclassical measure $\nu$ given by $u_{j,p}$ as $\lambda_{j,p}\to \infty$, we have $\mathrm{supp}\,\nu=q^*(S^*M)$.
\end{itemize}
\end{proposition}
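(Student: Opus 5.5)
The plan is to reduce full support of $\nu$ on $q^*(S^*M)$ to a uniform lower bound of the form $\nu(\mathcal{O})\ge c>0$ for every open $\mathcal{O}\subset q^*(S^*M)$. That lower bound will come from Theorem \ref{microlocalcontrolonhilbertbundle}, applied with symbols that are scalar in the fiber directions, combined with the invariance of $\nu$ under the horizontal geodesic flow.

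\textbf{Step 1: projected control.} Fix an open set $\mathcal{O}\subset q^*(S^*M)$. After shrinking, assume $\mathcal{O}=\{(x,\xi,z): (x,\xi)\in\Omega,\ z\in W\}$ in a local trivialization, with $\Omega\subset S^*M$ and $W\subset N$ open. Choose $a\in C_0^\infty(T^*M;[0,1])$ with $a|_{S^*M}\not\equiv 0$ and $\operatorname{supp}a\cap S^*M\subset\Omega$. The mixed quantization of the fiber-constant symbol $\pi^*a$ equals $\mathrm{Op}_h^{\rho^{\otimes p}}(a)$ acting on $F_p$, up to $\mathcal{O}(h)$, because $T_{1,p}=P_p$ is the identity on $H^{(0,0)}(N,L^p)$. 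Each $F_p$ is a flat Hilbert bundle, so $(H^{(0,0)}(N,L^p),\rho^{\otimes p})\in\mathcal{C}$. Theorem \ref{microlocalcontrolonhilbertbundle} then gives
\[
\|\mathrm{Op}_h^{\rho^{\otimes p}}(a)u_{j,p}\|\ge C^{-1},
\]
with $C$ independent of $p$. This uniformity in the bundle is essential for the second case, where $p_\ell$ varies. Passing to the limit yields $\nu(\Omega\times N)\ge\int |a|^2\,d\nu\ge C^{-2}$.

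\textbf{Step 2: the fiber directions.} Step 1 only shows that the push-forward $q_*\nu$ to $S^*M$ has full support, uniformly. To reach $W$ I would split into the two cases of the statement.

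In the first case $p$ is fixed, so $F_p$ is a finite-dimensional flat bundle and $T_{\cdot,p}$ acts on a fixed finite-dimensional space. Here I would try to show that the conditional measures of $\nu_p$ on the fibers are absolutely continuous with respect to the Liouville/Fubini--Study measure on $N$, with density bounded below. The tool is the identity $\int\mathcal{A}\,d\nu_p=\lim\langle \mathrm{Op}(T_{\mathcal{A},p})u,u\rangle$. Positive fiber symbols $\mathcal{A}(z)$ give positive Toeplitz operators $T_{\mathcal{A},p}$ that dominate $c_p\,\mathrm{Id}$ whenever $\mathcal{A}\ge0$ and $\mathcal{A}\not\equiv0$; I would use the Bergman kernel $P_p(z,z)>0$ together with unique continuation for holomorphic sections. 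Applying Theorem \ref{microlocalcontrolonhilbertbundle} to $a\otimes T_{\mathcal{A},p}$ through the sharp Gårding inequality then gives a positive lower bound on $\nu_p(\mathcal{O})$.

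In the second case the constant $c_p$ degenerates as $p\to\infty$, because $T_{\mathcal{A},p}$ concentrates. Instead I would use the horizontal flow invariance of $\nu$ together with density of $\rho(\pi_1(M))$ in $\mathrm{U}$. Following geodesic loops in $M$ starting from $\Omega$, the horizontal flow moves the fiber $N$ by holonomies $\rho(\gamma)$. These are dense in $\mathrm{U}$, which acts transitively on $N$ since it contains the automorphism group of $(L,\nabla^L)$, so the orbit of $\Omega\times N$ meets any $\Omega'\times W$.

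\textbf{Main obstacle.} The hard part is converting invariance plus full projected support into positive mass on each $\Omega\times W$. Invariance alone could allow $\nu$ to concentrate on a horizontal invariant set over $S^*M$ that misses $W$. I would try a Ma--Ma type argument: use equidistribution of holonomy along the geodesic flow, which is mixing on $S^*M$, to show that the fiber conditional measures are $\mathrm{U}$-invariant, hence Liouville. This is where the density hypothesis enters. Combined with the bound $q_*\nu(\Omega)\ge C^{-2}$ from Step 1, it would give $\nu(\Omega\times W)\ge C^{-2}\,\mathrm{vol}(W)/\mathrm{vol}(N)>0$.
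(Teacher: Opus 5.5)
Your Step 1 and your first case are sound and match the paper. The operator inequality $T_{\mathcal{A},p}\ge c_p\,\mathrm{Id}$ on the finite-dimensional space $H^{(0,0)}(N,L^p)$ comes from unique continuation of holomorphic sections and compactness of the unit sphere. The paper uses exactly this, via $\widetilde a_p(x,\xi)=\min_{\|s\|=1}\langle T_{\mathcal{A},p}s,s\rangle$ and a cutoff $0\le a_p\le\widetilde a_p$, followed by Theorem \ref{microlocalcontrolonhilbertbundle}. Drop the side claim that the fiber conditional densities of $\nu_p$ are bounded below. They have the form $z\mapsto\sum_k\lambda_k|s_k(z)|^2$ with $s_k$ holomorphic, so they can vanish at common zeros, and the argument does not need the claim.

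The genuine gap is in the second case. The step ``the fiber conditional measures are $\mathrm{U}$-invariant, hence Liouville'' does not follow from flow invariance, full support of $q_*\nu$, mixing and dense holonomy. The standard compact-extension argument gives it only over ergodic components of $q_*\nu$ whose Haar lift to the principal $\mathrm{U}$-extension is ergodic. Take a component carried by a closed geodesic $\gamma$: there the horizontal flow acts on the fiber through the single element $\rho(\gamma)$, which generates a closed abelian subgroup. An invariant lift can then sit on one of its orbits in $N$ and miss $W$ entirely. Nothing currently known (full support, entropy bounds) excludes such components from a semiclassical measure. The equidistribution results of Ma--Ma concern density-one subsequences, not every limit. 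So your route would need a fiberwise unique-ergodicity statement that is not available. Your other observation, that flow-translates of $\Omega\times N$ meet every $\Omega'\times W$, goes in the wrong direction, since intersection transfers no mass.

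The missing idea is a finite covering argument using only compactness of $N$ and topological transitivity of the horizontal flow $\widetilde\varphi_t$ on $q^*(S^*M)$. Transitivity follows from ergodicity for the Liouville measure, and this is where density of $\rho(\pi_1(M))$ enters. The construction:

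\begin{itemize}
\item Lift the open set $\mathcal{U}$ so that it contains $B_0\times B(z_0,3\varepsilon)$ in $S^*\widetilde M\times N$, and cover $N$ by $B(z_1,\varepsilon),\dots,B(z_k,\varepsilon)$.
\item Set $\widetilde U_0=B_0$ and inductively pick $t_i\in\mathbb{R}$, $\gamma_i\in\pi_1(M)$ with $\gamma_i\varphi_{t_i}(B_0)\cap\widetilde U_{i-1}\neq\varnothing$ and $\gamma_iB(z_0,\varepsilon)\cap B(z_i,\varepsilon)\neq\varnothing$. Put $\widetilde U_i=\gamma_i\varphi_{t_i}(B_0)\cap\widetilde U_{i-1}$.
\item Since $\mathrm{U}$ acts isometrically on $N$ (it preserves the curvature of $\nabla^L$ and the complex structure), $B(z_i,\varepsilon)\subset\gamma_iB(z_0,3\varepsilon)$.
\item Let $V\subset S^*M$ be the nonempty open image of $\widetilde U_k$. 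Then $q^{-1}(V)\subset\bigcup_{i=1}^k\widetilde\varphi_{t_i}(\mathcal{U})$.
\end{itemize}

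Flow invariance and your Step 1 now give $k\,\nu(\mathcal{U})\ge\nu(q^{-1}(V))\ge C(M,V)>0$. Here $k$, the $t_i$ and $V$ depend only on $\mathcal{U}$. The uniformity of Step 1 in $p$ is what makes this bound hold along every sequence with $\lambda_{j,p}\to\infty$.
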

\begin{proof}
For any open subset $\Omega\subset q^*(S^*M)$, we take $\mathcal{A} \in C_{0}^{\infty}(q^*(T^*M);[0,1])$ such that $\mathcal{A}|_{\Omega}\equiv 1$. Let 
\begin{equation*}
    \begin{aligned}
        \widetilde{a}_p(x,\xi) &:=\min_{s \in H^{(0,0)}(N,L^p), \ \|s\|_{L^2}=1}\int_{N}\mathcal{A}(x,\xi,n)\|s(x,n)\|_{L^2}^2 \,d\mathrm{Vol}_{N}(n) \\
        &\ =\min_{s \in H^{(0,0)}(N,L^p), \ \|s\|_{L^2}=1}\langle T_{\mathcal{A},p}s,s\rangle.
    \end{aligned}
\end{equation*}
Then $\widetilde{a}_p\in [0,1]$ is continuous on $T^*M$. For any $(x,\xi)\in q(\Omega)$, there exists a non-empty open subset $\Omega_{n}\subset N$ such that $(x,\xi)\times \Omega_{n}\subset \Omega$. Thus, for any $s \in H^{0,0}(N,L^{p})$ with $\|s\|_{L^2(N,L^p)}=1$, we have $\|s\|_{L^2(\Omega_n,L^p)}>0$ by the holomorphic of $s$. Since $\mathrm{dim} \ H^{0,0}(N,L^{p})=p^{\mathrm{dim} N}<\infty$, the $L^2$-unit holomorphic section forms a compact subset, thus $\widetilde{a}_p(x,\xi)>0$. Then we take $a_p \in C_{0}^{\infty}(T^*M)$ such that $0\leq a_p\leq \widetilde{a}_p\leq 1$ and $a_p>0$ on $q(\Omega)$. It implies $T_{\mathcal{A},p}\geq a_p(x,\xi)\mathrm{Id}_{H^{0,0}(N,L^p)}$. We have
\begin{equation}
\nu_p(\Omega)\geq\lim_{\ell\to \infty}\langle \mathrm{Op}_{h_{j_\ell,p}}(\mathcal{A})u_{j_\ell,p},u_{j_\ell,p}\rangle\geq\lim_{\ell\to \infty}\langle \mathrm{Op}_{h_{j_\ell,p}}^{\rho^{\otimes p}}(a_p)u_{j_\ell,p},u_{j_\ell,p}\rangle\geq C^{-1}_p.
\end{equation}
Therefore, we have proved the first part.

By Theorem \ref{semiclassicaldefectmeasure}, for any open set $U\subset S^*M$, we have $\nu(q^{-1}(U))\geq C(M,U)>0$.
Let 
\begin{equation*}
    \widetilde{\pi}:S^* \widetilde{M} \times N\to q^*(S^*M)
\end{equation*}
be the natural projection. For $(\widetilde{x}, \widetilde{\xi}, z) \in S^* \widetilde{M} \times N$, we denote by $[(\widetilde{x}, \widetilde{\xi}, z)]$ its image under $\widetilde{\pi}$. For any open set $\mathcal{U} \subseteq q^*(S^*M)$, $\widetilde{\pi}^{-1}(\mathcal{U})$ is a $\pi_1(M)$-invariant open set of $S^* \widetilde{M} \times N$, so it contains an open ball $B\big((\widetilde{x}_0, \widetilde{\xi}_0), \varepsilon \big) \times B(z_0, 2 \varepsilon)$ for some $\varepsilon>0$ with $(\widetilde{x}_0, \widetilde{\xi}_0, z_0) \in S^* \widetilde{X} \times N$. 

Choose a finite set $\{z_i\}_{i=1}^k \subset N$ such that $N=\bigcup_{i=1}^k B(z_i, \varepsilon)$. Now we choose a sequence $(t_i)_{i=1}^k$ inductively: first, we set $\widetilde{U}_0=B\big((\widetilde{x}_0, \widetilde{\xi}_0), \varepsilon\big)$; at $i$-th step for $1 \leq i \leq k$, since the horizontal geodesic flow $(\widetilde{\varphi}_t)_{t \in \mathbb{R}}$ on $q^*(S^*M)$ is ergodic with respect to the Liouville measure, we can find $t_i \in \mathbb{R}$ such that
\begin{equation*}
    \widetilde{\varphi}_{t_i} \cdot\left(\left[B\big((\widetilde{x}_0, \widetilde{\xi}_0), \varepsilon\big) \times B\left(z_0, \varepsilon\right)\right]\right) \cap\left[\widetilde{U}_{i-1} \times B\left(z_i, \varepsilon\right)\right] \neq \varnothing,
\end{equation*}
equivalently, there is $\gamma_i \in \pi_1(M)$ such that
\begin{equation*}
    \gamma_i \cdot \widetilde{\varphi}_{t_i}\left(B\big((\widetilde{x}_0, \widetilde{\xi}_0), \varepsilon\big)\right) \cap \widetilde{U}_{i-1} \neq \varnothing, \qquad \gamma_i \cdot B\left(z_0, \varepsilon\right) \cap B\left(z_i, \varepsilon\right) \neq \varnothing,
\end{equation*}
and we put $\widetilde{U}_i=\gamma_i \cdot \widetilde{\varphi}_{t_i}\left(B\big((\widetilde{x}_0, \widetilde{\xi}_0), \varepsilon\big)\right) \cap \widetilde{U}_{i-1}$. Since $\gamma_i$ acts on $N$ isomorphically,  we have $B\left(z_i, \varepsilon\right) \subseteq \gamma_i \cdot B\left(z_0, 2 \varepsilon\right)$, then it follows that
\begin{equation*}
    \widetilde{U}_k \times N \subseteq \bigcup_{i=1}^k \gamma_i \cdot \widetilde{\varphi}_{t_i}\left(B\big((\widetilde{x}_0, \widetilde{\xi}_0), \varepsilon\big)\right) \times \gamma_i \cdot B\left(z_0, 2 \varepsilon\right) .
\end{equation*}
We denote by $\big[\widetilde{U}_k\big]$ the image of $\widetilde{U}_k$ under $\widetilde{\pi}$, then we have
\begin{equation*}
    q^{-1}\big(\big[\widetilde{U}_k\big]\big) \subset \bigcup_{i=1}^k\widetilde{\varphi}_{t_i}(\mathcal{U}).
\end{equation*}
Since $\nu$ is $(\widetilde{\varphi}_t)_{t \in \mathbb{R}}$-invariant, we have
\begin{equation*}
    k \nu(\mathcal{U})=\sum_{i=1}^k \nu\big(\widetilde{\varphi}_{t_i}(\mathcal{U}) \big) \geq \nu\left(q^{-1}\big(\big[\widetilde{U}_k\big]\big) \right)>0,
\end{equation*}
from which we conclude the proof.
\end{proof}

\section{Proof of observability}\label{sec: proof of observability}

In this section, we derive the uniform observability of Schrödinger equations on flat Hilbert bundles from Theorem \ref{controlonflathilbertbundle}. Our method closely follows that of the standard Schrödinger setting in \cite{jin2018control}, with an addition of uniformity in the resulting bounds.

For convenience, we recall the definitions of the semiclassical Fourier transform and its adjoint on $\R$:
\begin{equation*}
    \mathcal{F}_h\varphi(\tau):=\int_\R e^{-it\tau /h}\varphi(t)\,d t, \quad \mathcal{F}_h^*\varphi(\tau):=\int_\R e^{it\tau /h}\varphi(t)\,d t
\end{equation*}
for any $\varphi\in L^1(\R)\cap L^2(\R)$. They can be extended to any $\varphi \in L^2(\R)$ and satisfy the Parseval's identity
\begin{equation}\label{parseval}
    \| \mathcal{F}_h \varphi \|_{L^2(\R)}=\| \mathcal{F}_h^* \varphi \|_{L^2(\R)}=(2\pi h )^{1/2}\|\varphi\|_{L^2(\R)}.
\end{equation}
Let $\mathcal{V}$ be a Hilbert space. If $\varphi \in L^1(\mathbb{R};\mathcal{V})\cap  L^2(\mathbb{R};\mathcal{V})$, the tensor extension $\mathcal{F}_h\otimes\mathrm{Id}_{\mathcal{V}}$ gives the semiclassical Fourier transform on $\mathcal{V}$-valued functions. Let $\mathcal{V}:=L^2(M;F^{\rho})$, we define the operators 
\begin{equation*}
    \mathcal{F}_{\rho,h}:=\mathcal{F}_h\otimes\mathrm{Id}_{\mathcal{V}}, \qquad \mathcal{F}_{\rho,h}^{*}:=\mathcal{F}_h^*\otimes \mathrm{Id}_{\mathcal{V}}.
\end{equation*}
In this section, we always assume that $M$ is a compact connected hyperbolic surface.
% {\color{red}Illustration of some symbols in this setting: $F^\rho$, $Op_h(a),\cdots$}

\subsection{Semiclassical observability}
First, we establish a semiclassical version of observability.
\begin{proposition}\label{prp-semi-obs}
    Let $\chi \in C_0^\infty ( (1/2,2))$, $0\not \equiv\psi \in C_0^\infty (\R;[0,1])$ and $\Omega \subset M$ be an nonempty open subset. There exist constants $C=C(M,\Omega,\chi,\psi)>0$, $h_0= h_0(M,\Omega,\chi,\psi)>0$ such that, for any $(\mathcal{H},\rho) \in \mathcal{C}$, any $0<h<h_0$,  and any $u\in L^2(M;F^\rho)$,
    \begin{equation}
        \| \chi\bigl(-h^2\Delta^\rho\bigr) u  \|^2_{L^2(M;F^\rho)} \le C\int_\R \| \psi(t) e^{it\Delta^\rho} \chi\bigl( -h^2\Delta^\rho\bigr) u\|^2_{L^2(\Omega; F^\rho)}\,dt.\label{eqn-semi-obs}
    \end{equation}
\end{proposition}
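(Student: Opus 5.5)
We follow Jin's argument in \cite{jin2018control}, with Theorem \ref{controlonflathilbertbundle} as the only geometric input. Uniformity in $(\mathcal{H},\rho)$ comes from using only spectral calculus, Parseval's identity \eqref{parseval}, and the uniform estimate \eqref{controlonflathilbertbundleq}.

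Fix $0\not\equiv a\in C_0^\infty(\Omega;[0,1])$ and write $v:=\chi(-h^2\Delta^\rho)u$. Set
\[
w(t):=\psi(t)e^{it\Delta^\rho}v\in L^2(\R;\mathcal{V}),
\]
and take the Fourier transform in time at frequency $\tau$ rescaled by $h$:
\[
W(\tau):=h^{-1}\mathcal{F}_{\rho,h}w(\tau).
\]
Since $\Delta^\rho$ is self-adjoint and $\psi$ is Schwartz, the spectral theorem gives
\[
W(\tau)=h^{-1}\widehat{\psi}\big((\tau+h^2\Delta^\rho)/h\big)v .
\]
A direct computation also gives the key identity
\[
(-h^2\Delta^\rho-\tau)W(\tau)=-i\,h^{-1}\mathcal{F}_{\rho,h}\big(\psi'(t)e^{it\Delta^\rho}v\big)(\tau)\cdot h .
\]
This follows from $ih\partial_t(e^{it\Delta^\rho}v)=-h\Delta^\rho e^{it\Delta^\rho}v$ combined with integration by parts in $t$. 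Rescale $\tau\mapsto\tau$ so the energy shell becomes $\{-h^2\Delta^\rho=\tau\}$, with $\tau\in(1/4,4)$ covering the spectral support of $v$. For each fixed $\tau$ in this range, apply Theorem \ref{controlonflathilbertbundle} with $h_\tau:=h/\sqrt\tau$ to $W(\tau)$. The right-hand side then involves $\|aW(\tau)\|$ plus $\tfrac{|\log h|}{h}\|(-h^2\Delta^\rho-\tau)W(\tau)\|$. The first term is the time-Fourier transform of $a\,w$. The second is $|\log h|$ times the transform of $a$-free $\psi' e^{it\Delta^\rho}v$, which gains only a factor $|\log h|$ rather than the needed $h$. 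So we must proceed as in Jin and rescale time.

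We therefore prove the estimate first for short time windows of length $\sim h$. Replace $\psi(t)$ by $\psi_h(t):=\psi(t/h)$, so that in the time variable $s=t/h$ the evolution becomes $e^{-is(-h\Delta^\rho)}$. In semiclassical time $s$, $(-h^2\Delta^\rho-1)$ acts on $\psi(s)e^{-is h\Delta^\rho}$ with the error $ih\psi'$. Integrating over $\tau$ with Parseval, we obtain
\[
\|\psi\|_{L^2}^2\|v\|^2\lesssim \int\|a\psi(s)e^{-ish\Delta^\rho}v\|^2ds+|\log h|^2\int\|\psi'(s)e^{-ish\Delta^\rho}v\|^2ds .
\]
The second term is comparable to $|\log h|^2\|\psi'\|^2\|v\|^2$, which does not close. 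The standard remedy, which we will adopt, is to use cutoffs $\psi$ adapted to time intervals of length $T_h=|\log h|$ in semiclassical time. That is, take $\psi_T(s)=\psi(s/T_h)$. Then $\|\psi_T'\|^2/\|\psi_T\|^2\sim T_h^{-2}$, which cancels the $|\log h|^2$ factor up to a constant that we make small by choosing the shape of $\psi$. After absorbing that term, we obtain
\[
T_h\|v\|^2\lesssim\int\|a\psi_T(s)e^{-ish\Delta^\rho}v\|^2ds .
\]
Semiclassical time $|s|\lesssim|\log h|$ corresponds to physical time $|t|\lesssim h|\log h|\ll1$. Cover the support of the given $\psi$ by $\sim (h|\log h|)^{-1}$ such windows, using unitarity of $e^{it\Delta^\rho}$ to translate each window. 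Summing the resulting inequalities and using $a\le\mathbf 1_\Omega$ then gives \eqref{eqn-semi-obs}.

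The delicate step is the absorption. One must check that the term $\frac{|\log h|}{h}\|(-h^2\Delta^\rho-\tau)W\|$, after Parseval, is bounded by a \emph{small} multiple of $\|v\|^2$ times the window length. This requires choosing the rescaled window length as $c_0^{-1}|\log h|$ with $c_0$ small relative to the constant $C(M,a)$ of Theorem \ref{controlonflathilbertbundle}. One must also handle the $\tau$-dependence of $h_\tau$, which is uniform since $\tau\in(1/4,4)$ and $h_0$ can be adjusted. Neither of these uses anything beyond the uniform constants $C,h_0$, so the final $C$ and $h_0$ are independent of $(\mathcal{H},\rho)$.
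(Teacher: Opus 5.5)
Your final argument is correct, but it takes a different and longer route than the paper, and your reason for abandoning the direct approach comes from a scaling error.

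\textbf{The direct approach does close.} With $w(t)=\psi(t)e^{it\Delta^\rho}v$ in physical time, $e^{it\Delta^\rho}v$ oscillates at frequency $\sim h^{-2}$. The correct dual variable is therefore $\lambda=\tau/h^2$. Setting $\widehat f(\lambda)=\int e^{it\lambda}f(t)\,dt$, the equation $(i\partial_t+\Delta^\rho)w=i\psi'(t)e^{it\Delta^\rho}v$ becomes
\[
(-h^2\Delta^\rho-\tau)\,\widehat w(\tau/h^2)=-ih^2\,\widehat{\psi' e^{it\Delta^\rho}v}(\tau/h^2).
\]
This gains $h^2$. Against the $|\log h|/h$ loss in Theorem \ref{controlonflathilbertbundle}, the remaining factor is $h|\log h|$, which is absorbed immediately after Parseval. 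This is exactly the paper's proof, and also Jin's. They write it in semiclassical time with the long window $\psi(ht)$, where $\partial_t[\psi(ht)]=h\psi'(ht)$ supplies the extra $h$.

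Your formula for $W(\tau)$ does not match your definition of $w$. In fact $\mathcal{F}_{\rho,h}w(\tau)=\widehat\psi\bigl((\tau-h\Delta^\rho)/h\bigr)v$, which is concentrated at $|\tau|\sim h^{-1}$, not in $(1/4,4)$. Your ``key identity'' is really the identity for a window of length $O(1)$ in semiclassical time, i.e.\ $O(h)$ in physical time. For that window you are right that it does not close. But Jin's remedy is to lengthen the window, not shorten it.

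\textbf{Your fallback does work.} Take windows $\phi(s/T_h)$ with $T_h=K|\log h|$ and $K\gtrsim C(M,a)\|\phi'\|_{L^2}/\|\phi\|_{L^2}$. For every $t_j$ you get
\[
Kh|\log h|\,\|\phi\|_{L^2}^2\|v\|^2\lesssim\int\bigl\|a\,\phi\bigl((t-t_j)/(Kh|\log h|)\bigr)e^{it\Delta^\rho}v\bigr\|^2\,dt,
\]
using unitarity of $e^{it_j\Delta^\rho}$ and its commutation with $\chi(-h^2\Delta^\rho)$. Summing $\sim(h|\log h|)^{-1}$ disjointly supported windows then gives an $h$-independent constant. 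Two details need fixing:
\begin{itemize}
\item Place the windows inside an interval where $\psi\ge c>0$, not across all of $\operatorname{supp}\psi$. Otherwise the summed right-hand side is not controlled by $\int\psi^2\|\cdot\|^2$.
\item Include the $O(h^\infty)$ bound for $\tau\notin(1/4,4)$. It follows from the decay of $\widehat\phi$, as in \eqref{eqn-tau-notin-2}.
\end{itemize}

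\textbf{Comparison.} Your route yields observability of spectrally localized data on time intervals of length $\sim h|\log h|$, which is the natural scale set by the $|\log h|/h$ loss. The paper's route is one-shot: the window length need not be tuned against $C(M,a)$, and no summation step is required.
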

\begin{proof}
    For any single pair $(\mathcal{H},\rho)$, one can obtain the above result directly from \cite[Theorem 4]{burq2004}. Here we follow the proof in \cite[Proposition 2.1]{jin2018control} to give a complete proof and keep the control constant uniform in $(\mathcal{H},\rho)$. 
    
    Define $v(t)=e^{ith\Delta^\rho }\chi(-h^2\Delta^\rho)u_0$ and $w(t)=\psi(ht)v(t)$. It is clear that $v$ solves the semiclassical Schrödinger equation 
    \begin{equation*}
        (ih\partial_t +h^2\Delta^\rho) v=0.
    \end{equation*}
    This gives 
    \begin{equation*}
        (ih\partial_t+h^2\Delta^\rho)w=ih^2\psi'(ht)v(t).
    \end{equation*}
    Then we take the adjoint semiclassical Fourier transform $\mathcal{F}_{\rho,h}^*$ in $t$ which gives
    \begin{equation*}
        (-h^2\Delta^\rho-\tau)\mathcal{F}_{\rho,h}^*w(\tau)=-ih^2\mathcal{F}_{\rho,h}^*\bigl( \psi'(ht)v(t) \bigr)(\tau).
    \end{equation*}
    By a simple rescaling of Theorem \ref{controlonflathilbertbundle}, we have, uniformly in $\tau\in [1/2,2]$, for any $0<h<h_0 $ and any $u\in H^2(M;F^\rho)$,
    \begin{equation}\label{rescale-semiclassical-control}
        \|u\|_{L^2(M;F^\rho)} \leq C\left(\|u\|_{L^2(\Omega;F^{\rho})}+ \frac{|\log h|}{h}\left\|\left(-h^2 \Delta^{\rho}-\tau\mathrm{Id}\right) u\right\|_{L^2(M;F^{\rho})}\right).   
    \end{equation}
    For any $\tau \in [1/2,2]$, applying \eqref{rescale-semiclassical-control} to $u=\mathcal{F}_{\rho,h}^* w(\tau)$ and $\mathrm{supp}\, \eta \subset \Omega$, we obtain
    \begin{equation}
        \|\mathcal{F}_{\rho,h}^*w(\tau)\|_{L^2(M;F^\rho)}\le  C\|\mathcal{F}_{\rho,h}^* w(\tau) \|_{L^2(\Omega;F^\rho)}+Ch|\log h|\| \mathcal{F}_{\rho,h}^*\bigl( \psi'(ht)v(t) \bigr)(\tau) \|_{L^2(M;F^\rho)}.\label{eqn-tau-in}
    \end{equation}
    For $\tau \not\in [1/2,2]$, by definition, we have
    \begin{equation*}
        \mathcal{F}_{\rho, h}^*w(\tau) =\int_\R e^{-it(-h^2\Delta^\rho-\tau)/h}\psi(ht)\chi(-h^2\Delta) u_0\,d t.
    \end{equation*}
    We rewrite it as
    \begin{equation}
            \mathcal{F}_{\rho,h}^*w(\tau)= \int_\R (h^2\Delta^\rho+\tau)^{-N}(hD_t)^N e^{-it(-h^2\Delta-\tau)/h} \psi(ht)\chi(-h^2\Delta)u_0\,d t.\label{integration-by-part}
    \end{equation}
    Recall that $\chi \in C_0^\infty ((1/2,2))$ and therefore we have
    \begin{equation}
        \| (h^2\Delta^\rho+\tau)^{-N}\chi(-h^2\Delta^\rho) u_0\|_{L^2(M;F^\rho)}\le C_{N}\langle \tau \rangle^{-N} \|\chi(-h^2\Delta^\rho)u_0 \|_{L^2(M;F^\rho)},\label{eqn-tau-notin}
    \end{equation}
    where $C_{N}$ is a positive constant depending on $N$ but not on the choice of $(\mathcal{H},\rho)$. Substituting \eqref{eqn-tau-notin} into \eqref{integration-by-part}, we obtain
    \begin{equation}
        \|\mathcal{F}_{\rho,h}^* w(\tau)\|_{L^2(M;F^\rho)}=\mathcal{O}((h\langle \tau\rangle^{-1})^\infty)\| \chi(-h^2\Delta^\rho)u_0 \|_{L^2(M;F^\rho)}.\label{eqn-tau-notin-2}
    \end{equation}
    Combining \eqref{eqn-tau-in} with \eqref{eqn-tau-notin-2}, we obtain
    \begin{align}
        \| \mathcal{F}_{\rho,h}^*w(\tau) \|^2_{(\R_\tau,L^2(M;F^\rho))}\le&\, C\|\mathcal{F}_{\rho,h}^* w(\tau)\|^2_{L^2(\R_\tau,L^2(\Omega,F^\rho))}\label{last-two-term}\\ 
        &\,+Ch^2|\log h|^2\| \mathcal{F}_{\rho, h}^*\bigl( \psi'(ht)v(t) \bigr)(\tau)   \|^2_{L^2(\R_\tau, L^2(M;F^\rho))}\notag\\ 
        &\,+ \mathcal{O}(h^\infty)\| \chi(-h^2\Delta^\rho)u_0 \|^2_{L^2(M;F^\rho)}.\notag
    \end{align}
    By the Parseval identity \eqref{parseval}, we have
    \begin{equation*}
        \begin{aligned}
            \| w \|^2_{(\R_t,L^2(M;F^\rho))}\le&\, C\|w\|^2_{L^2(\R_t,L^2(\Omega,F^\rho))}\\
            &\,+Ch^2|\log h|^2\|  \psi'(ht)v(t)   \|^2_{L^2(\R_t, L^2(M;F^\rho))}\\
            &\,+ \mathcal{O}(h^\infty)\| \chi(-h^2\Delta^\rho)u_0 \|^2_{L^2(M;F^\rho)}.
        \end{aligned}
    \end{equation*}
    By the definition of $v$ and $w$, we have
    \begin{equation*}
        \begin{aligned}
            \|w\|^2_{(\R_t,L^2(M;F^\rho))}=&\,\int_\R \psi(ht)^2\|e^{it\Delta^\rho}\chi(-h^2\Delta^\rho)u_0\|^2_{L^2(M;F^\rho)}\,d t\\
            =&\, \left(\int_\R \psi(ht)^2\,d t \right)\|\chi(-h^2\Delta^\rho)u_0\|^2_{L^2(M;F^\rho)}\,d t\\
            =&\, h^{-1}\|\psi\|^2_{L^2(\R)}\|\chi(-h^2\Delta^\rho) u_0\|^2_{L^2(M;F^\rho)},
        \end{aligned}
    \end{equation*}
    \begin{equation*}
        \begin{aligned}
            \| w\|^2_{(\R_t,L^2(\Omega,F^\rho))} = &\,\int_\R \psi(ht)^2\| \chi(-h^2\Delta^\rho) u_0 \|^2_{L^2(\Omega, F^\rho)}\,d t\\
            =&\, h^{-1}\int_\R \| \psi(t) e^{it\Delta^\rho} \chi(-h^2\Delta^\rho) u_0 \|_{L^2(\Omega)}^2\,d t,
        \end{aligned}
    \end{equation*}
    and
    \begin{equation*}
        \begin{aligned}
            \| \psi'(ht) v(t) \|^2_{L^2(\R_t,L^2(M;F^\rho))}=&\, \int_\R |\psi'(ht)|^2\| e^{ith\Delta^\rho} \chi(-h^2\Delta^\rho) u_0\|^2_{L^2(M;F^\rho)}\,d t \\
            =&\, h^{-1}\|\psi'\|^2_{L^2(\R)} \| \chi(-h^2\Delta^\rho) u_0 \|^2_{L^2(M;F^\rho)}.
        \end{aligned}
    \end{equation*}
    As long as $h$ is small enough and $\psi\not\equiv 0$, the last two terms on the right-hand side of \eqref{last-two-term} can be absorbed into the left-hand side and the proof is completed.
\end{proof}

\subsection{Observability with an error term} We prove Theorem \ref{thm-uniform-obs} with an error term:
\begin{proposition}\label{prp-obs-error}
Let $u \in L^2(M;F^{\rho})$, $\Omega$ is a non-empty open subset of $M$, $T>0$, then there exists a constant $C_T=C(M, \Omega, T)>0$ such that 
\begin{equation}
\|u\|_{L^2(M;F^{\rho})} \leq C_T\left(\int_{0}^{T}\|\mathrm{e}^{it \Delta^{\rho}}u\|^2_{L^2(\Omega,F^{\rho})}\,d t+ \|u\|^2_{H^{-4}(M;F^\rho)}\right).\label{eqn-obs-error}
\end{equation}

\end{proposition}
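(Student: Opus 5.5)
The plan is to follow the argument of \cite[Proposition 2.2]{jin2018control} and glue together the dyadic semiclassical estimates of Proposition \ref{prp-semi-obs}. The only extra requirement is that every constant be independent of $(\mathcal{H},\rho)$. (The left-hand side of \eqref{eqn-obs-error} should be read as $\|u\|^2_{L^2(M;F^\rho)}$.)

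\textbf{Step 1: dyadic partition and choice of cutoffs.} Fix a Littlewood--Paley partition of unity in the spectral variable of $-\Delta^\rho$:
\[
1=\chi_0(\lambda)^2+\sum_{k\ge1}\chi(2^{-2k}\lambda)^2,\qquad \lambda\ge0,
\]
with $\chi\in C_0^\infty((1/2,2))$ and $\chi_0\in C_0^\infty(\R)$. Put $h_k=2^{-k}$ and $u_k=\chi(-h_k^2\Delta^\rho)u$. By the spectral theorem,
\[
\|u\|^2=\|\chi_0(-\Delta^\rho)u\|^2+\sum_k\|u_k\|^2 .
\]
The low-frequency piece $\chi_0(-\Delta^\rho)u$, together with the finitely many $u_k$ with $h_k\ge h_0$, has norm bounded by $C\|u\|_{H^{-4}}$. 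This uses only functional calculus: $\langle\lambda\rangle^{4}\chi_0(\lambda)$ is bounded, and $H^{-4}$ is equivalent to $\|(1-\Delta^\rho)^{-2}\cdot\|_{L^2}$ uniformly in $\rho$, by Proposition \ref{inverseofelliptic}. Choose $\psi\in C_0^\infty((0,T);[0,1])$ with $\psi\not\equiv0$. For $h_k<h_0$, Proposition \ref{prp-semi-obs} then gives
\[
\|u_k\|^2\le C\int_\R\|\psi(t)e^{it\Delta^\rho}u_k\|^2_{L^2(\Omega;F^\rho)}\,dt .
\]

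\textbf{Step 2: summing over $k$ and commuting the dyadic cutoff.} The goal is to bound
\[
\sum_k\int\psi^2\,\|\mathbf 1_\Omega\chi(-h_k^2\Delta^\rho)e^{it\Delta^\rho}u\|^2\,dt
\]
by $C\int_0^T\|e^{it\Delta^\rho}u\|^2_{L^2(\Omega)}\,dt$ plus an error term. Following \cite{jin2018control}, I first replace $\mathbf 1_\Omega$ by a smooth $\eta\in C_0^\infty(\Omega)$. This requires slightly enlarging $\Omega$: one applies Proposition \ref{prp-semi-obs} with a smaller open set $\Omega'\Subset\Omega$ and takes $\eta=1$ on $\Omega'$. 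Next, write
\[
\eta\,\chi(-h_k^2\Delta^\rho)=\chi(-h_k^2\Delta^\rho)\,\eta+[\eta,\chi(-h_k^2\Delta^\rho)].
\]
By Theorem \ref{mainthmofquantized} and Proposition \ref{propofquantization}, the commutator lies in $h_k\Psi^{\mathrm{comp},sc}_{h_k}$, with uniform bounds. It is microlocalized where $|\xi|\sim1$, so it is bounded by
\[
Ch_k\,\|\widetilde\chi(-h_k^2\Delta^\rho)e^{it\Delta^\rho}u\|
\]
plus $\mathcal O(h_k^\infty)\|u\|_{H^{-4}}$, where $\widetilde\chi$ is a slightly wider dyadic cutoff. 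Almost orthogonality then gives
\[
\sum_k\|\chi(-h_k^2\Delta^\rho)\eta v\|^2\le C\|\eta v\|^2 .
\]
Since $e^{it\Delta^\rho}$ is unitary and commutes with the functional calculus, the commutator terms sum to
\[
C\sum_k h_k^2\|\widetilde\chi(-h_k^2\Delta^\rho)u\|^2\le C\|u\|^2_{H^{-1}} .
\]

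\textbf{Step 3: removing the intermediate error.} The $H^{-1}$ term is not yet $H^{-4}$. To reach $H^{-4}$, I would split frequencies at a large threshold $2^{K}$. The part with $k\ge K$ contributes at most $C2^{-2K}\|u\|^2$, which is absorbed into the left-hand side once $K$ is chosen large (independently of $\rho$). The part with $k<K$ consists of finitely many dyadic blocks and is bounded by $C_K\|u\|^2_{H^{-4}}$. I expect the main obstacle to be this bookkeeping in Step 2, namely controlling the commutator and the non-sharp cutoff $\eta$ with constants uniform in $(\mathcal{H},\rho)$. The uniform operator norms of Proposition \ref{uniformboundedtheorem} and the uniform calculus are exactly what make this possible.
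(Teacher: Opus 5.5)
Your argument is correct, but you handle the key step differently from the paper. That step is passing from $\sum_k\int\psi^2\|\mathbf 1_\Omega e^{it\Delta^\rho}\chi(-h_k^2\Delta^\rho)u\|^2\,dt$ to $\int_0^T\|e^{it\Delta^\rho}u\|^2_{L^2(\Omega;F^\rho)}\,dt$.

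The paper uses the equation to trade the spatial spectral cutoff for a temporal one: $e^{it\Delta^\rho}\varphi_k(-\Delta^\rho)u=\varphi_k(D_t)e^{it\Delta^\rho}u$, with the $\varphi_k$ even. Since $\varphi_k(D_t)$ acts only in $t$, it commutes with restriction to $\Omega$. So the sharp indicator $\mathbf 1_\Omega$ can be kept, $\Omega$ need not be shrunk, and after inserting a cutoff $\widetilde\psi(t)$ the sum over $k$ is just Plancherel in $t$. The only commutator is the scalar one-dimensional $[\widetilde\psi(t),\varphi(2^{-k}D_t)]$. Its symbol is $\mathcal O(2^{-kN}\langle t\rangle^{-N}\langle\tau\rangle^{-N})$, and its contribution is bounded directly by $C\|u\|^2_{H^{-4}}$. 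Uniformity in $(\mathcal H,\rho)$ is therefore automatic, and no flat-bundle calculus is used in this step.

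You commute in space instead. This costs you the shrinking to $\Omega'\Subset\Omega$, a uniform bound $\|[\eta,\chi(-h^2\Delta^\rho)]\|_{L^2\to L^2}\le Ch$, and an absorption step. That bound does hold uniformly. It follows from the paper's calculus, or elementarily from Helffer--Sj\"{o}strand: $[-h^2\Delta^\rho,\eta]$ is $h$ times a first-order operator, controlled via $\|h\nabla^\rho w\|^2=\langle -h^2\Delta^\rho w,w\rangle$.

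Your route also carries more machinery than needed. Each commutator term is crudely bounded by $C h_k^2 T\|u\|^2$. Since $\sum_{k\ge k_0}h_k^2\le C h_{k_0}^2$, these terms can be absorbed into the left-hand side directly by taking $k_0$ large. The microlocalization to $\widetilde\chi$, the $H^{-1}$ intermediate error, and your Step 3 are therefore unnecessary.

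One small slip: with $h_k=2^{-k}$ and $\chi\in C_0^\infty((1/2,2))$, every $\chi(2^{-2k}\lambda)$ vanishes at $\lambda=2^{2k+1}$, so your partition of unity cannot exist. Take $h_k=2^{-k/2}$ instead, as the paper effectively does with $\varphi_k(-\Delta^\rho)=\varphi(-2^{-k}\Delta^\rho)$, and nothing else changes. You are also right that the left-hand side of the statement should be squared.
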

\begin{proof}
    Again this follows directly from  \cite[Theorem 7]{burq2004}. We present the argument in our concrete setting, which is essentially the same as the one given in \cite[Proposition 2.2]{jin2018control}. We use a dyadic decomposition  
    \begin{equation*}
        1=\varphi_0(r)^2+\sum_{k=1}^\infty \varphi_k(r)^2,
    \end{equation*}
    where 
    \begin{equation*}
        \varphi_0 \in C_0^\infty \bigl( (-2,2); [0,1] \bigr),\quad \varphi_k(r)=\varphi(2^{-k}|r|),\quad \varphi \in C_0^\infty \bigl((1/2,2);[0,1]\bigr).
    \end{equation*}
    Then we have
    \begin{equation}
        \|u\|^2_{L^2(M;F^\rho)}=\sum_{k=0}^\infty \|\varphi_k(-\Delta^\rho) u\|^2_{L^2(M;F^\rho)}.\label{eqn-dyadic-decomp}
    \end{equation}
    We choose an integer $K$ large enough so that $2^{-K}<h_0^2$. Then for any integer $k\ge K$, by \eqref{eqn-semi-obs}, we obtain
    \begin{equation}
        \|\varphi_k(-\Delta^\rho) u\|^2_{L^2(M;F^\rho)}\le C \int_\R \|\psi(t) e^{it\Delta^\rho}\varphi_k(-\Delta^\rho )u\|^2_{L^2(\Omega,F^\rho)}\,d t\label{eqn-dyadic-obs}
    \end{equation}
    uniformly in $k$ for a chosen nonvanishing function $\psi\in C_0^\infty ((0,T);[0,1])$. Taking \eqref{eqn-dyadic-obs} into \eqref{eqn-dyadic-decomp}, we obtain
    \begin{equation}
        \|u\|^2_{L^2(M;F^\rho)}\le \sum_{k=0}^{K-1} \|\varphi_k(-\Delta^\rho)u\|^2_{L^2(M;F^\rho)}+\sum_{k=K}^\infty C \int_\R \|\psi(t) e^{it\Delta^\rho}\varphi_k(-\Delta^\rho )u\|^2_{L^2(\Omega,F^\rho)}\,d t.\label{eqn-low-high-terms}
    \end{equation}
    Since
    \begin{equation*}
        \|u\|^2_{H^{-4}(M;F^\rho)}\sim \|(-\Delta^\rho+I)^{-2}u\|^2_{L^2(M;F^\rho)}\sim \sum_{k=0}^\infty 2^{-4k}\|\varphi_k(-\Delta^\rho) u\|^2_{L^2(M;F^\rho)},
    \end{equation*}
    The first sum in the right-hand side of \eqref{eqn-low-high-terms} can be bounded from above by $H^{-4}$ norm of $u$, that is,
    \begin{equation}
        \sum_{k=0}^{K-1} \|\varphi_k(-\Delta^\rho)u\|^2_{L^2(M;F^\rho)} \le C \|u\|^2_{H^{-4}(M;F^\rho)}.\label{eqn-low-estimate}
    \end{equation}
    To estimate the second sum in the right-hand side of \eqref{eqn-low-high-terms}, we use the Schrödinger equation $(D_t-\Delta^\rho)e^{it\Delta^\rho}=0$
    to change the frequency localization in space $\varphi_k(-\Delta^\rho)$ to frequency localization in time $\varphi_k(D_t)$. Precisely speaking, we have
    \begin{equation*}
        e^{it\Delta^\rho}\varphi_k(-\Delta^\rho) u =\varphi_k(-\Delta^\rho) e^{it \Delta^\rho} u=\varphi_k(-D_t)e^{it\Delta^\rho} u= \varphi_k(D_t)e^{it\Delta^\rho} u,
    \end{equation*}
    where we used the fact that all $\varphi_k$ are even. We introduce another cutoff function $\widetilde{\psi}\in C_0^\infty \bigl((0,T);[0,1]\bigr)$ such that $\widetilde{\psi}=1$ on a neighborhood of $\mathrm{supp}\, \psi$. Then we have
    \begin{equation*}
        \psi(t)\varphi_k(D_t)= \psi(t)\varphi_k(D_t)\widetilde{\psi}(t)+R_k(t,D_t)
    \end{equation*}
    where $R_k(t,D_t)=\psi(t) [\widetilde{\psi}(t),\varphi(2^{-k}D_t)]$ with its symbol $R_k(t,\tau)$ satisfying
    \begin{equation}
        \partial^\alpha R_k(t,\tau)=\mathcal{O} \bigl( 2^{-kN}\langle t\rangle^{-N}\langle \tau\rangle^{-N} \bigr),\quad \forall N.\label{eqn-symbol-estimate}
    \end{equation}
    Then we have
    \begin{equation}
        \|\psi(t)e^{it\Delta^\rho }\varphi_k(-\Delta^\rho)u\|^2_{L^2(\Omega,F^\rho)} \le \|\varphi_k(D_t)\widetilde{\psi}(t)e^{it\Delta^\rho }u\|^2_{L^2(\Omega,F^\rho)}+ \| R_k(t,D_t)e^{it\Delta^\rho} u\|^2_{L^2(\Omega,F^\rho)}.\label{eqn-high-change}
    \end{equation}
    Taking \eqref{eqn-low-estimate} and all \eqref{eqn-high-change} into \eqref{eqn-low-high-terms}, we obtain
    \begin{equation}
        \begin{aligned}
            \|u\|^2_{L^2(M;F^\rho )}\le &\, C\sum_{k=K}^\infty \int_\R \|\varphi_k(D_t) \widetilde{\psi}(t)e^{it\Delta^\rho} u\|^2_{L^2(\Omega,F^\rho)}\,d t\\
            &\, +C \sum_{k=K}^\infty \int_\R \|R_k(t,D_t) e^{it\Delta^\rho} u\|^2_{L^2(\Omega)} \,d t + C\|u\|^2_{H^{-4}(M;F^{\rho})}.
        \end{aligned}\label{eqn-low-high-2}
    \end{equation}
    The first sum is bounded by
    \begin{equation}
        \sum_{k=K}^\infty \int_\R \|\varphi_k(D_t) \widetilde{\psi}(t)e^{it\Delta^\rho} u\|^2_{L^2(\Omega,F^\rho)}\,d t\le  \int_\R \| \widetilde{\psi}(t)e^{it\Delta^\rho} u\|^2_{L^2(\Omega,F^\rho)}\,d t.\label{eqn-first-bound}
    \end{equation}
    To estimate the second sum, we write
    \begin{equation*}
        \begin{aligned}
            R_k(t,D_t) e^{it\Delta^\rho}u&=R_k(t,D_t)(-D_t+1)^2 e^{it\Delta^\rho}(-\Delta^\rho+I)^2 u\\
            &=\widetilde{R}_k(t,D_t)\langle t\rangle^{-2} e^{it\Delta^\rho} (-\Delta^\rho+I)^{-2} u
        \end{aligned}
    \end{equation*}
    where the symbol of $\widetilde{R}_k(t,D_t)= R_k(t,D_t)(-D_t+1)^2\langle t\rangle^2$ also satisfies \eqref{eqn-symbol-estimate} and thus $\widetilde{R}_k(t,D_t)=\mathcal{O}(2^{-k}):L^2(R,F^\rho)\to L^2(R,F^\rho)$. Therefore the second sum of \eqref{eqn-low-high-2} is bounded by 
    \begin{align}
        \sum_{k=K}^\infty \int_\R \|R_k(t,D_t) e^{it\Delta^\rho} u\|^2_{L^2(\Omega)} \,d t &\le C\sum_{k=K}^\infty 2^{-2k} \|\langle t\rangle^{-2} e^{it\Delta^\rho}(-\Delta^\rho+I)^{-2} u\|^2_{L^2(\R\times M;F^{\rho})}\notag\\
        &\le C \| (-\Delta^\rho+I)^{-2}u\|^2_{L^2(M;F^\rho)}=C\|u\|^2_{H^{-4}(M;F^\rho)}.\label{eqn-second-bound}
    \end{align}
    Taking \eqref{eqn-first-bound} and \eqref{eqn-second-bound} into \eqref{eqn-low-high-2} finishes the proof of \eqref{eqn-obs-error}.
\end{proof}
If we apply $\mathcal{H}=\ell^2(\Gamma)$, we obtain the observability with error term for any covering space of $M$.

\begin{theorem}
Assume that $\Omega \subset M$ is a non-empty open subset and $T>0$, then there exist constants $C>0$ and $h_0>0$ depending only on $\Omega$, $M$ and $T$, independent of $\Gamma$, such that for all $h \in\left(0, h_0\right)$, all $u \in L^2(X)$ we have the estimate
$$
\|u\|_{L^2(X)} \leq C\left(\int_{0}^{T}\|e^{it\Delta_g}u\|_{L^2(\pi^{-1}(\Omega))}dt+\|u\|_{H^{-4}(X)}\right) .
$$
\end{theorem}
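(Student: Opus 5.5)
The plan is to transport Proposition \ref{prp-obs-error} from the flat Hilbert bundle to the cover via the non-commutative Bloch transform. Let $\rho=\rho_\pi$ be the quasiregular representation on $\mathcal{H}=\ell^2(V)$, with $V=\Gamma_X\backslash\Gamma_M$. We do not need normality here, since $\mathcal{B}_{\mathrm{NC}}$ is defined for an arbitrary Riemannian cover. The key point is that the constant $C_T$ in Proposition \ref{prp-obs-error} depends only on $(M,\Omega,T)$ and not on $(\mathcal{H},\rho)$, because its proof only uses the uniform semiclassical observability of Proposition \ref{prp-semi-obs} together with functional calculus bounds that are uniform in $\rho$. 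The $h_0$ in the statement is then inherited from Proposition \ref{prp-semi-obs} and plays no further role.

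Step 1 is to check that $\mathcal{B}_{\mathrm{NC}}$ intertwines all the objects appearing in \eqref{eqn-obs-error}. From \eqref{nonblochtwisted} we have $\mathcal{B}_{\mathrm{NC}}\Delta_g=\Delta^{\rho}\mathcal{B}_{\mathrm{NC}}$ on smooth compactly supported functions. Since $\mathcal{B}_{\mathrm{NC}}$ is unitary, this gives $\mathcal{B}_{\mathrm{NC}}f(\Delta_g)=f(\Delta^\rho)\mathcal{B}_{\mathrm{NC}}$ for bounded Borel $f$, in particular for $e^{it\Delta}$ and for $(-\Delta+I)^{-2}$. To make this rigorous I would verify that $\mathcal{B}_{\mathrm{NC}}$ maps a core of the self-adjoint $\Delta_g$ into a core of $\Delta^\rho$. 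Alternatively, I would show that both generators are self-adjoint and unitarily equivalent on $H^2$, using the intertwining and the invertibility of $\mathcal{B}_{\mathrm{NC}}:H^2(X)\to H^2(M;F^\rho)$ already invoked in the proof of Theorem \ref{highfrequencycontrol}. This domain issue is the only place I expect real care to be needed.

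Step 2 handles the two terms on the right-hand side. For the localisation, the multiplication property $M_{\mathbf 1_\Omega}\mathcal{B}_{\mathrm{NC}}=\mathcal{B}_{\mathrm{NC}}M_{\pi^*\mathbf 1_\Omega}$ holds, extended from continuous $a$ to indicator functions by approximation or directly from \eqref{defofnonbloch}. Combined with the fibrewise isometry behind \eqref{unitaryofnonbloch}, which is an integral over $M$ of $\ell^2(V)$-norms and so localises to $\Omega$, this gives
\[
\|e^{it\Delta^\rho}\mathcal{B}_{\mathrm{NC}}u\|_{L^2(\Omega;F^\rho)}=\|e^{it\Delta_g}u\|_{L^2(\pi^{-1}(\Omega))}.
\]
For the error term, Step 1 gives
\[
\|\mathcal{B}_{\mathrm{NC}}u\|_{H^{-4}(M;F^\rho)}\sim\|(-\Delta^\rho+I)^{-2}\mathcal{B}_{\mathrm{NC}}u\|_{L^2}=\|(-\Delta_g+I)^{-2}u\|_{L^2(X)}\sim\|u\|_{H^{-4}(X)}.
\]
The first equivalence must be uniform in $\rho$. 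This follows from ellipticity of $-h^2\Delta^\rho+I$ with $h=1$, using Propositions \ref{inverseofelliptic} and \ref{uniformboundedtheorem}. Alternatively, one can simply use $\|(-\Delta^\rho+I)^{-2}\cdot\|_{L^2}$ as the $H^{-4}$ norm, which is what the proof of Proposition \ref{prp-obs-error} actually uses.

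Step 3 applies Proposition \ref{prp-obs-error} to $\mathcal{B}_{\mathrm{NC}}u$ and substitutes the identities from Step 2. Together with $\|u\|_{L^2(X)}=\|\mathcal{B}_{\mathrm{NC}}u\|_{L^2(M;F^\rho)}$, this gives the claimed estimate with a constant independent of $\Gamma$. The inequality is read with squared norms as in \eqref{eqn-obs-error}; for the unsquared version stated, one takes square roots and applies Cauchy–Schwarz in $t$, which costs a factor of $T^{1/2}$. I do not expect any step beyond the domain verification in Step 1 to cause difficulty.
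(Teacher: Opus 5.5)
This is essentially the paper's approach. The paper states this theorem right after Proposition \ref{prp-obs-error} with only the remark that one applies it with $\mathcal{H}=\ell^2(\Gamma)$. That means transporting it through $\mathcal{B}_{\mathrm{NC}}$ via the isometry, the intertwining with $\Delta_g$ and with multiplication by $\pi^*\mathbf{1}_\Omega$, and the $\rho$-independence of $C_T$, exactly as in the proof of Theorem \ref{highfrequencycontrol}. Your use of $\ell^2(V)$ for a possibly non-normal cover is fine, and so is your handling of the domain issue. The map $s\mapsto c_{[e]}$, where $s=\sum c_{[\gamma]}\delta_{[\gamma]}$, inverts $\mathcal{B}_{\mathrm{NC}}$, and $\Delta_g$ is essentially self-adjoint on $C_0^\infty(X)$ since $X$ is complete.

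One step in your last paragraph fails as written. Cauchy--Schwarz gives $\int_0^T f\,dt\le T^{1/2}\bigl(\int_0^T f^2\,dt\bigr)^{1/2}$. That is the opposite of what you need to pass from the squared estimate to the stated one with $\int_0^T\|e^{it\Delta_g}u\|_{L^2(\pi^{-1}(\Omega))}\,dt$.

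Use unitarity instead. Set $f(t)=\|e^{it\Delta_g}u\|_{L^2(\pi^{-1}(\Omega))}$, so that $f(t)\le\|u\|_{L^2(X)}$. Then $\int_0^T f^2\,dt\le\|u\|_{L^2(X)}\int_0^T f\,dt$, and the squared estimate becomes $\|u\|^2\le C\|u\|\int_0^T f\,dt+C\|u\|_{H^{-4}}^2$. Absorbing via $C\|u\|\int_0^T f\le\frac12\|u\|^2+\frac{C^2}{2}\bigl(\int_0^T f\bigr)^2$ gives $\|u\|_{L^2(X)}\le C\int_0^T f\,dt+(2C)^{1/2}\|u\|_{H^{-4}(X)}$. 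This is the stated inequality, with constants still independent of $\Gamma$.
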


However, when $\mathcal{H}$ is infinite-dimensional, $\Delta^{\rho}$ is not a Fredholm operator on $L^2(M;F^{\rho})$. We cannot apply the uniqueness-compactness argument to remove the remainder term. Finally, we assume that $\Gamma$ is type I group. Then, the dimension of the irreducible representation of $\Gamma$ is finite, \emph{i.e.}, $d_\Gamma<\infty$. By the generalized Bloch Theorem, we can fix a Hilbert space $\mathcal{H}=\mathrm{End}(\mathbb{C}^{d_\Gamma})$.

\subsection{Removing the error term}
The last step is to remove the error term and therefore complete the proof of Theorem \ref{thm-uniform-obs}. We use the  uniqueness-compactness argument of Bardos, Lebeau and Rauch \cite{bardo1992}. 

%\begin{proposition}
 %   Fix $\mathcal{H}=\mathrm{End}(\mathbb{C}^{d_\Gamma})$, for any unitary representation $\rho: \Gamma \to \mathrm{U}(\mathcal{H})$, any non-empty open subset $\Omega\subset M$, and any $T>0$, there exists a $C=C(M,\Omega, T, d_\Gamma)$ such that for any $u \in L^2(M;F^{\rho})$,
  %  \begin{equation}
   % \|u\|_{L^2(M)}^2\leq C\int_{0}^{T}\|e^{it\Delta^{\rho}}u\|_{L^2(\Omega)}^2dt.
    %\end{equation}
%\end{proposition}

For any unitary representation $\rho$ and any $T>0$, we define a linear subspace
 \begin{equation*}
     N_T^\rho=\lbrace u\in L^2(M;F^\rho):e^{it\Delta^\rho}u\equiv 0 \text{ on } (0,T)\times \Omega\rbrace.
 \end{equation*}

 \begin{lemma}
     For any unitary representation $\rho$ and any $T>0$, we have $N^\rho_T=\lbrace 0\rbrace$.
 \end{lemma}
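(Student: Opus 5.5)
We would prove the lemma by the standard uniqueness–compactness argument, and we work in the setting of this subsection, namely $(\mathcal{H},\rho)\in\mathcal{C}_m$ with $\mathcal{H}$ finite-dimensional. Then $\Delta^\rho$ is an elliptic, self-adjoint operator with discrete spectrum on $L^2(M;F^\rho)$, and the embedding $H^s(M;F^\rho)\hookrightarrow H^{t}(M;F^\rho)$ is compact for $s>t$ by Proposition \ref{inclusionofSobolev}(b).

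\emph{Step 1: $N_T^\rho$ is finite-dimensional.} For $u\in N_T^\rho$, Proposition \ref{prp-obs-error} gives
\begin{equation*}
\|u\|_{L^2(M;F^\rho)}\le C_T\|u\|_{H^{-4}(M;F^\rho)}.
\end{equation*}
The space $N_T^\rho$ is closed in $L^2$, since $e^{it\Delta^\rho}$ is unitary and restriction to $(0,T)\times\Omega$ is continuous. Its unit ball is therefore compact: a bounded sequence has a subsequence that is Cauchy in $H^{-4}$ by the compact embedding, hence Cauchy in $L^2$ by the inequality. So $\dim N_T^\rho<\infty$.

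\emph{Step 2: $N_T^\rho$ is invariant under $\Delta^\rho$.} Fix $0<T'<T$ and $u\in N_T^\rho$. For $0<\varepsilon<T-T'$ the difference quotients $\varepsilon^{-1}(e^{i\varepsilon\Delta^\rho}u-u)$ lie in $N_{T'}^\rho$. Since $N_{T'}^\rho$ is finite-dimensional by Step 1, the norms on it are equivalent, and the quotients are Cauchy there. One way to see this is to note that the $e^{i\varepsilon\Delta^\rho}$ restrict to a one-parameter group on a finite-dimensional space, so they are generated by a bounded operator. It follows that $u\in D(\Delta^\rho)$ and $i\Delta^\rho u\in N_{T'}^\rho$.

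This step needs $N_{T}^\rho\subset N_{T'}^\rho$ together with a stabilization argument. The spaces $N_{T'}^\rho$ are decreasing in $T'$ and finite-dimensional, so the dimension is constant on some interval $(T-\delta,T)$. On that interval $N_{T'}^\rho$ equals a fixed space $N$, and $N$ is invariant under $\Delta^\rho$. Moreover $N_T^\rho\subset N$.

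\emph{Step 3: the invariant space is trivial.} If $N\neq\{0\}$, then $\Delta^\rho|_N$ is a symmetric operator on a finite-dimensional space, so it has an eigensection $u\in N$ with $\Delta^\rho u=-\lambda u$. Then $e^{it\Delta^\rho}u=e^{-it\lambda}u$, and vanishing on $(0,T')\times\Omega$ forces $u|_\Omega=0$.

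In a parallel orthonormal local trivialization, the twisted Laplacian acts diagonally by \eqref{diagofLaplacian1}: the components $u^j$ solve the scalar equation $\Delta_g u^j+\lambda u^j=0$. The scalar unique continuation principle, propagated chart by chart over the connected manifold $M$ using the constant transition maps, then gives $u\equiv0$. This is a contradiction, so $N=\{0\}$ and hence $N_T^\rho=\{0\}$.

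We expect Step 2 to be the main obstacle. Making the regularity and invariance argument rigorous requires the stabilization in $T'$ and the passage from difference quotients to $\Delta^\rho u$. We would follow \cite{bardo1992} and \cite{jin2018control} closely here. Note that the constant $C_T$ from Proposition \ref{prp-obs-error} is only needed qualitatively in this lemma, so uniformity in $\rho$ plays no role at this stage.
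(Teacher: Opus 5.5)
Your argument is correct and uses the same Bardos--Lebeau--Rauch uniqueness--compactness skeleton as the paper: finite dimensionality from Proposition \ref{prp-obs-error} plus the compact embedding, invariance under $\Delta^\rho$, and an eigensection vanishing on $\Omega$. Two steps are done differently.

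For the invariance, the paper works on $N_T^\rho$ directly. The quotients $v_\varepsilon=\varepsilon^{-1}(e^{i\varepsilon\Delta^\rho}u-u)$ are bounded in $H^{-4}$ and lie in $N^\rho_{T-\delta}$, so Proposition \ref{prp-obs-error} at time $T-\delta$ bounds them in $L^2$. This gives $u\in D(\Delta^\rho)$, then $i\Delta^\rho u\in N^\rho_{T-\delta}$ by closedness, and hence $\Delta^\rho u\in\bigcap_{\delta>0}N^\rho_{T-\delta}=N^\rho_T$. No finite dimensionality or stabilization is needed there.

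You instead rely on soft finite-dimensional facts. Your first justification (a one-parameter group on $N^\rho_{T'}$) fails as stated, since $e^{i\varepsilon\Delta^\rho}$ maps $N^\rho_{T'}$ into the larger space $N^\rho_{T'-\varepsilon}$, not into itself. It is your stabilization in $T'$ that makes the step work. The stabilized space is in fact $N^\rho_T=\bigcap_{T'<T}N^\rho_{T'}$ itself.

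For the final contradiction, the paper cites Theorem \ref{controlonflathilbertbundle}. That theorem only applies when $h<h_0$, i.e.\ to eigensections with $\lambda>h_0^{-2}$. Your componentwise unique continuation argument covers every eigenvalue, including low ones such as $\lambda=0$, so on that point your version is the more complete one.
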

 \begin{proof}
     Given any fixed $\rho$ and $T>0$, let $\varepsilon>0$ and $u\in N_T^\rho$. We have
     \begin{equation*}
         v_\epsilon =\frac{e^{it\Delta^\rho} u-u}{\varepsilon} \in N_{T-\delta}^\rho, \quad \forall \varepsilon\le \delta.
     \end{equation*}
     Since $u\in L^2(M;F^\rho)$, $v_\varepsilon$ is uniformly bounded in $H^{-4}(M;F^\rho)$ for $\varepsilon<T/2$,
     \begin{equation*}
         \|v_\varepsilon\|^2_{H^{-4}(M;F^\rho)}\le C ,\quad \forall \varepsilon\le \delta,
     \end{equation*}
     where $C>0$ is a positive constant. Then \eqref{eqn-obs-error} gives
     \begin{equation*}
         \|v_\varepsilon\|_{L^2(M;F^\rho)}^2\le C_{T/2} \|v_{\varepsilon}\|^2_{H^{-4}(M;F^\rho)}\le C_{T/2}C,\quad \forall \varepsilon\le \delta.
     \end{equation*}  
     By definition of $e^{it\Delta^\rho}$, we obtain $v_\varepsilon \to i\Delta^\rho u$ in $ L^2(M;F^\rho)$ for $\varepsilon\to 0$ and hence $u\in H^2(M;F^\rho)$. This gives $\Delta^\rho u\in N_{T-\delta}^\rho$ for any $\delta>0$ and hence $\Delta^\rho u \in N_T^\rho$. Therefore, $N_T^\rho$ is stable under $\Delta^\rho$. Again by Proposition \ref{prp-obs-error}, the $H^{-4}(M;F^\rho)$ norm is equivalent to the $L^2(M;F^\rho)$ norm on $N_T^\rho$, so the unit ball in $N_T^\rho$ is compact and thus $N_T^\rho$ is of finite dimension. If it is not $\lbrace 0\rbrace$, then it must contain some eigenfunction $\varphi$, but this would imply that $\varphi\equiv 0$ on $\Omega$, which violates Theorem \ref{controlonflathilbertbundle}. Hence we finish the proof.  
 \end{proof}
We notice that this step is the only place we need the Fredholm property of $\Delta^{\rho}$, because we need the compactness of the embedding $H^{-4} \hookrightarrow L^2$. 
\begin{proof}[Proof of Theorem~\ref{thm-uniform-obs}]
 To remove the error term, we proceed by contradiction. Suppose that there exists a sequence $\lbrace u_n \rbrace_{n\in\mathbb{N}^+}$ in $L^2(M;F^{\rho_n})$ such that
 \begin{equation}
     \|u_n\|_{L^2(M;F^{\rho_n})}=1 \quad\text{ and }\quad \int_0^T \|e^{it\Delta^{\rho_n}}u_n\|^2_{L^2(\Omega;F^{\rho_n})} \,d t \le \frac{1}{n}.\label{eqn-assump-error}
 \end{equation}
 Then there exists a subsequence $\lbrace u_{n_k}\rbrace_{k\in\mathbb{N}^+}$ and a sequence of maps $\lbrace \mathfrak{U}_{\rho_{n_k}} \rbrace_{k\in \mathbb{N}^+}$ defined in Section~\ref{subsec: twisted laplacian}, such that
 \begin{equation*}
      \mathfrak{U}_{\rho_{n_k}}u_{n_k}\mathfrak{U}_{\rho_{n_k}}^{-1}\rightharpoonup \mathfrak{U}_{\rho_{0}}u_0\mathfrak{U}_{\rho_{0}}^{-1} \ \ \ \ \text{  weakly in } L^2(M;\mathrm{End}(\mathbb{C}^{d_\Gamma}))
 \end{equation*}
 for some $u_0\in L^2(M;F^\rho)$ and some finite dimensional unitary representation $\rho_0$ with the $\rho_0$-equivalent map $\mathfrak{U}_{\rho_{0}}$. Thus $\mathfrak{U}_{\rho_{n_k}}u_{n_k}\mathfrak{U}_{\rho_{n_k}}^{-1}\to \mathfrak{U}_{\rho_{0}}u_0\mathfrak{U}_{\rho_{0}}^{-1}$ strongly in $H^{-4}(M;\mathrm{End}(\mathbb{C}^{d_\Gamma}))$. On the one hand, applying \eqref{eqn-obs-error} and \eqref{eqn-assump-error}, we obtain
 \begin{equation}
     1\le C\frac{1}{n_k}+C\|u_{n_k}\|^2_{H^{-4}(M;F^\rho)}.
 \end{equation}
 Taking $k\to \infty$, we obtain
 \begin{equation}
     \|(1-\Delta_\rho)^{2}\mathfrak{U}_{\rho_{0}}u_0\mathfrak{U}_{\rho_{0}}^{-1}\|_{L^2(M;\mathrm{End}(\mathbb{C}^{d_\Gamma})}\ge C^{-1/2}>0.\label{eqn-non-zero}
 \end{equation}
 On the other hand, we have
 \begin{equation*}
     \int_0^T\|e^{it\Delta_g}u_0\|^2_{L^2(\Omega;F^\rho)}\,d t=0.
 \end{equation*}
 Then $u_0\in N_T^\rho$ and thus $u_0\equiv 0$, which contradicts to \eqref{eqn-non-zero} finishing the proof of Theorem \ref{thm-uniform-obs}.  
\end{proof}
Now we complete the proof of Theorem \ref{thm-main-obs}:
\begin{proof}
According to the generalized Bloch Theorem,
\begin{equation}
\begin{aligned}
\|u\|_{L^2(X)}^2&=\int_{\widehat{\Gamma}}\|\mathcal{B}u([\rho],x)\|^2_{L^2(M,F^\rho)}d\mu([\rho])\\
&\leq C\int_{\widehat{\Gamma}}\int_{0}^{T}\|e^{it\Delta^{\rho}}\mathcal{B}u([\rho],x)\|^2_{L^2(\Omega,F^\rho)}dtd\mu([\rho])\\
&=C\int_{0}^{T}\|e^{it\Delta_g}u(x)\|^2_{L^2(\pi^{-1}(\Omega))}dt.
\end{aligned}
\end{equation}
The proof is complete.
\end{proof}

\section*{Acknowledgments}
The authors would like to thank Semyon Dyatlov, Long Jin, and Zhongkai Tao for their valuable suggestions about semiclassical analysis on flat Hilbert bundles; Qiaochu Ma for useful discussions regarding the full support of augmented semiclassical measures for eigensections; and Philippe Jaming for helpful discussions at an early stage of the project. Yulin Gong was supported by the EPSRC [EP/W007010/1].

%%%%%%%%%
\section*{Data availability statement}

Data sharing is not applicable to this article, as no datasets were generated or analyzed during the current study.

\section*{Conflict of interest}

The authors declare that they have no conflict of interest.

%%%%%%%%%

\bibliographystyle{alpha}
\bibliography{article.bib}
\end{document}